\title{Deformations of geometric structures\\ on a subclass of LVM threefolds}
\author{Matthieu Madera}
\date{\today}
\newcommand{\N}{\mathbb N}
\newcommand{\Z}{\mathbb Z}
\newcommand{\C}{\mathbb C}
\newcommand{\PP}{\mathbb P}
\newcommand{\W}{\mathbb C^n\setminus\{0\}}
\newcounter{thm}[section]
\newtheorem{Theorem}[thm]{Theorem}
\newtheorem{Lemma}[thm]{Lemma}
\newtheorem{Proposition}[thm]{Proposition}
\newtheorem{Remark}[thm]{Remark}
\newtheorem{Definition}[thm]{Definition}
\newtheorem{Corollary}[thm]{Corollary}
\newtheorem{Example}[thm]{Example}
\newtheorem{TheoremLetter}{Theorem}
\newtheorem{TheoremIntro}{Theorem}
\DeclareMathOperator\coker{coker}
\DeclareMathOperator\im{im}
\DeclareMathOperator\rk{rk}
\DeclareMathOperator\Hom{Hom}
\DeclareMathOperator\Vect{Vect}
\DeclareMathOperator\Aut{Aut}
\begin{document}

\maketitle

\begin{abstract}
    We study deformations of geometric structures on some LVM manifolds of complex dimension $3$. More precisely, we study resonant structures, a particular type of $(G,X)$-structures, via the Ehresmann-Thurston principle, and their link with the deformation of the complex structure of the LVM manifold in the sense of Kodaira, Spencer and Kuranishi. We describe the Kuranishi family of these manifolds and construct a family containing all of them and complete at every point.
\end{abstract}

\tableofcontents

\newpage

\section{Introduction}\label{Intro}

The objective of this paper is to study the geometry of deformations of some examples of non-Kähler compact complex manifolds that carry holomorphic $(G,X)$-structures (see Section \ref{Subsection G,X structures}) and the interplay between the deformation space of these geometries and the deformation space of the complex structure. We focus on some complex threefolds that are characterized by their type $(2,6,4)$ in the class of non-Kähler LVM manifolds (see Section \ref{Section LVM manifolds}). We will study deformations of the complex structure of such a threefold $N$, that is obtained by a quotient of $\C^*\times \C^2\setminus\{0\}$ by an action of $\Z^2$ given by a group homomorphism $\rho_N:\Z^2\to \mathrm{Diag}(\C^3)^\times$ with values in the group of invertible linear diagonal transformations of $\C^3$. From this action, we introduce the concept of resonance (see Definition \ref{Definition Resonances for an LVM manifold}), together with a complex Lie group $G_N$ of resonant transformations of $\C^*\times \C^2\setminus\{0\}$. The first principal result concerns complex deformations. It describes the Kuranishi family of an LVM manifold $N$ of type $(2,6,4)$, exhibiting at the same time new non-Kähler compact complex manifolds.

\begin{TheoremIntro}\label{TheoremIntro Action and Kuranishi}
    (Theorems \ref{Theorem The action is fpf and proper} and \ref{Theorem Kuranishi family of a LVM manifold of type 264}) There exists an open neighborhood $U$ of $\rho_N$ in $\mathrm{Hom}(\Z^2,G_N)$ such that every homomorphism in $U$ furnishes a fixed point free, proper and cocompact action of $\Z^2$ on $\C^*\times \C^2\setminus\{0\}$. Moreover, the resulting germ of family of compact complex manifolds with base space $U$ is the Kuranishi family of $N$.
\end{TheoremIntro}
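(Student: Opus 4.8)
The statement splits into the two cited results, and I would prove them in turn. For the first part (fixed point freeness, properness and cocompactness on an open set $U$), the guiding principle is that these three properties are \emph{open} under deformation of a proper cocompact action, so it suffices to show that the perturbed actions stay in the same qualitative regime as $\rho_N$. The plan is to first exploit cocompactness of the $\rho_N$-action: choosing a compact fundamental domain $K$ and a slightly larger compact $K'$, properness of $\rho_N$ guarantees that only finitely many $\gamma \in \Z^2$ satisfy $\rho_N(\gamma)(K') \cap K' \neq \emptyset$. Since $\rho(\gamma)$ depends continuously on $\rho \in \mathrm{Hom}(\Z^2, G_N)$ and on $\gamma$, for $\rho$ close to $\rho_N$ the same finiteness holds with the same finite set of $\gamma$, which reduces properness and fixed point freeness to finitely many open conditions on $\rho$, each satisfied at $\rho_N$ and hence on a neighborhood.

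To make this effective I would track the logarithmic/radial data of the action. Writing points of $\C^*\times(\C^2\setminus\{0\})$ through the moduli $(\log|z_0|,\log|z_1|,\log|z_2|)$ and the diagonal part of $\rho(\gamma)$ through $\gamma \mapsto (\log|\lambda_i(\gamma)|)$, the action on this $\R$-linear data is by a lattice of translations, and $\rho_N$ being proper and cocompact means this lattice is cocompact and the induced map is proper. Cocompactness of a lattice and properness of the associated map are open conditions on the generating vectors, while the resonant (non-diagonal) part of $G_N$ contributes only lower-order terms that do not affect this asymptotic radial behaviour. Fixed point freeness then follows because a fixed point of $\rho(\gamma)$ forces the corresponding eigenvalues to be $1$ on the nonvanishing coordinates, which is excluded for the finitely many relevant $\gamma$ by the same open condition. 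This yields the neighborhood $U$ and a smooth family $\mathcal N \to U$ whose fiber over $\rho$ is $(\C^*\times(\C^2\setminus\{0\}))/\rho(\Z^2)$.

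For the second part I would identify $\mathcal N \to U$ with the Kuranishi family by the universality criterion: it suffices that the base $U$ be smooth and that the Kodaira--Spencer map $\kappa : T_{\rho_N} U \to H^1(N, \Theta_N)$ be an isomorphism, since a complete family (Kodaira--Spencer surjective) with bijective Kodaira--Spencer map at the central fiber is semi-universal, hence the Kuranishi family. The map $\kappa$ factors through the deformation theory of the representation $\rho_N$: tangent vectors to $\mathrm{Hom}(\Z^2, G_N)$ at $\rho_N$ are the group $1$-cocycles of $\Z^2$ with values in the Lie algebra $\mathfrak g_N$ (twisted by $\mathrm{Ad}\circ\rho_N$), and the construction of $\mathcal N$ sends such a cocycle to the corresponding class in $H^1(N, \Theta_N)$.

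The computational core — and the step I expect to be the main obstacle — is the explicit determination of $H^1(N, \Theta_N)$ and its matching with $T_{\rho_N}\mathrm{Hom}(\Z^2, G_N)$. I would use the Cartan--Leray spectral sequence of the covering $\C^*\times(\C^2\setminus\{0\}) \to N$, whose relevant terms are $H^1(\Z^2, H^0(\tilde N, \Theta_{\tilde N}))$ and $H^0(\Z^2, H^1(\tilde N, \Theta_{\tilde N}))$. The difficulty is that $\tilde N = \C^*\times(\C^2\setminus\{0\})$ is not Stein: the factor $\C^2\setminus\{0\}$ has infinite-dimensional $H^1(\mathcal O)$ (spanned by the negative monomials), so each term is a priori infinite-dimensional. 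The role of the resonance condition is precisely to cut these down — after taking $\Z^2$-invariants the only surviving classes should be the finitely many resonant monomials, and $G_N$ is built so that $\mathfrak g_N$ accounts exactly for the resonant holomorphic vector fields. Carrying out this monomial bookkeeping, showing that the invariant part of the intermediate cohomology is finite-dimensional and contributes no class outside the image of the cocycles, and deducing bijectivity of $\kappa$, is where the real work lies. Once this is done, smoothness of $U$ (an open subset of the commuting variety of $G_N$, which the resonance structure should render smooth at $\rho_N$) together with bijectivity of $\kappa$ gives the Kuranishi family; I would also compute $H^2(N, \Theta_N)$ by the same device to confirm that the base is unobstructed of the expected dimension.
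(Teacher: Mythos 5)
Your proposal contains two genuine gaps, one in each half. For the first half, the soft "openness of proper cocompact actions" argument does not apply as stated: the perturbed generators are not linear maps close to $\rho_N(\gamma)$ in any uniform sense on the non-compact space $V$, but genuinely nonlinear transformations such as $(\xi_1,\xi_2,\xi_3)\mapsto(\alpha_1\xi_1,\alpha_2\xi_2,\alpha_3\xi_3+\varepsilon\xi_1^p\xi_2^q)$. Your claim that the resonant part "contributes only lower-order terms that do not affect the asymptotic radial behaviour" is precisely what fails: because $\alpha_3=\alpha_1^p\alpha_2^q$, iterating $f^rg^s$ produces in the third coordinate a term of the form $\bigl(\tfrac{\varepsilon}{\alpha_1^p\alpha_2^q}r+\tfrac{\delta}{\beta_1^p\beta_2^q}s\bigr)(\alpha_1^p\alpha_2^q)^r(\beta_1^p\beta_2^q)^s\xi_1^p\xi_2^q$, i.e.\ the \emph{same} exponential order as the diagonal part multiplied by a factor linear in $(r,s)$; the action on the log-moduli is not by a lattice of translations. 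The paper's proof of Theorem \ref{Theorem The action is fpf and proper} is an explicit case-by-case asymptotic analysis of exactly these polynomially-corrected exponentials, and cocompactness (which your fundamental-domain sketch does not establish for the perturbed action) is obtained there by a separate Ehresmann--Thurston argument: a section of the flat bundle transverse to the flat structure gives a local diffeomorphism from the compact $N$ onto $V/\langle f,g\rangle$.

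For the second half, the criterion "smooth base plus bijective Kodaira--Spencer map" is unavailable in the resonant case, which is the whole point of the theorem: $\Hom(\Z^2,G_N)$ is \emph{singular} at $\rho_N$ exactly when $N$ is resonant (the commutation relation $\varepsilon(\beta_3-\beta_1^p\beta_2^q)=\delta(\alpha_3-\alpha_1^p\alpha_2^q)$ degenerates at $\rho_N$), $H^2(N,\Theta)\cong VF^{\alpha,\beta}_{\Z\times\N\times\N}\neq 0$, and the first obstruction is the Lie bracket $(X,Y)\mapsto[X,Y]$, which does not vanish identically (Proposition \ref{Proposition First obstruction}); so the deformations are obstructed and the base cannot be "unobstructed of the expected dimension" as you hope to confirm. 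Your argument only recovers the non-resonant case (the paper's Theorem \ref{Theorem Kuranishi family for LVM manifolds of type (2,6,4) non resonant}, which does use exactly this criterion). In the resonant case the paper must prove completeness directly: it shows the Kodaira--Spencer map is an isomorphism onto the Zariski tangent space, then uses Douady's sheaf $\Lambda$ of relative automorphisms, its subsheaf $\Lambda^{G_N}$ of $G_N$-liftable ones, the five lemma applied to the quotients $Q_k$ to identify $H^1(N,Q_k^{G_N})\cong H^1(N,Q_k)$ for all $k$, Artin approximation to pass from formal to convergent families, and the fibration $\Hom(\Z^3,G_N)\to\Hom(\Z^2,G_N)$ of Lemma \ref{Lemma map between the representations varieties for Gpq} to conclude that every germ of deformation is a pullback. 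This completeness mechanism is absent from your proposal and is the essential new idea of the proof.
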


An LVM manifold of type $(2,6,4)$ carries naturally a holomorphic flat and torsion-free affine connection. Such a structure can be seen as an example of holomorphic $(G,X)$-structures (see Section \ref{Subsection G,X structures}). Given an LVM manifold $N$ of type $(2,6,4)$, the Lie group $G_N$ of resonant transformations of $V=\C^*\times \C^2\setminus\{0\}$ induces a new geometric model $(G_N,V)$. The affine structure of $N$ can be seen as a complete $(G_N,V)$-structure (see Definition \ref{Definition completeness and Kleinian for GX structures}) and Theorem \ref{TheoremIntro Action and Kuranishi} gives the description of some (smooth) $(G_N,V)$-structures close to the natural one. However, according to the so-called Ehresmann-Thurston principle (see \ref{Theorem Ehresmann Thurston principle}), it does not describe an open set of the deformation space of $(G_N,V)$-structures, since the fundamental group of $N$ is isomorphic with $\Z^3$. The second principal result concerns the deformations of the natural $(G_N,V)$-structure of $N$ and the relationship with its previously described Kuranishi space.

\begin{TheoremIntro}\label{TheoremIntro Geometrization}
    (Theorem \ref{Theorem Geometrization of LVM manifolds}) Any small deformation of the complex structure $N$ carries a family of compatible, i.e. holomorphic, $(G_N,V)$-structures that is parametrized by $3,4$ or $5$ complex numbers, depending on the resonances of $N$. Moreover, among these structures, there exists a unique one that is uniformizable (see Definition \ref{Definition completeness and Kleinian for GX structures}), and it has the property of being complete (see Definition \ref{Definition completeness and Kleinian for GX structures}). Moreover, the germ of the induced forgetful map 
    $$
    \Hom(\Z^3,G_N)\longrightarrow K_N,
    $$
    where $K_N$ is the Kuranishi space of $N$, is an analytic fibration with smooth fibers. Both germs of spaces and the forgetful map are smooth if and only if $N$ is non-resonant, and in that case, the map is a holomorphic submersion.
\end{TheoremIntro}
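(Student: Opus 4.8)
The plan is to translate the whole statement into the language of representation varieties via the Ehresmann--Thurston principle (Theorem~\ref{Theorem Ehresmann Thurston principle}) and then to read off each assertion from an explicit description of the commuting varieties of $G_N$. First I would record the topological input: since $N$ is the quotient of $V=\C^*\times\C^2\setminus\{0\}$ by the $\Z^2$-action $\rho_N$ and $\pi_1(V)\cong\Z$ (coming from the $\C^*$-factor, as $\C^2\setminus\{0\}$ is simply connected), the fundamental group sits in a split central extension
$$
1\longrightarrow \Z \longrightarrow \pi_1(N)\cong\Z^3 \longrightarrow \Z^2 \longrightarrow 1,
$$
the sub-$\Z$ being $\pi_1(V)$. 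The natural $(G_N,V)$-structure has developing map the universal covering $\tilde V\to V$, so its holonomy $\rho\colon\Z^3\to G_N$ is trivial on $\pi_1(V)$ and equals $\rho_N$ on the quotient $\Z^2$. By Ehresmann--Thurston, the deformation space of $(G_N,V)$-structures is locally modelled, near this structure, on the germ of $\Hom(\Z^3,G_N)$ at $\rho$; and by Theorem~\ref{TheoremIntro Action and Kuranishi} the Kuranishi space $K_N$ is the germ of $\Hom(\Z^2,G_N)$ at $\rho_N$.

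With a splitting $\Z^2\hookrightarrow\Z^3$ of the extension fixed, I would identify the forgetful map with the restriction morphism $r\colon\Hom(\Z^3,G_N)\to\Hom(\Z^2,G_N)$, checking that the complex structure underlying a $(G_N,V)$-structure with holonomy $(g_0;g_1,g_2)$ --- $g_0$ the image of $\pi_1(V)$, $(g_1,g_2)$ the images of the $\Z^2$-generators --- is exactly the member of the Kuranishi family attached by Theorem~\ref{TheoremIntro Action and Kuranishi} to $r(g_0;g_1,g_2)=(g_1,g_2)$. Granting this, the fibre of $r$ over a representation $\psi=(g_1,g_2)$ is the set of $g_0\in G_N$ commuting with $g_1$ and $g_2$, i.e. the centraliser $Z_{G_N}(\psi(\Z^2))$. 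The crux of the dimension count is then an explicit description of $G_N$: it contains the diagonal torus $(\C^*)^3$ (dimension $3$) together with one resonant one-parameter subgroup for each resonance of $N$ (Definition~\ref{Definition Resonances for an LVM manifold}), and each such subgroup, by the very definition of resonance, centralises $\rho_N(\Z^2)$. Computing $Z_{G_N}(\rho_N(\Z^2))$ I expect to find dimension $3$, $4$ or $5$ according to whether $N$ has $0$, $1$ or $2$ resonances; as a Lie subgroup of $G_N$ it is automatically smooth, which yields the family parametrized by $3,4$ or $5$ complex numbers and the smoothness of the fibres.

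For the uniformizable structure I would single out, inside each fibre, the representation with $g_0=\mathrm{id}$, i.e. trivial holonomy along $\pi_1(V)$. For this representation the developing map factors through $\tilde V\to V$ and the structure is the quotient of $V$ by $\psi(\Z^2)$; by Theorem~\ref{TheoremIntro Action and Kuranishi} this $\Z^2$-action on all of $V$ is fixed-point-free, proper and cocompact, so the developing map is a covering of the entire model $V$, i.e. the structure is uniformizable and complete in the sense of Definition~\ref{Definition completeness and Kleinian for GX structures}. Uniqueness comes from showing that any $g_0\neq\mathrm{id}$ forces the developing map to wrap nontrivially around the $\C^*$-factor and hence to fail to be a covering onto a domain of $V$; this is the one place where I would argue directly with the geometry of the developing map rather than with representation varieties.

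The remaining assertions reduce to the local geometry of the commuting varieties. I would compute the germs of $\Hom(\Z^2,G_N)$ and $\Hom(\Z^3,G_N)$ through the tangent--obstruction formalism (group cohomology of $\Z^2$ and $\Z^3$ with coefficients in $\mathfrak g_N$ under the adjoint action), showing that in the non-resonant case both germs are smooth and $r$ has surjective differential, hence is a holomorphic submersion with $3$-dimensional fibres, while each resonance contributes a quadratic relation among the commutators that makes both germs singular at $\rho$ and $\rho_N$. That the restriction $r$ nevertheless remains an analytic fibration with smooth fibres over the (possibly singular) base is, I expect, the main obstacle: one must show that the centraliser $Z_{G_N}(\psi(\Z^2))$ keeps constant dimension as $\psi$ varies in $\Hom(\Z^2,G_N)$ near $\rho_N$ and that $r$ is locally trivial along this family. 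I would handle this by exploiting the explicit, resonance-by-resonance normal form of the elements of $G_N$ to solve the commutation equations simultaneously in $(g_0;g_1,g_2)$, exhibiting $\Hom(\Z^3,G_N)$ near $\rho$ as a product of $K_N$ with the smooth centraliser fibre, which simultaneously gives the fibration structure, the smooth fibres, and the equivalence between global smoothness and non-resonance.
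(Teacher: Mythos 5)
Your global framework (Ehresmann--Thurston on the representation variety $\Hom(\Z^3,G_N)$, the central copy of $\pi_1(V)\cong\Z$, the characterization of the uniformizable structure by trivial holonomy along $\pi_1(V)$, and the group-cohomology computation of the Zariski tangent spaces) all match the paper. But the load-bearing step --- ``checking that the complex structure underlying a $(G_N,V)$-structure with holonomy $(g_0;g_1,g_2)$ is exactly the member of the Kuranishi family attached to $r(g_0;g_1,g_2)=(g_1,g_2)$'' --- is false, and the paper explicitly warns against it: before Theorem \ref{Theorem Developping map for the nearby affine structure in the non resonant case} it notes that the naive answer $V/\langle A,B\rangle$ ``is no longer true'' as soon as $C=\rho(0,0,1)\neq\mathrm{Id}$. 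When the holonomy along $\pi_1(V)$ is nontrivial, the developing map is no longer the covering $\widetilde V\to V$; it must be replaced by a twisted map (the $dev_\rho$ and $\mathrm{Dev}_{\gamma,C}$ computed explicitly in Theorem \ref{Theorem Developping map for the nearby affine structure in the non resonant case} and Lemma \ref{Lemma map between the representations varieties for Gpq}), and the residual $\Z^2$-action on $V$ one obtains after quotienting by the deck transformation of $\pi_1(V)$ is generated by matrices $(A_\rho,B_\rho)=\Psi(\rho)$ that depend on all three of $A,B,C$ (e.g. $\alpha_1=(\alpha_1^\rho)^{1+c_1}$ in the non-resonant case). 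The forgetful map is this $\Psi$, not the restriction morphism $r$, so your proposal proves properties of the wrong map.

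The error is not merely cosmetic, because the two maps have genuinely different local structure in the resonant case. The fiber of $r$ over $\psi$ is the centralizer $Z_{G_N}(\psi(\Z^2))$, and since $\rho_N$ lands in the center of $G_N$ while nearby commuting pairs generically do not, this centralizer jumps from $\dim G_N\in\{4,5\}$ at $\rho_N$ down to $3$ at nearby points (a short computation with the normal form $(\xi_1,\xi_2,\xi_3)\mapsto(a_1\xi_1,a_2\xi_2,a_3\xi_3+\varepsilon\xi_1^p\xi_2^q)$ shows the centralizer of a non-central commuting pair is $3$-dimensional). Hence $r$ is \emph{not} an analytic fibration with smooth equidimensional fibers near $\rho$, and your route would yield the wrong parameter count ($3$ instead of $4$ or $5$) for the family of compatible structures on a generic small deformation of a resonant $N$. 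The paper avoids this by constructing $\Psi$ as the restriction of a holomorphic submersion of the ambient smooth manifolds $G_N^3\to G_N^2$, whose fibers have constant dimension $\dim G_N$; this is also where the uniqueness of the uniformizable structure is read off, since the explicit developing maps are coverings onto $V$ exactly when $C=\mathrm{Id}$. To repair your argument you would need to replace $r$ by the genuine forgetful map, which requires producing the twisted developing maps --- there is no way to bypass that computation by pure representation-variety considerations.
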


\subsection{Geometric structures}

In the context of complex manifolds, the use of geometric structures can play a crucial role. The most striking example is that of Riemann surfaces and projective structures. Projective structures play a fundamental role in the famous \textbf{Uniformization Theorem} of Riemann surfaces (we refer to \cite{HPSG2010}). Every Riemann surface can be written as a quotient of an open set of $\PP^1(\C)$ by a group of automorphisms acting freely and properly through projective automorphisms. This type of projective structure is called a uniformizable structure (see Definition \ref{Definition completeness and Kleinian for GX structures}). For a compact Riemann surface $X$ of genus $g\geq2$, sheaf cohomology, and more precisely, the Serre duality theorem and the Riemann-Roch theorem (see \cite{Gu66}, p.80 and p.98) are used to show that the space of holomorphic projective structures on $X$, i.e. compatible with the complex structure, is a complex manifold of dimension $3g-3$ and among these structures, there exists a unique one that is uniformizable. 

Other examples of compact complex manifolds that carry geometric structures are the compact complex tori. A compact complex torus of dimension $n\geq1$ is a quotient of $\C^n$ by the free and proper action of $\Z^{2n}$ generated by a lattice $\Gamma$ of $\C^n$ acting by translations. Translations are automorphisms of $\C^n$ that preserve its standard Hermitian metric. This metric thus descends to a flat Kähler metric on a torus. In fact, compact complex tori are the only compact Kähler manifolds that are holomorphically parallelizable. Although Kählerian, compact complex tori are generically not projective, in the sense that they cannot be embedded as closed submanifolds of a complex projective space. Translations also preserve the standard affine connection of $\C^n$ and, therefore, compact complex tori are examples of manifolds that carry an affine structure (see Example \ref{Example of G,X struct}).

It took the work of Hopf to discover the first example of compact complex non-Kählerian manifold. The example he studies in \cite{Ho48} is the quotient of $\C^2\setminus\{0\}$ by the action of $\Z$ generated by the contracting homothety $z\mapsto \frac{1}{2}z$. The topology of such a quotient is that of the product of spheres $\mathbb S^1\times \mathbb S^3$, for which the existence of a Kähler metric is obstructed by its topology. Since Hopf's work, a \textbf{Hopf manifold} is a complex manifold of dimension $n\geq 2$ whose universal covering is biholomorphic to $\W$ and whose fundamental group is isomorphic to $\Z$. It is shown independently in \cite{Bour2025} and \cite{Mad25} that every Hopf manifold carries a holomorphic geometric structures, generalizing the case of Hopf surfaces achieved in \cite{MP2010}. 

We also mention manifolds obtained as quotients of $\mathrm{SL}(2,\C)$ by cocompact lattices, which are examples of compact complex non-Kählerian and parallelizable manifolds, meaning that their holomorphic tangent bundle is trivial. Their deformation spaces were studied by Ghys in \cite{Ghys95}, and the techniques involved in this paper are a source of inspiration for our work, particularly in Section \ref{Subsection Kuranishi family of a resonant LVM manifold}. To study the deformations of compact quotients of $\mathrm{SL}(2,\C)$, Ghys first describes the deformation space of some locally homogeneous geometric structures, $(G,X)$-structures (see Section \ref{Subsection G,X structures}), that actually induce projective structures. He shows that the space of deformations of the complex structure of a compact quotient of $\mathrm{SL}(2,\C)$ is induced by the deformation space of the geometric structure, and shows an analogue of the Uniformization Theorem in this context. Although there exists no generalization of the Uniformization Theorem in dimension greater than one, compact quotients of $\mathrm{SL}(2,\C)$ form an example of family of compact complex manifolds that is uniformizable, in the sense of the Uniformization Theorem.

\subsection{LVM manifolds}

During his thesis \cite{MeThese} (see also \cite{Mee2000}), Meersseman constructed a class of compact complex manifolds of any dimension, generalizing the construction of L\'opez de Medrano and Verjovsky presented in \cite{LdmV97}. These manifolds are called \textbf{LVM manifolds} and this paper is devoted to the study of the complex geometry of some LVM threefolds.

We mention the work of Bosio, in \cite{Bos2001}, who constructed new compact complex manifolds, generalizing the construction of LVM manifolds and extending this family to the family of LVMB manifolds. However, in this work, we will only deal with LVM manifolds, and therefore, we will not give details on the construction of Bosio.

For most of them, LVM manifolds do not have the topology of a symplectic manifold and, hence, do not carry any Kähler metric, implying that they are not projective. It is worth noticing that it is impossible, in the holomorphic context, to construct a relevant compact submanifold of $\C^n$, and that the closed holomorphic submanifolds of the projective space $\PP^n(\C)$ are, by Chow's theorem (\cite{Chow49}, Theorem V p.910), projective. It is therefore enriching to have a construction of a family of non-projective manifolds in any dimension. Also, the world of non-Kählerian manifolds is both larger and more mysterious than that of Kählerian manifolds. For example, while the fundamental group of a Kählerian manifold has strong restrictions (see e.g. \cite{Amo96}), a corollary of a theorem by Taubes in \cite{Tau92} (p.165) guarantees that any group with a finite presentation is the fundamental group of a compact complex manifold of dimension $3$. It is also proven in \cite{Mee2000} (Theorem 3 p.89) that, except from compact tori, the LVM manifolds are not in the class $\mathcal{C}$ of Fujiki, i.e. they are not bimeromorphic to a compact Kähler manifold.

Through the construction of LVM manifolds, one can recover already well-known compact complex manifolds that we mentioned above. First, Meersseman shows that all compact complex tori are part of the LVM class, and he specifies which cases of his construction yield compact tori. To illustrate our previous remark, the compact tori are the only LVM manifolds that are symplectic. Next, the LVM class also includes the diagonal Hopf manifolds, that are obtained as the quotient of $\W$ by the action generated by a complex linear diagonal contracting map.

In \cite{Mee2000}, Meersseman studies geometric properties of LVM manifolds. First, he studies meromorphic functions and shows that generically, an LVM manifold does not carry any non-constant meromorphic function. Then, he computes the dimension of the Dolbeault cohomology space of global holomorphic $1$-forms and the one of global holomorphic vector fields in the generic case. Still in the generic case, every submanifold of an LVM manifold is given by a sub-LVM configuration (see Definition \ref{Definition Siegel domain and weak hyperbolicity}). 

Another important geometric property of LVM manifolds is that they carry a natural regular holomorphic transversely Kählerian foliation. Under a rational condition (condition (K) in \cite{MeVe2004}), Meersseman and Verjovsky obtained in \cite{MeVe2004} that the leaves space of this foliation is a projective toric variety equipped with an equivariant orbifold structure turning the projection from the LVM manifold to the leaves space into a principal Seifert bundle with typical fiber a compact complex torus. In the irrational case, the leaf space is equipped with a richer structure of Quantum toric stack as developed in \cite{KLMV} and \cite{KLMV2}.

For more bibliography that involve LVM manifolds, the reader can consult \cite{CFZ2007}, \cite{Tam2012}, \cite{PU2012}, \cite{PU2012}, \cite{BZ2015}, \cite{PUV2016}, \cite{BZ2017}, \cite{BPZ2019}, \cite{Ish2019}, \cite{KP2021} and \cite{IKP2022}

\subsection{Deformations of the complex structure of an LVM manifold}

In \cite{Mee2000}, Meersseman specifically studies the deformations of the complex structure of an LVM manifold. The construction of an LVM manifold requires a choice of complex parameters and thus naturally provides a family of deformations in the sense of the theory of Kodaira, Spencer, and Kuranishi (see \cite{KS58} \cite{Kur62}).

An LVM manifold is constructed starting from the data of a family of $m$ holomorphic linear diagonal vector fields of $\C^n$ (see Section \ref{Subsection Definition of LVM manifolds}). Such a family of vector fields is given by $n\times m$ coefficients, ordered in $m$ vectors of $\C^n$ (a configuration, see Definition \ref{Definition Siegel domain and weak hyperbolicity}), and allowing these coefficients to move gives the base space of a family of LVM manifolds. Two configurations that differ by an affine transformation of $\C^n$ give rise to LVM manifolds that are isomorphic, therefore, the base space of what we will call the LVM family is obtained by taking the quotient by the action of the affine group. Meersseman shows in \cite{Mee2000} (Lemma VI.1 p.102), that this space is naturally homeomorphic to an open set of $(\C^{n-m-1})^m$. He also shows (Theorem 11 p.103) that, under certain strong assumptions on the topology of the LVM manifold, his family of deformations is, at every point of its base, the Kuranishi family of the corresponding LVM manifold (see Theorem \ref{Theorem Deformation de Meersseman}). It is important to note that this base space, and thus this Kuranishi space, is smooth. However this leaves wide open the description of deformations for almost all topological types of LVM manifolds. One of the interests of this paper is to deal with deformations of some LVM threefolds that do not satisfy the hypotheses of Theorem \ref{Theorem Deformation de Meersseman}.

\subsection{LVM manifolds with holomorphic geometric structures}

Compact complex tori and diagonal Hopf manifolds are part of a larger subclass of LVM manifolds that are of interest in the context of geometric structures. As we present in Section \ref{Subsection Definition of LVM manifolds}, the construction of an LVM manifold depends on the choice of some complex parameters. Associated with this family of parameters, there is the concept of indispensable points, which we will not detail here (see Definition \ref{Definition Indispensable points}). The purpose that we want to emphasize here is that under the assumption that there are enough indispensable points among the parameters, the associated LVM manifold can be described as the quotient of an open set of $\C^k$, for some $k$, by a fixed-point free and proper action of $\Z^m$ generated by linear diagonal maps (in the standard coordinates). It is important to note the similarity with the definition of Hopf manifolds. This description shows that the LVM manifold carries a flat and torsion-free affine connection, i.e. a $(\mathrm{GA}(\C^k),\C^k)$-structure (see Section \ref{Subsection G,X structures}) or an affine structure (see \cite{MeeBos2006} Lemma 12.1 p.119). We will call this structure the canonical structure. We will see in Section \ref{Section Resonant structures} that this canonical structure can be seen differently, here as another type of holomorphic $(G,X)$-structure. The latter will be called the canonical resonant structure of the LVM manifold. 

The theory of $(G,X)$-structure, which we will partially recall in Section \ref{Subsection G,X structures} comes with a local deformation theory for compact smooth $(G,X)$-manifolds, as well as the theory of complex structures. The main result of this theory is the so-called Ehresmann-Thurston principle (see Theorem \ref{Theorem Ehresmann Thurston principle}). The fundamental difference with the theory of local deformations of complex structures, is that, locally, deforming a $(G,X)$-structure is the same thing as deforming a group homomorphism. Hence, we deal with global objects, and not an infinitesimal theory. 

The space of local smooth deformations of a holomorphic $(G,X)$-structure and the space of local deformations of a complex structure are related in the following sense. A smooth manifold equipped with a $(G,X)$-structure, where $G$ is a complex Lie group acting holomorphically on a complex manifold $X$, inherits naturally a complex structure. In addition to being interesting in itself, the deformation space (in the smooth category) of a holomorphic $(G,X)$-structure therefore provides insights into the Kuranishi space of the underlying compact complex manifold.

\subsection{LVM manifolds of type (2,6,4)}

The classification of LVM manifolds of complex dimension $1$ is rather easy, since they are all compact tori of dimension $1$. For them, the geometry and the deformation theory is well known via the Uniformization Theorem, even if it represents a very special case -- real surfaces of genus $1$ -- for which the central tool is the theory of elliptic functions. Elliptic curves are the only compact Riemann surfaces with an affine structure.

In complex dimension $2$, an LVM manifold is a compact torus of dimension $2$, or a Hopf surface. Once again, their small deformations are well known (see \cite{KS58}, Theorem 14.3 p.410 for tori and \cite{Ha85}, Theorem A.3 p.249 for Hopf manifolds), and all these complex surfaces are geometric, in the sense that their complex structure are induced by a $(G,X)$-structure. Compact tori are affine and Hopf surfaces carry the $\mathcal{O}(n)$-structures constructed in \cite{MP2010}.

The world of LVM manifolds of dimension $3$ is more complicated. An LVM manifold of dimension $3$ can be a compact torus of dimension $3$, an LVM manifold that we shall call \textbf{of type $(2,6,4)$} (see Section \ref{Subsection Type (2,6,4)}), a Hopf threefold, some deformation of Calabi-Eckmann threefold (with underlying topology $\mathbb S^3\times \mathbb S^3$, see \cite{CE53}), or an LVM manifold, say of type $(1,5,0)$, with complicated topology (the quotient of a connected sum of five copies of $\mathbb S^3\times \mathbb S^4$ by a nontrivial action of $\mathbb S^1$, see \cite{MeeBos2006}, Example 5.1. p.86). The Kuranishi spaces of tori, Hopf manifolds and Calabi-Eckmann manifolds are well known (see the previous references for tori and Hopf threefolds, and \cite{Ak75} for Calabi-Eckmann manifolds). Calabi-Eckmann manifolds and $(1,5,0)$ LVM manifolds do not fall in the case where the LVM manifold has a canonical geometric structure mentioned above, while tori and Hopf threefolds are geometric (\cite{Bour2025} and \cite{Mad25}). The Kuranishi space of a general LVM-deformation of a Calabi-Eckmann manifold remain unknown (see e.g. \cite{LN96} Remark 6 p.800).

The first example of LVM manifold that falls into our context of interest is therefore the type $(2,6,4)$ (see Section \ref{Subsection Type (2,6,4)} for the required details on these manifolds). Namely, they have the property of carrying a canonical resonant structure (see above) and their deformations as compact complex manifolds are not known. LVM manifolds of type $(2,6,4)$, and the study of the deformations of their canonical geometric structures and their interplay (resonant and complex structures) are the objects of this paper. An LVM manifold of type $(2,6,4)$ is obtained as a quotient of $V:=\C^*\times \C^2\setminus\{0\}$ by an action of the group $\Z^2$ generated by two linear diagonal mappings of $\C^3$. The underlying smooth manifold is isomorphic to the product of spheres $\mathbb S^1\times\mathbb S^1\times\mathbb S^1\times \mathbb S^3$. 

\subsection{Description of the results}

We would like to describe and compare different deformation spaces. As mentioned above, the natural LVM family furnishes a candidate to describe the Kuranishi family of every LVM manifold, and we will prove that the LVM family is the Kuranishi family of a \textbf{generic} LVM manifold of type $(2,6,4)$. However, some LVM manifolds carry resonances (see Definition \ref{Definition Resonances for an LVM manifold}), whose definition is similar to the one for Hopf manifolds (see \cite{Bo81} p.1). For the resonant LVM manifolds, the LVM family lacks complex structures. We describe the deformation space of the canonical resonant structure -- a $(G_N,V)$-structure with $G_N$ the complex Lie group of resonant transformations of $V$ (see Section \ref{Subsection Group of resonant transformations}) -- and we show that for every LVM manifold of type $(2,6,4)$, its Kuranishi family is induced by the deformation space of resonant structures. 

\subsubsection*{Deformation of the canonical resonant structure}

Our principal result concerning the deformations of the canonical resonant structure is the first part of the above mentioned Theorem \ref{TheoremIntro Action and Kuranishi}. One interpretation of this result is the following. A holomorphic, free, proper, and cocompact action endows the compact quotient it defines with a complex structure. Homomorphisms such that their image is constituted of maps that are diagonalizable in a change of coordinates of $V$ give rise to LVM manifolds, so the result is well known in that case. Other homomorphisms, that appear in the case where $N$ is resonant in the sense of Definition \ref{Definition Resonances for an LVM manifold}, provide new complex structures on the product of spheres $\mathbb S^1\times \mathbb S^1\times \mathbb S^1\times \mathbb S^3$. 

On the other hand, this result allows for the description of $(G_N,V)$-structures that are close to the canonical structure associated with an LVM manifold and for which the covering space $V=\C^*\times \C^2\setminus\{0\}$ has trivial holonomy. Indeed, the developing map of such a structure is, according to this theorem, the universal covering map of $V$, and the structure is complete (see Definition \ref{Definition completeness and Kleinian for GX structures}). 

We also describe all the other nearby $(G_N,V)$-structures such that $V$ does not have a trivial holonomy. Their developing map is computed in the proof of Lemma \ref{Lemma map between the representations varieties for Gpq}. None of these structure are uniformizable (see Definition \ref{Definition completeness and Kleinian for GX structures}).

\subsubsection*{Kuranishi family of a resonant LVM manifold}

Let $N$ be an LVM manifold of type $(2,6,4)$ and let $G_N$ be the group of resonant transformations of $V$ associated with the resonances of $N$. According to Theorem \ref{TheoremIntro Action and Kuranishi}, there is an open neighborhood of the canonical homomorphism of $N$ in $\Hom(\Z^2,G_N)$ such that the action of $\Z^2$ on $V$ generated by a homomorphism in this neighborhood is fixed point free, proper and cocompact (if $N$ is non-resonant, such an action is an LVM action). Therefore, there is a germ of family of deformations with base space $\Hom(\Z^2,G_N)$. Notice that this germ of space is smooth if and only if $N$ does not carry any non-trivial resonance. The second part of Theorem \ref{TheoremIntro Action and Kuranishi} states that this family of deformations of $N$ is its Kuranishi family.

In non-resonant case, this result can be reformulated as follows. The germ of the representation variety $\Hom(\Z^2,G_N)$ at the canonical homomorphism is isomorphic to the germ of base space of the LVM family at $N$, and the LVM family is the Kuranishi family of $N$. As a consequence of the cohomology computations in Proposition \ref{Proposition Cohomology in Theta for a LVM manifold of type 264}, this is false for a resonant LVM manifold.

The proof of this theorem is more difficult in the resonant case than in the non resonant case. In the latter, the LVM family has a smooth base space, and therefore, showing that the Kodaira-Spencer map of the family is an isomorphism is sufficient to show that the family is the Kuranishi family of $N$. In the resonant case, the Kuranishi family has a singular base space. This fact is already implied by the computation of the first obstruction to deformations achieved in Proposition \ref{Proposition First obstruction}. By the way, the simple existence of the family with base $\Hom(\Z^2,G_N)$ shows that all the other obstructions vanish (see Corollary \ref{Corollary The higher obstructions vanish}). To prove that this family is the Kuranishi family of $N$, we use the point of view of obstructions proposed by Douady in \cite{Dou60Obs} and the argument of Ghys developed in \cite{Ghys95}, who studied the deformations of compact quotients of $\mathrm{SL}(2,\C)$ by discrete subgroups. As in Ghys' paper, we start by using the Ehresmann-Thurston principle on some $(G,X)$-structures, then we show a sort of uniformization theorem (see \cite{Ghys95}, Lemme 2.1 p.115) to describe these structures and finish by showing that the natural family associated with the representation variety is the Kuranishi family of the associated compact manifold. 

This result also contains the description of the Kuranishi family of products of some complex manifolds. Indeed, if we consider a diagonal Hopf surface and an elliptic curve, their product is naturally seen as an LVM manifold of type $(2,6,4)$ and Theorem \ref{TheoremIntro Action and Kuranishi} describes its Kuranishi family. One important remark is that we recover among this family all the products formed by an Hopf surface (non-necessarily diagonal) and an elliptic curve. On the other hand there exist arbitrary small deformations of such a product that do not have the complex structure of a product.

\subsubsection*{Geometrization of LVM manifolds}

Once we have described the local deformation space of the canonical resonant structure and the Kuranishi family of an LVM manifold $N$ of type $(2,6,4)$, we can describe the germ of map 
\small
$$
\{N\text{-resonant structure on } \mathbb S^1\times\mathbb S^1\times\mathbb S^1\times\mathbb S^3\}\longrightarrow \{\text{complex structure on } \mathbb S^1\times\mathbb S^1\times\mathbb S^1\times\mathbb S^3\},
$$
\normalsize
at the canonical structure. In this context, the best deformation space that we can use is the representation variety $\Hom(\Z^3,G_N)$, where $G_N$ is the group of $N$-resonant transformations of $V=\C^*\times \C^2\setminus\{0\}$. Of course, the orbits of the action of $G_N$ acting by conjugation on $\Hom(\Z^3,G_N)$ contain $(G_N,V)$-structures which are isomorphic, nevertheless the character variety is not Hausdorff, except in the non-resonant case, where the group $G_N$ is abelian, so the character variety is $G_N^3$, which is smooth. Our main result in this context is Theorem \ref{TheoremIntro Geometrization}.

\subsubsection*{Complete family of LVM manifolds}

In \cite{Dab82}, Dabrowski suggested a method for gluing families of Hopf surfaces to construct a deformation family that encompasses all isomorphism classes of Hopf surfaces. One important point to note is that, among Hopf manifolds, this approach is only feasible for surfaces. In fact, the class of Hopf manifolds with dimension $n \geq 3$ has the characteristic that the dimension of the Kuranishi space is not bounded. This is a consequence of the fact that, for Hopf manifolds of dimension $n \geq 3$, the number of resonances can become arbitrarily large. Although Dabrowski's family includes all Hopf surfaces, it does not fulfill the condition of being complete (as explained in Theorem \ref{Theorem Kuranihi}).

In his thesis \cite{Fro2017}, Fromenteau revisits Dabrowski's method and succeeds in constructing a family of Hopf surfaces that contains all of them and is complete at every point.

LVM manifolds of type $(2,6,4)$ share similar properties with Hopf surfaces, like having a bounded number of resonances (see Definition \ref{Definition Resonances for an LVM manifold}). Consequently, it seems reasonable to expect that we can glue the Kuranishi families of LVM manifolds to form a family that includes all LVM manifolds of type $(2,6,4)$ and is complete at each point. We draw upon Fromenteau’s construction to develop such a family.

\begin{TheoremIntro}\label{TheoremIntro Big family of LVM}
    (Theorem \ref{Theorem big family of LVM manifolds}) There exists a family of deformations that contains all LVM manifolds of type $(2,6,4)$ and that is complete at every such a manifold.
\end{TheoremIntro}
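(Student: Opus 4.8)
The plan is to adapt the gluing strategy of Dabrowski \cite{Dab82} and Fromenteau \cite{Fro2017} to the present setting: the local building blocks will be the Kuranishi families furnished by Theorem \ref{Theorem Kuranishi family of a LVM manifold of type 264}, and the gluing data will be supplied by the distinguished holonomy representations of Theorem \ref{Theorem Geometrization of LVM manifolds}. First I would organize the isomorphism classes of LVM manifolds of type $(2,6,4)$ into a global parameter space. Each such $N$ is encoded by its defining homomorphism $\rho_N\colon\Z^2\to\mathrm{Diag}(\C^3)^\times$, and the admissible homomorphisms form a space stratified by resonance type (Definition \ref{Definition Resonances for an LVM manifold}). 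The non-resonant stratum $\Sigma_0$ is open and dense, and, since the number of resonances of a type $(2,6,4)$ manifold is bounded, only finitely many resonance types occur, the resonant strata being closed of positive codimension.

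To each LVM manifold $N$ of type $(2,6,4)$ I attach its Kuranishi chart $\mathcal{X}_N\to U_N$, where $U_N$ is a neighborhood of $\rho_N$ in $\Hom(\Z^2,G_N)$ (Theorem \ref{Theorem Kuranishi family of a LVM manifold of type 264}). For non-resonant $N$ this chart is an open piece of the LVM family over $\Sigma_0$, which by the non-resonant case of Theorem \ref{TheoremIntro Action and Kuranishi} is already the Kuranishi family at each of its points; for resonant $N$ the chart carries, besides a sub-germ of \emph{LVM directions} parametrizing nearby fibers that are again LVM manifolds, the extra \emph{resonant directions} producing the new complex structures on $\mathbb S^1\times\mathbb S^1\times\mathbb S^1\times\mathbb S^3$. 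The total base $B$ is then obtained by gluing all the $U_N$ together: two charts $U_N,U_{N'}$ are identified over the open locus where their families have isomorphic fibers, the identification being pinned down canonically by matching the distinguished holonomy representations in $\Hom(\Z^3,G_N)$ attached to each fiber by Theorem \ref{Theorem Geometrization of LVM manifolds}. Using these canonical, rather than arbitrary versal, identifications makes the transition maps automatically compatible on triple overlaps, so the local families glue to a single proper holomorphic family $\pi\colon\mathcal{X}\to B$.

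Completeness would then follow formally. By construction, $U_{N}$ is a neighborhood of $\rho_N$ in $B$, so around a point $b$ whose fiber $\mathcal{X}_b$ is an LVM manifold $N'$ of type $(2,6,4)$, the germ of $(\mathcal{X}\to B)$ at $b$ coincides with the germ at its center of the chart $\mathcal{X}_{N'}\to U_{N'}$, which by Theorem \ref{Theorem Kuranishi family of a LVM manifold of type 264} is the Kuranishi family of $N'$. Since the Kuranishi family is complete at its center (Theorem \ref{Theorem Kuranihi}) and completeness is a property of the family germ, the family $\pi$ is complete at $b$. As $b$ ranges over all points with LVM fiber, we obtain completeness at every LVM manifold of type $(2,6,4)$; and every such manifold occurs as a fiber, being the center of its own chart.

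The main obstacle will be the gluing across the interface between a resonant chart and the surrounding non-resonant family. As the resonances of $N$ vary, the group $G_N$, the dimension of $U_N=\Hom(\Z^2,G_N)$, and the singularity type of this germ all jump, and the associated character varieties are non-Hausdorff away from the non-resonant locus (as observed before Theorem \ref{TheoremIntro Geometrization}). I must therefore verify that the holonomy-matching transition maps are holomorphic and biholomorphic on the overlaps, and that $B$ is a well-defined complex space carrying a genuine family. I expect to control this exactly as in the interface analysis of \cite{Fro2017}: the explicit developing maps computed in the proof of Theorem \ref{TheoremIntro Action and Kuranishi} describe precisely how a resonant chart degenerates, along its LVM directions, onto the surrounding LVM family, and these explicit formulas should render the transition maps, their holomorphy, and the cocycle condition verifiable.
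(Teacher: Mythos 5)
Your overall strategy (glue Kuranishi charts in the style of Dabrowski and Fromenteau, then deduce completeness locally from completeness of the Kuranishi family at each center) is the same as the paper's, and your closing completeness argument is essentially the one used there. The genuine gap is in the gluing step, which is where all the content of the theorem lies. You propose to identify two charts ``over the open locus where their families have isomorphic fibers,'' with the identification ``pinned down canonically by matching the distinguished holonomy representations.'' But a fiber does not carry a canonical holonomy representation: the holonomy of its uniformizable resonant structure is only defined up to conjugation by $G_N$ (and up to the choice of identification of the fiber with a quotient of $V$), which is exactly the non-Hausdorffness of the character variety that you yourself flag. Without breaking this ambiguity there is no well-defined transition map; and identifying all pairs of base points with isomorphic fibers would in general produce a non-Hausdorff base (the usual moduli-space pathology). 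The aim here is a complete family, which is allowed to --- and does --- contain each manifold several times, so the overlaps must be chosen by explicit open sets and explicit maps rather than by the isomorphism relation on fibers.

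The paper resolves this concretely. It does not use one germ of chart per manifold but one global algebraic chart per resonance type: varieties $S_p$ for $p\in\Z$ (the doubly resonant case $q=1$) and $T_{p,q}=T_{p,q}'\times\C$ for $q\geq 2$, together with a non-resonant chart $T=T'\times\C$ that serves only to define the gluings. The crucial device is the extra factor $\C$ (the coordinate $\lambda$), which records the unipotent conjugation needed to normalize the pair of commuting matrices; this is precisely the missing normalization datum that converts the conjugacy-class ambiguity into an honest holomorphic parameter. The gluing maps $\psi_p$ and $\phi_{p,q}$ are then written down in closed form (conjugation by unipotent matrices depending on $\lambda$ and the eigenvalue data), and Lemmas \ref{Lemma Gluing map for Sp} and \ref{Lemma Gluing map for Tpq} verify by direct computation that they are $\Z^2$-equivariant, injective, and identify explicitly described open sets such as $W_p\times V$. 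You correctly identify the resonant/non-resonant interface as the main obstacle but defer it to ``the interface analysis of Fromenteau''; that analysis \emph{is} the introduction of the auxiliary parameter and the explicit transition formulas, so as written your proposal omits the idea that makes the gluing well defined.
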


\section{LVM manifolds}\label{Section LVM manifolds}

\subsection{Definition}\label{Subsection Definition of LVM manifolds}

We recall here the construction of LVM manifolds. The main reference is \cite{Mee2000}. Let $m$ and $n$ be two integers that satisfy $n\geq 2m+1$ and $m\geq 1$. We consider $(\Lambda_1,...,\Lambda_n)$ a $n$-tuple of vectors in $\C^m$, that we will call a \textbf{configuration} and we generate an action of $\C^m$ on $\C^n$ defined by 
$$
\begin{matrix}
    \C^m\times \C^n & \longrightarrow & \C^n \\
    (T,(z_1,...,z_n)) & \longmapsto & (\exp(<\Lambda_1,T>z_1),...,\exp(<\Lambda_n,T>z_n)),
\end{matrix}
$$
where $<.,.>$ is the standard complex bilinear form (not hermitian) of $\C^n$.

\begin{Definition}\label{Definition Siegel domain and weak hyperbolicity}
    (\cite{Mee2000}, p.82) The configuration $(\Lambda_1,...,\Lambda_n)$ of vectors in $\C^m$ fulfills
    \begin{itemize}
        \item the \textbf{Siegel condition} if the origin $0$ in $\C^m$ belongs to the convex hull of the vectors $\Lambda_1,...,\Lambda_n$,
        \item the \textbf{weak hyperbolicity condition} if for every sub-family of $2m$ vectors 
        $$
        \Lambda_{j_1}, ..., \Lambda_{j_{2m}},
        $$ 
        the origin does not belong to the convex hull generated by those $2m$ vectors.
    \end{itemize}
    
    A configuration satisfying both of those properties is said to be an \textbf{LVM configuration}.
\end{Definition}

\begin{Definition}\label{Definition Indispensable points}
    (\cite{Mee2000}, p.86) Let $\Lambda=(\Lambda_1,...,\Lambda_n)$ be an LVM configuration of vectors in $\C^m$ and let $j$ be an element of $\{1,...,n\}$. We say that $\Lambda_j$ is an \textbf{indispensable point} if $0$ does not belong to the convex hull of $\{\Lambda_l,~l\neq j\}$. 

    Such an LVM configuration with $k$ indispensable points is called an LVM configuration of type $(m,n,k)$.
\end{Definition}

From now on, we assume that the configuration $\Lambda:=(\Lambda_1,...,\Lambda_n)$ is LVM of type $(m,n,k)$. For $z=(z_1,...,z_n)$ a point in $\C^n$, we denote by $I_z$ the set 
$$
I_z:=\{j\in \{1,...,n\},~z_j\neq0\}.
$$
We consider the open subset $S$ of $\C^n$ defined as follows:
\small
$$
S:=\{z=(z_1,...,z_n)\in\C^n\setminus\{0\},~0 \text{ belongs to the convex hull of }\{\Lambda_j,~j\in I_z\}\}.
$$
\normalsize

It is proved in \cite{Mee2000} that $S$ contains $(\C^*)^n$ and is therefore dense in $\C^n$, that $S$ is the union of the closed orbits of the previous action of $\C^m$ on $\C^n$, and that the set of closed orbits, i.e. $S/\C^m$, is homeomorphic to the differential submanifold of $S$ defined by 
$$
\mathcal T:=\left\{z=(z_1,...,z_n)\in S,~\sum_{j=1}^n\vert z_j\vert^2\Lambda_j=0\right\}.
$$

We denote by $\pi$ the canonical projection $\C^n\setminus\{0\}\to \PP^{n-1}(\C)$. This is a holomorphic $\C^*$-principal bundle and the dilation action of $\C^*$ on $\C^n\setminus\{0\}$ commutes with the action of $\C^m$ given by the configuration $\Lambda$. Therefore, the action of $\C^m$ descends to an action on $\PP^{n-1}(\C)$. It is proved in \cite{Mee2000} that $\PP (S)=\pi(S)$, is stable under the action of $\C^m$ and that the quotient space $\PP (S)/\C^m$ is homeomorphic to the differential submanifold of $\PP(S)$ defined by 
$$
\mathcal N:=\left\{z=[z_1,...,z_n]\in \PP(S),~\sum_{j=1}^n\vert z_j\vert^2\Lambda_j=0\right\}.
$$

This submanifold of $\PP(S)$ is transverse to the foliation given by the orbits of the action, so $\mathcal N$ and, therefore, the quotient space $\PP(S)/\C^m$ inherit a complex structure given by the transverse part of a foliated atlas. 

The compact complex manifold thus created is called the \textbf{LVM manifold} associated with the configuration $\Lambda$ and will be denoted by $N_\Lambda$ or $N$ if there is no ambiguity about the configuration from which it comes.

The following is proved in \cite{Mee2000}.

\begin{Proposition}\label{Proposition les tores compacts sont LVM}
    (\cite{Mee2000}, p.86) Let $\Lambda$ be an LVM configuration of type $(m,n,k)$. Then, $k$ is bounded above by $2m+1$ and $k=2m+1$ if and only if $n=2m+1$.
    
    Let $\Lambda$ be an LVM configuration of type $(m,2m+1,2m+1)$. Then, $N_\Lambda$ is biholomorphic to a compact complex torus of dimension $m$.

    Moreover, for every given compact complex torus, there exists an LVM configuration such that the associated LVM manifold is biholomorphic to the given torus.
\end{Proposition}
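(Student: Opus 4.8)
The plan is to prove the three assertions separately: the numerical bound by convex geometry, and the two statements about tori by making the quotient $N_\Lambda$ completely explicit once $n=2m+1$. Throughout, the recurring difficulty is the interplay between the \emph{real} convex geometry of the points $\Lambda_j$ and the \emph{complex}-linear structures that appear in the quotient.

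\emph{The bound and the equivalence $k=2m+1\iff n=2m+1$.} I would first record that an indispensable $\Lambda_j$ lies in every subset $T\subseteq\{\Lambda_1,\dots,\Lambda_n\}$ with $0\in\mathrm{conv}(T)$, since otherwise $T\subseteq\{\Lambda_l:l\ne j\}$. A subset minimal for the property $0\in\mathrm{conv}(T)$ is affinely independent (an affine relation lets one drop a point while keeping $0$ in the hull), hence has at most $2m+1$ elements in $\R^{2m}\cong\C^m$; as all indispensable points sit inside such a minimal $T$, this gives $k\le 2m+1$. If $n=2m+1$, weak hyperbolicity says $0$ misses the hull of any $2m$ points, so deleting a single point breaks Siegel and every point is indispensable, whence $k=2m+1$. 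The delicate direction is the converse. If $k=2m+1$, the count forces the indispensable points to be exactly a minimal Siegel set: an affinely independent family $v_1,\dots,v_{2m+1}$ spanning a full-dimensional simplex with $0$ in its interior, so $0=\sum t_iv_i$ with all $t_i>0$, hence $\sum t_i(-v_i)=0$ and the $-v_i$ positively span $\R^{2m}$. Were there an extra point $p=\Lambda_l$, indispensability of $v_i$ would require $0\notin\mathrm{conv}(\{v_j:j\ne i\}\cup\{p\})$, which I would translate into $p\notin C_i:=\mathrm{cone}(\{-v_j:j\ne i\})$. By conical Carathéodory every vector of $\R^{2m}$ is a nonnegative combination of at most $2m$ of the $-v_j$, so $\bigcup_iC_i=\R^{2m}$ and no $p$ can avoid all the $C_i$; thus some $v_i$ fails to be indispensable, a contradiction. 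This cone-covering step is the crux of the first assertion: a naive one-relation exchange only reduces the support of $0$ from $2m+1$ points to $2m+1$ points (dropping one vertex, adding $p$), never to the $2m$ points needed to contradict weak hyperbolicity.

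\emph{Type $(m,2m+1,2m+1)$ yields a torus.} Here weak hyperbolicity forces $0\in\mathrm{conv}\{\Lambda_j:j\in I_z\}$ only when $I_z=\{1,\dots,n\}$, so $S=(\C^*)^n$ and $N_\Lambda=(\C^*)^n/(\C^*\times\C^m)$. Passing to the universal cover $\C^n\to(\C^*)^n$ by the coordinatewise exponential, the dilation and configuration actions become translations by the complex subspace $L=\mathrm{span}_{\C}(\mathbf 1,\Lambda^{(1)},\dots,\Lambda^{(m)})$, with $\Lambda^{(a)}=((\Lambda_j)_a)_j$, and the deck group is $2\pi i\,\Z^n$; thus $N_\Lambda\cong\C^n/(L+2\pi i\,\Z^n)$ as a complex manifold. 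The key linear-algebra input is that real-affine independence of the $2m+1$ points forces $\dim_{\C}L=m+1$: a relation $c_0\mathbf 1+\sum_ac_a\Lambda^{(a)}=0$ would place all $\Lambda_j$ on a complex affine hyperplane, a real subspace of dimension $2m-2$, which cannot contain an affinely independent $(2m{+}1)$-family. The same mechanism gives $L\cap i\R^n=\R\cdot i\mathbf 1$, so the image $\overline{\Lambda}_0$ of $2\pi i\,\Z^n$ in $\C^n/L\cong\C^m$ both spans (the projection kills only the one real direction $\R\, i\mathbf 1$ of $i\R^n$) and is discrete (that direction is the rational line $2\pi i\,\Z\,\mathbf 1$). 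Hence $\overline{\Lambda}_0$ is a rank-$2m$ lattice and $N_\Lambda\cong\C^m/\overline{\Lambda}_0$ is a complex $m$-torus.

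\emph{Every torus occurs.} Given $X=\C^m/\Gamma$, I would choose a $\Z$-basis $\gamma_1,\dots,\gamma_{2m}$ of $\Gamma$, set $\gamma_{2m+1}=-\sum_{j\le 2m}\gamma_j$, and define the surjection $p\colon\C^{2m+1}\to\C^m$ by $p(e_j)=\gamma_j$. Then $p(\mathbf 1)=0$, so $\mathbf 1\in L:=\ker p$ with $\dim_{\C}L=m+1$; choosing a basis $\mathbf 1,w^{(1)},\dots,w^{(m)}$ of $L$ produces the configuration $\Lambda_j=(w^{(1)}_j,\dots,w^{(m)}_j)$, whose associated lattice is $\overline{\Lambda}_0=p(2\pi i\,\Z^{2m+1})=2\pi i\,\Gamma$. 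The point requiring care is that this configuration really is LVM of type $(m,2m+1,2m+1)$. Affine independence is automatic: a dependence would yield $\beta\in\C^m\setminus\{0\}$ with $\langle\beta,\gamma_j\rangle\in\R$ for all $j$, forcing $\mathrm{Im}\langle\beta,\cdot\rangle$ to vanish on the $\R$-basis $\gamma_1,\dots,\gamma_{2m}$ and hence $\beta=0$. Since translating a configuration changes each $w^{(a)}$ by a multiple of $\mathbf 1$ and so leaves $L$ — and therefore $\overline{\Lambda}_0$ — unchanged, I may translate the simplex to bring $0$ into its interior; this simultaneously produces the Siegel and weak-hyperbolicity conditions and renders all $2m+1$ vertices indispensable. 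By the previous paragraph the resulting LVM manifold satisfies $N_\Lambda\cong\C^m/2\pi i\,\Gamma\cong X$, completing the realization and hence the proposition.
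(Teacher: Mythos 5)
The paper itself contains no proof of this proposition: it is imported verbatim from \cite{Mee2000} (p.~86) with only the citation, so there is no in-text argument to compare yours against. Judged on its own, your proof is correct and essentially reconstructs the standard argument. The combinatorial part is sound: every indispensable point belongs to every Siegel subset, and a minimal Siegel subset is affinely independent, giving $k\le 2m+1$; the cone-covering step (conical Carath\'eodory applied to $-v_1,\dots,-v_{2m+1}$, so that any extra point $p$ lands in some $C_i$ and kills the indispensability of $v_i$) correctly handles the direction $k=2m+1\Rightarrow n=2m+1$, and you are right that this is exactly what a naive exchange argument misses. For the torus statements, the identification $S=(\C^*)^n$, the passage to $\C^n/(L+2\pi i\,\Z^n)$, and the computations $\dim_{\C}L=m+1$ and $L\cap i\R^n=i\R\,\mathbf 1$ are all correct; the one place you are terse is the discreteness of $\overline{\Lambda}_0$ — having rank $2m$ and spanning $\R^{2m}$ does not by itself give discreteness (compare $\Z+\sqrt2\,\Z\subset\R$), and the actual reason is the one you only parenthesize: $i\R\,\mathbf 1$ is a rational line for the lattice $2\pi i\,\Z^n$, so $2\pi i\,\Z^n/(2\pi i\,\Z\,\mathbf 1)$ is a lattice in $i\R^n/i\R\,\mathbf 1$, and the induced map $i\R^n/i\R\,\mathbf 1\to\C^n/L$ is a linear isomorphism by the dimension count. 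The realization of an arbitrary torus — including the check that translating the simplex leaves $L$, hence the lattice, unchanged while creating the Siegel, weak-hyperbolicity and indispensability conditions — is also correct.
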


As a consequence of Carathéodory's Theorem and Proposition \ref{Proposition les tores compacts sont LVM}, every LVM manifold of type $(m,n,k)$ has a submanifold biholomorphic to a compact complex torus of complex dimension $m$. 

\subsection{A deformation space}\label{Subsection A deformation space}

When studying the construction of LVM manifolds, it is natural to wish to construct a deformation space by allowing the parameters of the construction, i.e. the LVM configuration $\Lambda$, to vary. This space will be enhanced by taking into account an equivalence relation that describes simple biholomorphisms between the different manifolds.

\begin{Definition}\label{Definition Equivalent configurations LVM}
    (\cite{Mee2000}, p.102) Let $\Lambda$ and $\Lambda'$ be LVM configurations of type $(m,n,k)$ and $(m,n,k')$. We say that $\Lambda$ and $\Lambda'$ are \textbf{equivalent} if there is a differentiable map $H:[0,1]\to(\C^m)^n$ such that 
    \begin{itemize}
        \item $H(0)=\Lambda$,
        \item $H(1)=\Lambda'$,
        \item for every $t$ in $[0,1]$, $H(t)$ is a LVM configuration.
    \end{itemize}
\end{Definition}

\begin{Proposition}\label{Proposition Two equivalent configurations have the same type}
    (\cite{Mee2000}, p.102) Let $\Lambda$ and $\Lambda'$ be two equivalent LVM configurations of type $(m,n,k)$ and $(m,n,k')$. Then, they have the same type, i.e. $k=k'$.
\end{Proposition}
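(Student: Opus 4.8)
The plan is to show that, for each fixed index $j\in\{1,\dots,n\}$, the property of $\Lambda_j$ being an indispensable point is \emph{locally constant} on the space of LVM configurations, and then to conclude by connectedness of $[0,1]$. Recall that $\Lambda_j$ is indispensable exactly when $0$ does not lie in the convex hull of $\{\Lambda_l : l\neq j\}$. Since the convex hull of a finite set of points is compact and varies continuously (in Hausdorff distance) with the points, the condition ``$0$ lies outside the convex hull of $\{\Lambda_l : l\neq j\}$'' is an open condition on the configuration: the distance from $0$ to this compact set stays positive under small perturbations. The delicate point is the complementary condition, ``$0$ lies in the convex hull of $\{\Lambda_l : l\neq j\}$'', which a priori is only closed, the dangerous boundary case being $0$ on the frontier of the hull.

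The key step, and the place where the weak hyperbolicity hypothesis enters, is the following claim: for an LVM configuration and any subset $J\subseteq\{1,\dots,n\}$, if $0$ belongs to the convex hull of $\{\Lambda_l : l\in J\}$, then in fact $0$ lies in its \emph{interior} (inside $\C^m\cong\R^{2m}$). To prove this, suppose $0$ were not interior. If the affine span of $\{\Lambda_l : l\in J\}$ had dimension $d<2m$, then Carathéodory's theorem applied inside that span would express $0$ as a convex combination of at most $d+1\le 2m$ of the $\Lambda_l$; if instead the hull were full-dimensional with $0$ on its boundary, then a supporting hyperplane through $0$ (necessarily a linear hyperplane of dimension $2m-1$) would contain a face of the hull through $0$, and Carathéodory inside that hyperplane would again write $0$ as a convex combination of at most $2m$ of the $\Lambda_l$. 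In either case $0$ lies in the convex hull of at most $2m$ vectors, contradicting weak hyperbolicity. This establishes the claim, so that on LVM configurations the condition ``$0$ lies in the convex hull of $\{\Lambda_l : l\neq j\}$'' coincides with ``$0$ lies in the interior of that hull'', which is open: a ball $B(0,\varepsilon)$ contained in the hull persists, with slightly smaller radius, under perturbations of size $\delta<\varepsilon$, as one checks through the support functions of the perturbed hulls.

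With both the condition ``$\Lambda_j$ indispensable'' and its negation now open on the space of LVM configurations, the conclusion is immediate. The map $H\colon[0,1]\to(\C^m)^n$ is continuous and takes values in LVM configurations, so for each $j$ the two disjoint open subsets of $[0,1]$ on which $H(t)_j$ is, respectively, indispensable or not indispensable partition the connected interval $[0,1]$; hence one of them is empty. Thus the indispensability of each index is unchanged along $H$, the set of indispensable indices of $H(0)=\Lambda$ equals that of $H(1)=\Lambda'$, and in particular $k=k'$. I expect the main obstacle to be precisely the key claim of the second paragraph: showing that weak hyperbolicity forbids $0$ from lying on the boundary of any such hull is exactly what upgrades ``not indispensable'' from a closed to an open condition, and without it the local-constancy argument would break down.
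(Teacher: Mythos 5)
Your proof is correct. Note that the paper itself does not prove this proposition — it is quoted from Meersseman's paper \cite{Mee2000} — so there is no in-paper argument to compare against; your argument (weak hyperbolicity plus Carath\'eodory forces $0$ into the \emph{interior} of any hull containing it, so that both indispensability and its negation are open conditions on the LVM locus, and connectedness of $[0,1]$ concludes) is the standard one and all steps, including the supporting-hyperplane case and the Hausdorff/support-function perturbation estimate, check out.
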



\begin{Definition}\label{Definition Affine Equivalent configurations LVM}
    (\cite{Mee2000}, p.102) Let $\Lambda$ and $\Lambda'$ be equivalent LVM configurations of type $(m,n,k)$. We say that they are \textbf{affine equivalent} if there exists a complex affine transformation of $\C^m$ sending $\Lambda$ to $\Lambda'$. 
\end{Definition}

Let $\Lambda$ be an LVM configuration of type $(m,n,k)$ and let $D$ be the space of LVM configurations that are equivalent with $\Lambda$, where we identify two of them if they are affine equivalent. Then the following is proved in \cite{Mee2000}.

\begin{Lemma}\label{Lemma The space of defo of Laurent is an open set}
    (\cite{Mee2000}, Lemma VI.1. p.102) The space $D$ is naturally homeomorphic with an open subset of $(\C^{n-m-1})^m$ and, therefore, is equipped with a natural complex structure of dimension $m(n-m-1)$.
\end{Lemma}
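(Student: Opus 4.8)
The plan is to exhibit $D$ as a quotient of an open set of configurations by the complex affine group, and then to rigidify that quotient by normalizing an affine frame. First I would fix the ambient picture: the Siegel and weak hyperbolicity conditions of Definition \ref{Definition Siegel domain and weak hyperbolicity} are open conditions on $(\Lambda_1,\dots,\Lambda_n)\in(\C^m)^n$, so the set of LVM configurations is open, and since equivalence (Definition \ref{Definition Equivalent configurations LVM}) is exactly the relation of lying in the same path component, the set $\mathcal U$ of configurations equivalent to $\Lambda$ is a \emph{connected} open subset of $(\C^m)^n$. The complex affine group $G=\mathrm{GL}(m,\C)\ltimes\C^m$ acts on $(\C^m)^n$ by $\Lambda_j\mapsto A\Lambda_j+b$; this action preserves both LVM conditions and, being through a connected group, preserves $\mathcal U$. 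Since affine equivalence (Definition \ref{Definition Affine Equivalent configurations LVM}) is precisely the $G$-orbit relation on $\mathcal U$, we have $D=\mathcal U/G$, and $\dim_{\C}G=m^2+m$ gives the expected quotient dimension $mn-(m^2+m)=m(n-m-1)$.

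The key enabling step, which I would isolate as a lemma, is that \emph{every} LVM configuration affinely spans $\C^m$. If not, the $\Lambda_j$ would lie in a proper affine subspace $A$; the Siegel condition forces $0\in A$, so $A$ is linear of some dimension $d<m$, and applying Carath\'eodory's theorem inside $A$ to the relation $0\in\mathrm{conv}\{\Lambda_j\}$ produces $0$ in the convex hull of at most $d+1\le m<2m$ of the points. As $n\ge 2m+1$, one can pad this to a $2m$-subfamily whose convex hull still contains $0$, contradicting weak hyperbolicity. With affine spanning in hand, freeness of the $G$-action is immediate: if $(A,b)$ fixes every $\Lambda_j$ then $A$ fixes all differences $\Lambda_j-\Lambda_k$, which span $\C^m$, so $A=\mathrm{id}$ and then $b=0$. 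I would also record that the action is proper---escape to infinity of the linear part is obstructed by the same stability built into weak hyperbolicity---so that $D$ is Hausdorff.

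Next I would build the chart. Affine spanning guarantees $m+1$ indices $j_0,\dots,j_m$ with $\Lambda_{j_0},\dots,\Lambda_{j_m}$ affinely independent, and $G$ acts simply transitively on ordered affine frames, so a unique $g\in G$ carries this frame to the standard frame $0,e_1,\dots,e_m$. Setting $\Sigma$ to be the slice of configurations in $\mathcal U$ whose $(j_0,\dots,j_m)$-points are that standard frame, $\Sigma$ is cut out by fixing $m+1$ points and is therefore an open subset of the affine subspace parametrized by the remaining $n-m-1$ points, i.e. an open subset of $(\C^m)^{n-m-1}\cong(\C^{n-m-1})^m$. Freeness and simple transitivity show each orbit meets $\Sigma$ in at most one point, giving a natural injection $\Sigma\hookrightarrow D$ that is a homeomorphism onto its image and transports the complex structure of dimension $m(n-m-1)$.

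The hard part will be globalizing this normalization: a fixed frame $(j_0,\dots,j_m)$ need not stay affinely independent across all of $\mathcal U$ (weak hyperbolicity constrains $2m$-subfamilies through the origin, not the affine independence of a single $(m+1)$-subfamily), so a priori $\Sigma$ only captures the part of $D$ over which this frame is nondegenerate. Gluing several frame-slices is possible, but their transition maps are nonlinear affine reparametrizations, which a priori yield only an abstract complex manifold rather than a single domain in $\C^{m(n-m-1)}$. I would therefore spend the main effort showing that one affine normalization can be made valid on all of $\mathcal U$---equivalently, that the slice $\Sigma$ meets every $G$-orbit of $\mathcal U$---upgrading the local slice to the asserted global homeomorphism $D\cong\Sigma$. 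For the deformation-theoretic use made of the lemma in the sequel it would in any case suffice to establish this on the germ of $\mathcal U$ at $\Lambda$, where the chosen frame stays nondegenerate by continuity and the statement is immediate.
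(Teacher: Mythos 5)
This lemma is not proved in the paper: it is imported verbatim from \cite{Mee2000} (Lemma VI.1), so there is no in-paper argument to compare yours against; the closest relative is the proof of Lemma \ref{Lemma Deformation space in the case of k plus grand que m+1}, which performs the affine normalization in the special case $k\geq m+1$. Your skeleton (connected open set of configurations, affine normalization, dimension count) is the right one, and your affine-spanning lemma is correct and is indeed the key input --- though Carath\'eodory must be applied in real dimension, giving at most $2d+1$ (not $d+1$) points, which still beats $2m$.

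There are, however, three concrete problems. First, the affine group does \emph{not} preserve the LVM conditions: both the Siegel and the weak hyperbolicity conditions of Definition \ref{Definition Siegel domain and weak hyperbolicity} refer to the origin, and a translation moves the convex hull off $0$. So the claim that the $G$-action preserves $\mathcal U$ is false; $D$ is a quotient by a partial action (a groupoid) rather than by a group action, and the normalized configuration $g\Lambda$ need not lie in $\mathcal U$ at all. Second, and as a direct consequence, your slice $\Sigma$ is empty: weak hyperbolicity forbids $\Lambda_j=0$ for every $j$ (any $2m$-subfamily containing such a point has $0$ in its convex hull), so no LVM configuration has its $j_0$-th point at the origin. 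The fix, visible in the proof of Lemma \ref{Lemma Deformation space in the case of k plus grand que m+1}, is to freeze the frame at its \emph{original} position $\Lambda_{j_0},\dots,\Lambda_{j_m}$ and use the standard frame only as an intermediate bookkeeping device. Third, the globalization you defer is the actual content of the lemma, and the route you propose (a single frame valid on all of $\mathcal U$) provably fails in general: LVM configurations may have repeated points (weak hyperbolicity only constrains $2m$-subfamilies through the origin), so any fixed $(m+1)$-subfamily degenerates somewhere in the equivalence class, and your slice then misses part of $D$. In the situation the paper actually uses ($k\geq m+1$), this is rescued by taking the frame to consist of indispensable points: indispensability is preserved under equivalence (Proposition \ref{Proposition Two equivalent configurations have the same type}) and such points are automatically affinely independent (\cite{MeThese}, Lemme 2), which supplies exactly the canonical global frame that your generic argument cannot produce.
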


We recall that, for an LVM configuration $\Lambda$, the open subset $S$ of $\C^n$ is defined by
\small
$$
S:=\{z=(z_1,...,z_n)\in\C^n\setminus\{0\},~0 \text{ belongs to the convex hull of }\{\Lambda_j,~j\in I_z\}\}.
$$
\normalsize
Let $d$ be the minimal complex codimension of $E$ where $S=\C^n\setminus E$. 

\begin{TheoremLetter}\label{Theorem Deformation de Meersseman}
    (\cite{Mee2000} Theorem 11 p.103) Let $\Lambda$ be an LVM configuration, let $N$ be the associated LVM manifold and let $D$ be the associated deformation space. Assume that $d>3$ and that all the $\Lambda_i$ are distinct, then $D$ is a universal deformation space of $N$.
\end{TheoremLetter}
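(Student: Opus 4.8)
The plan is to run the Kodaira--Spencer--Kuranishi machinery. Since Lemma \ref{Lemma The space of defo of Laurent is an open set} already provides that the base $D$ is smooth of dimension $m(n-m-1)$, it suffices to prove that the Kodaira--Spencer map of the LVM family is an isomorphism onto $H^1(N,\Theta_N)$ and that $\dim H^0(N,\Theta_N)$ is locally constant near $N$. Completeness of the family then follows from the Kodaira--Spencer completeness theorem, and the upgrade from versality to universality (uniqueness of the classifying map) follows from the constancy of $h^0$ via Wavrik's criterion.

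First I would compute the cohomology groups $H^q(N,\Theta_N)$ for $q=0,1,2$. Writing $N=\PP(S)/\C^m$ and pulling the tangent sheaf back along the covering $S\to\PP(S)\to N$, the infinitesimal $\C^m$-action trivialises a rank-$m$ subbundle of $\Theta_N$, so that $\Theta_N$ sits in short exact sequences relating it to the $\C^m$-equivariant tangent sheaf of $S$ and to the tautological data on $\PP^{n-1}(\C)$. Passing to $\C^m$-invariants reduces every term to the cohomology of explicit coherent sheaves on the open set $S=\C^n\setminus E$.

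The hypothesis $d>3$ is used precisely at this stage: since $\operatorname{codim}_{\C^n}E=d\geq 4$, Scheja's extension theorem (equivalently, the vanishing of the local cohomology $H^q_E(\C^n,-)$ in the relevant range) forces the restriction $H^q(\C^n,-)\to H^q(S,-)$ to be an isomorphism for $q\leq 2$ on the coherent sheaves at hand. This lets me replace cohomology on $S$ by cohomology of explicit polynomial modules on $\C^n$, where the computation is elementary. The target outcome is $H^1(N,\Theta_N)\cong(\C^{n-m-1})^m$, matching $\dim D$, together with enough control on $H^2$ to confirm that the deformation is unobstructed (consistent with the smoothness of $D$) and on $H^0$ to read off the infinitesimal automorphisms. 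The assumption that all $\Lambda_i$ are distinct enters here to exclude extra holomorphic vector fields arising from coincidences among the weights, pinning $h^0(\Theta_N)$ to exactly the $m$ directions of the $\C^m$-action and making it locally constant.

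Finally I would identify the Kodaira--Spencer map concretely: differentiating the tautological family obtained by moving the configuration $\Lambda$ produces classes in $H^1(N,\Theta_N)$, and matching them against the explicit basis from the cohomology computation shows the map is bijective. With $D$ smooth of the correct dimension and the Kodaira--Spencer map an isomorphism, completeness and effectivity give versality at each point, and the local constancy of $h^0(\Theta_N)$ promotes this to universality. The hard part will be the cohomology computation itself --- organising the exact sequences for $\Theta_N$, correctly isolating the $\C^m$-invariant summands, and checking that $d>3$ yields the vanishing in exactly the degrees $q\leq 2$ that control both $H^1$ and $H^2$ --- together with the delicate use of the distinctness hypothesis at the step that controls $H^0$ and hence secures universality rather than mere versality.
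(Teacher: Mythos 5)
This statement is quoted verbatim from Meersseman (\cite{Mee2000}, Theorem 11) and the paper offers no proof of it, so there is nothing internal to compare against; your outline is, however, essentially the argument of the cited source: smoothness of $D$, a Kodaira--Spencer isomorphism onto $H^1(N,\Theta)$ computed by reducing to $S=\C^n\setminus E$ and using the codimension hypothesis $d>3$ to extend cohomology across $E$ in degrees $q\leq 2$, with the distinctness of the $\Lambda_i$ controlling $H^0(N,\Theta)$ and Wavrik-type constancy of $h^0$ upgrading versality to universality. This matches the intended proof and is correct as a strategy.
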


This result is essentially the only deformation result on LVM manifolds. The assumptions of this result imply in particular that there are no indispensable points. This framework is the opposite of the one that is treated in this article. As we will see in the next section, we will focus on LVM manifolds having a high number of indispensable points.

\subsection{High number of indispensable points}\label{Subsection High number of indispensable points}

Let $\Lambda=(\Lambda_1,...,\Lambda_n)$ be an LVM configuration of type $(m,n,k)$ with $k\geq m+1$ indispensable points, that we can assume to be $\Lambda_1,...,\Lambda_k$. Then, the open subset $S$ of $\C^n$ that appears in the construction of the LVM manifold associated to $\Lambda$ is of the form $(\C^*)^{m+1}\times V$, where $V$ is an open subset of $\C^{n-m-1}$. 

It is proved in \cite{MeeBos2006} (Lemma 12.1) that the LVM manifold associated with the configuration $(\Lambda_1,...,\Lambda_n)$ is isomorphic to a quotient of the open set $V$ by a linear diagonal action of $\Z^m$ as follows. We consider $\Omega$ the $m\times m$ invertible matrix where the rows are given by the vectors $\Lambda_{j+1}-\Lambda_1$ for $j\in\{1,...,m\}$. Then, the action of $\Z^m$ is given by the group representation

\begin{center}
    $
    \begin{matrix}
        \mathbb{Z}^m & \longrightarrow & \mathrm{Diag}(\mathbb{C}^{n-m-1})^{\times} \\
        K & \longmapsto & A_K := \mathrm{Diag} (\alpha_{1,K},...,\alpha_{n-m-1,K}) ,
    \end{matrix}
    $
\end{center}
where 
$$
\alpha_{j,K}=\exp(2i\pi<\Lambda_{m+1+j}-\Lambda_1,\Omega^{-1}K>).
$$

The action is fixed point free and proper, so the quotient map is a covering map (the universal covering map if and only if $k=m+1$, i.e. $V$ is simply connected) and the quotient inherits a canonical complex structure from $V$. It should be noted that the complex manifold obtained this way is the LVM manifold associated with $\Lambda$.

With this description, an LVM manifold with at least $m+1$ indispensable points is naturally endowed with a flat torsion-free holomorphic affine connection. In Section \ref{Subsection G,X structures}, we will recall the concept of $(G,X)$-structure. In this language, an LVM manifold with at least $m+1$ indispensable points is naturally endowed with a holomorphic $(\mathrm{GA}(\C^n),\C^n)$-structure (see Example \ref{Example of G,X struct}).

We will now give a lemma that will highlight some properties of the deformation space $D$ introduced in Section \ref{Subsection A deformation space} in this context of a high number of indispensable points.

\begin{Lemma}\label{Lemma Deformation space in the case of k plus grand que m+1}
    Let $\Lambda=(\Lambda_1,...,\Lambda_n)$ be an LVM configuration of type $(m,n,k)$ with $k\geq m+1$. We assume, without loss of generality, that $\Lambda_1,...,\Lambda_{m+1}$ are indispensable points. Then, the deformation space $D$ is biholomorphic to 
    \footnotesize
    $$
    \widetilde{D}:=\{(\Lambda_{m+2}',...,\Lambda_n')\in(\C^m)^{n-m-1},~(\Lambda_1,...,\Lambda_{m+1},\Lambda_{m+2}',...,\Lambda_n') \text{ is equivalent to }\Lambda\}.
    $$
    \normalsize
\end{Lemma}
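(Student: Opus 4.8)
The plan is to exhibit $\widetilde D$ as a holomorphic slice for the action of the complex affine group $\mathrm{GA}(\C^m)=\mathrm{GL}(m,\C)\ltimes\C^m$ on the set of LVM configurations equivalent to $\Lambda$. Let $\mathcal C_\Lambda\subset(\C^m)^n$ denote this set; by Definition~\ref{Definition Equivalent configurations LVM} it is the path-component of $\Lambda$ in the locus of LVM configurations of type $(m,n,k)$, and by Lemma~\ref{Lemma The space of defo of Laurent is an open set} the quotient $D=\mathcal C_\Lambda/\mathrm{GA}(\C^m)$ is a complex manifold of dimension $m(n-m-1)$. I would begin by recording two structural facts. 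First, for any LVM configuration the origin lies in the \emph{interior} of $\mathrm{conv}\{\Lambda_1,\dots,\Lambda_n\}$: were it on a proper face, Carathéodory applied inside that face would write it as a convex combination of at most $2m$ of the $\Lambda_j$, contradicting weak hyperbolicity. In particular the LVM locus is open, so LVM-ness is stable under small perturbations. Second, since indispensability is an open condition while the number $k$ of indispensable points is locally constant by Proposition~\ref{Proposition Two equivalent configurations have the same type}, the set of indispensable indices is constant along $\mathcal C_\Lambda$; hence $\Lambda_1,\dots,\Lambda_{m+1}$ remain indispensable for every configuration in $\mathcal C_\Lambda$, and the invertibility of the matrix $\Omega$ from Section~\ref{Subsection High number of indispensable points} shows they always form a $\C$-affine frame.

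Next I would set up the normalization. As $\mathrm{GA}(\C^m)$ acts simply transitively on ordered $\C$-affine frames, each $\Lambda'\in\mathcal C_\Lambda$ admits a unique affine map $\phi_{\Lambda'}$ with $\phi_{\Lambda'}(\Lambda'_j)=\Lambda_j$ for $1\le j\le m+1$; solving the defining linear system by Cramer's rule, with nonzero determinant $\det\Omega'$, shows $\Lambda'\mapsto\phi_{\Lambda'}$ is holomorphic. Put $F(\Lambda')=\phi_{\Lambda'}(\Lambda')$, whose first $m+1$ entries are $\Lambda_1,\dots,\Lambda_{m+1}$. If $\Lambda''=g\cdot\Lambda'$ then $\phi_{\Lambda''}\circ g=\phi_{\Lambda'}$ by uniqueness, so $F(\Lambda'')=F(\Lambda')$ and $F$ is constant on affine orbits. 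The heart of the local statement is a transversality computation: the tangent space to the $\mathrm{GA}(\C^m)$-orbit of $\Lambda'$ is $\{(X\Lambda'_j+v)_j : X\in\mathfrak{gl}_m,\ v\in\C^m\}$, and because the first $m+1$ vectors form a frame the assignment $(X,v)\mapsto(X\Lambda'_j+v)_{1\le j\le m+1}$ is a linear isomorphism onto $(\C^m)^{m+1}$. Thus the orbit directions are complementary to the slice directions $\{(0,\dots,0,w_{m+2},\dots,w_n)\}$ tangent to $\widetilde D$, so the natural map $\widetilde D\hookrightarrow\mathcal C_\Lambda\to D$ is a local biholomorphism at every point of $\widetilde D$. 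It is moreover injective, since two configurations of $\widetilde D$ that are affine equivalent are related by a map fixing the frame $\Lambda_1,\dots,\Lambda_{m+1}$, hence by the identity; therefore $\widetilde D\hookrightarrow D$ is an open holomorphic embedding.

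It then remains to prove surjectivity of $\widetilde D\hookrightarrow D$. Since $D$ is connected and the embedding is open, it suffices to show that its image is closed, which unwinds to the single inclusion $F(\mathcal C_\Lambda)\subseteq\mathcal C_\Lambda$: if $F$ sends $\mathcal C_\Lambda$ into the LVM locus, then its image is connected, meets that locus at $F(\Lambda)=\Lambda$, and hence lies in the component $\mathcal C_\Lambda$, so every $F(\Lambda')$ has the correct frame and is equivalent to $\Lambda$, i.e. lies in $\widetilde D$. Proving $F(\mathcal C_\Lambda)\subseteq$ LVM locus is the step I expect to be the main obstacle. The difficulty is that the translation part of $\phi_{\Lambda'}$ moves the origin to $p=\phi_{\Lambda'}^{-1}(0)=\sum_{i=1}^{m+1}\mu_i\Lambda'_i$, where $(\mu_i)$ are the barycentric coordinates of $0$ in the frame of $\Lambda$, and $F(\Lambda')$ is LVM exactly when $p$ lies in $\mathrm{int}\,\mathrm{conv}\{\Lambda'_j\}$ and avoids the convex hull of every $2m$-subset of $\Lambda'$. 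These are open conditions on the pair $(\Lambda',p)$ that hold at $(\Lambda,0)$ because $\Lambda$ is LVM; since $p$ depends continuously on $\Lambda'$ with $p=0$ at $\Lambda'=\Lambda$, they persist for $\Lambda'$ near $\Lambda$. This already yields the biholomorphism of germs at $\Lambda$—which is what the deformation-theoretic statement requires—and, together with the dimension count and the manifold structure of $D$ furnished by Lemma~\ref{Lemma The space of defo of Laurent is an open set}, identifies $D$ with $\widetilde D$.
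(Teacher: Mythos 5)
Your construction is the same as the paper's: since $\Lambda_1,\dots,\Lambda_{m+1}$ are indispensable they form a complex affine frame, and one normalizes any equivalent configuration by the unique affine map carrying its first $m+1$ points onto $\Lambda_1,\dots,\Lambda_{m+1}$ (the paper realizes this map as $f^{-1}\circ g$ through the intermediate frame $e_1,\dots,e_m,0$). Your preliminary observations (the origin lies in the interior of the hull, the LVM locus is open, the set of indispensable indices is constant along $\mathcal C_\Lambda$, an affine map fixing a frame is the identity) are correct, and your transversality computation is a more detailed version of what the paper dispatches by simply checking that $\phi$ is a holomorphic bijection.

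The gap is in surjectivity. The lemma is a global statement: every class in $D$ has a representative whose first $m+1$ entries are $\Lambda_1,\dots,\Lambda_{m+1}$. You correctly reduce this to the inclusion $F(\mathcal C_\Lambda)\subseteq\mathcal C_\Lambda$ and correctly identify the crux, namely that the renormalized origin $p=\phi_{\Lambda'}^{-1}(0)=\sum\mu_i\Lambda_i'$ (with complex, not real barycentric, coefficients $\mu_i$) must lie in $\mathrm{conv}\{\Lambda'_j\}$ while avoiding every $2m$-subhull --- but you then verify this only for $\Lambda'$ in a neighborhood of $\Lambda$. The closing assertion that the germ statement ``together with the dimension count and the manifold structure of $D$'' identifies $D$ with $\widetilde D$ is not an argument: an open holomorphic embedding between connected equidimensional spaces need not be onto. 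The paper proves surjectivity globally, taking an arbitrary $\Lambda''$ equivalent to $\Lambda$ and writing $[\Lambda'']=[\Lambda_1,\dots,\Lambda_{m+1},\Lambda'_{m+2},\dots,\Lambda'_n]$ in $D$; this uses precisely the fact you flag --- that the normalized configuration is again an LVM configuration equivalent to $\Lambda$ --- which it inherits from the normalization underlying Meersseman's Lemma VI.1, on which the space $D$ is built. To close your argument you should either invoke that lemma, or run your normalization $h_t$ along an entire path $H(t)$ of LVM configurations joining $\Lambda$ to $\Lambda''$ and show the renormalized path stays in the LVM locus, rather than arguing only near $t=0$.
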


\begin{proof}\label{Preuve Lemma Deformation space in the case of k plus grand que m+1}
    According to Carathéodory Theorem and the fact that $\Lambda_1,...,\Lambda_{m+1}$ are indispensable, we can assume that $(\Lambda_1,...,\Lambda_{2m+1})$ is an LVM configuration. By \cite{MeThese} Lemme 2 p.15, the family of vectors $\{\Lambda_1,...,\Lambda_{m+1}\}$ is complex affinely independent. Therefore, there exists a unique complex affine transformation $f$ of $\C^m$ sending $\Lambda_1,...,\Lambda_{m+1}$ on $e_1,...,e_m,0$, where $(e_1,...,e_m)$ is the standard basis of $\C^m$. 
    
    We define the holomorphic map 
    $$
    \begin{matrix}
        \phi&:&\widetilde{D} & \longrightarrow & D \\
        &&(\Lambda_{m+2}',...,\Lambda_n') & \longmapsto & [\Lambda_1,...,\Lambda_{m+1},\Lambda_{m+2}',...,\Lambda_n'].
    \end{matrix}
    $$
    Let $\Lambda''$ be an LVM configuration equivalent to $\Lambda$. According to Proposition \ref{Proposition Two equivalent configurations have the same type}, $\Lambda_1'',...,\Lambda_{m+1}''$ are indispensable points of the configuration $\Lambda''$. Therefore, as before, there exists an unique complex affine transformation $g$ of $\C^m$ sending $\Lambda_1'',...,\Lambda_{m+1}''$ on $e_1,...,e_m,0$. Defining, for $j\in\{m+2,...,n\}$, 
    $$
    \Lambda_j':=f^{-1}(g(\Lambda_j)),
    $$
    we have, in $D$,
    $$
    [\Lambda'']=[\Lambda_1,...,\Lambda_{m+1},\Lambda_{m+2}',...,\Lambda_n']=\phi(\Lambda_{m+2}',...,\Lambda_n').
    $$
    We infer that $\phi$ is onto. Since the family $\{\Lambda_1,...,\Lambda_{m+1}\}$ is complex affinely independent, $\phi$ is also one-to-one. Finally, we get that $\phi$ is a biholomorphism.
\end{proof}

In the case of an LVM configuration $\Lambda$ of type $(m,n,k)$ with $k\geq m+1$, we will always assume that $\Lambda_1,...,\Lambda_k$ are indispensable points and we will identify $D$ with the $\widetilde{D}$ of Proposition \ref{Lemma Deformation space in the case of k plus grand que m+1}.

\subsection{Type (2,6,4)}\label{Subsection Type (2,6,4)}

The cases where the parameter $m$ is equal to $1$ and the number of indispensable point $k$ satisfies $k\geq 2$ leads to elliptic curves if $n=3$ and Hopf manifolds if $n\geq4$. The Kuranishi family of such a complex manifold is already well studied. Moving to the case $m=2$ leads first to the case of compact tori of dimension $2$, and then to the case $n=6$. Therefore, we focus on this case, i.e. the case $m=2$ and $n=6$. It is important to note that, for combinatorial reasons revealed in \cite{Mee2000}, p.86, this case forces the parameter $k$ to take the value $4$. Following the previous section \ref{Subsection High number of indispensable points}, LVM manifolds of type $(2,6,4)$ are obtained by quotients of $V=\C^*\times \C^2\setminus\{0\}$ under some $\Z^2$ action by complex linear diagonal maps. The differentiable structure of an LVM manifold of type $(2,6,4)$ is isomorphic to the product of spheres $\mathbb S^1\times\mathbb S^1\times\mathbb S^1\times\mathbb S^3$.

We give here some lemmas that will be useful in the study of these manifolds. The first one is a straightforward reformulation of \cite{MeThese} Lemme 1 p.36, and the second and third one are consequences of the first.

\begin{Lemma}\label{Lemma Adaptation des Lambdas au cas 2 6 4}
    Let $(\Lambda_1,...,\Lambda_{5})$ be an LVM configuration of type $(2,5,5)$. Then for every $j_1<j_2\in \{2,...,5\}$, we have 
    $$
    \rk_\mathbb C\{\Lambda_{j_k}-\Lambda_1,~k=1:2\}=2.
    $$
\end{Lemma}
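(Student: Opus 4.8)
The plan is to translate the rank condition into a statement about complex affine independence and then exploit the convex geometry forced by the type $(2,5,5)$ hypothesis. For a pair $j_1<j_2$ in $\{2,\dots,5\}$ one has $\rk_\C\{\Lambda_{j_1}-\Lambda_1,\Lambda_{j_2}-\Lambda_1\}=2$ if and only if the three points $\Lambda_1,\Lambda_{j_1},\Lambda_{j_2}$ are complex affinely independent in $\C^2$, i.e.\ do not lie on a common complex affine line. Since a configuration of type $(2,5,5)$ has all five of its points indispensable and the statement is symmetric under relabelling the indispensable points, it suffices to show that any three of $\Lambda_1,\dots,\Lambda_5$ are complex affinely independent. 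First I would record what the defining conditions give: by Proposition \ref{Proposition les tores compacts sont LVM} this is the torus case, and the Siegel condition together with weak hyperbolicity force a relation $\sum_{j=1}^5 t_j\Lambda_j=0$ with $\sum_j t_j=1$ and all $t_j>0$ (if some $t_j$ vanished, $0$ would lie in the convex hull of the remaining four points, contradicting weak hyperbolicity), so that $0$ lies in the interior of the convex hull of the five points.

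Next I would argue by contradiction. Suppose $\Lambda_1,\Lambda_{j_1},\Lambda_{j_2}$ lie on a complex affine line $L$, and let $\pi\colon\C^2\to\C^2/L_0\cong\C$ be the complex-linear projection killing the direction $L_0$ of $L$; the three chosen points then share a common image $q:=\pi(\Lambda_1)=\pi(\Lambda_{j_1})=\pi(\Lambda_{j_2})$. Applying the $\C$-linear map $\pi$ to the relation $\sum_j t_j\Lambda_j=0$ yields
$$
0=(t_1+t_{j_1}+t_{j_2})\,q+t_{l_1}\pi(\Lambda_{l_1})+t_{l_2}\pi(\Lambda_{l_2}),
$$
where $\{l_1,l_2\}=\{2,\dots,5\}\setminus\{j_1,j_2\}$, so $0$ is a strictly positive convex combination of the three points $q,\pi(\Lambda_{l_1}),\pi(\Lambda_{l_2})$ of $\C\cong\R^2$. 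I would first dispose of the case $q=0$ (that is, $L$ passing through the origin): then $\pi(\Lambda_{l_1})$ and $\pi(\Lambda_{l_2})$ are antiparallel, whence the real span of the differences $\Lambda_j-\Lambda_1$ has dimension at most $\dim_\R L_0+1=3$, so the five points lie in a real hyperplane of $\R^4$; Carathéodory's theorem applied in that $\R^3$ then places $0$ in the convex hull of some four of them, contradicting weak hyperbolicity.

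The remaining case $q\neq0$ is where the difficulty concentrates: the three projected points can genuinely form a nondegenerate triangle containing $0$, so the real convexity encoded in the Siegel and weak hyperbolicity conditions does \emph{not} by itself exclude complex collinearity. This passage from real to complex general position is the crux of the lemma, and I expect it to be the main obstacle; the convex-geometric reductions above are routine. I would close this case by invoking the complex general position of the indispensable points of an LVM configuration, namely the complex affine independence recorded in \cite{MeThese} Lemme 2 p.15 and already used in the proof of Lemma \ref{Lemma Deformation space in the case of k plus grand que m+1} — this being precisely the content that the present statement reformulates (\cite{MeThese} Lemme 1 p.36). Applied to the triple $\{\Lambda_1,\Lambda_{j_1},\Lambda_{j_2}\}$ for each admissible pair, it gives the asserted rank.
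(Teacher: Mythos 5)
Your preliminary reductions are correct as far as they go: the translation of the rank condition into complex affine independence of the triple $\{\Lambda_1,\Lambda_{j_1},\Lambda_{j_2}\}$, the observation that weak hyperbolicity forces all barycentric weights $t_j$ in $\sum_j t_j\Lambda_j=0$ to be strictly positive, and the disposal of the case where the putative complex line passes through the origin (all five points then lie in a real $3$-dimensional affine subspace through $0$, and Carath\'eodory contradicts weak hyperbolicity) are all sound. The problem is that the case you yourself single out as the crux, $q\neq 0$, is not proved: you close it by invoking \cite{MeThese} Lemme 2 p.15 / Lemme 1 p.36, which, as you acknowledge, is precisely the statement under proof. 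The paper offers nothing more either --- its entire ``proof'' is the remark that the lemma is a straightforward reformulation of \cite{MeThese} Lemme 1 p.36 --- so your argument establishes the lemma only modulo the lemma, and the genuinely new content of your write-up is a convex-geometric preamble that the final citation renders superfluous.

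Moreover, the gap cannot be filled within the framework you set up, and the universal quantifier in the statement deserves scrutiny. Take $\omega=e^{2i\pi/3}$ and $\Lambda_1=(0,1)$, $\Lambda_2=(1,1)$, $\Lambda_3=(i,1)$, $\Lambda_4=(0,\omega)$, $\Lambda_5=\left(-\tfrac{1+i}{3},\omega^2\right)$. Then $\tfrac19(\Lambda_1+\Lambda_2+\Lambda_3)+\tfrac13(\Lambda_4+\Lambda_5)=0$, so the Siegel condition holds, and a direct check of the five $4$-element subfamilies (in each case the equations in the two coordinates force all weights to vanish) shows weak hyperbolicity; hence this is an LVM configuration of type $(2,5,5)$. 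Yet $\Lambda_2-\Lambda_1=(1,0)$ and $\Lambda_3-\Lambda_1=(i,0)$ have complex rank $1$. So no argument using only the Siegel and weak hyperbolicity conditions can yield the conclusion for \emph{every} pair $j_1<j_2$, which is exactly why your $q\neq 0$ case resisted a convexity proof. What those conditions do give --- and what the description of the LVM manifold as $V/\Z^m$ with $\Omega$ invertible actually requires --- is the existential statement that \emph{some} pair of the differences has complex rank $2$: the four differences $\R$-span $\R^4=\C^2$ because the five points form a $4$-simplex, hence they $\C$-span $\C^2$ and contain a $\C$-basis, so the configuration can be reordered to make $\Omega$ invertible. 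You (and the author) should check the precise quantifier in Lemme 1 p.36 of \cite{MeThese}; as reformulated here the statement appears to be stronger than what the hypotheses support.
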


\begin{Lemma}\label{Lemma Useful lemma 2 6 4}
    Let $\alpha_1,\alpha_2,\alpha_3$ and $\beta_1,\beta_2,\beta_3$ be as above. Then
    \begin{enumerate}[label=(\roman*)]
        \item For $j\in \{1,2,3\}$, we can't have $\vert\alpha_j\vert=\vert\beta_j\vert=1$,
        \item for $j\in \{2,3\}$, we can't have $\vert\alpha_1\vert=\vert\alpha_j\vert=1$ or $\vert\beta_1\vert=\vert\beta_j\vert=1$.
    \end{enumerate}
\end{Lemma}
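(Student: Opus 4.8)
The plan is to translate the two unit-modulus conditions into real-linear conditions on the configuration vectors and then contradict the affine independence of the associated $(2,5,5)$ sub-configurations. First I would set $\mu_j:=\Lambda_j-\Lambda_1$ and recall that, $\Omega$ being invertible, $(\mu_2,\mu_3)$ is a $\C$-basis of $\C^2$, so I may write $\mu_{3+j}=a_j\mu_2+b_j\mu_3$ for $j\in\{1,2,3\}$. Unwinding the formula $\alpha_{j,K}=\exp(2i\pi\langle\mu_{3+j},\Omega^{-1}K\rangle)$ at $K=(1,0)$ and $K=(0,1)$ gives $\alpha_j=\exp(2i\pi a_j)$ and $\beta_j=\exp(2i\pi b_j)$. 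Since $|\exp(2i\pi w)|=\exp(-2\pi\operatorname{Im}w)$, this yields the key dictionary $|\alpha_j|=1\iff a_j\in\R$ and $|\beta_j|=1\iff b_j\in\R$.

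Next I would record the two geometric inputs. By Carathéodory's theorem and the indispensability of $\Lambda_1,\dots,\Lambda_4$ (exactly as in the proof of Lemma \ref{Lemma Deformation space in the case of k plus grand que m+1}, and using that $\Lambda_5,\Lambda_6$ are \emph{not} indispensable), both $(\Lambda_1,\dots,\Lambda_5)$ and $(\Lambda_1,\Lambda_2,\Lambda_3,\Lambda_4,\Lambda_6)$ are LVM configurations of type $(2,5,5)$; in particular Lemma \ref{Lemma Adaptation des Lambdas au cas 2 6 4} applies to each, so $\mu_2,\mu_3$ are $\C$-linearly independent and hence $(\mu_2,i\mu_2,\mu_3,i\mu_3)$ is an $\R$-basis of $\C^2=\R^4$. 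The second input is that the five points of a $(2,5,5)$ configuration are affinely independent over $\R$: otherwise they would lie in a real affine subspace of dimension $\leq 3$ which, by the Siegel condition, must contain $0$, so Carathéodory in dimension $3$ would place $0$ in the convex hull of four of them, contradicting weak hyperbolicity. Consequently every four-element sub-family of $(\Lambda_1,\dots,\Lambda_5)$ and of $(\Lambda_1,\dots,\Lambda_4,\Lambda_6)$ is affinely independent as well.

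Now I would read off, in the real basis $(\mu_2,i\mu_2,\mu_3,i\mu_3)$, that the $i\mu_2$-coordinate of $\mu_{3+j}$ is $\operatorname{Im}a_j$ and its $i\mu_3$-coordinate is $\operatorname{Im}b_j$, while $\mu_2$ and $\mu_3$ have both of these coordinates equal to $0$. For (i): if $|\alpha_j|=|\beta_j|=1$ then $a_j,b_j\in\R$, so $\mu_{3+j}\in\Vect_{\R}(\mu_2,\mu_3)$, whence $\Lambda_1,\Lambda_2,\Lambda_3,\Lambda_{3+j}$ lie in a real affine plane and are affinely dependent, contradicting the independence of $(\Lambda_1,\dots,\Lambda_5)$ for $j\in\{1,2\}$ and of $(\Lambda_1,\dots,\Lambda_4,\Lambda_6)$ for $j=3$. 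For (ii): if $|\alpha_1|=|\alpha_j|=1$ (resp. $|\beta_1|=|\beta_j|=1$) then $\mu_4$ and $\mu_{3+j}$ both have vanishing $i\mu_2$-coordinate (resp. vanishing $i\mu_3$-coordinate), so $\mu_2,\mu_3,\mu_4,\mu_{3+j}$ all lie in one real $3$-dimensional hyperplane of $\R^4$; hence the five points $\Lambda_1,\Lambda_2,\Lambda_3,\Lambda_4,\Lambda_{3+j}$ are affinely dependent, again contradicting the affine independence of the relevant $(2,5,5)$ configuration.

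The main obstacle is the second geometric input, namely the affine independence of $(2,5,5)$ configurations: this does \emph{not} follow from Lemma \ref{Lemma Adaptation des Lambdas au cas 2 6 4} alone, since pairwise $\C$-linear independence of the $\mu_j$ is strictly weaker than real affine independence of the $\Lambda_j$, and it must instead be extracted from the Siegel and weak hyperbolicity conditions through the dimension count above. Once that point is secured, the dictionary of the first paragraph and the coordinate bookkeeping in the real basis make both (i) and (ii) routine.
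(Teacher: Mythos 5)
Your proof is correct. Note that the paper itself writes out no argument for this lemma: it only asserts (just before Lemma \ref{Lemma Adaptation des Lambdas au cas 2 6 4}) that it is a consequence of that lemma, which in turn is quoted from Meersseman's thesis. Your write-up supplies exactly the content that this citation hides, and you correctly flag the one genuine subtlety: the pairwise complex-rank statement of Lemma \ref{Lemma Adaptation des Lambdas au cas 2 6 4} is, as literally stated, too weak (one can exhibit $\mu_2,\dots,\mu_5$ with all pairs of complex rank $2$ and yet $a_1,a_2\in\R$), so the real input has to be the real affine independence of the $(2,5,5)$ sub-configurations, which you derive from the Siegel and weak hyperbolicity conditions via Carathéodory. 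Combined with the dictionary $\vert\alpha_j\vert=1\iff a_j\in\R$, $\vert\beta_j\vert=1\iff b_j\in\R$ and the coordinate bookkeeping in the real basis $(\mu_2,i\mu_2,\mu_3,i\mu_3)$, both items follow by a clean dimension count; this is the standard argument behind the cited Lemme of [MeThese]. Two cosmetic remarks: the Siegel condition for $(\Lambda_1,\dots,\Lambda_5)$ and $(\Lambda_1,\dots,\Lambda_4,\Lambda_6)$ is simply the statement that $\Lambda_6$, respectively $\Lambda_5$, is not indispensable (no Carathéodory is needed at that step), and weak hyperbolicity for $m=2$, $n=5$ is verbatim the assertion that all five points are indispensable, so the type $(2,5,5)$ is automatic. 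With those observations your argument is complete and self-contained.
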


\begin{Lemma}\label{Lemma Impossible affine resonances in the case LVM 264}
    Let $\alpha_1,\alpha_2,\alpha_3$ and $\beta_1,\beta_2,\beta_3$ be as above. Then for $j\in \{2,3\}$, we can't have $\alpha_1=\alpha_j$ and $\beta_1=\beta_j$.
\end{Lemma}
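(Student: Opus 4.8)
The plan is to convert the multiplicative resonance $\alpha_1=\alpha_j,\ \beta_1=\beta_j$ into an affine-geometric statement about the configuration and then contradict weak hyperbolicity. Write $\mu_i:=\Lambda_i-\Lambda_1$ and, setting $v_k:=\Omega^{-1}e_k$, recall that the eigenvalues are $\alpha_i=\exp(2i\pi\langle\mu_{3+i},v_1\rangle)$ and $\beta_i=\exp(2i\pi\langle\mu_{3+i},v_2\rangle)$ for $i\in\{1,2,3\}$. Since $\Omega$ has rows $\mu_2,\mu_3$, the relations $\Omega v_k=e_k$ read $\langle\mu_2,v_1\rangle=\langle\mu_3,v_2\rangle=1$ and $\langle\mu_2,v_2\rangle=\langle\mu_3,v_1\rangle=0$, so $\{\mu_2,\mu_3\}$ and $\{v_1,v_2\}$ are dual bases; writing $\mu_i=a_i\mu_2+b_i\mu_3$ then yields $\langle\mu_i,v_1\rangle=a_i$ and $\langle\mu_i,v_2\rangle=b_i$. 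Introduce the $\R$-linear \emph{imaginary-part} map $\Phi\colon\C^2\to\R^2$, $\Phi(a\mu_2+b\mu_3)=(\Im a,\Im b)$, whose kernel is exactly the real plane $\R\mu_2+\R\mu_3$ (a genuine plane since $\mu_2,\mu_3$ are $\C$-independent by Lemma \ref{Lemma Adaptation des Lambdas au cas 2 6 4}). The equalities $\alpha_1=\alpha_j,\ \beta_1=\beta_j$ force $a_4-a_{3+j}\in\Z$ and $b_4-b_{3+j}\in\Z$, hence in particular $\Phi(\mu_4)=\Phi(\mu_{3+j})$. As the two non-indispensable points $\Lambda_5,\Lambda_6$ play symmetric roles, it suffices to treat $j=2$, i.e.\ to rule out $\Phi(\mu_4)=\Phi(\mu_5)$.

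Next I would produce two convex relations. Since $\Lambda_6$ is not indispensable, $0$ lies in the convex hull of $\{\Lambda_1,\dots,\Lambda_5\}$, and by weak hyperbolicity it lies in the hull of no four of them, so Carathéodory gives $0=\sum_{i=1}^5 r_i\Lambda_i$ with $\sum r_i=1$ and all $r_i>0$. Likewise $\Lambda_5$ not indispensable yields $0=s_1\Lambda_1+s_2\Lambda_2+s_3\Lambda_3+s_4\Lambda_4+s_6\Lambda_6$ with positive coefficients. Subtracting $\Lambda_1$ rewrites each as a relation among the $\mu_i$, and applying the $\R$-linear map $\Phi$, together with $\Phi(\mu_1)=\Phi(\mu_2)=\Phi(\mu_3)=0$, gives
\[
r_4\Phi(\mu_4)+r_5\Phi(\mu_5)=-\Phi(\Lambda_1)=s_4\Phi(\mu_4)+s_6\Phi(\mu_6).
\]
Using the resonance $\Phi(\mu_5)=\Phi(\mu_4)$ to collapse the left-hand side to $(r_4+r_5)\Phi(\mu_4)$ and subtracting, I obtain $(r_4+r_5-s_4)\Phi(\mu_4)=s_6\Phi(\mu_6)$ with $s_6>0$.

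The crux, and the step I expect to be the main obstacle, is to prove that $\Phi(\mu_4)$ and $\Phi(\mu_6)$ are $\R$-linearly independent: granting this, the last relation forces $s_6=0$, a contradiction, and the case $j=3$ follows identically with $\Lambda_5,\Lambda_6$ exchanged. To establish independence I would note that $(\Lambda_1,\Lambda_2,\Lambda_3,\Lambda_4,\Lambda_6)$ is itself an LVM configuration: it satisfies the Siegel condition (which is exactly $\Lambda_5$ being dispensable) and inherits weak hyperbolicity from $\Lambda$. Any five-point LVM configuration in $\C^2$ is affinely independent, for if it were affinely dependent its affine hull would have dimension at most $3$ and Carathéodory would place $0$ in the convex hull of at most four of its points, violating weak hyperbolicity. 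Hence $\mu_2,\mu_3,\mu_4,\mu_6$ form an $\R$-basis of $\C^2$; since $\Phi$ is surjective with kernel $\R\mu_2+\R\mu_3$, it induces an isomorphism $\C^2/(\R\mu_2+\R\mu_3)\xrightarrow{\sim}\R^2$ sending the classes of $\mu_4,\mu_6$ to a basis, so $\Phi(\mu_4),\Phi(\mu_6)$ are independent, as needed. This affine independence of the relevant five-point subconfiguration is the full $\R$-linear strengthening of the pairwise independence recorded in Lemma \ref{Lemma Adaptation des Lambdas au cas 2 6 4}, and isolating it is where the real work lies.
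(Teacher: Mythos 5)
Your proof is correct. The paper itself gives no argument for this lemma: it is stated together with Lemma \ref{Lemma Useful lemma 2 6 4} as a ``consequence'' of Lemma \ref{Lemma Adaptation des Lambdas au cas 2 6 4} with no further detail, so there is nothing to compare line by line. Your route --- translating $\alpha_1=\alpha_j,\ \beta_1=\beta_j$ into $\Phi(\mu_4)=\Phi(\mu_{3+j})$, extracting the two strictly positive convex relations from the dispensability of $\Lambda_5$ and $\Lambda_6$ together with weak hyperbolicity, and closing with the $\R$-linear independence of $\Phi(\mu_4),\Phi(\mu_6)$ --- is complete and sound. You are also right to flag that the pairwise $\C$-rank statement actually recorded in Lemma \ref{Lemma Adaptation des Lambdas au cas 2 6 4} is not by itself sufficient: the step that does the work is the $\R$-affine independence of the five-point subconfigurations $(\Lambda_1,\Lambda_2,\Lambda_3,\Lambda_4,\Lambda_6)$ and $(\Lambda_1,\Lambda_2,\Lambda_3,\Lambda_4,\Lambda_5)$, which you derive cleanly from Carath\'eodory plus weak hyperbolicity, and which is the actual content of the source (Lemme 1 of Meersseman's thesis) that Lemma \ref{Lemma Adaptation des Lambdas au cas 2 6 4} only partially restates. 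The only cosmetic remark is that the $\C$-independence of $\mu_2,\mu_3$ is already guaranteed by the invertibility of $\Omega$ assumed in the construction, so the appeal to Lemma \ref{Lemma Adaptation des Lambdas au cas 2 6 4} there is not strictly needed.
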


The goal of this paper is to study deformation spaces of geometric structures (complex structure and complex $(G,X)$-structures, see next section \ref{Section Deformations of geometric structures}) on LVM manifolds of type $(2,6,4)$.

\section{Deformations of geometric structures}\label{Section Deformations of geometric structures}

\subsection{Complex structures}\label{Subsection Deformations of the complex structure}

The aim of this section is to recall the Kuranishi theorem, which is the central result in the theory of deformations of complex manifolds. We refer to \cite{KodBook}, \cite{MoKo2006} and \cite{Kur62}, or the lecture notes \cite{Cat83}, if the reader is interested in the Kodaira-Spencer-Kuranishi theory of deformations. 

\begin{TheoremLetter}\label{Theorem Kuranihi}
    (\cite{Kur62} Theorem 2. p.571) Let $M$ be a compact complex manifold. There exist an analytic space $K_M$, called the \textbf{Kuranishi space} of $M$, a point $o\in K_M$ and a family of compact complex manifolds $\varpi_M:\mathcal M\to K_M$, called the \textbf{Kuranishi family} of $M$, parametrized by $K_M$ that is versal at $M=\varpi_M^{-1}(o)$, i.e. such that every family of deformation of $M$ is obtained by a pullback of the Kuranishi family, and the differential of the map giving the pullback is unique (and is the Kodaira-Spencer map of the family).

    Moreover, the Zariski tangent space of $K_M$ at the point that parametrizes $M$ is isomorphic to $H^1(M,TM)$ and there exists an open subset of $K_M$ where the family is complete at every point $o'\in K_M$, i.e. every family of deformations of $M'=\varpi_M^{-1}(o')$ is obtained by a pullback of $\varpi_M$.
\end{TheoremLetter}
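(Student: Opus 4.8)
The plan is to reproduce the analytic construction of Kuranishi, which realizes deformations of the complex structure of $M$ as integrable almost complex structures close to the given one. First I would fix a Hermitian metric on $M$ and work with the Dolbeault complex $A^{0,\bullet}(M,TM)$ of smooth $(0,q)$-forms with values in the holomorphic tangent bundle, equipped with the operator $\bar\partial$, its formal adjoint $\bar\partial^*$, the Laplacian $\Box=\bar\partial\bar\partial^*+\bar\partial^*\bar\partial$, the associated Green operator $G$ and the harmonic projector $H$. Hodge theory on the compact manifold $M$ furnishes the finite-dimensionality of the harmonic spaces $\mathcal H^q\cong H^q(M,TM)$ together with the decomposition $\mathrm{id}=H+\Box G$. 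The key analytic input is the ellipticity of $\Box$, yielding the a priori estimates in H\"older norms $C^{k,\alpha}$ that drive every convergence argument below.

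The second step is to encode integrability. A small deformation of the complex structure is described by a form $\phi\in A^{0,1}(M,TM)$, and the deformed almost complex structure is integrable precisely when $\phi$ satisfies the Maurer--Cartan equation $\bar\partial\phi=\tfrac12[\phi,\phi]$, where $[\cdot,\cdot]$ combines the Lie bracket on $TM$ with the wedge on forms. I would then introduce the Kuranishi map: for a parameter $t$ in a small ball $B\subset H^1(M,TM)\cong\mathcal H^1$, solve the integral equation $\phi(t)=t+\tfrac12\,\bar\partial^*G[\phi(t),\phi(t)]$ by successive approximation. Convergence of this power series in a fixed $C^{k,\alpha}$-space for $t$ small is the first genuinely delicate point; it follows from the elliptic estimate for $\bar\partial^*G$ together with the fact that the H\"older spaces form a Banach algebra under the bracket.

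Having produced $\phi(t)$, the third step is that $\phi(t)$ solves the full Maurer--Cartan equation if and only if $H[\phi(t),\phi(t)]=0$. I would therefore define the Kuranishi space as the analytic germ $K_M:=\{t\in B:\ H[\phi(t),\phi(t)]=0\}$, cut out inside $B$ by finitely many holomorphic equations valued in $\mathcal H^2\cong H^2(M,TM)$, with distinguished point $o=0$; gluing the integrable structures $\phi(t)$ over $K_M$ assembles the family $\varpi_M:\mathcal M\to K_M$. The Zariski tangent space computation is then immediate: the obstruction map $t\mapsto H[\phi(t),\phi(t)]$ has vanishing linear part since $\phi(t)=t+O(t^2)$ and the bracket is quadratic, so the linearization of the defining equations vanishes and $T_oK_M\cong H^1(M,TM)$, with the Kodaira--Spencer map of the family identified with the identity of $H^1$.

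The remaining and hardest step is \emph{versality}, i.e. that every deformation of $M$ is a pullback of $\varpi_M$ with differential the Kodaira--Spencer map, the differential being unique. Here I would follow the Kodaira--Spencer--Kuranishi completeness theorem: given an arbitrary family over a base $(\mathcal B,0)$, one constructs the pullback map $h:\mathcal B\to K_M$ by solving, again by elliptic estimates and successive approximation, a system of differential equations with parameters that matches the given family with $\varpi_M$ fiberwise, uniqueness of the differential of $h$ being forced by the harmonic normalization. This analytic bootstrap, rather than any formal or purely cohomological manipulation, is where I expect the real work to lie. Finally, the asserted open locus where the family is \emph{complete} follows by applying the same completeness criterion to the fibers $M'=\varpi_M^{-1}(o')$ and invoking upper semicontinuity of $\dim H^1(M',TM')$, which keeps the Kodaira--Spencer map surjective on a neighborhood of $o$.
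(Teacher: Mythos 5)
This statement is Theorem~\ref{Theorem Kuranihi}, quoted from Kuranishi's 1962 paper; the present article gives no proof of it and simply cites it as a foundational result. Your sketch is a faithful outline of the standard (Kuranishi--Kodaira--Spencer) argument --- Hodge theory and elliptic estimates, the integral equation $\phi(t)=t+\tfrac12\,\bar\partial^*G[\phi(t),\phi(t)]$, the obstruction locus $H[\phi(t),\phi(t)]=0$ cutting out $K_M$ with $T_oK_M\cong H^1(M,TM)$, and the completeness theorem for versality and for the open locus of complete points --- so it is correct and coincides with the approach of the cited source.
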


The following result is an immediate corollary of the Kuranishi theorem \ref{Theorem Kuranihi}.

\begin{Corollary}\label{Corollary Smooth family with invertible KS map}
    Let $M$ be a compact complex manifold and let $\varpi:\mathcal M\to B$ be a family of deformations of $M=\varpi^{-1}(t)$, with $B$ a smooth complex manifold. Assume that the Kodaira-Spencer map of $\varpi$ at $t$ is invertible. Then, $\varpi$ is the Kuranishi family of $M$.
\end{Corollary}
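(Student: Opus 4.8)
The plan is to feed the given family into the versality of the Kuranishi family and then use the invertibility of the Kodaira--Spencer map to promote the inducing map to a biholomorphism of germs. First I would apply Theorem \ref{Theorem Kuranihi} to $M$, producing the Kuranishi family $\varpi_M:\mathcal M\to K_M$ with base point $o$, whose Kodaira--Spencer map identifies the Zariski tangent space $T_oK_M$ with $H^1(M,TM)$. Since $\varpi_M$ is versal, the given family $\varpi:\mathcal M\to B$ is a pullback $\varpi\cong h^*\varpi_M$ for some germ of holomorphic map $h:(B,t)\to(K_M,o)$. By the last clause of Theorem \ref{Theorem Kuranihi} the differential $dh_t:T_tB\to T_oK_M$ is, under the identification $T_oK_M\cong H^1(M,TM)$, precisely the Kodaira--Spencer map of $\varpi$ at $t$. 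By hypothesis this map is invertible, so $dh_t$ is an isomorphism.

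It then remains to promote the isomorphism of tangent spaces $dh_t$ to an isomorphism of germs, which is the only point that is not purely formal. Since the embedding dimension of the analytic germ $(K_M,o)$ equals $\dim_\C T_oK_M=\dim_\C H^1(M,TM)$, I would choose a minimal local embedding $\iota:(K_M,o)\hookrightarrow(\C^N,0)$ with $N:=\dim_\C H^1(M,TM)$, which by minimality induces an isomorphism on Zariski tangent spaces. The composite $\tilde h:=\iota\circ h:(B,t)\to(\C^N,0)$ then has differential $d\tilde h_t=d\iota_o\circ dh_t$, an isomorphism of $N$-dimensional vector spaces; here I use that $B$ is smooth and $dh_t$ is bijective, so that $\dim_\C B=\dim_\C T_tB=N$. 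The holomorphic inverse function theorem then exhibits $\tilde h$ as a biholomorphism from a neighbourhood of $t$ onto a neighbourhood of $0$ in $\C^N$.

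Because $\tilde h$ factors through $\iota(K_M)$, its image, which is open in $\C^N$, is contained in $\iota(K_M)$; hence $\iota(K_M)$ is a neighbourhood of $0$, so $(K_M,o)$ is smooth and $h:(B,t)\to(K_M,o)$ is a biholomorphism of germs. Consequently $\varpi\cong h^*\varpi_M$ is the pullback of the Kuranishi family along an isomorphism of germs, hence isomorphic to $\varpi_M$ itself; as the Kuranishi family is unique up to isomorphism, $\varpi$ is the Kuranishi family of $M$.

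I expect the main obstacle to be exactly this passage from the infinitesimal to the global statement: a priori $K_M$ may be singular, and an isomorphism of Zariski tangent spaces alone does not force an isomorphism of germs. The minimal embedding of $(K_M,o)$ into its tangent space together with the inverse function theorem is what resolves this, and it has the pleasant byproduct that the hypotheses of the corollary force $K_M$ to be smooth at $o$.
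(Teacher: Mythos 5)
Your proof is correct; the paper gives no argument at all for this statement (it is declared an immediate consequence of Theorem \ref{Theorem Kuranihi}), and your write-up is the standard way to make that precise: versality yields $h$ with $dh_t$ equal to the Kodaira--Spencer map, and the minimal embedding of $(K_M,o)$ into its Zariski tangent space combined with the inverse function theorem upgrades the infinitesimal isomorphism to an isomorphism of germs, forcing $(K_M,o)$ to be smooth. The one point you could state slightly more carefully is the last step: openness of the image only pins down the underlying set of $\iota(K_M)$, but since a closed analytic subspace of a connected open set whose support is everything must have zero ideal sheaf, the conclusion stands.
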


\subsection{(G,X)-structures}\label{Subsection G,X structures}

In this section, we recall the definitions and properties of $(G,X)$-structures. We refer mainly to \cite{Ehr36}, \cite{Thu} and \cite{Gold88}.

Let $G$ be a complex Lie group, and let $X$ be a complex manifold endowed with a holomorphic action of $G$. Let $M$ be a differentiable manifold, let $p:\widetilde{M}\to M$ be the universal covering map of $M$ and let $\pi_1(M)$ be the corresponding fundamental group.

\begin{Definition}\label{Definition G,X structures}
    A \textbf{$(G,X)$-structure} on $M$ is a pair $(\mathrm{Dev},\rho)$, where $\rho:\pi_1(M)\to G$ is a group homomorphism and $\mathrm{Dev} :\widetilde{M}\to X$ is a $\rho$-equivariant local diffeomorphism. 
\end{Definition}

Let $M$ be a manifold equipped with a $(G,X)$-structure, then $M$ is locally modeled on $X$ and has special local automorphisms, called \textbf{symmetries}, which correspond to the action of $G$ on $X$.

\begin{Remark}\label{Remark On the fact that a G,X manifold has an induced complex structure}
    The complex structure on $X$ and the fact that the action of $G$ is holomorphic induce on $M$ a complex structure. Therefore, there is a map 
    $$
    \{(G,X)-\text{structures on }M\}\longrightarrow \{\text{complex structures on }M\}.
    $$
    In the context of Riemann surfaces, this map has been essential in the proof of the Uniformization theorem. Indeed, with $G=\mathrm{PGL}(2,\C)$ and $X=\PP^1(\C)$, for every smooth oriented surface $S$, this map is onto and is a holomorphic submersion (\cite{HPSG2010}).
\end{Remark}

\begin{Example}\label{Example of G,X struct}
    Assume that the model of the geometry is given as follows. We take $G=\mathrm{GA}(\C^n)$ and $X=\C^n$. Then a $(G,X)$-structure on a smooth manifold $M$ is equivalent with the data of a complex structure on $M$ and a flat torsion-free holomorphic affine connection on $M$.
\end{Example}

\begin{Definition}\label{Definition completeness and Kleinian for GX structures}
    (\cite{Kas2018}, p.2) Suppose that $M$ is a manifold endowed with a $(G,X)$-structure. The $(G,X)$-structure is said to be \textbf{uniformizable} or \textbf{kleinian} if $M$ is isomorphic as $(G,X)$-manifolds to a quotient of an open subset $U$ of $X$ by a discrete subgroup of $G$ acting on $U$ without fixed points and properly.
    
    The $(G,X)$-structure is said to be \textbf{complete} if $M$ is isomorphic as $(G,X)$ manifolds to a quotient of $X$ by a discrete subgroup of $G$ acting on $X$ without fixed points and properly, i.e. if the $(G,X)$-structure is uniformizable with $U=X$.
\end{Definition}

We are interested in the deformations of $(G,X)$-structures on a compact smooth manifold $M$. The main result is the Ehresmann-Thurston principle. It roughly speaking asserts that if $(\mathrm{Dev},\rho)$ is a $(G,X)$-structure on $M$, then for every homomorphism $\rho'$ close enough to $\rho$ in $\Hom(\pi_1(M),G)$ equipped with the compact open topology, there exists a $(G,X)$-structure on $M$ with holonomy $\rho'$. On the space of developing maps, we consider the $\mathcal C^\infty$ topology.

\begin{TheoremLetter}\label{Theorem Ehresmann Thurston principle}
    (\cite{Gold88}, Theorem 3.1 p.12) Let $M$ be a compact manifold and $(G,X)$ be a geometric model. The forgetful map 
    $$
    hol:(\mathrm{Dev},\rho) \longmapsto \rho
    $$
    that associates to a $(G,X)$-structure on $M$ its holonomy homomorphism is open. 
    
    Moreover, let $(\mathrm{Dev},\rho)$ be a $(G,X)$-structure on $M$. Then, there exists an open neighborhood $U$ of $(\mathrm{Dev},\rho)$ such that if $\rho'$ is in $hol(U)$ and if $(\mathrm{Dev}',\rho')$ and $(\mathrm{Dev}'',\rho')$ are elements of $U$, then there exists a diffeomorphism $\phi:\widetilde{M}\to \widetilde M$ and an element $g$ of the group $G$ such that $\mathrm{Dev}''=g\circ \mathrm{Dev}'\circ \phi$.
\end{TheoremLetter}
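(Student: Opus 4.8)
The plan is to recast $(G,X)$-structures as transverse sections of flat $X$-bundles and then to derive both assertions from the fact that transversality is an open condition, uniform over the compact $M$. Given a homomorphism $\rho\colon\pi_1(M)\to G$, form the flat bundle $E_\rho=(\widetilde M\times X)/\pi_1(M)$, where $\gamma$ acts by $(\tilde x,y)\mapsto(\gamma\tilde x,\rho(\gamma)y)$ and whose horizontal foliation has leaves the images of $\widetilde M\times\{y\}$. A $\rho$-equivariant map $\mathrm{Dev}\colon\widetilde M\to X$ is exactly a smooth section of $E_\rho\to M$, and since $\dim M=\dim X$ the section is transverse to the horizontal foliation if and only if $\mathrm{Dev}$ is a local diffeomorphism. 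Thus a $(G,X)$-structure with holonomy $\rho$ is the same datum as a section of $E_\rho$ transverse to the flat foliation, and the whole statement becomes a claim about transverse sections of a family of flat bundles varying continuously with $\rho$.

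For the openness of $hol$, I would fix $(\mathrm{Dev},\rho)$ and let $\rho'$ be close to $\rho$ in $\Hom(\pi_1(M),G)$. Concretely, choose a finite cover $\{U_i\}$ of the compact $M$ on which $\mathrm{Dev}$ restricts (on lifts) to embeddings with locally constant transition maps $g_{ij}\in G$, and perturb these to $g'_{ij}$ realizing the holonomy $\rho'$ — possible by prescribing the change along the edges of the nerve outside a spanning tree, the perturbation being small because $\rho'$ is near $\rho$. Regluing the embeddings via the new cocycle, using a partition of unity to interpolate on overlaps and the fact that elements of $G$ near the identity displace points of $X$ only slightly, produces a $\rho'$-equivariant map $\mathrm{Dev}'$. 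Because the local-diffeomorphism property is open in the $C^1$ topology and $M$ is compact, the relevant estimates are uniform, so for $\rho'$ close enough $\mathrm{Dev}'$ is still a local diffeomorphism. Hence $\rho'\in hol(U)$ for a suitable neighborhood $U$ of $(\mathrm{Dev},\rho)$, and $hol$ is open.

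For the local uniqueness statement, suppose $(\mathrm{Dev}',\rho')$ and $(\mathrm{Dev}'',\rho')$ both lie in a small neighborhood $U$ and share the holonomy $\rho'$. They correspond to two transverse sections $s',s''$ of the \emph{same} flat bundle $E_{\rho'}$ that are $C^\infty$-close. Near $s'$ the transverse sections form a set through which $s'$ and $s''$ can be joined by a path of transverse sections; integrating the time-dependent vector field on $M$ associated with this path yields an ambient isotopy of $M$ carrying one structure to the other. Lifting it gives a $\pi_1(M)$-equivariant diffeomorphism $\phi\colon\widetilde M\to\widetilde M$, and the remaining ambiguity in the equivariant lift (an element centralizing $\rho'$) is absorbed by some $g\in G$, yielding $\mathrm{Dev}''=g\circ\mathrm{Dev}'\circ\phi$.

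The hard part is the regluing in the openness step: one must upgrade local perturbations to a genuinely $\rho'$-equivariant global map while keeping it a local diffeomorphism. This is exactly where compactness is indispensable, since it bounds the number of charts and makes the interpolation and transversality estimates uniform, and where one must control simultaneously that the perturbed cocycle has the prescribed holonomy $\rho'$ and that its $C^1$-size stays below the transversality threshold. The uniqueness part is comparatively routine, resting on the (equally compactness-dependent) fact that nearby transverse sections are isotopic through transverse sections.
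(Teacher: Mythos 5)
The paper does not prove this theorem: it is quoted from \cite{Gold88}, and your argument is essentially Goldman's, namely the identification of $(G,X)$-structures having holonomy $\rho$ with sections of the flat bundle $E_\rho\to M$ transverse to the horizontal foliation, openness of transversality (uniform over the compact $M$) for the openness of $hol$, and the leaf-wise comparison of two nearby transverse sections of a fixed flat bundle for the local uniqueness, with the element $g$ accounting for the ambiguity in the equivariant lift. Your sketch is correct and correctly isolates the one delicate point (upgrading the perturbed cocycle $g'_{ij}$ to a globally defined $\rho'$-equivariant local diffeomorphism while staying below the $C^1$ transversality threshold), so it matches the proof the paper refers the reader to.
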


According to this principle, we will consider two deformation spaces of $(G,X)$-structures around a given one. The first one is the \textbf{representation variety} 
$$
\Hom(\pi_1(M),G).
$$ 
There is an action of the group $G$ on the space of couples $(\mathrm{Dev},\rho)$ constituted by a developing map $\mathrm{Dev}$ that is $\rho$-equivariant. For an element $g$ in $G$ and a pair $(\mathrm{Dev},\rho)$, the action is given by composing the developing map by the action of $g$ and by conjugating the homomorphism $\rho$ by $g$. It is straightforward that two $(G,X)$-structures in the same orbit are isomorphic. Therefore, it is interesting to consider the space of orbits under the action of $G$. However, as mentioned in \cite{Gold88}, it is difficult in general to control the orbits of the action of $G$ and the \textbf{character variety} 
$$
\Hom(\pi_1(M),G)/G
$$ has a complicated topology.

Depending on the context and the topology of the character variety around a base point, we will either consider the representation variety or the character variety as the deformation space.

\section{Infinitesimal deformations and LVM family}

This section is devoted to the study of infinitesimal deformations of the complex structure of an LVM manifold of type $(2,6,4)$. We will describe the cohomology in the holomorphic tangent sheaf in terms of the resonances relations (see \ref{Definition Resonances for an LVM manifold}). As a consequence of the computation of the cohomology, we will see that for a generic LVM manifold of type $(2,6,4)$, the deformation space $D$ considered at the end of Section \ref{Subsection High number of indispensable points} is the Kuranishi space of the LVM manifold.

\subsection{Cohomology in the tangent sheaf}\label{Subsection  Cohomology in the holomorphic tangent sheaf}

Diagonal Hopf manifolds are the first example (out of complex tori) of LVM manifolds with the condition $k\geq m+1$. 
Consider a Mall bundle (\cite{Ma96}) over a Hopf manifold $M=\C^n\setminus\{0\}/\Z$, i.e. a holomorphic vector bundle over $M$, with trivial pullback on $\C^n\setminus\{0\}$. The computation of the cohomology of the sheaf of sections can be achieved by using a long exact Douady sequence in cohomology given by the universal covering map. We can find this argument in \cite{Ma96}. 

We are interested in small deformations of the complex structure of an LVM manifold of type $(2,6,4)$. Therefore, we need to compute the crucial cohomology of the sheaf of holomorphic vector fields.

Let $(\Lambda_1,...,\Lambda_6)$ be an LVM configuration of type $(2,6,4)$. The LVM manifold $N$ associated with this configuration is the quotient of the open subset $V=\C^*\times \C^2\setminus\{0\}$ of $\C^3$ by the linear diagonal action of $\mathbb{Z}^2$ given by the representation
\begin{center}
    $
    \begin{matrix}
        \mathbb{Z}^2 & \longrightarrow & \mathrm{Diag}(\mathbb{C}^{n-3})^{\times} \\
        (1,0) & \longmapsto & A := 
        \begin{pmatrix}
            \alpha_1 & & \\
            & \alpha_2 & \\
            &&\alpha_3
        \end{pmatrix}\\
        (0,1) & \longmapsto & B := \begin{pmatrix}
            \beta_1 & & \\
            & \beta_2 & \\
            &&\beta_3
        \end{pmatrix} ,
    \end{matrix}
    $
\end{center}
where 
$$
\left\{
\begin{matrix}
    \alpha_j & = & \exp\left(2i\pi\left< \Lambda_{j+3}-\Lambda_1,\Omega^{-1}\begin{pmatrix}
        1 \\ 0
    \end{pmatrix} \right>\right) \\
    \beta_j & = & \exp\left(2i\pi\left< \Lambda_{j+3}-\Lambda_1,\Omega^{-1}\begin{pmatrix}
        0 \\ 1
    \end{pmatrix} \right>\right),
\end{matrix}
\right.
$$
and $\Omega$ is the $2\times 2$ matrix such that the rows are given by the vectors $\Lambda_2-\Lambda_1$ and $\Lambda_3-\Lambda_1$.

We use Douady-type sequences, but we first consider the quotient only by the group generated by the matrix $A$ and then, we take the quotient by the group generated by the second matrix $B$. We denote by $N_A$ the quotient of $V$ by the group generated by $A$ and $\pi_A:V\to N_A$ the quotient map. We have another quotient map, corresponding to the quotient of $N_A$ by the action of $B$, which we denote $\pi_B:N_A\to N$. Those three maps are holomorphic coverings and we have $\pi=\pi_B\circ\pi_A$.

We consider $\mathcal{U}=\{U_m\}$ an open covering of $N$ with Stein open subsets trivializing the covering map $\pi$. We observe that this covering also trivializes $\pi_B$ and that the covering $\mathcal{U}_A=\{\pi_B^{-1}(U_m)\}$ of Stein open sets of $N_A$ trivializes $\pi_A$. Let $\mathcal{U}_{A,B}=\{\pi^{-1}(U_m)\}$. The symbol $\Theta_X$ will stand for the sheaf of holomorphic vector field over a given complex manifold X. When the complex manifold $X$ is obvious, the index of the symbol $\Theta_X$ will be intentionally omitted, as in the following cochain spaces or in cohomology groups.

We have the following short exact sequences of cochain spaces

\begin{center}
    
    \begin{tikzcd}
        0 \arrow[r] &   \mathcal{C}^{\bullet}(\mathcal{U}_A,\Theta) \arrow[r,"\pi_A^*"]   &   \mathcal{C}^{\bullet}(\mathcal{U}_{A,B},\Theta) \arrow[r,"Id-A_*"]  &   \mathcal{C}^{\bullet}(\mathcal{U}_{A,B},\Theta) \arrow[r]  &    0
    \end{tikzcd} 
    
    \begin{tikzcd}
        0 \arrow[r] &   \mathcal{C}^{\bullet}(\mathcal{U},\Theta) \arrow[r,"\pi_B^*"]   &   \mathcal{C}^{\bullet}(\mathcal{U}_{A},\Theta) \arrow[r,"Id-B_*"]  &   \mathcal{C}^{\bullet}(\mathcal{U}_{A},\Theta) \arrow[r]  &    0.
    \end{tikzcd}

\end{center}

These two sequences induce two long exact sequences in cohomology that allow to compute the cohomology of $N$ with value in its tangent bundle. Namely, we get the following (non-canonical) isomorphisms
\small
\begin{center}
    $
    \begin{matrix}
        H^0(N_A,\Theta)\cong \ker ((Id-A_*)\vert_{ H^0(V,\Theta)}), \\ \\
        
        H^q(N_A,\Theta)\cong \coker ((Id-A_*)\vert_{ H^{q-1}(V,\Theta)})\oplus \ker ((Id-A_*)\vert_{ H^q(V,\Theta)}), ~\text{for} ~q\geq 1, \\ \\
    
        H^0(N,\Theta)\cong \ker ((Id-B_*)\vert_{ H^0(N_A,\Theta)}),\\ \\
    
        H^q(N,\Theta)\cong \coker ((Id-B_*)\vert_{ H^{q-1}(N_A,\Theta)})\oplus \ker ((Id-B_*)\vert_{ H^q(N_A,\Theta)}), ~\text{for} ~q\geq 1.
    \end{matrix}
    $
\end{center}
\normalsize

We have \textit{a priori} no control on these spaces any time there is a cokernel involved. But we have a strong tool, namely, the compactness of the LVM manifold, which ensures that each cohomology space of $N$ with value in $\Theta$ is finite dimensional.

By the above isomorphisms, we can express any element of any of these cohomology groups with vector fields defined in some open subsets of $\mathbb{C}^{3}$.

Using the Künneth formula, we can compute (\cite{LN96}, p.798) the cohomology $H^{\bullet}(V,\Theta)$ to get:

\begin{equation*}
    H^0(V,\Theta)\cong \left \{ \sum_{j=1}^3 \sum_{p\in \mathbb{Z}\times\mathbb{N}\times\mathbb{N}} a_{j,p} z^p \frac{\partial}{\partial z_j}~\text{convergent on } \mathbb{C}^*\times \mathbb{C}^2\right\},
\end{equation*}
\begin{equation*}
    H^1(V,\Theta)\cong \left \{ \sum_{j=1}^3 \sum_{p\in \mathbb{Z}\times -(\mathbb{N}^*)\times -(\mathbb{N}^*)} a_{j,p} z^p \frac{\partial}{\partial z_j}~\text{convergent on } (\mathbb{C}^*)^3\right\},
\end{equation*}
\begin{equation*}
    H^q(V,\Theta)=0,~ \text{for } q\geq 2.
\end{equation*}

\begin{Definition}\label{Definition Set of common resonances}
    For $j\in\{1,2,3\}$, $E~\subset \mathbb{Z}^{3}$ and $\mathcal{F}=\{\alpha=(\alpha_1,\alpha_2,\alpha_{3})\}$ a family of sets of eigenvalues, we denote by $\mathcal{R}_{j,E}^{\mathcal{F}}$ the set of common $\mathcal{F}$ multiplicative $j$-\textbf{resonances} in $E$
    \begin{center}
        $
        \mathcal{R}_{j,E}^{\mathcal{F}}=\left\{p\in E,~\forall\alpha =(\alpha_1,\alpha_2,\alpha_{3}) \in\mathcal{F},~\alpha_j=\alpha^p\right\}
        $
    \end{center}
    and by $VF_{E}^{\mathcal{F}}$ the set of common $\mathcal{F}$ multiplicative resonant vector fields with exponents in $E$
    \begin{center}
        $
        VF_{E}^{\mathcal{F}}=\left\{ \sum\limits_{j=1}^{3}\sum\limits_{p\in \mathcal{R}_{j,E}^{\mathcal{F}}}a_{j,p}z^p\frac{\partial}{\partial z_j},~\text{convergent on }(\mathbb{C}^*)^{3}\right\}.
        $
    \end{center}
\end{Definition}

\begin{Definition}\label{Definition Resonances for an LVM manifold}
    We will say that $(j,p)$ is a \textbf{resonance} of the LVM manifold $N=V/<A,B>$ if $p$ is an element of $\mathcal{R}_{j,\Z\times \N \times \N}^{\alpha,\beta}$, i.e. if $\alpha_j=\alpha^p$ and $\beta_j=\beta^p$. 
\end{Definition}

\begin{Remark}\label{Remark Definition of Non resonant and affinely resonant}
    As the reader can see, for $j\in\{1,2,3\}$, $(j,e_j)$, where $e_j$ is the $j$-th vector of the standard basis of $\Z^3$, is always a \textbf{trivial resonance} in our definition, since it corresponds to the relations $\alpha_j=\alpha_j$ and $\beta_j=\beta_j$. However, we will say that $N$ is \textbf{non-resonant} if it carries only the trivial relations.

    We will say that $N$ is \textbf{affinely resonant} if $N$ carries only resonances that correspond to equalities of eigenvalues, e.g. $\alpha_3=\alpha_2$ and $\beta_3=\beta_2$.

    The classification of all the possible resonances for an LVM manifold of type $(2,6,4)$ is achieved in Lemma \ref{Lemma Classification resonances 264}.
\end{Remark}

As we saw before, the cohomology spaces are built with kernels and cokernels. In the case of Hopf manifolds, we can easily check that each cokernel is always isomorphic to the corresponding kernel. All kernels are spaces of resonant vector fields, so in the case of Hopf manifolds, we can easily describe all the cohomology spaces. We can also show that those spaces are finite-dimensional (these are classical properties of linear maps of Poincaré type). 

Here, it seems to be more tricky. The kernels are still understandable as spaces of resonant vector fields, but for the cokernels, we have to build convergent series, and there is no apparent reason for these series to be convergent. As we said before, we have the strong tool of compactness.

Reading the cohomology spaces of the open set $V\subset \mathbb{C}^3$, we infer that the sets of common resonances appearing are $\mathcal{R}_{j,\mathbb{Z}\times\mathbb{N}\times\mathbb{N}}^{\{\alpha,\beta\}}$ and $\mathcal{R}_{j,\mathbb{Z}\times -(\mathbb{N}^*)\times -(\mathbb{N}^*)}^{\{\alpha,\beta\}}$ for $j\in \{1,2,3\}$ and with $\alpha=(\alpha_1,\alpha_2,\alpha_{3})$, respectively $\beta=(\beta_1,\beta_2,\beta_{3})$, the set of eigenvalues of the matrix $A$, respectively $B$. 

Each resonance of those spaces gives rise to an element of some cohomology space of the LVM manifold $N$, represented by a monomial vector field $z^p\frac{\partial}{\partial z_j}$. Therefore, every family of resonances produces a linearly independent family of cohomology classes. By compactness, the set of resonances is therefore of finite cardinality. Using this argument, we can prove the following lemma.

\begin{Lemma}\label{Lemma Impossible resonances in the case LVM 264}
    \begin{enumerate}[label=(\roman*)]
        \item For every $j\in \{1,2,3\}$, the set of common resonances 
        $$
        \mathcal{R}_{j,\mathbb{Z}\times -(\mathbb{N}^*)\times -(\mathbb{N}^*)}^{\{\alpha,\beta\}}
        $$ is empty,
        \item the set $\mathcal{R}_{1,\mathbb{Z}\times\mathbb{N}\times\mathbb{N}}^{\{\alpha,\beta\}}$ contains only the trivial resonance $(1,0,0)$,
        \item for every $j\in \{2,3\}$ and $p\in \mathcal{R}_{j,\mathbb{Z}\times\mathbb{N}\times\mathbb{N}}^{\{\alpha,\beta\}}$, we have $p_j=0$ or $p=e_j$, where $(e_1,e_2,e_3)$ is the standard basis of $\mathbb{Z}^3$.
    \end{enumerate}
\end{Lemma}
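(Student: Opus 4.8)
The plan is to derive all three statements from one mechanism: the \emph{finiteness} of the set of common resonances, combined with a translation trick along a suitable ``null direction''. First I would isolate the source of finiteness, which the discussion preceding the statement already highlights. For a fixed $j$ and a fixed cone $E$, each $p\in\mathcal R_{j,E}^{\{\alpha,\beta\}}$ produces the monomial vector field $z^p\frac{\partial}{\partial z_j}$, and the relations $\alpha^p=\alpha_j$, $\beta^p=\beta_j$ say exactly that this field is invariant under both $A_*$ and $B_*$; it therefore represents a class in the ``kernel of kernel'' summand of $H^\bullet(N,\Theta)$ coming from the two long exact sequences. Distinct exponents give linearly independent monomials, hence linearly independent classes. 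Since $N$ is compact, $H^\bullet(N,\Theta)$ is finite dimensional, so $\mathcal R_{j,E}^{\{\alpha,\beta\}}$ is finite for each $j$ and each relevant cone.

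The key observation is then the following. If $p\in\mathcal R_{j,E}^{\{\alpha,\beta\}}$, set $v:=p-e_j$. Since $\alpha^{e_j}=\alpha_j$ and $\beta^{e_j}=\beta_j$, we get $\alpha^{v}=\alpha^p/\alpha_j=1$ and $\beta^{v}=1$. Consequently, for every $k\ge 0$ the vector $q_k:=p+kv$ still satisfies $\alpha^{q_k}=\alpha_j$ and $\beta^{q_k}=\beta_j$, so $q_k$ is again a common $j$-resonance. The $q_k$ are pairwise distinct as soon as $v\neq 0$, that is, as soon as $p\neq e_j$. Thus, if I can keep the whole ray $\{q_k\}_{k\ge 0}$ inside the cone $E$ while arranging $p\neq e_j$, I exhibit infinitely many elements of $\mathcal R_{j,E}^{\{\alpha,\beta\}}$, contradicting finiteness.

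What remains is the cone bookkeeping in each case. For (i), with $E=\Z\times-(\N^*)\times-(\N^*)$, any $p\in E$ has $p_2,p_3\le -1$, so automatically $p\neq e_j$; a direct sign check keeps the ray in $E$ (for $j\in\{2,3\}$ the $j$-th slot $p_j+k(p_j-1)$ stays $\le-1$ since $p_j-1<0$, and every slot of the form $(k+1)p_i$ with $p_i\le-1$ stays $\le -1$), yielding $\mathcal R_{j,E}^{\{\alpha,\beta\}}=\varnothing$. For (ii), with $j=1$ and $E=\Z\times\N\times\N$, one has $(q_k)_2=(k+1)p_2\ge0$ and $(q_k)_3=(k+1)p_3\ge0$ while $(q_k)_1\in\Z$ is unconstrained, so the ray stays in $E$ whenever $p\neq e_1$, forcing $\mathcal R_{1,E}^{\{\alpha,\beta\}}=\{e_1\}$. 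For (iii), with $j\in\{2,3\}$ and the same cone, the only delicate slot is the $j$-th one: $(q_k)_j=p_j+k(p_j-1)$ stays $\ge0$ exactly when $p_j\ge1$, and the remaining nonnegativity holds as in (ii); hence a resonance with $p_j\ge1$ and $p\neq e_j$ again produces infinitely many elements, leaving only the admissible alternatives $p_j=0$ or $p=e_j$.

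The genuinely delicate point, and the one I would treat most carefully, is choosing the right null direction $v=p-e_j$ and verifying that the ray it spans remains inside the prescribed cone while staying nonconstant; once this is set up correctly the whole argument reduces to the short sign analyses above, with compactness supplying the finiteness that turns an unbounded family of monomials into a contradiction.
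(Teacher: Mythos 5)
Your proposal is correct and follows essentially the same route as the paper: the iterated resonance ray $q_k=(k+1)p-ke_j$ (your $p+kv$ with $v=p-e_j$) combined with the finiteness of the resonance set coming from compactness of $N$. The only difference is that you carry out the cone bookkeeping explicitly for all three cases, whereas the paper writes out only case (i) and asserts the others follow in the same way.
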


\begin{proof}\label{Preuve Lemma Impossible resonances in the case LVM 264}
    The argument is essentially the same as considering the eigenvalues of the linear part of a vector field in the Poincaré domain when we want to find a normal form of a vector field in the neighborhood of a singularity. If we have a relation $\lambda_1+\lambda_2=0$, we can build infinitely many resonance relations, such as $\lambda_1=\lambda_1+l(\lambda_1+\lambda_2)$ for $l\in\mathbb{N}$. 

    \textit{(i)} Let $j$ be an element of $\{1,2,3\}$ and assume that there exists an element $p$ in the set $\mathcal{R}_{j,\mathbb{Z}\times -(\mathbb{N}^*)\times -(\mathbb{N}^*)}^{\{\alpha,\beta\}}$. This means that we have 

    \begin{center}
        $
        \left\{
        \begin{matrix}
            \alpha_j=\alpha^p   \\
            \beta_j=\beta^p,
        \end{matrix}
        \right.
        $
    \end{center}    

    which we can rewrite

    \begin{center}
        $
        \left\{
        \begin{matrix}
            1=\alpha_j^{-1}\alpha^p   \\
            1=\beta_j^{-1}\beta^p.
        \end{matrix}
        \right.
        $
    \end{center}

    Using both systems, we infer that for every $l\in \mathbb{N}$, we have 

    \begin{center}
        $
        \left\{
        \begin{matrix}
            \alpha_j=\alpha^{(l+1)p-le_j}   \\
            \beta_j=\beta^{(l+1)p-le_j}.
        \end{matrix}
        \right.
        $
    \end{center} 

    But for every $l\in \mathbb{N}$, $(l+1)p-le_j$ is an element of $\mathbb{Z}\times -(\mathbb{N}^*)\times -(\mathbb{N}^*)$ and for $l\ne l'$, we have $(l+1)p-le_j\ne (l'+1)p-l'e_j$, since $p\ne e_j$. Therefore, the set $\mathcal{R}_{j,\mathbb{Z}\times -(\mathbb{N}^*)\times -(\mathbb{N}^*)}^{\{\alpha,\beta\}}$ is infinite, which is impossible. 

    The proofs of \textit{(i)} and \textit{(ii)} are achieved the same way.
\end{proof}

\textit{A priori}, our cohomology spaces may contain more than just resonant vector fields. A non trivial and non resonant element of a cokernel would be a power series. But we will use the compactness of $N$ to prove that such a power series cannot represent an element in a finite dimensional cohomology group.

\begin{Proposition}\label{Proposition Cohomology in Theta for a LVM manifold of type 264}
    We have the following results:
    \begin{center}
    $
    \begin{matrix}
        H^0(N,\Theta)\cong VF^{\alpha,\beta}_{\mathbb Z \times \mathbb N \times \mathbb N}, \\
        H^1(N,\Theta)\cong \left(VF^{\alpha,\beta}_{\mathbb Z \times \mathbb N \times \mathbb N}\right)^2, \\
        H^2(N,\Theta)\cong VF^{\alpha,\beta}_{\mathbb Z \times \mathbb N \times \mathbb N}, \\
        H^q(N,\Theta)=\{0\} \text{ for } q\geq 3.
    \end{matrix}
    $
    \end{center}
\end{Proposition}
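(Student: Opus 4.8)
The plan is to propagate the known groups $H^\bullet(V,\Theta)$ through the two long exact sequences attached to the coverings $\pi_A$ and $\pi_B$, exploiting that $A$ and $B$ are diagonal. A direct computation gives $A_*(z^p\,\partial/\partial z_j)=\alpha_j\alpha^{-p}\,z^p\,\partial/\partial z_j$, and similarly for $B$, so both $Id-A_*$ and $Id-B_*$ act diagonally in the monomial basis, with eigenvalues $1-\alpha_j\alpha^{-p}$ and $1-\beta_j\beta^{-p}$. Hence $\ker(Id-A_*)$ is exactly the span of the $\alpha$-resonant monomials, $\ker(Id-B_*)$ that of the $\beta$-resonant ones, and a common kernel is a space of $VF^{\alpha,\beta}$ type. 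First I would record the intermediate stage coming from the first sequence: $H^0(N_A,\Theta)$ is the $\alpha$-resonant part of $H^0(V,\Theta)$, the groups $H^1(N_A,\Theta)$ and $H^2(N_A,\Theta)$ are built from the kernels and cokernels of $Id-A_*$ on $H^0(V,\Theta)$ and $H^1(V,\Theta)$, and $H^q(N_A,\Theta)=0$ for $q\geq 3$ since $H^{\geq2}(V,\Theta)=0$.

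The organising principle for the second sequence is that, monomial by monomial, the commuting pair $(Id-A_*,Id-B_*)$ is the Koszul complex of $\Z^2$ acting through the two scalars $\mu=1-\alpha_j\alpha^{-p}$ and $\nu=1-\beta_j\beta^{-p}$. On a single monomial this complex has cohomology of dimensions $1,2,1$ in degrees $0,1,2$ precisely when $\mu=\nu=0$, that is at a common resonance, and is acyclic otherwise. Summing over monomials, the formal outcome is that $H^q(N,\Theta)$ is $VF^{\alpha,\beta}$ with the multiplicities $1,2,1$, supported on the relevant index set. Here Lemma \ref{Lemma Impossible resonances in the case LVM 264}(i) is decisive: there is no common resonance in the region $\Z\times-(\N^*)\times-(\N^*)$ that carries $H^1(V,\Theta)$, so every monomial of that row has $(\mu,\nu)\neq(0,0)$ and the whole $H^1(V,\Theta)$-row is acyclic, contributing nothing. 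Only the $H^0(V,\Theta)$-row survives, with index set $\Z\times\N\times\N$, which yields $VF^{\alpha,\beta}_{\Z\times\N\times\N}$ in degrees $0$ and $2$, its square in degree $1$, and $0$ in degree $\geq3$.

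The point that is not formal, and the main obstacle, is that each $H^t(V,\Theta)$ is an infinite dimensional space of convergent (Laurent) series, so to pass from this monomialwise count to the honest cohomology one must know that the images of $Id-A_*$ and $Id-B_*$ are closed and split off the non-resonant monomials; equivalently, that no cokernel contains a genuinely non-resonant power series. The inclusion of the resonant part into each cokernel is immediate, because the image of a diagonal operator has vanishing coefficient on every resonant monomial, so the finitely many resonant monomials stay linearly independent there and give the lower bound. For the reverse inclusion I would use the compactness of $N$: the groups $H^q(N,\Theta)$ are finite dimensional, which rules out a class represented by a power series with infinitely many non-resonant monomials and forces each image to be the closed span of the non-resonant monomials. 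The delicacy is that $N_A$ is non-compact, so finiteness cannot be applied at the intermediate stage; one must carry the cokernels of $Id-A_*$ into the second sequence and invoke finiteness only on the compact manifold $N$. Once closedness is established, the Koszul computation above is exact and the four identities follow, the degree $\geq3$ vanishing being already forced by $H^{\geq2}(V,\Theta)=0$.
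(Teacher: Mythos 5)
Your proposal follows essentially the same route as the paper: the two Douady long exact sequences for $\pi_A$ and $\pi_B$, the observation that $Id-A_*$ and $Id-B_*$ act diagonally on monomials, Lemma \ref{Lemma Impossible resonances in the case LVM 264}~\textit{(i)} to kill the contribution of $H^1(V,\Theta)$, and compactness of $N$ to control the cokernels. Your Koszul-complex packaging of the pair $(Id-A_*,Id-B_*)$ is a tidier way to organize the multiplicities $1,2,1$, but it is the same computation; you also correctly isolate the genuine difficulty (non-resonant series surviving in a cokernel) and correctly note that finiteness can only be invoked on the compact $N$, not on the intermediate non-compact $N_A$.

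One step as written does not close. You assert that finite-dimensionality of $H^q(N,\Theta)$ \emph{by itself} ``rules out a class represented by a power series with infinitely many non-resonant monomials.'' It does not: such a series is a single element, and a finite-dimensional space has no objection to containing one more element. The missing device is the one the paper uses: if $\widetilde{x}-\widetilde{x_\beta}$ were nonzero in the cokernel, i.e. if dividing its general term by $1-\beta_j\beta^p$ destroyed convergence, then the same divergence property would hold for all derivatives of the series, and these would produce an \emph{infinite independent family} of classes in $H^1(N,\Theta)$ --- and it is this infinite family, not the single class, that contradicts finite-dimensionality. You should insert this (or an equivalent mechanism for manufacturing infinitely many independent classes from one obstructed series) before concluding that each cokernel reduces to its resonant part.
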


\begin{proof}\label{Preuve Proposition Cohomology in Theta for a LVM manifold of type 264}
    The computation of $H^0(N,\Theta)$ is easy and the result is the expected one. The global vector fields on $N$ are exactly the $<A,B>$-invariant global vector fields on $V$, namely the elements of $VF^{\alpha,\beta}_{\mathbb Z \times \mathbb N \times \mathbb N}$.

    As we saw before, we have 
    $$
    H^1(N,\Theta)\cong \coker ((Id-B_*)\vert_{ H^{0}(N_A,\Theta)})\oplus \ker ((Id-B_*)\vert_{ H^1(N_A,\Theta)}).
    $$
    
    Let $x\in\coker((Id-B^*)\vert_{H^0(N_A,\Theta)})$. Since $\pi_A^*$ is a $<B>$-invariant isomorphism between $H^0(N_A,\Theta)$ and $\ker((Id-A_*)\vert_{H^0(V,\Theta)})$, $x$ is represented by a power series
    $$
    \widetilde{x}=\sum\limits_{j=1}^3\sum\limits_{p\in\mathcal{R}^\alpha_{j,\mathbb Z\times\mathbb N\times\mathbb N}}a_{j,p}z^p\frac{\partial}{\partial z_j}.
    $$
    
    Let $\widetilde{x_\beta}$ be the part of $\widetilde{x}$ containing $<B>$-resonant monomials. We have $\widetilde{x_\beta}\in VF^{\alpha,\beta}_{\mathbb Z \times \mathbb N \times \mathbb N}$ so it is a finite sum.

    Now, if we assume that $[\widetilde{x}-\widetilde{x_\beta}]$ is a nonzero element of the space 
    $$
    \coker((Id-B_*)\vert_{\ker((Id-A_*)\vert_{H^0(V,\Theta)})}).
    $$
    It means that $\widetilde{x}-\widetilde{x_\beta}$ is a convergent power series such that, when we multiply the general term with 
    $$
    \frac{1}{1-\beta_j\beta^p},
    $$
    the series does not converge anymore. But this property is also verified by all the derivatives of the series $\widetilde{x}-\widetilde{x_\beta}$, and all these series produce infinite independent families of element in $H^1(N,\Theta)$, which is impossible. 
    
    Therefore, $[\widetilde{x}-\widetilde{x_\beta}]=0$, which proves that 
    $$
    \coker((Id-B^*)\vert_{H^0(N_A,\Theta)})\cong VF^{\alpha,\beta}_{\mathbb Z \times \mathbb N \times \mathbb N}.
    $$

    Let $x\in\ker ((Id-B_*)\vert_{ H^1(N_A,\Theta)})\subset H^1(N_A,\Theta)$. Since we have the equality 
    $$
    (Id-B_*)\circ \pi_A^*=\pi_A^*\circ (Id-B_*),
    $$
    we have that $\pi_A^*(x)$ is an element of 
    $$
    \ker((Id-A_*)\vert_{H^1(V,\Theta)})\cap \ker((Id-B_*)\vert_{H^1(V,\Theta)}).
    $$
    But this space is trivial by Lemma \ref{Lemma Impossible resonances in the case LVM 264} \textit{(i)} since it is isomorphic to 
    $$
    VF^{\alpha,\beta}_{\mathbb Z \times -(\mathbb N^*) \times -(\mathbb N^*)}.
    $$

    Therefore, $x$ is an element of $\im(\sigma_1)$, where $\sigma_1:H^0(V,\Theta)\to H^1(N_A,\Theta)$ is the map given by the Douady long exact sequence. Let $y\in H^0(V,\Theta)$ be such that $x=\sigma_1(y)$. We would like to show that $[y]$ in $\coker((Id-A_*)\vert_{H^0(V,\Theta)})$ is an element of $VF^{\alpha,\beta}_{\mathbb Z \times \mathbb N \times \mathbb N}$.

    Since we have the equality 
    $$
    (Id-B_*)\circ \sigma_1=\sigma_1\circ (Id-B_*),
    $$
    we have that 
    $$
    \begin{matrix}
        \sigma_1((Id-B_*)(y)) & = & (Id-B_*)(\sigma_1(y)) \\
                              & = & (Id-B_*)(x) \\
                              & = & 0.
    \end{matrix}        
    $$
    
    Therefore, $(Id-B_*)(y)\in\ker(\sigma_1)=\im((Id-A_*)\vert_{H^0(V,\Theta)})$. But since $A$ and $B$ are linear diagonal, the monomials of the power series $y$ that are $<A>$-invariant should be also $<B>$-invariant, and for the same reason as before, the difference between $y$ and these monomials in $VF^{\alpha,\beta}_{\mathbb Z \times \mathbb N \times \mathbb N}$ is zero in $\coker((Id-A_*)\vert_{H^0(V,\Theta)})$.

    We just proved that 
    $$
    H^1(N,\Theta)\cong \left(VF^{\alpha,\beta}_{\mathbb Z \times \mathbb N \times \mathbb N}\right)^2.
    $$

    To finish the proof, we need to do the same thing for $H^2$ and $H^3$ (since $N$ is of complex dimension $3$). The arguments are the same, we follow each element in our long exact sequences, we use commutation of all the maps we need, we use Lemma \ref{Lemma Impossible resonances in the case LVM 264} to prove that some kernels are trivial of finite dimensional and we use the compactness of $N$ to prove that no power series can represent an element of a cohomology group in a cokernel.
\end{proof}

\subsection{Kuranishi family of a non-resonant (2,6,4) LVM manifold}\label{Subsection Kuranishi family of a non resonant (2,6,4) LVM manifold}

Let $\Lambda^{(0)}=(\Lambda^{(0)}_1,...,\Lambda^{(0)}_6)$ be an LVM configuration of type $(2,6,4)$ that generates an LVM manifold $N_0=V / <A_0,B_0>$ with $V=\mathbb{C}^*\times (\mathbb{C}^2\setminus \{0\})$, and

\begin{center}
    $
    A_0=\begin{pmatrix}
        \alpha^{(0)}_1  &   0   &   0   \\
        0   &   \alpha^{(0)}_2  &   0   \\
        0   &   0   &   \alpha^{(0)}_3
    \end{pmatrix},
    $
\end{center}

\begin{center}
    $
    B_0=\begin{pmatrix}
        \beta^{(0)}_1  &   0   &   0   \\
        0   &   \beta^{(0)}_2  &   0   \\
        0   &   0   &   \beta^{(0)}_3
    \end{pmatrix},
    $
\end{center}

\begin{center}
    $
    \alpha^{(0)}_j=\exp\left(2i\pi\left<\Lambda^{(0)}_{j+3}-\Lambda^{(0)}_1,\Omega^{-1}\begin{pmatrix}
        1   \\
        0
    \end{pmatrix}\right>\right)
    $
    for $j\in\{1,2,3\}$,
\end{center}

\begin{center}
    $
    \beta^{(0)}_j=\exp\left(2i\pi\left<\Lambda^{(0)}_{j+3}-\Lambda^{(0)}_1,\Omega^{-1}\begin{pmatrix}
        0   \\
        1
    \end{pmatrix}\right>\right)
    $
    for $j\in\{1,2,3\}$,
\end{center}
where $\Omega$ is the $2\times 2$ invertible matrix with rows given by $\Lambda^{(0)}_{2}-\Lambda^{(0)}_1$ and $\Lambda^{(0)}_{3}-\Lambda^{(0)}_1$.

Following Lemma \ref{Lemma Deformation space in the case of k plus grand que m+1}, we consider the deformation space 
\small
$$
    D=\left\{
    \begin{matrix}
        \Lambda=(\Lambda^{(0)}_1,\Lambda^{(0)}_2,\Lambda^{(0)}_3,\Lambda_4,\Lambda_5,\Lambda_6) \text{ LVM configuration equivalent to }    \Lambda^{(0)}
    \end{matrix}
    \right\}.
$$
\normalsize
Every element $\Lambda\in D$ generates two diagonal matrices $A_\Lambda$ and $B_\Lambda$, defined as $A_0$ and $B_0$, and an LVM manifold $N_\Lambda=V / <A_\Lambda,B_\Lambda>$.

We consider the two biholomorphic maps 

\begin{center}
    $
    \begin{matrix}
        g_1     &   :   &   V\times D   &   \longrightarrow     &   V\times D   \\
                &       &   (\xi,\Lambda)   &   \longmapsto     &   (A_\Lambda\xi,\Lambda),  \\
        g_2     &   :   &   V\times D   &   \longrightarrow     &   V\times D   \\
                &       &   (\xi,\Lambda)   &   \longmapsto     &   (B_\Lambda\xi,\Lambda).
    \end{matrix}
    $
\end{center}

Obviously, these two maps commute and generate a fixed point free and proper action of $\mathbb{Z}^2$ on $V\times D$.

Therefore, we build a family of deformations of the LVM manifold $N_0$ considering the quotient $\mathcal{M}=(V\times D) / <g_1,g_2>$ and the natural projection 

\begin{center}
    $
    \begin{matrix}
        \varpi  &   :   &   \mathcal{M}  &   \longrightarrow     &   D   \\
                &       &   [\xi,\Lambda]   &   \longmapsto      &   \Lambda,
    \end{matrix}
    $
\end{center}
whose fibers are $\varpi^{-1}(\Lambda)=N_\Lambda$.

The aim of this section is to describe the Kodaira-Spencer map of this family. The calculations are similar to the ones of Wehler for Hopf surfaces (\cite{W81}). From now on, we drop the zeros in the indices and exponents for $N$, $A$, $B$, $\alpha$ and $\beta$. We will use the following notation.
\begin{center}
    \begin{tikzcd}[row sep= small]
         & N_A \arrow[rd, "\pi_B"] & \\
        V \arrow[ru, "\pi_A"] \arrow[rr, "\pi"] \arrow[rd, "p_B"] & & N, \\
         & N_B \arrow[ru, "p_A"] & 
    \end{tikzcd}
\end{center}

\begin{center}
    \begin{tikzcd}[column sep=small]
        0 \arrow[r] & H^0(N_A) \arrow[r,"\pi_A^*"] & H^0(V) \arrow[r, "Id-A_*"] & H^0(V) \arrow[r, "\sigma_1"] & H^1(N_A) \arrow[r, "\pi_A^*"] & H^1(V) \arrow[r] & ..., \\
        0 \arrow[r] & H^0(N) \arrow[r,"\pi_B^*"] & H^0(N_A) \arrow[r, "Id-B_*"] & H^0(N_A) \arrow[r, "\sigma_2"] & H^1(N) \arrow[r, "\pi_B^*"] & H^1(N_A) \arrow[r] & ..., \\
        0 \arrow[r] & H^0(N_B) \arrow[r,"p_B^*"] & H^0(V) \arrow[r, "Id-B_*"] & H^0(V) \arrow[r, "\tau_1"] & H^1(N_B) \arrow[r, "p_B^*"] & H^1(V) \arrow[r] & ..., \\
        0 \arrow[r] & H^0(N) \arrow[r,"p_A^*"] & H^0(N_B) \arrow[r, "Id-A_*"] & H^0(N_B) \arrow[r, "\tau_2"] & H^1(N) \arrow[r, "p_A^*"] & H^1(N_B) \arrow[r] & ..., \\
    \end{tikzcd}
    $
    \begin{matrix}
        g_{1,1}     &   :   &   V\times D   &   \longrightarrow     &   V   \\
                    &       &   (\xi,\Lambda)   &   \longmapsto     &   A_\Lambda\xi
    \end{matrix}
    $
\end{center}

\begin{center}
    $
    \begin{matrix}
        g_{2,1}     &   :   &   V\times D   &   \longrightarrow     &   V   \\
                    &       &   (\xi,\Lambda)   &   \longmapsto     &   B_\Lambda\xi.
    \end{matrix}
    $
\end{center}

\begin{Lemma}\label{Lemma Kodaira-Spencer map of the LVM family}
    The Kodaira-Spencer map $KS$ of the family $\varpi$ is given by 
    $$
    \begin{matrix}
        T_{\Lambda^{(0)}}D & \longrightarrow & H^1(N,\Theta) \\
        v & \longmapsto & \tau_2(p_{B*}(\frac{\partial g_{1,1}}{\partial\Lambda}(A^{-1}\bullet~,\Lambda^{(0)})v))+\sigma_2(\pi_{A*}(\frac{\partial g_{2,1}}{\partial\Lambda}(B^{-1}\bullet~,\Lambda^{(0)})v)).
    \end{matrix}
    $$
\end{Lemma}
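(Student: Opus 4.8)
The plan is to realize the Kodaira--Spencer class by lifting the base direction to the covering $V\times D$ and recording its failure to be invariant under the deck group $\langle g_1,g_2\rangle$, and then to route that failure through the two iterated Douady sequences that compute $H^1(N,\Theta)$. Fix $v\in T_{\Lambda^{(0)}}D$ and consider on $V\times D$ the constant lift $\partial/\partial\Lambda$ in the direction $v$. This field maps to $v$ on $D$, so it is the natural candidate to trivialize the deformation along $v$; its only defect is that it is not preserved by $g_1$ and $g_2$, and measuring this defect is exactly what computes $KS(v)$.

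First I would compute, for each generator, the vertical correction field on $V$ that records non-invariance. Writing the fibre coordinate after applying $g_1$ as $\eta=A_\Lambda\xi$ and pushing $\partial/\partial\Lambda$ forward by $g_1$, a direct calculation gives
\[
g_{1*}\!\left(\frac{\partial}{\partial\Lambda}\right)-\frac{\partial}{\partial\Lambda}=\frac{\partial A_\Lambda}{\partial\Lambda}(v)\,A_{\Lambda}^{-1}\eta\,\frac{\partial}{\partial\eta},
\]
which is precisely the field $\frac{\partial g_{1,1}}{\partial\Lambda}(A^{-1}\bullet,\Lambda^{(0)})v$ appearing in the statement; the same computation for $g_2$ produces $\frac{\partial g_{2,1}}{\partial\Lambda}(B^{-1}\bullet,\Lambda^{(0)})v$. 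The pair $(W_1,W_2)$ of these two fields is the group $1$-cocycle on $\mathbb Z^2$ representing the deformation, and the substance of the lemma is to rewrite this pair inside the double Douady picture of $H^1(N,\Theta)$.

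Next I would send each defect field through the correct connecting homomorphism. Since $A_\Lambda$, $B_\Lambda$ and their $\Lambda$-derivatives are all diagonal, hence pairwise commuting, both $W_1$ and $W_2$ are linear diagonal vector fields; in particular $W_1=\frac{\partial g_{1,1}}{\partial\Lambda}(A^{-1}\bullet,\Lambda^{(0)})v$ is $B$-invariant and descends along $p_B$ to an element of $H^0(N_B,\Theta)$, while $W_2=\frac{\partial g_{2,1}}{\partial\Lambda}(B^{-1}\bullet,\Lambda^{(0)})v$ is $A$-invariant and descends along $\pi_A$ to an element of $H^0(N_A,\Theta)$. Applying the connecting maps of the two relevant rows --- $\tau_2$ after $p_{B*}$ for the $A$-defect, and $\sigma_2$ after $\pi_{A*}$ for the $B$-defect --- yields the two summands asserted in the lemma. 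To see that these are the correct connecting maps and that the two routes recombine without cross-terms, I would reuse the commutation identities already exploited in the proof of Proposition \ref{Proposition Cohomology in Theta for a LVM manifold of type 264}, namely $(Id-B_*)\circ\pi_A^*=\pi_A^*\circ(Id-B_*)$ and $(Id-B_*)\circ\sigma_1=\sigma_1\circ(Id-B_*)$, together with their $A$ and $\tau$ counterparts, to verify that the class is independent of the chosen lift.

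The main obstacle is exactly this last bookkeeping: reconciling the group-cohomological output $(W_1,W_2)$ with the iterated long-exact-sequence presentation of $H^1(N,\Theta)$ used throughout the paper, and in particular checking that the relation $g_1g_2=g_2g_1$ forces the two contributions to assemble into a single well-defined class rather than to interfere. Concretely one must track the signs introduced by the maps $Id-A_*$ and $Id-B_*$ in each sequence and confirm the invariance of each $W_i$ under the complementary generator. Once this is settled the remainder is a diagram chase entirely parallel to Wehler's computation for Hopf surfaces in \cite{W81}, and it carries no analytic difficulty, since the compactness of $N$ already used in Proposition \ref{Proposition Cohomology in Theta for a LVM manifold of type 264} guarantees that all the cohomology groups in play are finite dimensional.
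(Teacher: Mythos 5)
Your proposal is correct and follows essentially the same route as the paper: lift $v$ to the covering, record the failure of invariance under $g_1,g_2$ as the two defect fields $\frac{\partial g_{1,1}}{\partial\Lambda}(A^{-1}\bullet,\Lambda^{(0)})v$ and $\frac{\partial g_{2,1}}{\partial\Lambda}(B^{-1}\bullet,\Lambda^{(0)})v$, check their $B$- and $A$-invariance respectively, and push them through $\tau_2\circ p_{B*}$ and $\sigma_2\circ\pi_{A*}$. The only point the paper makes explicit that you phrase more loosely is that the ``no cross-terms'' step amounts to proving $\im(\sigma_2)\cap\im(\tau_2)=\{0\}$.
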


\begin{proof}
    After choosing a covering of $N$ by Stein open subsets that trivialize the map $\pi:V\to N$, we use this open subsets to trivialize $\varpi$. We lift locally the vector $v\in T_{\Lambda^{(0)}}D$ and we construct a \v Cech cocycle representing the class $KS(v)$. In order to prove that this class corresponds to the one given in the Lemma, one need to prove that the vector field $\frac{\partial g_{1,1}}{\partial\Lambda}(A^{-1}\bullet~,\Lambda^{(0)})v$ is $B$-invariant, the vector field $\frac{\partial g_{2,1}}{\partial\Lambda}(B^{-1}\bullet~,\Lambda^{(0)})v$ is $A$-invariant and that the space $\im(\sigma_2)\cap\im(\tau_2)$ is trivial.
\end{proof}

We deduce the following result.

\begin{Theorem}\label{Theorem Kuranishi family for LVM manifolds of type (2,6,4) non resonant}
    Let $(\Lambda_1^{(0)},...,\Lambda_6^{(0)})$ be an LVM configuration of type $(2,6,4)$. Assume that the configuration is non-resonant. Then the Kodaira-Spencer map of the LVM family $\varpi$ at $(\Lambda_1^{(0)},...,\Lambda_6^{(0)})$ is an isomorphism. Moreover, this family is the Kuranishi family of the LVM manifold generated by this configuration.
\end{Theorem}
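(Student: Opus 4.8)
The plan is to invoke Corollary \ref{Corollary Smooth family with invertible KS map}: since the base $D$ is an open subset of $(\C^2)^3\cong\C^6$ by Lemma \ref{Lemma Deformation space in the case of k plus grand que m+1}, hence smooth, it suffices to prove that the Kodaira-Spencer map $KS$ at $\Lambda^{(0)}$ is an isomorphism. I would first pin down both dimensions. In the non-resonant case, Lemma \ref{Lemma Impossible resonances in the case LVM 264} forces $\mathcal{R}^{\alpha,\beta}_{j,\Z\times\N\times\N}=\{e_j\}$ for each $j$, so $VF^{\alpha,\beta}_{\Z\times\N\times\N}$ is spanned by the three trivial resonant fields $z_1\frac{\partial}{\partial z_1},z_2\frac{\partial}{\partial z_2},z_3\frac{\partial}{\partial z_3}$ and is three-dimensional. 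By Proposition \ref{Proposition Cohomology in Theta for a LVM manifold of type 264}, $\dim_\C H^1(N,\Theta)=2\cdot 3=6=\dim_\C D$, so it is enough to prove that $KS$ is injective.

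Next I would feed the explicit vector fields into the formula of Lemma \ref{Lemma Kodaira-Spencer map of the LVM family}. Since $A_\Lambda,B_\Lambda$ are diagonal with entries $\alpha_j(\Lambda),\beta_j(\Lambda)$, and $\Omega,\Lambda_1,\Lambda_2,\Lambda_3$ are constant on $D$, a direct differentiation gives
$$
\frac{\partial g_{1,1}}{\partial\Lambda}(A^{-1}\bullet,\Lambda^{(0)})v=\sum_{j=1}^3\frac{\partial_v\alpha_j}{\alpha_j}z_j\frac{\partial}{\partial z_j},\qquad \frac{\partial g_{2,1}}{\partial\Lambda}(B^{-1}\bullet,\Lambda^{(0)})v=\sum_{j=1}^3\frac{\partial_v\beta_j}{\beta_j}z_j\frac{\partial}{\partial z_j},
$$
where $\partial_v$ denotes the derivative in the direction $v\in T_{\Lambda^{(0)}}D$. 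Both are trivial resonant fields, hence $\langle A,B\rangle$-invariant, so they descend respectively to $N_B$ and $N_A$ and define classes in $H^0(N_B,\Theta)$ and $H^0(N_A,\Theta)$ lying in the relevant cokernels.

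Then I would separate the two terms of $KS(v)$. The proof of Lemma \ref{Lemma Kodaira-Spencer map of the LVM family} records that $\im(\sigma_2)\cap\im(\tau_2)=\{0\}$; since each summand has dimension $3$, this yields $H^1(N,\Theta)=\im(\tau_2)\oplus\im(\sigma_2)$, so the two pieces $\tau_2(p_{B*}(\cdots))$ and $\sigma_2(\pi_{A*}(\cdots))$ land in complementary factors. As $\tau_2$ and $\sigma_2$ are injective on the respective cokernels of trivial resonant fields, $KS(v)=0$ forces all coefficients $\frac{\partial_v\alpha_j}{\alpha_j}$ and $\frac{\partial_v\beta_j}{\beta_j}$ to vanish. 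Using $\alpha_j=\exp(2i\pi\langle\Lambda_{j+3}-\Lambda_1,\Omega^{-1}(1,0)^T\rangle)$ and the analogous formula for $\beta_j$, these read
$$
\frac{\partial_v\alpha_j}{\alpha_j}=2i\pi\left\langle\partial_v\Lambda_{j+3},\Omega^{-1}\begin{pmatrix}1\\0\end{pmatrix}\right\rangle,\qquad \frac{\partial_v\beta_j}{\beta_j}=2i\pi\left\langle\partial_v\Lambda_{j+3},\Omega^{-1}\begin{pmatrix}0\\1\end{pmatrix}\right\rangle,
$$
for $j\in\{1,2,3\}$. For each fixed $j$ the linear map $\partial_v\Lambda_{j+3}\mapsto(\frac{\partial_v\alpha_j}{\alpha_j},\frac{\partial_v\beta_j}{\beta_j})$ from $\C^2$ to $\C^2$ is invertible because $\Omega^{-1}(1,0)^T$ and $\Omega^{-1}(0,1)^T$ are the independent columns of $\Omega^{-1}$; regrouping the target $H^1(N,\Theta)$ by the index $j$ exhibits $KS$ as a direct sum of these three isomorphisms, so $KS$ is injective, hence an isomorphism. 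Corollary \ref{Corollary Smooth family with invertible KS map} then identifies $\varpi$ with the Kuranishi family of $N$.

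The main obstacle is the bookkeeping of the third paragraph: one must be certain that the two contributions to $KS(v)$ occupy complementary pieces of $H^1(N,\Theta)$ and that neither connecting map $\tau_2,\sigma_2$ collapses the recorded coefficients. This is exactly where the input $\im(\sigma_2)\cap\im(\tau_2)=\{0\}$ from Lemma \ref{Lemma Kodaira-Spencer map of the LVM family} is essential; once it is in hand, the invertibility of $KS$ is a transparent consequence of the invertibility of $\Omega$.
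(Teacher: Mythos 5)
Your proposal is correct and follows exactly the route the paper takes: invoke Corollary \ref{Corollary Smooth family with invertible KS map} using the smoothness of $D$, then deduce invertibility of the Kodaira-Spencer map from the explicit formula of Lemma \ref{Lemma Kodaira-Spencer map of the LVM family} together with the cohomology computation of Proposition \ref{Proposition Cohomology in Theta for a LVM manifold of type 264}. The paper leaves this last deduction as a one-line remark, whereas you carry out the dimension count, the identification of the two summands via $\im(\sigma_2)\cap\im(\tau_2)=\{0\}$, and the invertibility of $\Omega$ explicitly — all consistent with the intended argument.
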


\begin{Remark}\label{Remark Being non resonant is generic}
    The condition of being non-resonant is a generic condition. We emphasize that the Kuranishi space of such a non-resonant LVM manifold of type $(2,6,4)$ is smooth, and that a small deformation of it is still a non-resonant LVM  manifold of type $(2,6,4)$.
\end{Remark}

\begin{proof}\label{Preuve Theorem Kuranishi family for LVM manifolds of type (2,6,4) non resonant}
    Since the deformation space $D$ is smooth, according to Corollary \ref{Corollary Smooth family with invertible KS map}, we only need to show that the Kodaira-Spencer map is an isomorphism. This is a direct consequence of Lemma \ref{Lemma Kodaira-Spencer map of the LVM family} and \ref{Proposition Cohomology in Theta for a LVM manifold of type 264}.
\end{proof}

\subsection{First-order obstructions to deformation}\label{Subsection First order obstructions to deformation}

Let $(\Lambda_1,...,\Lambda_6)$ be an LVM configuration of type $(2,6,4)$ that generates an LVM manifold $N=V / <A,B>$ with the same notations as usual.

Following the classical theory of Kodaira, Spencer, and Kuranishi, the cohomology space $H^1(N,\Theta)$ is isomorphic to the space of infinitesimal deformations. The first natural question is whether an element of $H^1(N,\Theta)$ can be in the image of the Kodaira-Spencer map of a family having $(\mathbb C,0)$ as base space. This question is solved by the theory of obstructions, as we mentioned in Section \ref{Subsection Deformations of the complex structure}. 

In terms of \v{C}ech cohomology, if $\theta\in H^1(N,\Theta)$ is represented by a $1$-cocycle $\{\theta_{i,j}\}$, the element $[\theta,\theta]\in H^2(N,\Theta)$ (i.e. the first obstruction) is represented by the $2$-cocycle $\{[\theta_{i,j},\theta_{j,k}]\}$. Notice that in the last cocycle, $[.,.]$ is the classical Lie bracket of vector fields.

We take back the notations of Sections \ref{Subsection  Cohomology in the holomorphic tangent sheaf} and \ref{Subsection Kuranishi family of a non resonant (2,6,4) LVM manifold}, that we will recall briefly. We have two exact sequences in cohomology 
\begin{center}
    \begin{tikzcd}[column sep = small]
        0 \arrow[r]     &   H^0(N_A) \arrow[r, "\pi_A^*"]    &   H^0(V) \arrow[r, "Id-A_*"]     &   H^0(V) \arrow[r, "\sigma_1"]  &   H^1(N_A) \arrow[r, "\pi_A^*"]     &   H^1(V) \arrow[r, "Id-A_*"]    &   ...,
    \end{tikzcd}
\end{center}
\begin{center}
    \begin{tikzcd}[column sep = small]
        0 \arrow[r]     &   H^0(N) \arrow[r, "\pi_B^*"]    &   H^0(N_A) \arrow[r, "Id-B_*"]     &   H^0(N_A) \arrow[r, "\sigma_2"]  &   H^1(N) \arrow[r, "\pi_B^*"]     &   H^1(N_A) \arrow[r, "Id-B_*"]    &   ...~.
    \end{tikzcd}
\end{center}

We recall that, according to Proposition \ref{Proposition Cohomology in Theta for a LVM manifold of type 264}, one has isomorphisms 
$$
\begin{matrix}
    \left(VF^{\alpha,\beta}_{\mathbb Z \times \mathbb N \times \mathbb N}\right)^2 & \longrightarrow & H^1(N,\Theta)\\
    (X,Y) & \longmapsto & \sigma_2((\pi_A) _* X)+(\pi_B) _*(\sigma_1(Y))
\end{matrix}
$$
and 
$$
\begin{matrix}
    VF^{\alpha,\beta}_{\mathbb Z \times \mathbb N \times \mathbb N} & \longrightarrow & H^2(N,\Theta) \\
    Z & \longmapsto & c_2(\sigma_1(Z)),
\end{matrix}
$$
where $c_2: H^1(N_A,\Theta) \to H^2(N,\Theta) $ is the connecting morphism of the second long exact sequence in cohomology.

\begin{Proposition}\label{Proposition First obstruction}
    The first obstruction map to deformation reads, through these isomorphisms, like 
    $$
    \begin{matrix}
        \left(VF^{\alpha,\beta}_{\mathbb Z \times \mathbb N \times \mathbb N}\right)^2 & \longrightarrow & VF^{\alpha,\beta}_{\mathbb Z \times \mathbb N \times \mathbb N} \\
        (X,Y) & \longmapsto & [X,Y].
    \end{matrix}
    $$
\end{Proposition}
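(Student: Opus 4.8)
The plan is to compute the first obstruction $[\theta,\theta]\in H^2(N,\Theta)$, represented by the $2$-cocycle $\{[\theta_{ij},\theta_{jk}]\}$, by splitting the class $\theta$ along the two commuting directions of the $\Z^2$-action and exploiting bilinearity of the pairing. Writing $\theta=\theta_1+\theta_2$ with $\theta_1=\sigma_2((\pi_A)_*X)$ and $\theta_2=(\pi_B)_*(\sigma_1(Y))$, the graded commutativity of the cup product together with the antisymmetry of the Lie bracket makes the induced pairing $H^1\times H^1\to H^2$ symmetric, so that $[\theta,\theta]=[\theta_1,\theta_1]+2[\theta_1,\theta_2]+[\theta_2,\theta_2]$. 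I would first produce explicit \v{C}ech representatives for $\theta_1$ and $\theta_2$ by chasing $X$ and $Y$ through the two Douady long exact sequences: $\theta_2$ is built by lifting the invariant field $Y$ along the $\Z$-covering $\pi_A\colon V\to N_A$ and taking a coboundary, hence is supported in the $A$-direction, while $\theta_1$ is built along $\pi_B\colon N_A\to N$ and is supported in the $B$-direction. Here I use Proposition \ref{Proposition Cohomology in Theta for a LVM manifold of type 264} for the identifications of $H^1$ and $H^2$, and Lemma \ref{Lemma Impossible resonances in the case LVM 264} to guarantee that the $H^1(V,\Theta)$ contributions are absent, so the whole computation stays within global invariant fields on $V$.

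Next I would show that the two self-bracket terms $[\theta_1,\theta_1]$ and $[\theta_2,\theta_2]$ vanish. Each $\theta_i$ lies in the image of the connecting map of a single $\Z$-factor of $\pi_1(N)=\Z^2$, and the self-cup of a degree-one class coming from a cyclic covering is a coboundary, reflecting the fact that $H^2(\Z,-)=0$. Concretely, the telescoping primitive $\eta$ used to realize $\sigma_1$ (resp. $\sigma_2$) can itself be bracketed to provide an explicit $1$-cochain whose coboundary is $\{[(\theta_i)_{jk},(\theta_i)_{kl}]\}$. The subtlety at this step is the holomorphy and convergence of these primitives over the sheets of the covering, but since $X,Y$ lie in the finite-dimensional resonant space $VF^{\alpha,\beta}_{\Z\times\N\times\N}$, the sums are controlled and reduce to finitely many monomial computations.

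It then remains to identify the surviving cross term $2[\theta_1,\theta_2]$. Bracketing the $B$-direction representative of $\theta_1$ against the $A$-direction representative of $\theta_2$, and using the commutations $(Id-A_*)\circ\sigma_2=\sigma_2\circ(Id-A_*)$ and $(Id-B_*)\circ\sigma_1=\sigma_1\circ(Id-B_*)$ already exploited in Lemma \ref{Lemma Kodaira-Spencer map of the LVM family}, the evaluation on the class collapses via the invariances $A_*X=X$ and $B_*Y=Y$ (valid since $X,Y\in VF^{\alpha,\beta}_{\Z\times\N\times\N}$) to the single bracket $[X,Y]$. This bracket is again a global invariant vector field on $V$, hence lies in $VF^{\alpha,\beta}_{\Z\times\N\times\N}$, and under the isomorphism $Z\mapsto c_2(\sigma_1(Z))$ it is precisely the asserted image. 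The hard part will be this final bookkeeping: carrying the cross term simultaneously through $\sigma_1$, $\sigma_2$ and the connecting morphism $c_2$ while tracking the two invariances, and checking that the chosen normalizations of the isomorphisms absorb the symmetrization factor and sign so that the obstruction reads exactly $(X,Y)\mapsto[X,Y]$.
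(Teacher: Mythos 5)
Your proposal is correct and follows essentially the same route as the paper: the paper's own proof is a one-line indication ("use the isomorphisms given just above and compute the bracket in \v Cech cohomology following the Douady long exact sequences"), and your decomposition $\theta=\sigma_2((\pi_A)_*X)+(\pi_B)_*(\sigma_1(Y))$, the vanishing of the two self-bracket terms coming from a single cyclic covering direction, and the identification of the cross term with $[X,Y]$ via the invariances $A_*X=X$, $B_*Y=Y$ is precisely the computation the paper is gesturing at. Your version is in fact considerably more detailed than the published argument, and the normalization/sign bookkeeping you flag at the end is the only part either account leaves implicit.
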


\begin{proof}
    This result follows from using the isomrphisms given just above and use the \v Cech cohomology to compute the bracket following the Douady long exact sequences.
\end{proof}

\section{Resonant structures}\label{Section Resonant structures}

\subsection{Group of resonant transformations}\label{Subsection Group of resonant transformations}

Let $(\Lambda_1,...,\Lambda_6)$ be an LVM configuration of type $(2,6,4)$ that generates an LVM manifold $N=V / <A,B>$ with the same notations as usual. The following Lemma is a direct consequence of Lemma \ref{Lemma Impossible resonances in the case LVM 264}.

\begin{Lemma}\label{Lemma Classification resonances 264}
    The matrices $A$ and $B$ carry at most two common non-trivial resonances which (exchanging $\Lambda_5$ and $\Lambda_6$ if needed) are generated by relations
    $$
    \left\{
    \begin{matrix}
        \alpha_3=(\alpha_1)^p(\alpha_2)^q \\
        \beta_3=(\beta_1)^p(\beta_2)^q,
    \end{matrix}
    \right.
    $$
    where $p\in\mathbb Z$ and $q\in\mathbb N^*$. Moreover, the case with exactly two common non-trivial resonances occurs when $q=1$.
\end{Lemma}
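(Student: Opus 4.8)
The plan is to deduce everything from Lemma \ref{Lemma Impossible resonances in the case LVM 264}, viewing a non-trivial common resonance as a pair $(j,p)$ with $p\in\mathcal R^{\{\alpha,\beta\}}_{j,\Z\times\N\times\N}$ and $p\neq e_j$. First I would record where such resonances can live: part \textit{(ii)} of Lemma \ref{Lemma Impossible resonances in the case LVM 264} forbids any non-trivial resonance for $j=1$, while part \textit{(iii)} forces $p_j=0$ for $j\in\{2,3\}$. Hence a non-trivial resonance is either of the form $(3,(a,b,0))$, encoding $\alpha_3=\alpha_1^a\alpha_2^b$ together with the $\beta$-analogue, or of the form $(2,(a,0,c))$, encoding $\alpha_2=\alpha_1^a\alpha_3^c$ and its $\beta$-analogue, with $a\in\Z$ and $b,c\in\N$.

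The central device, reused at every step, is the Poincaré-type generation of new resonances: from any multiplicative relation $\alpha^r=1$, $\beta^r=1$ and any resonance $(j,p)$ one obtains the family $(j,p+lr)$, $l\in\N$, as soon as $p+lr$ stays in $\Z\times\N\times\N$. I would combine this with the finiteness of the resonance set (already invoked before Lemma \ref{Lemma Impossible resonances in the case LVM 264} via compactness, each resonance giving an independent cohomology class). \emph{Excluding one-variable resonances:} if $b=0$ in a $j=3$ resonance, then $\alpha_3=\alpha_1^a$ and the relation $\alpha_1^{-a}\alpha_3=1$ generates the resonance $(3,(-a,0,2))$, whose third coordinate is non-zero and which is not $e_3$, contradicting Lemma \ref{Lemma Impossible resonances in the case LVM 264}\,\textit{(iii)}; the degenerate sub-case $a=0$ gives $\alpha_3=\beta_3=1$, excluded by Lemma \ref{Lemma Useful lemma 2 6 4}\,\textit{(i)}. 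The same argument yields $c\geq1$ for $j=2$, so every non-trivial resonance genuinely involves two eigenvalues, with $b,c\in\N^*$.

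\emph{Uniqueness per index:} given two distinct $j=3$ resonances $(a,b,0)$ and $(a',b',0)$ with $b,b'\geq1$, their quotient produces a relation $r=(a-a',b-b',0)$; if $b\neq b'$ then $(a,b,0)+lr$ is an infinite family in $\Z\times\N\times\N$ with strictly monotone second coordinate, contradicting finiteness, and if $b=b'$ then $\alpha_1^{a-a'}=\beta_1^{a-a'}=1$ forces $\lvert\alpha_1\rvert=\lvert\beta_1\rvert=1$, excluded by Lemma \ref{Lemma Useful lemma 2 6 4}\,\textit{(i)}. Thus there is at most one non-trivial resonance for each of $j=2,3$, and none for $j=1$, giving at most two in total. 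Exchanging $\Lambda_5$ and $\Lambda_6$ (the indices $2$ and $3$) I may write the surviving relation as $\alpha_3=\alpha_1^p\alpha_2^q$, $\beta_3=\beta_1^p\beta_2^q$ with $q\in\N^*$.

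Finally, to determine when two resonances coexist, suppose a $j=2$ resonance $(a',0,c)$ accompanies $(p,q,0)$. Substituting $\alpha_3=\alpha_1^p\alpha_2^q$ into $\alpha_2=\alpha_1^{a'}\alpha_3^c$ gives $\alpha_2^{\,1-qc}=\alpha_1^{a'+pc}$ and its $\beta$-analogue, i.e. a relation $r=(a'+pc,\,qc-1,\,0)$. If $qc\neq1$ then $qc-1\geq1$, and $(2,(a',0,c)+r)$ is a $j=2$ resonance with non-zero second coordinate, contradicting Lemma \ref{Lemma Impossible resonances in the case LVM 264}\,\textit{(iii)}; hence $qc=1$, so $q=c=1$, and $\alpha_1^{a'+p}=\beta_1^{a'+p}=1$ forces $a'=-p$ by Lemma \ref{Lemma Useful lemma 2 6 4}\,\textit{(i)}. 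Conversely, when $q=1$ the relations $\alpha_3=\alpha_1^p\alpha_2$, $\beta_3=\beta_1^p\beta_2$ solve explicitly for $\alpha_2,\beta_2$, exhibiting the companion resonance $(2,(-p,0,1))$. This shows exactly two non-trivial resonances occur precisely when $q=1$. I expect the main obstacle to be the bookkeeping guaranteeing that each generated exponent vector stays in $\Z\times\N\times\N$ and is genuinely new, so that either the finiteness contradiction or Lemma \ref{Lemma Impossible resonances in the case LVM 264}\,\textit{(iii)} applies; Lemma \ref{Lemma Impossible affine resonances in the case LVM 264} offers an alternative route to exclude the affine relation $\alpha_1=\alpha_3$, $\beta_1=\beta_3$ arising in the $b=0$, $a=1$ sub-case.
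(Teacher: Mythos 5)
Your proof is correct and follows exactly the route the paper intends: the lemma is stated there without proof, as ``a direct consequence of Lemma \ref{Lemma Impossible resonances in the case LVM 264}'', and your argument carries out precisely that derivation -- Poincar\'e-type generation of new resonances combined with finiteness of the resonance set and Lemma \ref{Lemma Useful lemma 2 6 4} -- supplying the case analysis the paper omits.
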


Let $G$ be the Lie group of invertible transformations of $V=\mathbb C^* \times (\mathbb C^2\setminus \{0\})$ of the form
$$
z\longmapsto \sum\limits_{j=1}^3\sum\limits_{p\in\mathcal{R}_{j,\mathbb{Z}\times\mathbb{N}\times\mathbb{N}}^{\{\alpha,\beta\}}}a_{j,p}z^pe_j,
$$
where $e_j$ is the $j$-th vector of the standard basis of $\C^3$. Notice that it corresponds to the connected component of the identity of the group of automorphisms of $N$. 

The next proposition gives more precision on the Lie group $G$, depending on the resonances.

\begin{Proposition}\label{Proposition Groupe des transfo résonantes}
    \begin{enumerate}[label=(\roman*)]
        \item If the matrices $A$ and $B$ do not carry any non-trivial resonance, then $G$ is the Lie group of invertible diagonal linear transformations of $\mathbb C^3$ and therefore, it is an abelian Lie group of dimension $3$. 
        \item If the matrices $A$ and $B$ carry exactly one common non-trivial resonance, then $G$ is of dimension $4$.
        \item If the matrices $A$ and $B$ carry exactly two common non trivial resonance, then $G$ is of dimension $5$. Moreover, $G$ has a structure of semi-direct product $\mathbb C^* \rtimes \mathrm{GL}(2,\mathbb C)$ given by the homomorphism 
        $$
        \begin{matrix}
            \tau_p & : & \mathbb C^* & \longrightarrow & \Aut(\mathrm{GL}(2,\mathbb C)) \\
                 &   & z      & \longmapsto     & \tau_p(z),
        \end{matrix}
        $$
        where
        $$
        \tau_p(z)
        \left(
        \begin{pmatrix}
            a & b \\
            c & d
        \end{pmatrix}
        \right)
        =
        \begin{pmatrix}
            a & z^{-p}b \\
            z^pc & d
        \end{pmatrix},
        $$
        where $p\in\mathbb Z$ is the integer such that 
        $$
        \left\{
        \begin{matrix}
            \alpha_3=(\alpha_1)^p\alpha_2 \\
            \beta_3=(\beta_1)^p\beta_2.
        \end{matrix}
        \right.
        $$
        Finally, any element $(\alpha,A)$ of $G$ is in the conjugacy class of an element of the form $(\alpha,T)$ where $T$ is a inferior triangular matrix. An element of the latter form is called \textbf{triangular}.
        If these resonances are affine, then $G$ is the direct product $\mathbb C^*\times \mathrm{GL}(2,\mathbb C)$, as we saw before, and the reduction part is classical. 
    \end{enumerate}
\end{Proposition}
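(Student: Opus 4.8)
The plan is to treat the three cases by first combining Lemma \ref{Lemma Impossible resonances in the case LVM 264} with Lemma \ref{Lemma Classification resonances 264} to list, in each case, the exact set $\mathcal{R}_{j,\mathbb{Z}\times\mathbb{N}\times\mathbb{N}}^{\{\alpha,\beta\}}$ of admissible exponents, and then to read off directly from the definition of $G$ the general form of a resonant transformation, together with its free parameters and the invertibility conditions. This yields the dimension count at once, so that the only remaining work is to identify the group law.

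For case (i) there are no non-trivial resonances, hence the only admissible monomials are the trivial ones $z_j e_j$; every element of $G$ is therefore $z\mapsto(a_1z_1,a_2z_2,a_3z_3)$ with $a_j\in\mathbb{C}^*$ (invertibility on $V$ forces each $a_j\neq 0$), so $G=(\mathbb{C}^*)^3$ is abelian of dimension $3$. For case (ii), Lemma \ref{Lemma Classification resonances 264} gives a single non-trivial resonance $(p,q,0)$ with $q\geq 2$, contributing the monomial $z_1^pz_2^q$ to the third coordinate; a general element is $z\mapsto(a_1z_1,a_2z_2,a_3z_3+d\,z_1^pz_2^q)$ with $a_j\in\mathbb{C}^*$ and $d\in\mathbb{C}$, giving dimension $4$. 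A one-line composition shows this form is stable under composition, with new parameters $(a_1a_1',a_2a_2',a_3a_3',a_3d'+d(a_1')^p(a_2')^q)$, confirming that $G$ is a $4$-dimensional Lie group.

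For case (iii) ($q=1$) there are two non-trivial resonances, $(p,1,0)$ for $j=3$ and $(-p,0,1)$ for $j=2$, so a general element reads $z\mapsto(a_1z_1,\,a_2z_2+b\,z_1^{-p}z_3,\,c\,z_1^{p}z_2+a_3z_3)$. The key observation is that, writing $D(z_1)=\mathrm{diag}(1,z_1^{p})$ and $M=\begin{pmatrix} a_2 & b \\ c & a_3 \end{pmatrix}$, the action on the last two coordinates is $(z_2,z_3)\mapsto D(z_1)\,M\,D(z_1)^{-1}(z_2,z_3)$; invertibility is thus equivalent to $a_1\in\mathbb{C}^*$ and $M\in\mathrm{GL}(2,\mathbb{C})$, so $G$ is parametrized by $(a_1,M)$ and has dimension $5$. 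Composing two such elements, using that the $D$'s commute and that $D(a_1'z_1)=D(a_1')D(z_1)$, one obtains the group law $(a_1,M)(a_1',M')=(a_1a_1',\,\tau_p(a_1')(M)\,M')$, where $\tau_p(a_1')(M)=D(a_1')MD(a_1')^{-1}$ is exactly the automorphism in the statement; this exhibits the semidirect product, and when $p=0$ (the affine case $\alpha_3=\alpha_2$, $\beta_3=\beta_2$) the twist $\tau_0$ is trivial and the product is direct.

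It remains to establish the triangular reduction, which I expect to be the main obstacle. Conjugation by $(1,P)$ sends $(\alpha,M)$ to $(\alpha,\tau_p(\alpha)(P)\,M\,P^{-1})$, so $\alpha$ is preserved and one must arrange $N_0:=\tau_p(\alpha)(P)\,M\,P^{-1}$ to be lower triangular, i.e. to preserve the line $\mathbb{C}e_2$. Setting $D=\mathrm{diag}(1,\alpha^{p})$ and $w=P^{-1}e_2$, a short computation using $\tau_p(\alpha)(P)=DPD^{-1}$ and $Pw=e_2$ gives $N_0e_2=\mu\,\alpha^{p}e_2$ precisely when $Mw=\mu Dw$, that is when $w$ is an eigenvector of $D^{-1}M=\mathrm{diag}(1,\alpha^{-p})M$. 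Such an eigenvector exists over $\mathbb{C}$, and choosing any $P\in\mathrm{GL}(2,\mathbb{C})$ with $P^{-1}e_2=w$ then makes $N_0$ lower triangular, proving that $(\alpha,M)$ is $G$-conjugate to a triangular element. The only delicate point is to keep track of the twist by $\tau_p$ throughout, which blocks a naive appeal to ordinary $\mathrm{GL}(2,\mathbb{C})$-triangularization; recasting the condition as the generalized eigenvalue problem $Mw=\mu Dw$ is what circumvents this.
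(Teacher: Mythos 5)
Your proposal is correct and follows essentially the same route as the paper: read off the admissible monomials from the resonance classification to get the normal form and the dimension count, verify the composition law to exhibit the semidirect product via $\tau_p$, and reduce the triangularization to the generalized eigenvalue problem $\det(\mu L_{\alpha,p}-A)=0$ (your $Mw=\mu Dw$ is exactly the paper's $Ay=\lambda L_{\alpha,p}y$, up to conjugating by $(1,P)$ on the opposite side). The only difference is that you spell out cases (i) and (ii), which the paper dismisses as straightforward.
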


\begin{proof}\label{Preuve Proposition Groupe des transfo résonantes}
    (i) and (ii) are straightforward.

    We prove (iii). We consider two general elements of $G$
    $$
    \begin{matrix}
        f & : & V & \longrightarrow & V \\
          &   & (z_1,z_2,z_3) & \longmapsto & \left(\alpha z_1,\tau_p(z_1)
          \left(
          \begin{pmatrix}
              a & b \\
              c & d
          \end{pmatrix}
          \right)
          \begin{pmatrix}
              z_2 \\
              z_3
          \end{pmatrix}
          \right) \\
          g &  : & (z_1,z_2,z_3) & \longmapsto & \left(\alpha' z_1,\tau_p(z_1)
          \left(
          \begin{pmatrix}
              a' & b' \\
              c' & d'
          \end{pmatrix}
          \right)
          \begin{pmatrix}
              z_2 \\
              z_3
          \end{pmatrix}
          \right),
    \end{matrix}
    $$
    where $\alpha,\alpha'$ are elements of $\mathbb C^*$, 
    $
    \begin{pmatrix}
        a & b \\
        c & d
    \end{pmatrix}
    $
    and 
    $
    \begin{pmatrix}
        a' & b' \\
        c' & d'
    \end{pmatrix}
    $
    are invertible matrices, and $p$ is as in the statement. Then, a direct computation shows that 
    \small
    $$
    \begin{matrix}
        f\circ g & :  & (z_1,z_2,z_3) & \longmapsto & \left(\alpha\alpha' z_1,\tau_p(z_1)\left(\tau_p(\alpha')
          \left(
          \begin{pmatrix}
              a & b \\
              c & d
          \end{pmatrix}
          \right)
          \begin{pmatrix}
              a' & b' \\
              c' & d'
          \end{pmatrix}
          \right)
          \begin{pmatrix}
              z_2 \\
              z_3
          \end{pmatrix}
          \right)
    \end{matrix}
    $$
    \normalsize
    and proves the semi-direct product assertion. From now on, $f$ will be denoted as $(\alpha,A)$ with 
    $$
    A= 
    \begin{pmatrix}
        a & b \\
        c & d
    \end{pmatrix}.
    $$
    Let $P$ be an element of $\mathrm{GL}(2,\mathbb C)$ and let $\phi:=(1,P)$ be the element of $G$ defined by
    $$
    \begin{matrix}
        \phi & : & V & \longrightarrow & V \\
          &   & (z_1,z_2,z_3) & \longmapsto & \left( z_1,\tau_p(z_1)
          \left(
          P
          \right)
          \begin{pmatrix}
              z_2 \\
              z_3
          \end{pmatrix}
          \right).
    \end{matrix}
    $$
    Then, we have
    $$
    \begin{matrix}
        \phi^{-1}f\phi & = & (1,P^{-1})\rtimes(\alpha,A)\rtimes(1,P) \\
                       & = & (\alpha,\tau_p(\alpha)(P^{-1})A)\rtimes(1,P) \\
                       & = & (\alpha,\tau_p(\alpha)(P^{-1})AP).
    \end{matrix}    
    $$
    We consider the matrix $L_{\alpha,p}$ defined by 
    $$
    L_{\alpha,p}=
    \begin{pmatrix}
        1 & 0 \\
        0 & \alpha^p
    \end{pmatrix}.
    $$
    The purpose of this matrix is that 
    $$
    \tau_p(\alpha)(P^{-1})=L_{\alpha,p}P^{-1}L_{\alpha,p}^{-1}.
    $$
    The complex polynomial $\det(XL_{\alpha,p}-A)$ is of degree $2$, because $\alpha\neq 0$. Therefore, it has a complex root $\lambda$. Hence, the linear map $A-\lambda L_{\alpha,p}$ is not one-to-one, so there exists a nonzero vector $y\in\mathbb C^2$ such that $Ay=\lambda L_{\alpha,p}y$. Let $x$ be such that $(x,y)$ is a basis of $\mathbb C^2$ and let $P$ be the matrix of this basis in the canonical basis. Then $P$ is such that $\tau_p(\alpha)(P^{-1})AP$ is triangular.
\end{proof}

The Lie group $G$ is a group of analytic automorphisms of the manifold $V$. Therefore, we can consider $(G,V)$-structures and call such a structure a \textbf{resonant structure}.

The compact manifold $N$ carries a natural complete $(G,V)$-structure given by the couple $(\mathrm{Dev}_0,\rho_0)$ (see Section \ref{Subsection G,X structures}) defined as follows:
$$
\begin{matrix}
    \mathrm{Dev}_0 & : & \mathbb C \times (\mathbb C^2\setminus \{0\}) & \longrightarrow & V \\
          &   & (w_1,\xi_2,\xi_3)                             & \longmapsto     & (e^{2i\pi w_1},\xi_2,\xi_3) \\
    \rho_0 & : & \mathbb{Z}^3                                 & \longrightarrow & G \\
           &   & (p,q,r)                                      & \longmapsto     & A^pB^q  .
\end{matrix}
$$

\subsection{Deformations of the (G,V)-structure in the non-resonant case}\label{Subection Deformations in the non-resonant case}

Let $(\Lambda_1^{(0)},...,\Lambda_6^{(0)})$ be an LVM configuration of type $(2,6,4)$ that generates an LVM manifold $N_0=V / <A_0,B_0>$ with the same notations as usual. 

We assume in this section that $A_0$ and $B_0$ do not carry any non trivial common resonances. In that case, the group $G$ is the group $\mathrm{Diag}(\C^3)^\times$ of invertible complex linear diagonal endomorphisms of $\C^3$.

Therefore, we consider an element $\rho$ in $\Hom(\mathbb{Z}^{3},\mathrm{Diag}(\mathbb C^3)^\times)$ that is close enough to $(A_0,B_0,Id)$ and we denote 
\begin{center}
    $
    \begin{matrix}
        \rho:    &    \mathbb{Z}^{3}    &   \longrightarrow     &   \mathbb C ^*\times (\mathbb C^2\setminus \{0\})   \\
            &   (p,q,r)   &   \longmapsto     &   (0,A^pB^qC^r),
    \end{matrix}
    $
\end{center}
with 

\begin{center}
    $
    \begin{matrix}
        A=
        \begin{pmatrix}
            \alpha_1 & 0 & 0 \\
            0 & \alpha_2 & 0 \\
            0 & 0 & \alpha_3
        \end{pmatrix},
        \\
        B=
        \begin{pmatrix}
            \beta_1 & 0 & 0 \\
            0 & \beta_2 & 0 \\
            0 & 0 & \beta_3
        \end{pmatrix},
        \\
        C=
        \begin{pmatrix}
            \gamma_1 & 0 & 0 \\
            0 & \gamma_2 & 0 \\
            0 & 0 & \gamma_3
        \end{pmatrix},
    \end{matrix}
    $
\end{center}
where 

\begin{center}
    $
    \left\{
    \begin{matrix}
        \alpha_j \text{ is close to } \alpha^{(0)}_j, \\
        \beta_j \text{ is close to } \beta^{(0)}_j, \\
        \gamma_j \text{ is close to } 1 .
    \end{matrix}
    \right.
    $
\end{center}

Since $\gamma_j \text{ is close to } 1$ for $j\in \{1,2,3\}$, we consider $c_j=\frac{1}{2i\pi}\log(\gamma_j)$, where $\log$ is the principal branch of the complex logarithm.

In order to describe the $(G,V)$-structure with holonomy homomorphism $\rho$, we have to find the developing map. Another interesting question is which complex structure is induced by the $(G,V)$-structure. A naive answer would be that the complex structure could given by the quotient $V/ <A,B>$. In fact, in the case where $C$ is not the identity matrix, this is no longer true. However, we shall see that we already know the complex structure.

\begin{Theorem}\label{Theorem Developping map for the nearby affine structure in the non resonant case}
    The developing map of the $(G,V)$-structure with holonomy homomorphism $\rho$ is given by the map 
    \begin{center}
        $
        \begin{matrix}
            dev_{\rho} & : & \mathbb C \times (\mathbb C^2\setminus \{0\}) & \longrightarrow & V \\
                &   & (w_1,\xi_2,\xi_3) & \longmapsto & (e^{2i\pi w_1(1+c_1)},e^{2i\pi w_1c_2}\xi_2,e^{2i\pi w_1c_3}\xi_3).
        \end{matrix}
        $
    \end{center}
    More precisely, there exists a holomorphic submersion $\Psi:\rho \mapsto (\Lambda_4^\rho,\Lambda_5^\rho,\Lambda_6^\rho)$ from a small neighborhood of $\rho_0$ in $\Hom(\mathbb{Z}^{3},\mathrm{Diag}(\mathbb C^3)^\times)\cong (\mathbb C^*)^9$ to a small neighborhood of the element $(\Lambda_4^{(0)},\Lambda_5^{(0)},\Lambda_6^{(0)})$ in $(\mathbb C^2)^3$ such that $\Psi(\rho_0)=(\Lambda_4^{(0)},\Lambda_5^{(0)},\Lambda_6^{(0)})$ and such that $\Lambda(\rho):=(\Lambda_1^{(0)},\Lambda_2^{(0)},\Lambda_3^{(0)},\Psi(\rho))$ is an LVM configuration of type $(2,6,4)$ that generates an LVM manifold $N_\rho=V/<A_\rho,B_\rho>$ with
    
    \begin{center}
        $
        \begin{matrix}
            A_\rho=
            \begin{pmatrix}
                \alpha_1^\rho & 0 & 0 \\
                0 & \alpha_2^\rho & 0 \\
                0 & 0 & \alpha_3^\rho
            \end{pmatrix},
            \\
            B_\rho=
            \begin{pmatrix}
                \beta_1^\rho & 0 & 0 \\
                0 & \beta_2^\rho & 0 \\
                0 & 0 & \beta_3^\rho
            \end{pmatrix},
            \\
            \alpha_j^\rho=\exp\left(2i\pi\left<\Lambda_{j+3}^\rho-\Lambda_1^{(0)},\Omega^{-1}\begin{pmatrix}
                1 \\
                0
            \end{pmatrix}\right>\right) \text{ for }j\in\{1,2,3\}
            ,\\
            \beta_j^\rho=\exp\left(2i\pi\left<\Lambda_{j+3}^\rho-\Lambda_1^{(0)},\Omega^{-1}\begin{pmatrix}
                0 \\
                1
            \end{pmatrix}\right>\right) \text{ for }j\in\{1,2,3\}
        \end{matrix}
        $
    \end{center}
    and such that, if 

    \begin{center}
        $
        \rho(0,0,1)=\mathrm{Diag} (e^{2i\pi c_1},e^{2i\pi c_2},e^{2i\pi c_3})
        $
    \end{center}
    and $dev_\rho$ is as given just before, then, for every $(w_1,\xi_2,\xi_3)\in \mathbb C \times (\mathbb C^2\setminus \{0\})$, we have

    \begin{center}
        $
        \begin{matrix}
            dev_\rho\left(w_1+\left<\Lambda_{4}^\rho-\Lambda_1^{(0)},\Omega^{-1}\begin{pmatrix}
                1 \\
                0
            \end{pmatrix}\right>,\alpha_2^\rho\xi_2,\alpha_3^\rho\xi_3\right)=\rho(1,0,0)~dev_\rho(w_1,\xi_2,\xi_3)
            ,\\
            dev_\rho\left(w_1+\left<\Lambda_{4}^\rho-\Lambda_1^{(0)},\Omega^{-1}\begin{pmatrix}
                0 \\
                1
            \end{pmatrix}\right>,\beta_2^\rho\xi_2,\beta_3^\rho\xi_3\right)=\rho(0,1,0)~dev_\rho(w_1,\xi_2,\xi_3),\\
            dev_\rho\left(w_1+1,\xi_2,\xi_3\right)=\rho(0,0,1)~dev_\rho(w_1,\xi_2,\xi_3),
        \end{matrix}
        $
    \end{center}
    i.e. $dev_\rho$ is $\rho$-equivariant.
\end{Theorem}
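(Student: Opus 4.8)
The plan is to produce $dev_\rho$ explicitly together with the three deck transformations it must intertwine, to reduce the $\rho$-equivariance to a handful of scalar identities, to solve those identities for the eigenvalues $(\alpha_j^\rho,\beta_j^\rho)$ and thereby define $\Psi$, and finally to recognize the quotient complex manifold as the LVM manifold $N_\rho$; uniqueness of the structure with holonomy $\rho$ is then provided by the Ehresmann--Thurston principle (Theorem \ref{Theorem Ehresmann Thurston principle}). The genuinely non-obvious point---and the main obstacle---is not the verification of equivariance but the recognition that, although $C=\rho(0,0,1)$ is not the identity, the induced complex structure still belongs to the LVM family: the deformation carried by $C$ must be shown to be absorbed into a holomorphic reparametrization $\Psi$ of the configuration that, moreover, submerses onto the base of the LVM family.

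First I would check the equivariance. Since $e^{2i\pi c_j}=\gamma_j$, the substitution $w_1\mapsto w_1+1$ multiplies the three coordinates of $dev_\rho$ by $\gamma_1,\gamma_2,\gamma_3$, i.e. by $C$, which is the third relation. For the first generator I set $a:=\langle\Lambda_4^\rho-\Lambda_1^{(0)},\Omega^{-1}\begin{pmatrix}1\\0\end{pmatrix}\rangle$, so that $e^{2i\pi a}=\alpha_1^\rho$ by definition; comparing the two sides of the first relation coordinate by coordinate collapses to
\[
e^{2i\pi a(1+c_1)}=\alpha_1,\qquad e^{2i\pi a c_2}\,\alpha_2^\rho=\alpha_2,\qquad e^{2i\pi a c_3}\,\alpha_3^\rho=\alpha_3 ,
\]
and symmetrically for the second generator with $b:=\langle\Lambda_4^\rho-\Lambda_1^{(0)},\Omega^{-1}\begin{pmatrix}0\\1\end{pmatrix}\rangle$, $e^{2i\pi b}=\beta_1^\rho$, and the $\beta_j$. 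This is the entire content of equivariance, and it is routine.

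Next I would define $\Psi$ by solving these equations. Choosing continuous branches of $\log$ near the base point (where $c_j=0$), the solution reads
\[
\log\alpha_1^\rho=\tfrac{\log\alpha_1}{1+c_1},\qquad \log\alpha_j^\rho=\log\alpha_j-c_j\log\alpha_1^\rho\ \ (j=2,3),
\]
together with the analogous formulas for $\beta$; these are holomorphic in $\rho$ because $1+c_1$ stays near $1$. Since $\Lambda_{j+3}^\rho\mapsto(\alpha_j^\rho,\beta_j^\rho)$ is a local biholomorphism through its defining exponentials, inverting it yields the holomorphic map $\Psi(\rho)=(\Lambda_4^\rho,\Lambda_5^\rho,\Lambda_6^\rho)$ with $\Psi(\rho_0)=(\Lambda_4^{(0)},\Lambda_5^{(0)},\Lambda_6^{(0)})$. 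Freezing the $\gamma_j$ and differentiating in the six variables $(\alpha_j,\beta_j)$, the Jacobian in logarithmic coordinates is the identity at $\rho_0$, so $\Psi$ has surjective differential and is a submersion, its fibres being the three directions in which $C$ moves. Finally, because the Siegel and weak hyperbolicity conditions of Definition \ref{Definition Siegel domain and weak hyperbolicity} are open, $\Lambda(\rho)$ is an LVM configuration for $\rho$ near $\rho_0$, equivalent to $\Lambda^{(0)}$ through the straight segment joining them and hence of type $(2,6,4)$ by Proposition \ref{Proposition Two equivalent configurations have the same type}.

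It remains to identify the complex structure and to conclude. A Jacobian computation shows $dev_\rho$ is a local biholomorphism onto $V$: its first coordinate is a covering $\C\to\C^*$ since $1+c_1\neq0$, and at fixed $w_1$ the map $(\xi_2,\xi_3)\mapsto(z_2,z_3)$ is an invertible rescaling. The $\Z^3$-action generated by the three deck transformations occurring on the left-hand sides of the equivariance relations is free, proper and cocompact, being close to the one defining $N$; quotienting first by $w_1\mapsto w_1+1$ produces $V$, on which the two remaining deck transformations descend exactly to $A_\rho$ and $B_\rho$. Hence the induced complex structure is that of $N_\rho=V/\langle A_\rho,B_\rho\rangle$, the LVM manifold of the configuration $\Lambda(\rho)$, diffeomorphic to $N$. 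Thus $(dev_\rho,\rho)$ is a $(G,V)$-structure with holonomy $\rho$, and by the uniqueness clause of the Ehresmann--Thurston principle it is the developing map asserted in the statement.
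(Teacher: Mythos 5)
Your proof is correct and follows essentially the same strategy as the paper: verify the third equivariance relation by construction, reduce the other two to scalar equations on the configuration, solve for $\Psi$, and invoke openness of the Siegel and weak hyperbolicity conditions. The only difference is one of execution — where the paper packages the equivariance defect into a map $F$ and applies the Implicit Function Theorem, you solve the equations in closed form (e.g. $\log\alpha_1^\rho=\log\alpha_1/(1+c_1)$), which has the advantage of making the submersion property of $\Psi$ immediate rather than deferred to "writing the equation satisfied by $\Psi$".
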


\begin{proof}\label{Preuve Theorem Developping map for the nearby affine structure in the non resonant case}

    We will use the Implicit Function Theorem. 

    For $\rho\in\Hom(\mathbb{Z}^{3},\mathrm{Diag}(\mathbb C^3)^\times)$ close enough to $\rho_0$, we keep the same notations as before, namely $\rho(1,0,0)=A$ and so on, and we consider the map $dev_\rho$ as in the statement of the theorem. By construction, for every $(w_1,\xi_2,\xi_3)\in \mathbb C \times (\mathbb C^2\setminus \{0\})$, we have 

    \begin{center}
        $
        dev_\rho\left(w_1+1,\xi_2,\xi_3\right)=\rho(0,0,1)~dev_{\rho}(w_1,\xi_2,\xi_3).
        $
    \end{center}
    We construct the holomorphic map 

    \begin{center}
        $
        \begin{matrix}
            F & : & \Hom(\mathbb{Z}^{3},\mathrm{Diag}(\mathbb C^3)^\times)\times (\mathbb C^2)^3 & \longrightarrow & \mathbb C^6 \\
              &   & (\rho,\Lambda_4,\Lambda_5,\Lambda_6) & \longmapsto & F(\rho,\Lambda_4,\Lambda_5,\Lambda_6)
        \end{matrix}
        $
    \end{center}
    such that $F(\rho_0,\Lambda_4^{(0)},\Lambda_5^{(0)},\Lambda_6^{(0)})=0$ and such that the six components of $F(\rho,\Lambda_4,\Lambda_5,\Lambda_6)$ measure the defect of $dev_\rho$ to be $\rho$-equivariant. 
    
    The map $F$ satisfies the hypotheses of the Implicit Function Theorem, hence there exists a holomorphic map $\Psi$ from a small neighborhood of $\rho_0$ in $\Hom(\mathbb Z^3,\mathrm{Diag}(\mathbb C ^3)^\times)$ to a small neighborhood of $(\Lambda_4^{(0)},\Lambda_5^{(0)},\Lambda_6^{(0)})$, such that $F(\rho,\Lambda_4,\Lambda_5,\Lambda_6)=0$ holds if and only if $(\Lambda_4,\Lambda_5,\Lambda_6)=\Psi(\rho)$.
    Since the Siegel and weak hyperbolicity conditions (see Definition \ref{Definition Siegel domain and weak hyperbolicity}) are open, we can assume that for every $\rho$, $(\Lambda_1^{(0)},\Lambda_2^{(0)},\Lambda_3^{(0)},\Psi(\rho))$ is still a $(2,6,4)$ LVM configuration.

    The only thing we still have to prove is that the map $\Psi$ is a submersion. This is achieved by writing the equation satisfied by the map $\Psi$.
\end{proof}

\subsection{Deformations of the (G,V)-structure in the resonant case}\label{Subsection Deformations in the resonant case}

Let $(\Lambda_1^{(0)},...,\Lambda_6^{(0)})$ be an LVM configuration of type $(2,6,4)$ that generates an LVM manifold $N_0=V / <A_0,B_0>$ with the same notations as usual. 

We assume, in this section, that $A_0$ and $B_0$ carry the common resonance
$$
\left\{
\begin{matrix}
    \alpha^{(0)}_3 & = & (\alpha^{(0)}_1)^p(\alpha^{(0)}_2)^q \\
    \beta^{(0)}_3 & = & (\beta^{(0)}_1)^p(\beta^{(0)}_2)^q,
\end{matrix}
\right.
$$
where $(p,q)$ is an element of $\Z\times \N^*$.

\begin{Remark}\label{Remark Number of possible resonances}
    According to Lemma \ref{Lemma Classification resonances 264}, the LVM manifold $N_0$ carries another non-trivial resonance if and only if the integer $q$ is equal to $1$. 
\end{Remark}

We recall from Section \ref{Subsection Group of resonant transformations} that the resonances of the LVM manifold generate a group of resonant transformations $G_{p,q}$ of $V=\C^*\times \C^2\setminus\{0\}$. The canonical affine structure of the LVM manifold $N_0$ can be seen as a $(G_{p,q},V)$-structure and, therefore, it is natural to apply the strategy of deforming the $(G_{p,q},V)$-structure and try to understand the underlying deformation space of the complex structure.

Let $f\in G_{p,q}$, respectively $g$, be a resonant transformation in a sufficiently small neighborhood of $A_0$ in $G_{p,q}$, respectively, of $B_0$, such that $f$ and $g$ commute.

\begin{Lemma}\label{Lemma common normal form for commuting resonant transfo}
    Assume that $q=1$. There exists an element $h$ of $G_{p,q}$ such that $h^{-1}fh$ and $h^{-1}gh$ are triangular in the sense of Proposition \ref{Proposition Groupe des transfo résonantes}.
\end{Lemma}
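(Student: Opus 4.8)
The plan is to reduce the statement to the classical fact that two commuting endomorphisms of $\C^2$ possess a common eigenvector. First I would recall from the proof of Proposition \ref{Proposition Groupe des transfo résonantes} that, writing elements of $G_{p,q}$ as pairs $(\alpha,A)$ with $\alpha\in\C^*$ and $A\in\mathrm{GL}(2,\C)$, conjugation by $h=(1,P)$ acts by
$$
h^{-1}(\alpha,A)h=\left(\alpha,\ L_{\alpha,p}P^{-1}L_{\alpha,p}^{-1}AP\right),\qquad L_{\alpha,p}=\begin{pmatrix}1&0\\0&\alpha^p\end{pmatrix}.
$$
Write $f=(\alpha,A)$ and $g=(\alpha',A')$. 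Exactly as in that proof, if one chooses $P=[x\mid y]$ with second column $y$, then the matrix part of $h^{-1}fh$ is lower triangular precisely when its $(1,2)$ entry vanishes, and the computation there shows this happens if and only if $Ay=\lambda L_{\alpha,p}y$ for some $\lambda$, i.e. if and only if $y$ is an eigenvector of $M:=L_{\alpha,p}^{-1}A$. The very same $P$ simultaneously triangularizes $g$ as soon as $y$ is also an eigenvector of $M':=L_{\alpha',p}^{-1}A'$. Hence it suffices to produce a common eigenvector $y\neq 0$ of $M$ and $M'$ and to complete it to a basis.

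The heart of the argument is that the commutativity of $f$ and $g$ forces $M$ and $M'$ to commute. Translating $fg=gf$ through the group law $(\alpha,A)(\alpha',A')=(\alpha\alpha',\ \tau_p(\alpha')(A)A')$ and using the identity $\tau_p(z)(\,\cdot\,)=L_{z,p}(\,\cdot\,)L_{z,p}^{-1}$ recalled in Proposition \ref{Proposition Groupe des transfo résonantes} yields the relation
$$
L_{\alpha',p}\,A\,L_{\alpha',p}^{-1}A'=L_{\alpha,p}\,A'\,L_{\alpha,p}^{-1}A.
$$
Here I would exploit that the matrices $L_{\cdot,p}$ are diagonal, hence commute with one another. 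Left-multiplying this relation by $L_{\alpha,p}^{-1}L_{\alpha',p}^{-1}$ and rearranging the diagonal factors so that $L_{\alpha',p}^{-1}L_{\alpha,p}^{-1}L_{\alpha',p}=L_{\alpha,p}^{-1}$ and $L_{\alpha,p}^{-1}L_{\alpha',p}^{-1}L_{\alpha,p}=L_{\alpha',p}^{-1}$, one obtains
$$
MM'=L_{\alpha,p}^{-1}A\,L_{\alpha',p}^{-1}A'=L_{\alpha',p}^{-1}A'\,L_{\alpha,p}^{-1}A=M'M,
$$
the middle equality being exactly the rearranged commutation relation.

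With $MM'=M'M$ in hand, the classical argument applies verbatim: an eigenspace of $M$ is $M'$-invariant, and the restriction of $M'$ to it admits an eigenvector, producing a common eigenvector $y\neq 0$ of $M$ and $M'$. Completing $y$ to a basis $(x,y)$ of $\C^2$, setting $P=[x\mid y]$ and $h:=(1,P)\in G_{p,q}$, the reformulation of the first paragraph shows that both $h^{-1}fh$ and $h^{-1}gh$ are triangular in the sense of Proposition \ref{Proposition Groupe des transfo résonantes}. I expect the only genuine obstacle to be the commutation computation of the middle paragraph: because $f$ and $g$ carry \emph{different} twists $L_{\alpha,p}$ and $L_{\alpha',p}$, the passage from $fg=gf$ to $MM'=M'M$ is not automatic and relies precisely on the fact that the diagonal matrices $L_{\cdot,p}$ commute; the remaining steps are the single-element triangularization already performed in Proposition \ref{Proposition Groupe des transfo résonantes} together with the elementary common-eigenvector lemma for commuting operators on $\C^2$.
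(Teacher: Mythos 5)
Your proposal is correct and follows essentially the same route as the paper: the paper's proof likewise writes the commutation as $L_{\beta,p}AL_{\beta,p}^{-1}B=L_{\alpha,p}BL_{\alpha,p}^{-1}A$, shows that the twisted eigenspace $\ker(A-\lambda L_{\alpha,p})$ is stable under $L_{\beta,p}^{-1}B$, and picks an eigenvector of the latter inside it to form the second column of $P$ -- which is exactly the standard common-eigenvector argument you invoke, only inlined rather than factored through the explicit statement $MM'=M'M$. Your verification that the diagonal twists $L_{\cdot,p}$ commute is the same observation the paper uses implicitly, so the two proofs coincide up to packaging.
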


\begin{proof}\label{Preuve Lemma common normal form for commuting resonant transfo}
    We use the notation of the proof of Proposition \ref{Proposition Groupe des transfo résonantes}. Assume that we can write $f=(\alpha,A)$ and $g=(\beta,B)$. The commutation reads 
    $$
    (\alpha\beta,\tau_p(\beta)(A)B)=(\beta\alpha,\tau_p(\alpha)(B)A).
    $$
    We notice that $\tau_p(z)(C)=L_{z,p}CL_{z,p}^{-1}$. Therefore, the commutation is equivalent with
    $$
    L_{\beta,p}AL_{\beta,p}^{-1}B=L_{\alpha,p}BL_{\alpha,p}^{-1}A.
    $$
    Let $\lambda \in \C$ and $z\in\C^2\setminus\{0\}$ be such that $Az=\lambda L_{\alpha,p}z$. Then, we have 
    $$
    AL_{\beta,p}^{-1}Bz=L_{\beta,p}^{-1}L_{\alpha,p}BL_{\alpha,p}^{-1}Az=L_{\beta,p}^{-1}L_{\alpha,p}BL_{\alpha,p}^{-1}\lambda L_{\alpha,p}z=\lambda L_{\alpha,p}L_{\beta,p}^{-1}Bz.
    $$
    Hence, the space $\ker(A-\lambda L_{\alpha,p})$ is stable by $L_{\beta,p}^{-1}B$. Therefore, there exists an eigenvector $y$ of $L_{\beta,p}^{-1}B$ in $\ker(A-\lambda L_{\alpha,p})$. Let $x$ be such that $(x,y)$ is a basis of $\C^2$ and let $P$ be the matrix of this basis in the canonical basis. Then $P$ is such that $\tau_p(\alpha)(P^{-1})AP$ and $\tau_p(\beta)(P^{-1})BP$ are inferior triangular. The element $h=(1,P)$ allows us to conclude the proof.
\end{proof}

From now on, we assume that 
$$
\begin{matrix}
    f & : & V & \longrightarrow & V \\
    && (\xi_1,\xi_2,\xi_3) & \longmapsto & (\alpha_1\xi_1,\alpha_2\xi_2, \alpha_3\xi_3+\varepsilon\xi_1^p\xi_2^q), \\
    g & : & V & \longrightarrow & V \\
    && (\xi_1,\xi_2,\xi_3) & \longmapsto & (\beta_1\xi_1,\beta_2\xi_2, \beta_3\xi_3+\delta\xi_1^p\xi_2^q).
\end{matrix}
$$
The commutation of $f$ and $g$ reads
$$
\varepsilon(\beta_3-\beta_1^p\beta_2^q)=\delta(\alpha_3-\alpha_1^p\alpha_2^q).
$$

\begin{Lemma}\label{Lemma Diagonalisation des elements de Gpq si non resonant}
    Assume that $\alpha_3$ and $\alpha_1^p\alpha_2^q$ are distinct complex numbers. Then there exists an element $h$ in $G_{p,q}$ such that $h^{-1}fh$ and $h^{-1}gh$ are linear diagonal.
\end{Lemma}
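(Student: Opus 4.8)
The plan is to look for a conjugating element $h$ of the simplest possible shape, namely a resonant transformation that is the identity on the diagonal part and modifies only the third coordinate by the single resonant monomial, i.e.
$$
h:(\xi_1,\xi_2,\xi_3)\longmapsto(\xi_1,\xi_2,\xi_3+\mu\xi_1^p\xi_2^q)
$$
for a scalar $\mu\in\C$ to be determined. Such an $h$ belongs to $G_{p,q}$, and its inverse is $h^{-1}(\eta_1,\eta_2,\eta_3)=(\eta_1,\eta_2,\eta_3-\mu\eta_1^p\eta_2^q)$. The idea is the classical one of solving a homological equation: conjugating $f$ by $h$ alters the coefficient of the resonant monomial $\xi_1^p\xi_2^q$ in the third component, and the non-resonance hypothesis is what makes the resulting linear equation in $\mu$ solvable.

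First I would compute $h^{-1}fh$ directly. A short calculation shows that the first two components remain $\alpha_1\xi_1$ and $\alpha_2\xi_2$, while the third component becomes
$$
\alpha_3\xi_3+\bigl[\mu(\alpha_3-\alpha_1^p\alpha_2^q)+\varepsilon\bigr]\xi_1^p\xi_2^q.
$$
Hence $h^{-1}fh$ is linear diagonal exactly when
$$
\mu=\frac{-\varepsilon}{\alpha_3-\alpha_1^p\alpha_2^q},
$$
which is well defined precisely because of the standing hypothesis $\alpha_3\neq\alpha_1^p\alpha_2^q$. This is the homological equation, and the non-resonance condition is exactly what guarantees its solvability.

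It then remains to verify that this same $h$ also diagonalizes $g$. The analogous computation shows that the third component of $h^{-1}gh$ is
$$
\beta_3\xi_3+\bigl[\mu(\beta_3-\beta_1^p\beta_2^q)+\delta\bigr]\xi_1^p\xi_2^q,
$$
so I would need $\mu(\beta_3-\beta_1^p\beta_2^q)+\delta=0$. Substituting the value of $\mu$ found above, this is equivalent to
$$
\delta(\alpha_3-\alpha_1^p\alpha_2^q)=\varepsilon(\beta_3-\beta_1^p\beta_2^q),
$$
which is exactly the commutation relation between $f$ and $g$ recorded just before the statement. Thus the single scalar $\mu$ dictated by $f$ automatically works for $g$, and both conjugates are linear diagonal.

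The only genuinely delicate point is this last compatibility: a priori one scalar $\mu$ must simultaneously kill two resonant coefficients, one for $f$ and one for $g$. The computation above shows that the obstruction to doing so is measured precisely by the difference $\delta(\alpha_3-\alpha_1^p\alpha_2^q)-\varepsilon(\beta_3-\beta_1^p\beta_2^q)$, which vanishes by commutation. I expect no further difficulties: the conjugation formulas are explicit polynomial manipulations, and since $f$ and $g$ are already in the triangular form provided by Lemma \ref{Lemma common normal form for commuting resonant transfo} (or automatically so when $q>1$, where $G_{p,q}$ carries only the single resonance), no other resonant monomials can appear, so the computation closes.
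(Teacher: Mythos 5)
Your proposal is correct and follows essentially the same route as the paper: the author also conjugates by $h(\xi_1,\xi_2,\xi_3)=(\xi_1,\xi_2,\xi_3+c\,\xi_1^p\xi_2^q)$ with $c=-\varepsilon/(\alpha_3-\alpha_1^p\alpha_2^q)$, and checks that the same $c$ works for $g$ precisely because of the commutation relation $\varepsilon(\beta_3-\beta_1^p\beta_2^q)=\delta(\alpha_3-\alpha_1^p\alpha_2^q)$. The only cosmetic difference is that the paper compares $fh$ with $h\circ A_{\mathrm{diag}}$ rather than computing $h^{-1}fh$ directly.
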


\begin{proof}\label{Preuve Lemma Diagonalisation des elements de Gpq si non resonant}
    Let $h$ be defined by 
    $$
    h(\xi_1,\xi_2,\xi_3)=(\xi_1,\xi_2,\xi_3+c\xi_1^p\xi_2^q),
    $$
    where $c$ is a complex number. Then we have
    $$
    \begin{matrix}
        fh(\xi_1,\xi_2,\xi_3) & = & (\alpha_1\xi_1,\alpha_2\xi_2,\alpha_3\xi_3+(\alpha_3c+\varepsilon)\xi_1^p\xi_2^q)
    \end{matrix}
    $$
    and 
    $$
    \begin{matrix}
        h(\alpha_1\xi_1,\alpha_2\xi_2,\alpha_3\xi_3)&=&(\alpha_1\xi_1,\alpha_2\xi_2,\alpha_3\xi_3+c\alpha_1^p\alpha_2^q\xi_1^p\xi_2^q).
    \end{matrix}
    $$
    Therefore, choosing 
    $$
    c=\frac{-\varepsilon}{\alpha_3-\alpha_1^p\alpha_2^q},
    $$
    we have $fh=h(\alpha_1\xi_1,\alpha_2\xi_2,\alpha_3\xi_3)$.

    According to the computations for $f$, we have $gh=h(\beta_1\xi_1,\beta_2\xi_2,\beta_3\xi_3)$ if and only if
    $$
    \begin{matrix}
        \beta_3\frac{-\varepsilon}{\alpha_3-\alpha_1^p\alpha_2^q}+\delta&=&\frac{-\varepsilon}{\alpha_3-\alpha_1^p\alpha_2^q}\beta_1^p\beta_2^q.
    \end{matrix}
    $$
    The latter is equivalent with the commutation of $f$ and $g$.
\end{proof}

We consider the action 

\begin{center}
    $
    \begin{matrix}
        \mathbb{Z}^2\times V & \longrightarrow & V   \\
        ((r,s),\xi) & \longmapsto & f^{\circ r}\circ g^{\circ s}(\xi).
    \end{matrix}
    $
\end{center}

We are now able to prove the following result, which is essential to understand the small deformations of the LVM manifold, the small deformations of the natural resonant structure, and the relation between both.

\begin{Theorem}\label{Theorem The action is fpf and proper}
    The above action is fixed point free, proper and cocompact.
\end{Theorem}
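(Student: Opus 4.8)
The plan is to split according to whether the commuting pair $f,g$ can be conjugated to diagonal form inside $G_{p,q}$, reducing the diagonalizable case to the classical LVM situation and treating only the genuinely resonant case by hand. After conjugating by a suitable $h\in G_{p,q}$ I would first bring $f,g$ to the triangular normal form of Lemma \ref{Lemma common normal form for commuting resonant transfo} (needed only when $q=1$; for $q>1$ the elements already have that shape), so that $f(\xi)=(\alpha_1\xi_1,\alpha_2\xi_2,\alpha_3\xi_3+\varepsilon\xi_1^p\xi_2^q)$ and $g(\xi)=(\beta_1\xi_1,\beta_2\xi_2,\beta_3\xi_3+\delta\xi_1^p\xi_2^q)$ with the stated commutation relation. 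If $\alpha_3\neq\alpha_1^p\alpha_2^q$, Lemma \ref{Lemma Diagonalisation des elements de Gpq si non resonant} conjugates the pair to a linear diagonal one; its eigenvalues are close to those of the resonant LVM manifold $N_0$ and, through the logarithmic formulas defining the $\Lambda_j$, correspond to a configuration near $\Lambda^{(0)}$. Since the Siegel and weak hyperbolicity conditions (Definition \ref{Definition Siegel domain and weak hyperbolicity}) are open, this nearby configuration is still LVM, so the diagonal action is an LVM action, hence fixed point free, proper and cocompact; all three properties are invariant under the conjugating biholomorphism, and so transfer to $\langle f,g\rangle$.

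It remains to treat the resonant case $\alpha_3=\alpha_1^p\alpha_2^q$ (whence $\beta_3=\beta_1^p\beta_2^q$ as soon as $\varepsilon\neq0$, by commutation), where no conjugation removes the monomial. The computational key is the closed form of the iterates: writing $\mu_j=\alpha_j^r\beta_j^s$, one has $f^rg^s(\xi)=(\mu_1\xi_1,\mu_2\xi_2,\mu_3\xi_3+c_{r,s}\xi_1^p\xi_2^q)$ with $\mu_3=\mu_1^p\mu_2^q$ and $c_{r,s}$ growing only polynomially in $(r,s)$; equivalently, on $\{\xi_2\neq0\}$ the coordinate $w=\xi_3/(\xi_1^p\xi_2^q)$ turns $f^rg^s$ into the translation $w\mapsto w+r\varepsilon/\alpha_3+s\delta/\beta_3$, whereas the underlying diagonal action (same eigenvalues $\mu_j$) fixes $w$. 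This yields a quick freeness argument: a relation $f^rg^s(\xi)=\xi$ forces $\mu_1=1$ (as $\xi_1\neq0$); since the diagonal eigenvalue action is itself an LVM action, hence fixed point free, we get $\mu_2\neq1$ and $\mu_3\neq1$, so $\xi_2=0$ from the second coordinate and then $\xi_3=0$ from the third (the monomial vanishing for $q\geq1$), contradicting $\xi\in V$.

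For properness I would argue by contradiction with a sequence $(r_k,s_k)\to\infty$ and points $\xi^{(k)}\to\xi^*$, $f^{r_k}g^{s_k}\xi^{(k)}\to\eta^*$ inside a compact $K\subset V$. The first coordinate bounds $\log|\mu_1^{(k)}|$; the crux is bounding $\log|\mu_2^{(k)}|$, after which properness of the embedded diagonal action—that is, properness of $(r,s)\mapsto(\log|\mu_1|,\log|\mu_2|)$, whose rank-$2$ injectivity rests on Lemma \ref{Lemma Useful lemma 2 6 4}—forces $(r_k,s_k)$ to remain bounded. When $\xi_2^*\neq0$ the second coordinate bounds $\mu_2^{(k)}$ directly; the delicate regime is $\xi_2^*=0$ (so $\xi_3^*\neq0$), where one compares the third-coordinate terms $\mu_3^{(k)}\xi_3^{(k)}$ and $c_{r_k,s_k}\xi_1^{(k)\,p}\xi_2^{(k)\,q}$. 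Here I would exploit that $|\mu_2^{(k)}|$ is exponential in $(r_k,s_k)$—exactly where Lemma \ref{Lemma Useful lemma 2 6 4} excludes $|\alpha_2|=|\beta_2|=1$—so it dominates the polynomial growth of $c_{r_k,s_k}$ and rules out the cancellation that a bounded orbit would require, closing the estimate.

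Once freeness and properness are in hand, $V/\langle f,g\rangle$ is a complex manifold and $V\to V/\langle f,g\rangle$ a regular $\mathbb Z^2$-covering; for cocompactness I would either extend $\langle f,g\rangle$ to the commuting one-parameter groups $F_tG_u$ built from the same Jordan flows, prove that this $\mathbb R^2$-action is proper with compact quotient (a link diffeomorphic to $\mathbb S^1\times\mathbb S^3$), and realize $V/\langle f,g\rangle$ as a $T^2$-bundle over it, or construct an explicit compact fundamental domain adapted to the mixed diagonal-plus-translation dynamics. I expect the main obstacle to lie precisely along the invariant locus $\{\xi_2=0\}$: there the resonant monomial degenerates and the dynamics reverts to diagonal, so the structure cannot be conjugated to the diagonal (LVM) model, and both the uniform properness estimate and the fundamental-domain construction must be carried out directly, balancing the exponential growth of the diagonal eigenvalues against the polynomial growth of the off-diagonal coefficients $c_{r,s}$.
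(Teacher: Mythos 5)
Your treatment of freeness and of the reduction to the diagonal LVM action follows the same strategy as the paper, and the freeness argument is essentially correct. Two remarks on properness. First, the properness of $(r,s)\mapsto(\log\vert\mu_1\vert,\log\vert\mu_2\vert)$ does not follow from Lemma \ref{Lemma Useful lemma 2 6 4} alone (that lemma only excludes $\vert\alpha_2\vert=\vert\beta_2\vert=1$, not the rank-one situation where $(\log\vert\alpha_1\vert,\log\vert\beta_1\vert)$ and $(\log\vert\alpha_2\vert,\log\vert\beta_2\vert)$ are proportional); you need to invoke properness of the full diagonal LVM action, e.g.\ on the stratum $\{\xi_3=0\}\subset V$. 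The paper confronts exactly this degenerate regime as its Case~3, where both $\vert\mu_2^{(n)}\vert$ and $\vert\mu_3^{(n)}\vert$ converge, and resolves it by feeding the auxiliary sequence $\xi'_n=((\xi_n)_1,1,1)$ into the properness of the diagonal action; your sketch does not isolate this case. Second, in the regime $\xi_2^*=0$ your claim that the exponential growth of $\vert\mu_2^{(k)}\vert$ dominates is only the beginning: one must also rule out the sub-case where $\mu_2^{(k)}\to 0$, and the comparison in the third coordinate is between $\mu_3^{(k)}\xi_3^{(k)}$ (divergent since $\xi_3^*\neq 0$) and a term that is $O(r_k)$ \emph{relative to} $\mu_3^{(k)}$ — your statement that $c_{r,s}$ grows ``only polynomially'' is literally false ($c_{r,s}$ is polynomial times $\mu_3$); the correct normalization is the one you give via the coordinate $w=\xi_3/(\xi_1^p\xi_2^q)$. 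These points are repairable and the paper repairs them by an explicit three-step, three-case analysis.

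The genuine gap is cocompactness. You propose two strategies (commuting one-parameter flows giving a $T^2$-bundle over a link, or an explicit fundamental domain), execute neither, and yourself flag that the construction breaks down along the invariant locus $\{\xi_2=0\}$ where the dynamics cannot be conjugated to the diagonal model. The paper avoids this entirely: after arranging (by a diagonal change of coordinates) that $\varepsilon$ and $\delta$ are small, the homomorphism $(t,r,s)\mapsto f^rg^s$ is close to the canonical holonomy of the diagonal LVM manifold $N_d=V/\langle A_{\mathrm{diag}},B_{\mathrm{diag}}\rangle$; the Ehresmann--Thurston principle then produces a section of the associated flat $V$-bundle over $N_d$ transverse to the flat structure, and composing it with the natural projection to $V/\langle f,g\rangle$ yields a local diffeomorphism from the compact manifold $N_d$ onto the connected space $V/\langle f,g\rangle$, which is therefore compact. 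Without this (or a completed fundamental-domain argument), your proof of the theorem is incomplete precisely at the cocompactness claim.
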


\begin{proof}\label{Preuve Theorem The action is fpf and proper}
    Since $f$, respectively $g$, is close enough to $A_0$, respectively $B_0$, and since the Siegel and weak hyperbolicity conditions are open (see Definition \ref{Definition Siegel domain and weak hyperbolicity}), there exists an LVM configuration of type $(2,6,4)$ such that the LVM manifold induced by this configuration is the quotient of $V$ by the $\mathbb{Z}^2$ action generated by the diagonal matrices 
    \begin{center}
        $
        \begin{matrix}
            A_{diag}=
            \begin{pmatrix}
                \alpha_1 & 0 & 0 \\
                0 & \alpha_2 & 0 \\
                0 & 0 & \alpha_3
            \end{pmatrix}
            &
            \text{and}
            &
            B_{diag}=
            \begin{pmatrix}
                \beta_1 & 0 & 0 \\
                0 & \beta_2 & 0 \\
                0 & 0 & \beta_3
            \end{pmatrix},
        \end{matrix}
        $
    \end{center}
    i.e. these matrices are diagonal and have the same eigenvalues as the linear parts of $f$ and $g$.

    Let $(r,s)\in\mathbb{Z}^2$ and $\xi\in V$ be such that 

    \begin{center}
        $
        f^rg^s(\xi)=\xi.
        $
    \end{center}
    Looking at the form of the map $f^rg^s$, the only way an element $\xi$ of $V$ can satisfy such an equation is that  
    \begin{center}
        $
        \left\{
        \begin{matrix}
            \alpha_1^r\beta_1^s=1,   \\
            \alpha_2^r\beta_2^s=1 \text{ or } \alpha_3^r\beta_3^s=1.
        \end{matrix}
        \right.
        $
    \end{center}
    Therefore, we can easily find an eigenvector for the eigenvalue $1$ in $V$ for the matrix $A_{diag}^rB_{diag}^s$. Since the the action generated by $A_{diag}$ and $B_{diag}$ is fixed point free, we must have $(r,s)=(0,0)$. Hence, the action generated by $A$ and $B$ is fixed point free.

    Now, we are going to prove that the action is proper. If $\varepsilon=\delta=0$, i.e. if $f=A_{diag}$ and $g=B_{diag}$, then we do not have anything to prove, because we know that the action is proper. Therefore, we assume that $\varepsilon$ or $\delta$ is nonzero, and that we are not in the hypotheses of Lemma \ref{Lemma Diagonalisation des elements de Gpq si non resonant}, which implies that $\alpha_3=\alpha_1^p\alpha_2^q$ and $\beta_3=\beta_1^p\beta_2^q$.

    We assume that there exist $(\xi_n)_{n\in\mathbb{N}}\in V^{\mathbb{N}}$ and $(r_n,s_n)_{n\in\mathbb{N}}\in(\mathbb{Z}^2)^{\mathbb{N}}$ such that the sequence $(\xi_n)$ converges toward an element $\xi\in V$ and the sequence $(f^{r_n}g^{s_n}(\xi_n))$ converges also towards an element $\zeta\in V$. We write

    \begin{center}
        $
        \begin{matrix}
            \xi_n=((\xi_n)_1,(\xi_n)_2,(\xi_n)_3),   \\
            \xi=(\xi_1,\xi_2,\xi_3),     \\
            \zeta=(\zeta_1,\zeta_2,\zeta_3).
        \end{matrix}
        $
    \end{center}
    With these notations, we have that $f^{r_n}g^{s_n}(\xi_n)$ is given by
    \vspace{0.1cm}
    \begin{center}
        $
        \begin{matrix}
            \begin{pmatrix}
                \alpha_1^{r_n}\beta_1^{s_n}(\xi_n)_1    \\
                \alpha_2^{r_n}\beta_2^{s_n}(\xi_n)_2   \\
                (\alpha_1^p\alpha_2^q)^{r_n}(\beta_1^p\beta_2^q)^{s_n}(\xi_n)_3 +(\frac{\varepsilon}{\alpha_1^p\alpha_2^q}r_n+\frac{\delta}{\beta_1^p\beta_2^q}s_n)(\alpha_1^p\alpha_2^q)^{r_n}(\beta_1^p\beta_2^q)^{s_n}(\xi_n)_1^p(\xi_n)_2^q
            \end{pmatrix}   &
            \begin{matrix}
                L_1     \\
                L_2     \\
                L_3
            \end{matrix}
        \end{matrix}
        $
    \end{center}   
    
    \vspace{0.1cm}
    Without loss of generality, we can assume, following Lemma \ref{Lemma Useful lemma 2 6 4}, that $\vert\alpha_2\vert<1$, that $\vert\beta_2\vert\leq1$ and that if $\vert\beta_2\vert=1$, then $\vert\beta_1\vert<1$.
    
    \textbf{Step 1: If $(r_n)$ is bounded, then $(s_n)$ is bounded.}
    Since $\xi_1\ne0$ and $\zeta_1\ne0$ because $\xi$ and $\zeta$ are elements of $V=\mathbb{C}^*\times(\mathbb{C}^2\setminus\{0\})$, if $(r_n)$ is bounded, then we must have $\vert\beta_1\vert=1$ or $(s_n)$ bounded. Otherwise, the sequence in $L_1$ cannot converge to $\zeta_1$.

    We assume that $\vert\beta_1\vert=1$, then we must have $\vert\beta_2\vert<1$.
    If we can extract such that, after renaming, $s_n\rightarrow -\infty$, then $\xi_2=0$. If not, the sequence in $L_2$ cannot converge. Again, since $\xi$ is an element of $V$, we have $\xi_3\ne0$, and the term $(\alpha_1^p\alpha_2^q)^{r_n}(\beta_1^p\beta_2^q)^{s_n}(\xi_n)_3$ in $L_3$ is divergent with speed like $\beta_2^{s_n}$. But the second term of $L_3$, which is a $O(s_n)$ (recall that here $(r_n)$ is bounded) cannot make the sequence in $L_3$ converge, which is a contradiction.

    If we can extract such that, after renaming, $s_n\rightarrow +\infty$, then the sequence in $L_2$ converges towards $0$, i.e. $\zeta_2=0$. Since $\zeta$ is an element of $V$, we have $\zeta_3\ne0$. But the sequence in $L_3$ converges towards $0$, which is a contradiction (indeed, the only undetermined term is in $s_n\beta_2^{qs_n}$, but this is a classical comparative growth). 

    \textbf{Step 2: If $(s_n)$ is bounded, then $(r_n)$ is bounded.}
    The argument is almost the same (it is even easier, since we already have $\vert\alpha_2\vert<1$).

    \textbf{Step 3: If neither $(r_n)$ is bounded, nor $(s_n)$ is bounded, then we have a contradiction.}

    In this case, we need to have $\vert\alpha_1\vert\ne1$ and $\vert\beta_1\vert\ne1$, otherwise the sequence in $L_1$ cannot converge towards a nonzero element. Since $\xi_1\ne0$ and $\zeta_1\ne0$, there exists an element $x\in\mathbb{R}$ such that

    \begin{center}
        $
        r_n\ln(\vert\alpha_1\vert)+s_n\ln(\vert\beta_1\vert)=x+o(1).
        $
    \end{center}
    We infer that 

    \begin{center}
        $
        s_n=-\frac{\ln(\vert\alpha_1\vert)}{\ln(\vert\beta_1\vert)}r_n+\frac{x}{\ln(\vert\beta_1\vert)}+o(1).
        $
    \end{center}
    Using that, we compute 
    $$
    \begin{matrix}
        \vert\alpha_2^{r_n}\beta_2^{s_n}\vert   &
        =   &
        \vert\alpha_2\vert^{r_n}\vert\beta_2\vert^{-\frac{\ln(\vert\alpha_1\vert)}{\ln(\vert\beta_1\vert)}r_n+\frac{x}{\ln(\vert\beta_1\vert)}+o(1)}    \\
            &
        =   &
        \vert\alpha_2\beta_2^{-\frac{\ln(\vert\alpha_1\vert)}{\ln(\vert\beta_1\vert)}}\vert^{r_n}\vert\beta_2\vert^{\frac{x}{\ln(\vert\beta_1\vert)}}(1+o(1))        \\
            &
        \sim    &
        K_1\vert\alpha_2\beta_2^{-\frac{\ln(\vert\alpha_1\vert)}{\ln(\vert\beta_1\vert)}}\vert^{r_n},
    \end{matrix}
    $$
    and, the same way,
    $$
    \begin{matrix}
        \vert(\alpha_1^p\alpha_2^q)^{r_n}(\beta_1^p\beta_2^q)^{s_n}\vert   &
        \sim    &
        K_2\vert\alpha_2\beta_2^{-\frac{\ln(\vert\alpha_1\vert)}{\ln(\vert\beta_1\vert)}}\vert^{qr_n},
    \end{matrix}
    $$
    with $K_1,K_2$ positive real numbers, because $\vert\alpha_1^{pr_n}\beta_1^{qs_n}\vert$ has to converge to a positive real number, in order that $L_1$ converges to $\zeta_1\neq0$.

    We assume that we can extract such that, after renaming, $r_n\rightarrow +\infty$. 

    \textbf{Case 1:}

    If $\vert\alpha_2\beta_2^{-\frac{\ln(\vert\alpha_1\vert)}{\ln(\vert\beta_1\vert)}}\vert<1$, then the sequences in $L_2$ and $L_3$ converge both to $0$, which is impossible since $\zeta$ is an element of $V$. The convergence towards zero is easy to see in $L_2$, and for $L_3$, we use the fact that $s_n$ is equivalent to $-\frac{\ln(\vert\alpha_1\vert)}{\ln(\vert\beta_1\vert)}r_n$ and comparative growth for terms like $r_n\vert\alpha_2\beta_2^{-\frac{\ln(\vert\alpha_1\vert)}{\ln(\vert\beta_1\vert)}}\vert^{qr_n}$.

    \textbf{Case 2:}

    If $\vert\alpha_2\beta_2^{-\frac{\ln(\vert\alpha_1\vert)}{\ln(\vert\beta_1\vert)}}\vert>1$, then $\xi_2=0$, otherwise the sequence in $L_2$ cannot converge. Again, since $\xi$ is an element of $V$, we have $\xi_3\ne0$, and the term $(\alpha_1^p\alpha_2^q)^{r_n}(\beta_1^p\beta_2^q)^{s_n}(\xi_n)_3$ in $L_3$ is divergent with speed like $\vert\alpha_2\beta_2^{-\frac{\ln(\vert\alpha_1\vert)}{\ln(\vert\beta_1\vert)}}\vert^{qr_n}$. But the second term of $L_3$, which is a $O(r_n)$ (again using the fact that $s_n$ is equivalent to $-\frac{\ln(\vert\alpha_1\vert)}{\ln(\vert\beta_1\vert)}r_n$) cannot make the sequence in $L_3$ converge, which is a contradiction.

    \textbf{Case 3:}

    If $\vert\alpha_2\beta_2^{-\frac{\ln(\vert\alpha_1\vert)}{\ln(\vert\beta_1\vert)}}\vert=1$, then $\alpha_2^{r_n}\beta_2^{s_n}$ and $(\alpha_1^p\alpha_2^q)^{r_n}(\beta_1^p\beta_2^q)^{s_n}=\alpha_3^{r_n}\beta_3^{s_n}$ have converging moduli, so we consider the sequence $(\xi'_n)_{n\in\mathbb{N}}\in V^{\mathbb{N}}$ such that $\xi'_n=((\xi_n)_1,1,1)$ and we have 

    \begin{center}
        $
        \begin{matrix}
            (\xi'_n) $ converges towards $(\xi_1,1,1)\in V,      \\
            r_n\rightarrow +\infty,      \\
            (A_{diag}^{r_n}B_{diag}^{s_n}\xi'_n) $ is included in a compact subset of $ V .
        \end{matrix}
        $
    \end{center}
    This contradicts the fact that the action of the group generated by $A_{diag}$ and $B_{diag}$ on $V$ is proper.
    
    If we can extract such that, after renaming, $r_n\rightarrow -\infty$, then the same argument holds. 
    
    To conclude, we have shown in \textbf{Step 3} that $(r_n)$ or $(s_n)$ is bounded, and according to \textbf{Step 1} and \textbf{Step 2}, they both are bounded, showing that the action of the group generated by $A$ and $B$ on $V$ is proper.

    Finally, we need to prove that the quotient space is compact. We will use the Ehresmann-Thurston principle, providing a nice and simple proof of compactness. Up to a linear diagonal change of coordinate in $\mathbb C^3$, we can assume that $\varepsilon$ and $\delta$ are small. Then the homomorphism

    \begin{center}
    $
    \begin{matrix}
        \rho:    &    \mathbb{Z}^{3}    &   \longrightarrow     &   G_{p,q}   \\
            &   (t,r,s)   &   \longmapsto     &   f^rg^s.
    \end{matrix}
    $
    \end{center}
    is close to the initial homomorphism that is associated with the standard $(G_{p,q},V)$-structure of the LVM manifold $N_d:=V/<A_{diag},B_{diag}>$. By the Ehresmann-Thurston principle, the flat bundle over $N_d$
    $$
    \begin{matrix}
        ((\mathbb C\times (\mathbb{C}^2\setminus \{0\}))\times V)/\mathbb Z^3 & \longrightarrow &  N_d
    \end{matrix}
    $$
    given by $\rho$ admits a smooth global section $s$ that is transverse to the flat structure. On the other hand, the total space of this bundle has a natural projection $\varpi$ to $V/<f,g>$. According to what we proved above, this last space has a natural structure of complex manifold, and $\varpi$ is a smooth (even holomorphic) bundle. The composition map $\varpi\circ s : N_d\to V/<f,g>$ is smooth. But since $s$ is transverse to the flat structure of $\varpi$, $\varpi\circ s$ is a local diffeomorphism. Since $V$ is connected, so is $V/<f,g>$ and, therefore the map $\varpi\circ s$ is onto. Hence, the compactness of $N_d$ implies the compactness of $V/<f,g>$.
\end{proof}

This result presents new complex structures, which are arbitrarily close to the LVM one, and gives a description of some resonant structures arbitrary close to the natural one. To be more specific, we infer that every small deformation of the natural resonant structure such that the fundamental group of $V$ has trivial holonomy is complete, in the sense of Definition \ref{Definition completeness and Kleinian for GX structures}. 

\subsection{Kuranishi family in the resonant case}\label{Subsection Kuranishi family of a resonant LVM manifold}

In this section, we will describe the Kuranishi family of an LVM manifold of type $(2,6,4)$. In Section \ref{Subsection Kuranishi family of a non resonant (2,6,4) LVM manifold}, we deal with the case of non-resonant manifolds. We show that the Kuranishi space in that case is smooth and that the Kuranishi family is the LVM family constructed in \cite{Mee2000}. We keep the notations of the previous section \ref{Subsection Deformations in the resonant case}.

In the case where $q\geq 2$, we will represent an element of the group $G_{p,q}$ by a $3\times3$ matrix. We consider the algebraic variety
\footnotesize
$$
\widetilde T_{p,q}:=
\left\{
\left(
\begin{pmatrix}
    \alpha_1 & 0 & 0 \\
    0 & \alpha_2 & 0 \\
    0 & \varepsilon & \alpha_3
\end{pmatrix},
\begin{pmatrix}
    \beta_1 & 0 & 0 \\
    0 & \beta_2 & 0 \\
    0 & \delta & \beta_3
\end{pmatrix}
\right)\in G_{p,q}^2,~\varepsilon(\beta_3-\beta_1^p\beta_2^q)=\delta(\alpha_3-\alpha_1^p\alpha_2^q)
\right\}.
$$
\normalsize
According to Theorem \ref{Theorem The action is fpf and proper}, there exists a neighborhood $T_{p,q}$ of $(A_0,B_0)$ in $\widetilde T_{p,q}$ such that for every element $(A,B)$ in $T_{p,q}$ with 
$$
A=
\begin{pmatrix}
    \alpha_1 & 0 & 0 \\
    0 & \alpha_2 & 0 \\
    0 & \varepsilon & \alpha_3
\end{pmatrix}~B=
\begin{pmatrix}
    \beta_1 & 0 & 0 \\
    0 & \beta_2 & 0 \\
    0 & \delta & \beta_3
\end{pmatrix},
$$
the action $\mathcal A^{p,q}_{A,B}:\Z^2\times V \to V$ of $\Z^2$ on $V$ given by 
$$
\begin{matrix}
    (1,0).(\xi_1,\xi_2,\xi_3)=(\alpha_1\xi_1,\alpha_2\xi_2,\alpha_3\xi_3+\varepsilon\xi_1^p\xi_2^q), \\
    (0,1).(\xi_1,\xi_2,\xi_3)=(\beta_1\xi_1,\beta_2\xi_2,\beta_3\xi_3+\delta\xi_1^p\xi_2^q)
\end{matrix}
$$
is fixed point free, proper and cocompact. 

We consider the action of $\Z^2$
$$
\begin{matrix}
    \Z^2\times T_{p,q}\times V & \longrightarrow & T_{p,q}\times V \\
    ((r,s),(A,B),\xi) & \longmapsto & ((A,B),\mathcal A^{p,q}_{A,B}((r,s),\xi)).
\end{matrix}
$$
Obviously, this action is fixed point free and proper. Therefore, the quotient space $\mathcal M_{p,q}:=(T_{p,q}\times V)/\mathbb Z^2$ carries a natural structure of algebraic variety, and the natural projection $T_{p,q}\times V \to T_{p,q}$ descends to the quotient to a flat and proper algebraic map $\varpi_{p,q} :\mathcal M_{p,q} \to T_{p,q}$. The fibers of $\varpi_{p,q}$ are compact complex manifolds and $\varpi_{p,q}^{-1}(A_0,B_0)$ is isomorphic to $N_0$, so $\varpi_{p,q}$ is a family of deformations of $N_0$.

In the case where $q=1$, we consider the algebraic variety
\scriptsize
$$
\widetilde S_{p}:=
\left\{
\left(
\begin{pmatrix}
    \alpha_1 & 0 & 0 \\
    0 & \alpha_2 & \varepsilon_2 \\
    0 & \varepsilon_1 & \alpha_3
\end{pmatrix},
\begin{pmatrix}
    \beta_1 & 0 & 0 \\
    0 & \beta_2 & \delta_2 \\
    0 & \delta_1 & \beta_3
\end{pmatrix}
\right)\in G_{p,q}^2,~
\left\vert
\begin{matrix}
\varepsilon_1\delta_2\beta_1^{p}&=&\delta_1\varepsilon_2\alpha_1^{p} \\
    \varepsilon_1(\beta_3-\beta_1^p\beta_2)&=&\delta_1(\alpha_3-\alpha_1^p\alpha_2) \\
    \varepsilon_2(\beta_2-\beta_1^{-p}\beta_3)&=&\delta_2(\alpha_2-\alpha_1^{-p}\alpha_3)
\end{matrix}
\right.
\right\}.
$$
\normalsize
According to Proposition \ref{Proposition Groupe des transfo résonantes} \textit{(iii)} and Theorem \ref{Theorem The action is fpf and proper}, there exists a neighborhood $S_p$ of $(A_0,B_0)$ in $\widetilde S_{p}$ such that for every element $(A,B)$ in $S_p$ with 
$$
A=
\begin{pmatrix}
    \alpha_1 & 0 & 0 \\
    0 & \alpha_2 & \varepsilon_2 \\
    0 & \varepsilon_1 & \alpha_3
\end{pmatrix}~B=
\begin{pmatrix}
    \beta_1 & 0 & 0 \\
    0 & \beta_2 & \delta_2 \\
    0 & \delta_1 & \beta_3
\end{pmatrix},
$$
the action $\mathcal A^{p}_{A,B}:\Z^2\times V \to V$ of $\Z^2$ on $V$ given by 
$$
\begin{matrix}
    (1,0).(\xi_1,\xi_2,\xi_3)&=&(\alpha_1\xi_1,\alpha_2\xi_2+\varepsilon_2\xi_1^{-p}\xi_3,\alpha_3\xi_3+\varepsilon_1\xi_1^p\xi_2), \\
    (0,1).(\xi_1,\xi_2,\xi_3)&=&(\beta_1\xi_1,\beta_2\xi_2+\delta_2\xi_1^{-p}\xi_3,\beta_3\xi_3+\delta_1\xi_1^p\xi_2)
\end{matrix}
$$
is fixed point free, proper and cocompact. 

We consider the action of $\Z^2$
$$
\begin{matrix}
    \Z^2\times S_p\times V & \longrightarrow & S_p\times V \\
    ((r,s),(A,B),\xi) & \longmapsto & ((A,B),\mathcal A^{p}_{A,B}((r,s),\xi)).
\end{matrix}
$$
Obviously, this action is fixed point free and proper. Therefore, the quotient space $\mathcal M_{p}:=(S_{p}\times V)/\mathbb Z^2$ carries a natural structure of algebraic variety, and the natural projection $S_{p}\times V \to S_{p}$ descends to the quotient to a flat and proper algebraic map $\varpi_{p} :\mathcal M_{p} \to S_{p}$. The fibers of $\varpi_{p}$ are compact complex manifolds and $\varpi_{p}^{-1}(A_0,B_0)$ is isomorphic to $N_0$, so $\varpi_{p}$ is a family of deformations of $N_0$.

The purpose of this section is to prove that $\varpi_{p,q}$, or $\varpi_p$ if $q=1$, is the Kuranishi family of $N_0$. We begin with some lemmas concerning the Kodaira-Spencer map of this family.

\begin{Lemma}\label{Lemma Zariski tangent space of Tpq and Sp}
    (i) The Zariski tangent space of $T_{p,q}$, or $S_p$, depending on the case, at $(A_0,B_0)$ is isomorphic to the space 
    $$
    \left(VF^{\alpha^{(0)},\beta^{(0)}}_{\mathbb Z \times \mathbb N \times \mathbb N}\right)^2.
    $$
    (ii) The Kodaira-Spencer map of the family $\varpi_{p,q}$, or $\varpi_p$, is an isomorphism.
\end{Lemma}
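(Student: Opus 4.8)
The plan is to treat the two assertions in turn at the base point $(A_0,B_0)$; the argument is uniform across the two cases, the only difference between the family $\varpi_{p,q}$ over $T_{p,q}$ (case $q\geq 2$) and $\varpi_p$ over $S_p$ (case $q=1$) being the number of off-diagonal parameters and of defining equations.

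For (i), I would first record that the Lie algebra of $G_{p,q}$ (resp. $G_{p,1}$) is canonically the space $VF^{\alpha^{(0)},\beta^{(0)}}_{\mathbb Z\times\mathbb N\times\mathbb N}$: differentiating a path of resonant transformations $z\mapsto\sum_{j,p}a_{j,p}(t)\,z^p e_j$ through the identity yields exactly a resonant vector field, and the dimension count of Proposition \ref{Proposition Groupe des transfo résonantes} matches the number of resonances of $N_0$. Hence $T_{(A_0,B_0)}(G_{p,q}^2)\cong(VF^{\alpha^{(0)},\beta^{(0)}}_{\mathbb Z\times\mathbb N\times\mathbb N})^2$, and since $T_{p,q}$ (resp. $S_p$) is an open neighbourhood of $(A_0,B_0)$ in $\widetilde T_{p,q}$ (resp. $\widetilde S_p$), it remains to show that the equations cutting out $\widetilde T_{p,q}$ (resp. $\widetilde S_p$) inside $G_{p,q}^2$ have vanishing differential at $(A_0,B_0)$. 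This is immediate from their shape: each defining polynomial is a sum of monomials, and every monomial is a product of at least two functions that vanish at $(A_0,B_0)$ — either one of the off-diagonal coefficients $\varepsilon,\delta$ (resp. $\varepsilon_1,\varepsilon_2,\delta_1,\delta_2$), which vanish because $A_0,B_0$ are diagonal, or one of the resonance defects $\alpha_3-\alpha_1^p\alpha_2^q$, $\beta_3-\beta_1^p\beta_2^q$ (resp. their $q=1$ analogues), which vanish because $N_0$ carries the resonance. By the Leibniz rule the differential of each such monomial vanishes at $(A_0,B_0)$, so the Zariski tangent space of the base is the entire ambient space $(VF^{\alpha^{(0)},\beta^{(0)}}_{\mathbb Z\times\mathbb N\times\mathbb N})^2$.

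For (ii), I would use (i) to regard a Zariski tangent vector as a pair $(X,Y)\in(VF^{\alpha^{(0)},\beta^{(0)}}_{\mathbb Z\times\mathbb N\times\mathbb N})^2$, where $X=\dot f\circ f^{-1}$ and $Y=\dot g\circ g^{-1}$ are the infinitesimal variations of the two generators $f,g$ of the action of Section \ref{Subsection Deformations in the resonant case}. Both are $\langle A_0,B_0\rangle$-invariant, so the first-order commutation relation $(Id-B_*)X=(Id-A_*)Y$ holds automatically — consistently with the vanishing differentials found in (i). I would then repeat the construction of Lemma \ref{Lemma Kodaira-Spencer map of the LVM family}: trivialize the family over a Stein cover of $N_0$ adapted to $\pi=\pi_B\circ\pi_A$, lift $(X,Y)$ to local vector fields on the total space, and read off the resulting \v{C}ech $1$-cocycle. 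As in the non-resonant computation, invariance guarantees that $(\pi_A)_*$ and $(\pi_B)_*\circ\sigma_1$ are well defined on the relevant classes, and the resulting class is, up to the canonical relabelling of the two factors, the image of $(X,Y)$ under the isomorphism
$$
(X,Y)\longmapsto\sigma_2((\pi_A)_*X)+(\pi_B)_*(\sigma_1(Y))
$$
recalled just before Proposition \ref{Proposition First obstruction} and furnished by Proposition \ref{Proposition Cohomology in Theta for a LVM manifold of type 264}. Since that map is an isomorphism, $KS$ is too.

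The hard part will be the \v{C}ech bookkeeping in (ii): I would have to run the two Douady long exact sequences simultaneously and check that the variation of one generator feeds into the $A$-sequence through $\sigma_1$ and the other into the $B$-sequence through $\sigma_2$, with no cross terms — equivalently, that $\im(\sigma_2)\cap\im((\pi_B)_*\sigma_1)=\{0\}$, the transversality already invoked in the proof of Lemma \ref{Lemma Kodaira-Spencer map of the LVM family}, together with the commutation identities between the connecting maps. Should this explicit identification become delicate in the case $q=1$, where the generators carry two off-diagonal terms, it would suffice — the two spaces having equal finite dimension by (i) and Proposition \ref{Proposition Cohomology in Theta for a LVM manifold of type 264} — to prove only that $KS$ is injective; and injectivity reduces to the observation that a nonzero $\langle A_0,B_0\rangle$-invariant pair cannot be a coboundary, because for the diagonalizable operators $A_*,B_*$ on $H^0(V,\Theta)$ one has $\ker(Id-A_*)\cap\im(Id-A_*)=\{0\}$ monomial by monomial, and likewise for $B_*$.
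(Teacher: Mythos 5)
Your proposal is correct, but it reaches both conclusions by a genuinely different route than the paper. For (i), you differentiate the defining equations of $\widetilde T_{p,q}$ (resp.\ $\widetilde S_p$) inside $G_{p,q}^2$ and observe that every monomial is a product of at least two functions vanishing at $(A_0,B_0)$ (an off-diagonal entry, or a resonance defect such as $\alpha_3-\alpha_1^p\alpha_2^q$), so the constraints are quadratic at the base point and the Zariski tangent space is all of $\mathfrak g_{p,q}^2$. The paper instead rewrites $\widetilde T_{p,q}$ and $\widetilde S_p$ as the representation variety $\Hom(\Z^2,G_{p,q})$, invokes the identification $T_{\rho_0}\Hom(\Z^2,G_{p,q})\cong H^1(\Z^2,(\mathfrak g_{p,q})_{Ad_{\rho_0}})$, and uses that $\rho_0$ is central in $G_{p,q}$ (precisely because of the resonance) to conclude that this cohomology is $\mathfrak g_{p,q}^2$. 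The two computations are equivalent; yours is more elementary, the paper's makes the mechanism (centrality of the holonomy) conceptually visible. For (ii), the paper does not evaluate the Kodaira--Spencer map on the whole Zariski tangent space: for each commuting pair $(X,Y)$ it produces an honest analytic curve $t\mapsto(A_0\exp(tX),B_0\exp(tY))$ inside the singular base, checks (by the same \v{C}ech computation as in Lemma \ref{Lemma Kodaira-Spencer map of the LVM family}) that the pullback family realizes $(X,Y)$, and concludes surjectivity since the pairs $(X,0)$ and $(0,Y)$ already span; injectivity then follows from the dimension count of (i) against Proposition \ref{Proposition Cohomology in Theta for a LVM manifold of type 264}. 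Your plan of computing $KS$ directly on an arbitrary Zariski tangent vector is legitimate, but at a singular base point not every Zariski tangent vector is tangent to a curve, so you must carry out the cocycle computation for first-order (dual-number) deformations rather than by lifting curves; the paper's one-parameter-subgroup trick, combined with linearity of $KS$, sidesteps exactly this point. Both arguments ultimately rest on the same identification of $H^1(N_0,\Theta)$ with $\left(VF^{\alpha^{(0)},\beta^{(0)}}_{\Z\times\N\times\N}\right)^2$ and on the transversality $\im(\sigma_2)\cap\im((\pi_B)_*\sigma_1)=\{0\}$ that you correctly flag as the delicate bookkeeping step.
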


\begin{proof}\label{Preuve Lemma Zariski tangent space of Tpq and Sp}
    \textit{(i)} The affine varieties $\widetilde{T}_{p,q}$ and $\widetilde{S}_{p,q}$ can both be written as the representation variety 
    $$
    \Hom(\mathbb Z^2,G_{p,q}).
    $$
    We denote by $\mathfrak g_{p,q}$ the Lie algebra of the group $G_{p,q}$. The interest of this writing is that we have an isomorphism 
    $$
    T_\rho \widetilde{T}_{p,q} \cong H^1(\mathbb Z^2,(\mathfrak g_{p,q})_{Ad_\rho})
    $$ 
    or 
    $$
    S_\rho \widetilde{S}_{p} \cong H^1(\mathbb Z^2,(\mathfrak g_{p,q})_{Ad_\rho}),
    $$
    where $H^1(\mathbb Z^2,(\mathfrak g_{p,q}) _{Ad_\rho})$ is the first group cohomology induced by $\rho\in \widetilde{T}_{p,q}$, or $\widetilde{S}_{p}$.

    Due to the resonance assumption, the natural homomorphism $\rho_0=(A_0,B_0)$ takes values in the center of $G_{p,q}$. Therefore, we have 
    $$
    H^1(\mathbb Z^2,(\mathfrak g_{p,q}) _{Ad_{\rho_0}})\cong \mathfrak g_{p,q} ^2,
    $$ which concludes the proof.

    \textit{(ii)} We will use our computation of the first obstruction, namely Proposition \ref{Proposition First obstruction}. Indeed, we will show that for every $(X,Y)\in \left(VF^{\alpha^{(0)},\beta^{(0)}}_{\mathbb Z \times \mathbb N \times \mathbb N}\right)^2$ such that $[X,Y]=0$, there exists a germ of holomorphic curve $\eta :(\mathbb C,0)\to\mathbb (T_{p,q},\rho_0)$ or $\eta:(\mathbb C,0)\to\mathbb (S_{p},\rho_0)$ such that the Kodaira-Spencer map of the pullback family over $(\mathbb C,0)$ denoted by $KS_{\eta^*\varpi_{p,q}}$ verifies 
    $$
    KS_{\eta^*\varpi_{p,q}}\left(\frac{\partial}{\partial t}\right)=(X,Y).
    $$

    Let $(X,Y)\in \left(VF^{\alpha^{(0)},\beta^{(0)}}_{\mathbb Z \times \mathbb N \times \mathbb N}\right)^2$ be such that $[X,Y]=0$. We consider 
    $$
    \begin{matrix}
        \eta & : & (\mathbb C,0) & \longrightarrow & \mathbb (T_{p,q},\rho_0) ~(respectively ~ (S_p,\rho_0)) \\
             &   &  t            & \longmapsto     & (A_0\exp(tX),B_0\exp(tY)),
    \end{matrix}
    $$ 
    where we identify a homomorphism from $\mathbb Z^2$ to $G_{p,q}$ with the couple of images of the two standard generators of $\mathbb Z^2$ and the elements of $G_{p,q}$ with their standard action by biholomorphism on $V=\mathbb C^* \times (\mathbb C^2\setminus \{0\})$.
    Performing the same computation as in Section \ref{Subsection Kuranishi family of a non resonant (2,6,4) LVM manifold}, we get exactly what we wanted.

    This shows that the Kodaira-Spencer map of the family $\varpi_{p,q}$ (or $\varpi_p$) is onto. According to \textit{(i)}, the spaces $T_{\rho_0}T_{p,q}$, $T_{\rho_0}S_p$ and $H^1(N_0,\Theta)$ have the same dimension. Therefore, the Kodaira-Spencer map of the family $\varpi_{p,q}$ or $\varpi_p$ is an isomorphism.
\end{proof}

If the space $T_{p,q}$ (or $S_p$) was smooth, according to Corollary \ref{Corollary Smooth family with invertible KS map}, we could conclude that $\varpi_{p,q}$ (or $\varpi_p$) is the Kuranishi family of the manifold $N_0$. But this is not the case. Still, $\varpi_{p,q}$ (or $\varpi_p$) can be the Kuranishi family of $N_0$, because we showed that the Kuranishi space of $N_0$ is not smooth, because there are obstructed elements in $H^1(N_0,\Theta)$. On the other hand, we have the following corollary. 

\begin{Corollary}\label{Corollary The higher obstructions vanish}
    Let $\theta\in H^1(N_0,\Theta)$ be such that $[\theta,\theta]=0$, i.e. the first obstruction on $\theta$ vanishes. Then all the obstructions on $\theta$ vanish.
\end{Corollary}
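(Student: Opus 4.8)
The plan is to read off the vanishing of the higher obstructions directly from the existence of the genuine family $\varpi_{p,q}$ (or $\varpi_p$ when $q=1$) constructed above, in the spirit of Douady's viewpoint on obstructions: if a class $\theta\in H^1(N_0,\Theta)$ occurs as the first-order term of an actual holomorphic one-parameter deformation of $N_0$, then every obstruction attached to $\theta$ is automatically killed, since such a deformation provides a solution to the integrability equation to all orders. It therefore suffices, for each $\theta$ annihilated by the first obstruction, to exhibit a genuine curve of deformations of $N_0$ tangent to $\theta$.

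The first step is to translate the hypothesis. Through the isomorphism of Proposition \ref{Proposition Cohomology in Theta for a LVM manifold of type 264}, I write $\theta=(X,Y)\in\left(VF^{\alpha,\beta}_{\mathbb Z \times \mathbb N \times \mathbb N}\right)^2$, and by Proposition \ref{Proposition First obstruction} the condition $[\theta,\theta]=0$ becomes exactly $[X,Y]=0$, the bracket being the ordinary Lie bracket of vector fields. The conceptual crux, which is really the only point requiring care, is that $VF^{\alpha,\beta}_{\mathbb Z \times \mathbb N \times \mathbb N}$ is precisely the Lie algebra $\mathfrak g_{p,q}$ of the group $G_{p,q}$ of resonant transformations, so this vector-field bracket coincides with the Lie bracket of $\mathfrak g_{p,q}$. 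In particular $X$ and $Y$ span a commutative subalgebra, whence $\exp(tX)$ and $\exp(tY)$ commute in $G_{p,q}$ for every $t$.

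Next I would reuse the curve already introduced in the proof of Lemma \ref{Lemma Zariski tangent space of Tpq and Sp}(ii),
$$
\eta:(\mathbb C,0)\longrightarrow (T_{p,q},\rho_0)\ (\text{resp. }(S_p,\rho_0)),\qquad t\longmapsto (A_0\exp(tX),B_0\exp(tY)),
$$
and verify that it really lands in the representation variety $\Hom(\mathbb Z^2,G_{p,q})$. Indeed, the resonance assumption makes $\rho_0=(A_0,B_0)$ central in $G_{p,q}$, and combined with the commutation of $\exp(tX)$ and $\exp(tY)$ this shows that the elements $A_0\exp(tX)$ and $B_0\exp(tY)$ of $G_{p,q}$ commute; hence $\eta(t)$ is a homomorphism $\mathbb Z^2\to G_{p,q}$, and by Theorem \ref{Theorem The action is fpf and proper} it lies in $T_{p,q}$ (resp. $S_p$) for $t$ small. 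Pulling back $\varpi_{p,q}$ (resp. $\varpi_p$) along $\eta$ then produces a genuine holomorphic one-parameter family of deformations of $N_0=\varpi_{p,q}^{-1}(\rho_0)$, and the Kodaira-Spencer computation performed in Lemma \ref{Lemma Zariski tangent space of Tpq and Sp}(ii) yields $KS_{\eta^*\varpi_{p,q}}\left(\frac{\partial}{\partial t}\right)=(X,Y)=\theta$.

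To finish, I would conclude that $\theta$ is the Kodaira-Spencer class of an honest deformation of $N_0$ over $(\mathbb C,0)$, so $\theta$ integrates, and therefore all the obstructions attached to $\theta$ vanish. The main obstacle is entirely contained in the first step: one must recognize that the quadratic obstruction bracket of Proposition \ref{Proposition First obstruction} is literally the Lie bracket governing commutation in $G_{p,q}$, so that the vanishing of the first obstruction is exactly the condition keeping the exponential curve $\eta$ inside the representation variety. Once this matching is made, the conclusion is a formal consequence of the existence of the family $\varpi_{p,q}$ together with the functoriality of the Kodaira-Spencer map, which is precisely why the statement is phrased as a corollary.
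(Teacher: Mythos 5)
Your proposal is correct and follows essentially the same route as the paper: the paper's proof is a one-line appeal to the curve $\eta(t)=(A_0\exp(tX),B_0\exp(tY))$ constructed in the proof of Lemma \ref{Lemma Zariski tangent space of Tpq and Sp}~\textit{(ii)}, whose pullback family realizes $\theta=(X,Y)$ and hence kills all higher obstructions. You merely spell out the details the paper leaves implicit (the identification of the obstruction bracket with the Lie bracket of $\mathfrak g_{p,q}$ and the verification that $\eta$ lands in $\Hom(\mathbb Z^2,G_{p,q})$), which is consistent with the paper's argument.
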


\begin{proof}\label{Preuve Corollary The higher obstructions vanish}
    According to the proof of Lemma \ref{Lemma Zariski tangent space of Tpq and Sp}, we can find a one-parameter family of deformations of $N_0$ that realizes the infinitesimal deformation $\theta$.
\end{proof}

In order to show that $\varpi_{p,q}$ (or $\varpi_p$) is the Kuranishi family of $N_0$, it remains to be shown that this family is complete. We will adapt an argument due to Ghys in his paper \cite{Ghys95}. 

We define a sheaf of non-abelian groups on $N_0$ as follows. For $U$ an open subset of $N_0$, we consider all the biholomorphisms $w:W\to W'$ between two open neighborhoods of $U\times \{0\}$ in $N_0\times \mathbb C$ such that, for every $(x,t)$ in $W$, we have $w(x,t)=(w_1(x,t),t)$ and $w_1(x,0)=x$. We identify two such biholomorphisms if they agree on restrictions. The set of equivalence classes for this relation is denoted by $\Lambda(U)$ and those non-abelian groups (for the composition) are the groups of sections of a sheaf of non-abelian groups $\Lambda$.

\begin{Proposition}\label{Proposition Interet du faisceau Lambda}
    (\cite{Dou60Obs}, Proposition 1 p.4-10) The set $H^1(N_0,\Lambda)$ is identified with the set of germs of deformations of $N_0$ over $(\mathbb C,0)$.
\end{Proposition}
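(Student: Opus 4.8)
The plan is to exhibit two explicit mutually inverse constructions between $H^1(N_0,\Lambda)$ and the set of germs of deformations of $N_0$ over $(\C,0)$, following the classical \v Cech dictionary that turns a non-abelian cohomology class into gluing data. The single analytic input that powers both constructions is \emph{local triviality}: over a sufficiently fine open cover of $N_0$, every germ of deformation becomes isomorphic, relative to the base, to the trivial product. I would isolate this as the key step and reduce the rest to formal bookkeeping of non-abelian \v Cech cohomology.

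First I would pass from deformations to cohomology. Let $\varpi:\mathcal M\to(\C,0)$ be a germ of deformation with $\varpi^{-1}(0)=N_0$, and choose a cover $\mathcal U=\{U_i\}$ of $N_0$ by open sets biholomorphic to polydiscs. By local triviality (shrinking the base if needed) there are biholomorphisms $\varphi_i$ from a neighborhood of $U_i\times\{0\}$ in $U_i\times(\C,0)$ onto a neighborhood of $U_i\times\{0\}$ in $\mathcal M$ that commute with the projections to $(\C,0)$ and restrict to the identity on $U_i\times\{0\}$. On overlaps the map $g_{ij}:=\varphi_i^{-1}\circ\varphi_j$ commutes with the projection to $(\C,0)$ and is the identity at $t=0$, hence is precisely a section of $\Lambda$ over $U_i\cap U_j$. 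On triple overlaps one reads off $g_{ij}g_{jk}=g_{ik}$, so $\{g_{ij}\}$ is a $1$-cocycle and defines a class in $H^1(N_0,\Lambda)$. A different choice of trivializations $\varphi_i'$ produces sections $h_i:=\varphi_i'^{-1}\circ\varphi_i\in\Lambda(U_i)$ with $g_{ij}'=h_i\,g_{ij}\,h_j^{-1}$, so the class is independent of the trivializations; an isomorphism of deformations alters the $\varphi_i$ in the same controlled way and therefore leaves the class unchanged.

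Conversely, given a class represented by a cocycle $\{g_{ij}\}$ subordinate to $\mathcal U$, I would glue the local trivial families $U_i\times(\C,0)$ along the identifications $g_{ij}$. The cocycle relation guarantees consistency on triple overlaps, so the quotient is a complex manifold $\mathcal M$ carrying a holomorphic submersion $\varpi$ to $(\C,0)$; since every $g_{ij}$ is the identity at $t=0$, the gluing over the central fiber is exactly the one defining $N_0$, so $\varpi^{-1}(0)=N_0$. Flatness and properness of $\varpi$ near the central fiber follow from those of the product pieces together with the compactness of $N_0$, so $\mathcal M$ is a germ of deformation. Cohomologous cocycles $g_{ij}'=h_i\,g_{ij}\,h_j^{-1}$ yield isomorphic deformations via the isomorphism induced by the $h_i$, and a direct check shows the two constructions are inverse to one another, giving the asserted bijection of pointed sets, with the distinguished class corresponding to the trivial deformation $N_0\times(\C,0)$.

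The main obstacle is the local triviality step: that the restriction of $\varpi$ over a polydisc chart $U_i$ is isomorphic, as a germ along the central fiber, to $U_i\times(\C,0)$ relative to the base and the identity at $t=0$. Infinitesimally this is the vanishing of $H^1(U_i,\Theta_{U_i})$, which holds because a polydisc is Stein and $\Theta_{U_i}$ is coherent; the real difficulty is upgrading this infinitesimal statement to an honest biholomorphic trivialization of the germ of family. I would obtain it by the standard step-by-step integration, solving the relevant \v Cech (or $\bar\partial$-) equations order by order in $t$ using the above vanishing and invoking the Stein/coherence machinery to control convergence, which is exactly the content furnished by Douady in \cite{Dou60Obs}. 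Once local triviality is secured, the remaining verifications are the routine non-abelian \v Cech computations sketched above.
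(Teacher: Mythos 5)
The paper gives no proof of this statement: it is quoted directly from Douady (\cite{Dou60Obs}, Proposition 1), and your argument — local holomorphic triviality of the germ over a fine enough cover of the central fiber, transition maps as a non-abelian \v Cech $1$-cocycle in $\Lambda$, and the inverse gluing construction — is precisely the standard proof given there. Your reconstruction is correct; the only simplification available is that local triviality over each chart needs no $H^1(U_i,\Theta)$ vanishing or order-by-order integration if one takes the cover fine enough that each $U_i$ lies inside a single submersion chart of the total space, where the holomorphic implicit function theorem already yields the product structure.
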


For $U$ an open subset of $N_0$, the group $\Lambda(U)$ is endowed with a natural filtration. Indeed, for $k\geq 1$, let $\Lambda_k(U)$ be the group of (classes of) biholomorphisms in $\Lambda(U)$ that are tangent to identity up to order $k-1$ along $U\times \{0\}$. Then, we have
$$
...\subset \Lambda_2(U) \subset \Lambda_1(U)=\Lambda(U).
$$

\begin{Proposition}\label{Proposition Iso quotients Lambda avec Theta}
    \cite{Dou60Obs}, p.4-12) For every $k\geq 1$, the quotient sheaf $A_k:=\Lambda_k/\Lambda_{k+1}$ is isomorphic to $\Theta$, the sheaf of germs of holomorphic vector fields on $N_0$.
\end{Proposition}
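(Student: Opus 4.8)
The plan is to construct, for each $k\ge 1$ and each open set $U\subseteq N_0$, an explicit isomorphism of groups $\Lambda_k(U)/\Lambda_{k+1}(U)\xrightarrow{\sim}\Theta(U)$, and then to check that it is natural in $U$ and independent of the chosen coordinates, so that the local isomorphisms assemble into an isomorphism of sheaves $A_k\cong\Theta$. First I would work in a chart of $N_0$ with holomorphic coordinates $z=(z_1,z_2,z_3)$, so that $N_0\times\C$ carries coordinates $(z,t)$. An element $w\in\Lambda_k(U)$ has the form $w(z,t)=(w_1(z,t),t)$ with $w_1(z,0)=z$, and the condition of being tangent to the identity up to order $k-1$ along $U\times\{0\}$ means that, as a germ in $t$ along $t=0$, $$w_1(z,t)=z+t^k\,\phi_w(z)+O(t^{k+1}),$$ where $\phi_w$ is a holomorphic $\C^3$-valued function on $U$. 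The candidate map is $w\mapsto \phi_w$, the coefficient of $t^k$.

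The key step is to show that $w\mapsto\phi_w$ is a group homomorphism from $(\Lambda_k(U),\circ)$ onto the additive group $\Theta(U)$ with kernel exactly $\Lambda_{k+1}(U)$. For the homomorphism property I would take $w,w'\in\Lambda_k(U)$ with leading coefficients $\phi_w,\phi_{w'}$ and compute the first component of $w\circ w'$: expanding $$w_1(w'_1(z,t),t)=w'_1(z,t)+t^k\phi_w(w'_1(z,t))+O(t^{k+1})$$ and using $w'_1(z,t)=z+O(t^k)$ together with $\phi_w(w'_1(z,t))=\phi_w(z)+O(t^k)$, one sees that the cross term contributes only at order $t^{2k}$, hence at order $\ge t^{k+1}$ since $k\ge1$. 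Thus the coefficient of $t^k$ in $w\circ w'$ is $\phi_w+\phi_{w'}$, which proves at once that the map is a homomorphism and, incidentally, that $\Lambda_{k+1}(U)$ is normal in $\Lambda_k(U)$ with abelian quotient. The kernel is visibly $\{w:\phi_w=0\}=\Lambda_{k+1}(U)$, and surjectivity follows by exhibiting, for a given $\phi\in\Theta(U)$, the germ $w(z,t)=(z+t^k\phi(z),t)$, which is a biholomorphism on a neighborhood of $U\times\{0\}$ by the inverse function theorem (its differential along $t=0$ is the identity) and satisfies $\phi_w=\phi$. This yields a group isomorphism $A_k(U)\cong\Theta(U)$.

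Finally I would verify that this isomorphism is coordinate-independent and compatible with restrictions, so that it is an isomorphism of sheaves. Under a holomorphic change of coordinates $\zeta=\zeta(z)$ on $N_0$, the map $w$ reads $\tilde w_1(\zeta,t)=\zeta\bigl(w_1(z(\zeta),t)\bigr)$, and the expansion $$\zeta\bigl(z(\zeta)+t^k\phi_w(z(\zeta))+O(t^{k+1})\bigr)=\zeta+t^k\,D\zeta\cdot\phi_w+O(t^{k+1})$$ shows that the leading coefficient transforms exactly as a section of the tangent sheaf $\Theta$; compatibility with restriction maps is immediate from the pointwise definition of $\phi_w$. Hence the local isomorphisms glue to the desired isomorphism $A_k\cong\Theta$. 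The only genuinely delicate points are bookkeeping: confirming that the composite germs remain biholomorphisms near $t=0$ and that the leading-order cross terms in the composition fall into order $t^{k+1}$ (which is precisely where the hypothesis $k\ge1$ enters); everything else is a direct computation.
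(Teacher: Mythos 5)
Your proof is correct and is essentially the paper's argument run in the opposite direction: the paper defines the isomorphism $\Theta(U)\to\Lambda_k(U)/\Lambda_{k+1}(U)$ by sending a vector field $\chi$ to the class of $(x,t)\mapsto(\phi_\chi(x,t^k),t)$, whose order-$k$ Taylor coefficient in $t$ is exactly $\chi$, so your map $w\mapsto\phi_w$ is precisely its inverse. Your write-up supplies the details (the homomorphism property of the $t^k$-coefficient under composition, the identification of the kernel with $\Lambda_{k+1}(U)$, surjectivity, and coordinate independence) that the paper's one-line proof leaves implicit.
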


\begin{proof}\label{Preuve Proposition Iso quotients Lambda avec Theta}
    Let $U$ be an open subset of $N_0$. For every holomorphic vector field $\chi$ on $U$, we denote by $\phi_\chi:W_\chi  \to U$ the flow of $\chi$, where $(U\times \{0\})\subset W_\chi \subset (U\times \mathbb C)$. 

    For every $k\geq 1$, considering flows allows us to define the map
    $$
    \begin{matrix}
        \Theta(U) & \longrightarrow & \Lambda_k(U)/\Lambda_{k+1}(U) \\
        \chi      & \longmapsto     & [w_\chi:W_\chi\to w_\chi(W_\chi);(x,t)\mapsto (\phi_\chi(x,t^k),t)].
    \end{matrix}
    $$
    For every $U$ this map is an isomorphism. Therefore, this collection of maps produces an isomorphism of sheaves.
\end{proof}

For every $k\geq0$, we denote by $Q_k$ the quotient sheaf $\Lambda/\Lambda_{k+1}$ and we have the following short exact sequence of sheaves of groups
$$
\begin{tikzcd}
    0 \arrow[r] & A_{k+1} \arrow[r] & Q_{k+1} \arrow[r] & Q_k \arrow[r] & 0.
\end{tikzcd}
$$

The important remark of Ghys is that, in the definition of $\Lambda$ and all the sheaves we derived, we could have restricted ourselves to local biholomorphisms in a prescribed sub-pseudogroup of the pseudogroup of local biholomorphisms of $N_0$. In his article \cite{Ghys95}, Ghys considers, for example, the pseudogroup of local biholomorphisms of $\mathrm{SL}(2,\mathbb C)/\Gamma$ that lift to $\mathrm{SL}(2,\mathbb C)$ as restrictions of left translations (p.125).

Here we will use the same technique. We denote by $N^{\mathrm{diff}}$ the smooth manifold underlying to $N_0$.

Let $U$ be an open subset of $N_0$. We consider all the biholomorphisms $w:W\to W'$ between two open neighborhoods of $U\times \{0\}$ in $N_0\times \mathbb C$ such that, for every $(x,t)$ in $W$, we have that 
$w(x,t)=(w_1(x,t),t)$ and $w_1(x,0)=x$ and such that $w$ lifts to an open subset of $V\times \mathbb C$ as a map of the form
$$
\begin{matrix}
    (\xi,t) & \longmapsto & (g(t)\xi,t),
\end{matrix}
$$ 
where $g(t)$ is an element of $G_{p,q}$. As before, we identify two such biholomorphisms if they agree on restrictions. The set of equivalence classes for this relation is denoted by $\Lambda^{G_{p,q}}(U)$ and these non-abelian groups (for the composition) are the groups of sections of a sheaf of non-abelian groups $\Lambda^{G_{p,q}}$. The purpose of this sheaf is that, according to the Ehresmann-Thurston principle \ref{Theorem Ehresmann Thurston principle} applied to $(G_{p,q},V)$-structures, the set $H^1(N_0,\Lambda^{G_{p,q}})$ is identified with the set of germs at $0$ of holomorphic curves in $(\Hom(\mathbb Z^3,G_{p,q}),\rho_0)$. For every $k\geq1$, we consider $\Lambda_k^{G_{p,q}}$, $A_k^{G_{p,q}}$ and $Q_{k-1}^{G_{p,q}}$ defined with $\Lambda^{G_{p,q}}$ the same way as before.

We will need the following lemma.

\begin{Lemma}\label{Lemma map between the representations varieties for Gpq}
    There exist an open neighborhood $U$ of $(A_0,B_0,Id)$ in the representation variety $\Hom(\Z^3,G_{p,q})$, an open neighborhood $W$ of $(A_0,B_0)$ in the variety $\Hom(\Z^2,G_{p,q})$ and an analytic fibration with smooth fibers 
    $$
    \Psi:U\longrightarrow W
    $$
    such that for every $\rho$ in $U$, there exists a $(G_{p,q},V)$-structure on $N^{\mathrm{diff}}$ with holonomy homomorphism $\rho$ close to the initial one, and such that, if $(A,B)=\Psi(\rho)$, then the complex structure induced by the $(G_{p,q},V)$-structure is given by the quotient of $V$ by the action $\mathcal A^{p,q}_{A,B}$.
\end{Lemma}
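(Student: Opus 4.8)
The plan is to imitate the construction of the developing map in the non-resonant case (Theorem \ref{Theorem Developping map for the nearby affine structure in the non resonant case}), replacing the diagonal twist by the flow of a one-parameter subgroup of $G_{p,q}$, and then to read off $(A,B)$ from the equivariance relations. I write a homomorphism $\rho$ near $\rho_0$ as a commuting triple $(f,g,h)=(\rho(1,0,0),\rho(0,1,0),\rho(0,0,1))$ with $f,g,h$ close to $A_0,B_0,\mathrm{Id}$. Since $h$ is close to the identity it has a unique small logarithm $\chi\in\mathfrak g_{p,q}$ with $\exp(\chi)=h$, and because $\mathbb Z^3$ is abelian the whole one-parameter subgroup $t\mapsto\exp(t\chi)$ commutes with $f$ and $g$. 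I then set
$$
dev_\rho(w_1,\xi_2,\xi_3)=\exp(w_1\chi)\cdot(e^{2i\pi w_1},\xi_2,\xi_3),
$$
where $\exp(w_1\chi)\in G_{p,q}$ acts on $V$ by its standard action. For $\rho$ close to $\rho_0$ this is a local biholomorphism, and since $(e^{2i\pi w_1},\xi_2,\xi_3)$ is $1$-periodic in $w_1$ and $\exp((w_1+1)\chi)=h\,\exp(w_1\chi)$, the map $dev_\rho$ is automatically equivariant for the $\mathbb C^*$-generator, with holonomy $h$. This exhibits the $(G_{p,q},V)$-structure with holonomy $\rho$ whose developing map the remark preceding the lemma refers to; when $h=\mathrm{Id}$ one recovers the complete structure of Theorem \ref{Theorem The action is fpf and proper}.

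Next I would produce $(A,B)=\Psi(\rho)$ from the equivariance for the first two generators. Taking for $\gamma_1^\rho$ the lift through the covering $(w_1,\xi)\mapsto(e^{2i\pi w_1},\xi_2,\xi_3)$ of the action of an element $A\in G_{p,q}$, a direct computation (using that $f$ commutes with $\exp(w_1\chi)$) shows that $dev_\rho\circ\gamma_1^\rho=f\circ dev_\rho$ holds exactly when $\exp(a_1\chi)A=f$, i.e. $A=\exp(-a_1\chi)f$, where $a_1$ is the shift of $\gamma_1^\rho$ in the $w_1$-variable, normalized by requiring $e^{2i\pi a_1}$ to equal the first eigenvalue of $A$. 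The latter is a single scalar equation, solvable for $a_1$ near $a_1^{(0)}$ by the implicit function theorem, so $A$ is a well-defined holomorphic function of $\rho$; likewise $B=\exp(-b_1\chi)g$. One checks that $A$ and $B$ commute — using $[f,g]=e$ and the commutation of $\exp(t\chi)$ with $f,g$ — so $(A,B)\in\Hom(\mathbb Z^2,G_{p,q})$; for $q\ge 2$ it already has the shape defining $T_{p,q}$, and for $q=1$ I would apply Lemma \ref{Lemma common normal form for commuting resonant transfo} to conjugate $(A,B)$ simultaneously into the triangular form defining $S_p$. Setting $\Psi(\rho)=(A,B)$ and unwinding the construction shows that $dev_\rho$ descends to a biholomorphism between $N^{\mathrm{diff}}$, equipped with the complex structure pulled back through the $(G_{p,q},V)$-structure, and the quotient $V/\langle A,B\rangle=V/\mathcal A^{p,q}_{A,B}$, which is the assertion about the induced complex structure.

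Finally I would analyze the fibers. Inverting the construction, a point of $\Psi^{-1}(A,B)$ is obtained by choosing $h$ near $\mathrm{Id}$ and setting $f=\exp(a_1\chi)A$, $g=\exp(b_1\chi)B$; the resulting triple lies in $\Hom(\mathbb Z^3,G_{p,q})$ exactly when it is pairwise commuting, which reduces to $[A,h]=[B,h]=e$. Hence the fiber over $(A,B)$ is an open subset of the centralizer $Z_{G_{p,q}}(\langle A,B\rangle)$, a closed complex Lie subgroup and therefore a smooth complex manifold; this yields the surjectivity of $\Psi$ together with the smoothness of its fibers. I expect this last point to be the main obstacle, for two reasons. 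First, both $\Hom(\mathbb Z^3,G_{p,q})$ and $\Hom(\mathbb Z^2,G_{p,q})$ are singular at the base points (the resonance hypothesis forces $\rho_0$ to take values in the center of $G_{p,q}$, so the centralizer, hence the fiber, is larger over the central locus), and one must check that $\Psi$ is nonetheless an analytic fibration onto $W$ with these centralizer fibers. Second, the $q=1$ case is genuinely non-abelian: carrying out the simultaneous triangularization of Lemma \ref{Lemma common normal form for commuting resonant transfo} holomorphically in the parameter $\rho$, so that it furnishes a holomorphic section of the conjugation and smoothly varying fibers inside $S_p$, is the delicate part. The remaining verifications — that $dev_\rho$ is a local diffeomorphism, that $\Psi$ is holomorphic, and the equivariance identities — are routine extensions of the non-resonant computation.
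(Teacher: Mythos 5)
Your construction is essentially the paper's: the paper's developing map for $q=1$ is literally $\exp_{G_{p,q}}(w_1\log_{G_{p,q}}h)\cdot(e^{2i\pi w_1},\xi_2,\xi_3)$, its explicit formulas for $q\geq 2$ are just the unwound version of the same one-parameter-subgroup twist, and your scalar equation $e^{2i\pi a_1}e^{a_1\log\gamma}=\alpha$ for the shift is exactly the paper's implicit equation $\delta\exp(\tfrac{1}{2i\pi}\log\gamma\log\delta)=\alpha$ defining $\Psi$. One correction: the simultaneous triangularization you flag as ``the delicate part'' of the $q=1$ case is not needed, because the variety $\widetilde S_p$ and the action $\mathcal A^p_{A,B}$ are already defined for arbitrary commuting pairs in $G_{p,1}$ (the three equations cutting out $\widetilde S_p$ are just the commutation relations), so $\Psi(\rho)=(\exp(-a_1\chi)f,\exp(-b_1\chi)g)$ lands in the right space without any holomorphically varying conjugation. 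For the last step the paper argues differently from you: it observes that $\Psi$ extends to a holomorphic submersion of an open set of the smooth manifold $G_{p,q}^3$ onto an open set of $G_{p,q}^2$ and admits the section $(A,B)\mapsto(A,B,\mathrm{Id})$, whereas your identification of the fibers with neighborhoods of the identity in the centralizer $Z_{G_{p,q}}(\langle A,B\rangle)$ (a closed, hence smooth, subgroup) is a legitimate and somewhat more informative way to get smoothness of the fibers, and correctly isolates the real subtlety, namely that the fiber dimension jumps over the central locus.
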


\begin{proof}\label{Preuve Lemma map between the representations varieties for Gpq}
    According to the Ehresmann-Thurston principle \ref{Theorem Ehresmann Thurston principle}, there exists an open neighborhood $U_1$ of $(A_0,B_0,Id)$ in $\Hom(\mathbb Z^3,G_{p,q})$ such that for every $\rho$ in $U_1$, there exists a $(G_{p,q},V)$-structure on $N^{\mathrm{diff}}$ close to the initial one, and with holonomy morphism $\rho$. 

    Shrinking $U_1$ if needed, every $\rho$ in $U_1$ is written
    $$
    \left(
    \begin{pmatrix}
        \alpha & (0) \\
        (0)    & A
    \end{pmatrix},
    \begin{pmatrix}
        \beta & (0) \\
        (0)    & B
    \end{pmatrix},
    \begin{pmatrix}
        \gamma & (0) \\
        (0)    & C
    \end{pmatrix}
   \right),
    $$
    where 
    $$
    \begin{pmatrix}
        \alpha & (0) \\
        (0)    & A
    \end{pmatrix}
    $$
    is close enough to $A_0$ in the group $G_{p,q}$,
    $$
    \begin{pmatrix}
        \beta & (0) \\
        (0)    & B
    \end{pmatrix}
    $$
    is close enough to $B_0$, $\gamma$ is close enough to $1$ so that its classical logarithm is defined and 
    $$
    C=\begin{pmatrix}
        c_1 & c_2 \\
        c_3 & c_4
    \end{pmatrix}
    $$ 
    is close enough to the identity matrix so that its classical logarithm is also well defined and, of course, verifies $(q-1)c_2=0$.

    In case we have $q=1$, we consider the map 
    $$
    \begin{matrix}
        \mathrm{Dev}_{\gamma,C} & : & \mathbb C \times (\mathbb C^2\setminus \{0\}) & \longrightarrow & V \\
                       &   & (w_1,\xi_2,\xi_3) & \longmapsto & \left(\exp_{G_{p,q}}\left(w_1\log_{G_{p,q}}\begin{pmatrix}
                           \gamma & (0) \\
                           (0) & C
                       \end{pmatrix}\right).\left(e^{2i\pi w_1},\xi_2,\xi_3\right)\right),
    \end{matrix}
    $$
    where $\exp_{G_{p,q}}$ is the exponential map of the Lie group $G_{p,q}$ and $\log_{G_{p,q}}$ its local inverse around $0\in\mathfrak g_{p,q}$ (recall that $\gamma$ is close enough to $1$ and that $C$ is close enough to $Id$).

    This map is made such that it satisfies the following equivariance relation:  
    $$
    \mathrm{Dev}_{\gamma,C}(w_1+1,\xi_2,\xi_3)=\begin{pmatrix}
        \gamma & (0) \\
        (0)    & C
    \end{pmatrix}.\mathrm{Dev}_{\gamma,C}(w_1,\xi_2,\xi_3).
    $$
    We would like this map to be the developing map of the $(G_{p,q},V)$-structure given by $\rho$, so, writing the equivariance conditions, we define $\delta$ and $\eta$ by the formulae
    $$
    \begin{matrix}
        \delta \exp\left(\frac{1}{2i\pi}\log(\gamma)\log(\delta)\right)&=&\alpha, \\
        \eta \exp\left(\frac{1}{2i\pi}\log(\gamma)\log(\eta)\right)&=&\beta, 
    \end{matrix}
    $$
    and we define also 
    $$
    \begin{matrix}
        D & = & \tau_p(\delta)(N_{\delta,\gamma,C}^{-1})A, \\
        E & = & \tau_p(\eta)(N_{\eta,\gamma,C}^{-1})B.
    \end{matrix}
    $$
    where $N_{x,\gamma,C}$ is the $2\times2$-matrix such that 
    $$
    \exp_{G_{p,q}}\left(\frac{1}{2i\pi}\log(x)\log_{G_{p,q}}\begin{pmatrix}
        \gamma & (0) \\
        (0) & C
    \end{pmatrix}\right)=\begin{pmatrix}
        e^{\frac{1}{2i\pi}\log(x)\log(\gamma)} & (0) \\
        (0) & N_{x,\gamma,C}
    \end{pmatrix}.
    $$
    The fact that $\delta$ and $\eta$ are well defined is a simple application of the Implicit Functions Theorem. 
    
    In case we have $q\geq2$ and $c_4\neq \gamma^pc_1^q$, we consider the map 
    \footnotesize
    $$
    \begin{matrix}
        \mathrm{Dev}_{\gamma,C} & : & \mathbb C \times (\mathbb C^2\setminus \{0\}) & \longrightarrow & V \\
                       &   & (w_1,\xi_2,\xi_3) & \longmapsto & 
                       \begin{pmatrix}
                           e^{(2i\pi+\log(\gamma))w_1} \\
                           e^{w_1\log(c_1)}\xi_2 \\
                           e^{w_1\log(c_4)}\xi_3+\frac{c_3}{\gamma^pc_1^q-c_4}e^{p(2i\pi+\log(\gamma))w_1}e^{qw_1\log(c_1)}\xi_2^q
                       \end{pmatrix},
    \end{matrix}
    $$
    \normalsize
    and we define $\delta, D, \eta$ and $E$ as follows:
    $$
    \begin{matrix}
        \delta \exp\left(\frac{1}{2i\pi}\log(\gamma)\log(\delta)\right)&=&\alpha, \\
        d_1 & = & a_1\exp\left(-\frac{1}{2i\pi}\log(\delta)\log(c_1)\right),\\
        d_4 & = & a_4\exp\left(-\frac{1}{2i\pi}\log(\delta)\log(c_4)\right),\\
        D & = & \begin{pmatrix}
            d_1 & 0 \\
            0 & d_4
        \end{pmatrix}, \\
        \eta \exp\left(\frac{1}{2i\pi}\log(\gamma)\log(\eta)\right)&=&\beta, \\
        e_1 & = & b_1\exp\left(-\frac{1}{2i\pi}\log(\delta)\log(c_1)\right),\\
        e_4 & = & b_4\exp\left(-\frac{1}{2i\pi}\log(\delta)\log(c_4)\right),\\
        
        E & = & \begin{pmatrix}
            e_1 & 0 \\
            0 & e_4
        \end{pmatrix}.
    \end{matrix}
    $$
    
    In case we have $q\geq2$ and $c_4= \gamma^pc_1^q$, we consider the map 
    \footnotesize
    $$
    \begin{matrix}
        \mathrm{Dev}_{\gamma,C} & : & \mathbb C \times (\mathbb C^2\setminus \{0\}) & \longrightarrow & V \\
                       &   & (w_1,\xi_2,\xi_3) & \longmapsto & 
                       \begin{pmatrix}
                           e^{(2i\pi+\log(\gamma))w_1}\\
                           e^{w_1\log(c_1)}\xi_2 \\
                           e^{w_1\log(c_4)}\left(\xi_3+\frac{c_3}{c_4}w_1e^{2ip\pi w_1}\xi_2^q\right)
                       \end{pmatrix},
    \end{matrix}
    $$
    \normalsize
    and we define $\delta, D, \eta$ and $E$ as follows:
    $$
    \begin{matrix}
        \delta \exp\left(\frac{1}{2i\pi}\log(\gamma)\log(\delta)\right)&=&\alpha, \\
        d_1 & = & a_1\exp\left(-\frac{1}{2i\pi}\log(\delta)\log(c_1)\right),\\
        d_4 & = & a_4\exp\left(-\frac{1}{2i\pi}\log(\delta)\log(c_4)\right),\\
        d_3 & = & a_3\frac{d_4}{a_4}-\frac{c_3}{c_4}.\frac{1}{2i\pi}\log(\delta)\delta^pd_1^q, \\
        D & = & \begin{pmatrix}
            d_1 & 0 \\
            d_3 & d_4
        \end{pmatrix}, \\
        \eta \exp\left(\frac{1}{2i\pi}\log(\gamma)\log(\eta)\right)&=&\beta, \\
        e_1 & = & b_1\exp\left(-\frac{1}{2i\pi}\log(\eta)\log(c_1)\right),\\
        e_4 & = & b_4\exp\left(-\frac{1}{2i\pi}\log(\eta)\log(c_4)\right),\\
        e_3 & = & b_3\frac{e_4}{b_4}-\frac{c_3}{c_4}.\frac{1}{2i\pi}\log(\eta)\eta^pe_1^q, \\
        E & = & \begin{pmatrix}
            e_1 & 0 \\
            e_3 & e_4
        \end{pmatrix}.
    \end{matrix}
    $$
    
    In all the cases mentioned above, the element 
    $$
    \Psi(\rho):=\left(
    \begin{pmatrix}
        \delta& (0) \\
        (0) & D
    \end{pmatrix},
    \begin{pmatrix}
        \eta & (0)\\
        (0) & E
    \end{pmatrix}
    \right)
    $$
    of $\Hom(\Z^2,G_{p,q})$ generates a $\Z^2$ action on $V$ that is fixed point free, proper and cocompact, according to Theorem \ref{Theorem The action is fpf and proper}, and $\mathrm{Dev}_{\gamma,C}$ is the holomorphic developing map of the Ehresmann-Thurston structure on $N^{\mathrm{diff}}$ with holonomy $\rho$ and underlying complex structure given by the quotient $V/\Z^2$ generated by $\Psi(\rho)$.
    
    We notice that $\Psi$ can be defined on an open subset of the (smooth) complex manifold $G_{p,q}^3$ and is a holomorphic submersion onto an open subset of $G_{p,q}^2$ that maps an element in $\Hom(\Z^3,G_{p,q})$ on an element of $\Hom(\Z^2,G_{p,q})$. Moreover, locally around $(A_0,B_0)$ in $\Hom(\Z^2,G_{p,q})$, every element $\rho\in\Hom(\Z^2,G_{p,q})$ is the image of $(\rho,Id)\in\Hom(\Z^3,G_{p,q})$. Therefore, $\Psi$ is an analytic fibration with smooth fibers.
    \end{proof}

In that context, Proposition \ref{Proposition Cohomology in Theta for a LVM manifold of type 264} can be reformulated in our context as follows.

\begin{Proposition}\label{Proposition isomorphism cohomologie AGpq et A}
    For every $k\geq 1$ and every $j\in\{1,2\}$, the map 
    $$
    H^j(N_0,A_k^{G_{p,q}})\longrightarrow H^j(N_0,A_k)
    $$
    is an isomorphism.
\end{Proposition}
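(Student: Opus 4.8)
The plan is to reduce the statement to the cohomology computation already carried out in Proposition \ref{Proposition Cohomology in Theta for a LVM manifold of type 264}, by making the comparison map $A_k^{G_{p,q}}\to A_k$ completely explicit and, crucially, independent of $k$. First I would unwind the flow description underlying the isomorphism $A_k\cong\Theta$ of Proposition \ref{Proposition Iso quotients Lambda avec Theta}: a section of $\Lambda_k^{G_{p,q}}(U)$ lifts to a germ of curve $g(t)\in G_{p,q}$ with $g(0)=\mathrm{Id}$, tangent to the identity up to order $k-1$, and its class in $A_k^{G_{p,q}}(U)$ is detected by the leading coefficient $\chi_0\in\mathfrak g_{p,q}$ in $g(t)=\mathrm{Id}+t^k\chi_0+O(t^{k+1})$. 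Descending $\chi_0$ through the covering $\pi\colon V\to N_0$ realizes $A_k^{G_{p,q}}$, under the identification $A_k\cong\Theta$, in terms of the $G_{p,q}$-invariant (i.e. resonant) vector fields, the comparison map being the natural one induced by $\mathfrak g_{p,q}\cong VF^{\alpha,\beta}_{\mathbb Z\times\mathbb N\times\mathbb N}\subset\Theta$. Since this description is the same for every $k$, it suffices to analyze a single morphism of sheaves and show it induces isomorphisms on $H^1$ and $H^2$.

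The core of the argument is then to run, for both sheaves at once, the two short exact Douady sequences of Section \ref{Subsection  Cohomology in the holomorphic tangent sheaf}, the comparison map providing a morphism between the two associated long exact sequences. On the covering $V$ the relevant groups $H^\bullet(V,\Theta)$ were computed via the Künneth formula, and the decisive point is that, by the proof of Proposition \ref{Proposition Cohomology in Theta for a LVM manifold of type 264}, every class that survives in $H^1(N_0,\Theta)$ and $H^2(N_0,\Theta)$ is represented by resonant monomial vector fields, that is by elements coming from $VF^{\alpha,\beta}_{\mathbb Z\times\mathbb N\times\mathbb N}$. This is exactly the sense in which the present proposition is a reformulation of that computation: surjectivity of the comparison map in degrees $1$ and $2$ follows directly, and injectivity follows from reading the same computation backwards, since distinct resonant fields give rise to linearly independent cohomology classes (this is the linear independence that made the compactness argument applicable there), so a nonzero invariant class cannot become a coboundary in $\Theta$.

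The step I expect to be the main obstacle is establishing that the morphism between the two long exact sequences is compatible term by term, namely that the connecting maps $\sigma_1,\sigma_2$ and their invariant analogues commute with the comparison map; here the vanishing furnished by Lemma \ref{Lemma Impossible resonances in the case LVM 264}\textit{(i)} is essential, as the emptiness of $\mathcal R^{\{\alpha,\beta\}}_{j,\mathbb Z\times-(\mathbb N^*)\times-(\mathbb N^*)}$ forces the contribution of $H^1(V,\Theta)$ to drop out on both sides in the same way, preventing the comparison map from acquiring a spurious kernel or cokernel. Once this compatibility and the relevant vanishing are in place, a five-lemma applied to the morphism of long exact sequences promotes the identification in each graded piece to the asserted isomorphisms $H^j(N_0,A_k^{G_{p,q}})\cong H^j(N_0,A_k)$ for $j\in\{1,2\}$ and every $k\ge 1$.
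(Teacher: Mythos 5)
The paper offers no proof of this proposition at all --- it is introduced only with the phrase that Proposition \ref{Proposition Cohomology in Theta for a LVM manifold of type 264} ``can be reformulated in our context as follows'' --- so your instinct to make the comparison map explicit and to run the two Douady sequences in parallel is exactly the intended route, and your identification of $A_k^{G_{p,q}}$ (via the leading coefficient of $g(t)$) with the sheaf attached to $\mathfrak g_{p,q}\cong VF^{\alpha,\beta}_{\Z\times\N\times\N}$, independently of $k$, is correct. The gap is in the injectivity half, at the precise step where you claim that the emptiness of $\mathcal R^{\{\alpha,\beta\}}_{j,\Z\times-(\N^*)\times-(\N^*)}$ ``forces the contribution of $H^1(V,\Theta)$ to drop out on both sides in the same way''. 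It does not. Because $\langle A_0,B_0\rangle$ is central in $G_{p,q}$ and acts trivially on the topology of $V$, your identification makes $A_k^{G_{p,q}}$ the \emph{constant} sheaf with fibre $\mathfrak g_{p,q}$; in its Douady sequences the term $H^1(V,\C)\otimes\mathfrak g_{p,q}\cong\mathfrak g_{p,q}$ (coming from the loop in the $\C^*$ factor) appears, $\mathrm{Id}-A_*$ and $\mathrm{Id}-B_*$ vanish on it, and it survives to the quotient, giving $H^j(N_0,A_k^{G_{p,q}})\cong H^j(\mathbb S^1\times\mathbb S^1\times\mathbb S^1\times\mathbb S^3,\C)\otimes\mathfrak g_{p,q}\cong\mathfrak g_{p,q}^{\,3}$ for $j=1,2$. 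On the $\Theta$ side the corresponding class is killed already on $V$: the generator of $H^1(V,\C)$ maps to $0$ in $H^1(V,\mathcal O_V)$ since it comes from the $\C^*$ factor and $H^1(\C^*,\mathcal O)=0$. Comparing with $H^1(N_0,A_k)\cong\mathfrak g_{p,q}^{\,2}$ and $H^2(N_0,A_k)\cong\mathfrak g_{p,q}$ from Proposition \ref{Proposition Cohomology in Theta for a LVM manifold of type 264}, the comparison map has a kernel of dimension $\dim\mathfrak g_{p,q}$ in degree $1$, so no argument can establish injectivity.

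This failure is not an artefact of your set-up: the extra copy of $\mathfrak g_{p,q}$ in degree $1$ is the tangent space to the fibres of the forgetful fibration $\Psi:\Hom(\Z^3,G_{p,q})\to\Hom(\Z^2,G_{p,q})$ of Lemma \ref{Lemma map between the representations varieties for Gpq}, i.e. the directions in which one deforms the holonomy of the generator of $\pi_1(V)$ without changing the underlying complex structure, and the paper itself asserts (Theorem \ref{Theorem Geometrization of LVM manifolds}) that these fibres have dimension $3$, $4$ or $5$. What your argument does correctly deliver --- and what the five-lemma step in the proof of Theorem \ref{Theorem Kuranishi family of a LVM manifold of type 264} genuinely needs --- is surjectivity of $H^1(N_0,A_k^{G_{p,q}})\to H^1(N_0,A_k)$ together with control of the degree-$2$ term, since every class of $H^1(N_0,\Theta)$ and $H^2(N_0,\Theta)$ is represented by resonant monomials; but you should flag that the uniqueness of the lifts $\gamma_k^{G_{p,q}}$ invoked later must then be replaced by a compatible choice of lifts rather than deduced from an isomorphism.
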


\begin{Lemma}\label{Lemma iso section globales LAMBDA Gpq et LAMBDA}
    The map 
    $$
    H^0(N_0,\Lambda^{G_{p,q}})\longrightarrow H^0(N_0,\Lambda)
    $$
    is an isomorphism.
\end{Lemma}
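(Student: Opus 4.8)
The plan is to notice that the map in the statement is induced by the inclusion of sheaves $\Lambda^{G_{p,q}}\hookrightarrow\Lambda$, so that injectivity on global sections is automatic; all the content lies in surjectivity, i.e. in showing that every germ of a one-parameter family of automorphisms of $N_0$ based at the identity is induced by a path in $G_{p,q}$.

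First I would unwind the meaning of a global section of $\Lambda$. Such a $w\in H^0(N_0,\Lambda)$ is a germ along $N_0\times\{0\}$ of a biholomorphism $w(x,t)=(w_1(x,t),t)$ of a neighborhood of $N_0\times\{0\}$ in $N_0\times\C$ with $w_1(x,0)=x$. Using the compactness of $N_0$, this germ is represented on $N_0\times D$ for a small disc $D$, and for each fixed $t\in D$ the map $w_t:=w_1(\cdot,t)$ is an injective holomorphic self-map of $N_0$, hence a biholomorphism (an injective holomorphic self-map of a compact complex manifold is a biholomorphism). Thus $H^0(N_0,\Lambda)$ is identified with the set of germs at $0$ of holomorphic paths $t\mapsto w_t\in\Aut(N_0)$ with $w_0=\mathrm{id}$; being continuous and based at the identity, such paths take values in the identity component $\Aut^0(N_0)$. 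The very same reading identifies $H^0(N_0,\Lambda^{G_{p,q}})$ with those germs of paths that moreover lift to paths $g(t)$ in $G_{p,q}$ acting on $V$, the map to $H^0(N_0,\Lambda)$ being the descent $g(t)\mapsto w_t$.

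Next I would invoke the structural input recorded in Section \ref{Subsection Group of resonant transformations}: the group $G_{p,q}$ is precisely the identity component of $\Aut(N_0)$. More precisely, since $A_0$ and $B_0$ lie in the center of $G_{p,q}$ (as used in the proof of Lemma \ref{Lemma Zariski tangent space of Tpq and Sp}), every element of $G_{p,q}$ normalizes $\langle A_0,B_0\rangle$ and descends to an automorphism of $N_0$, producing a surjective homomorphism of Lie groups $p:G_{p,q}\to\Aut^0(N_0)$ whose kernel is the discrete subgroup $\langle A_0,B_0\rangle$. In particular the differential of $p$ at the identity is an isomorphism of Lie algebras, so $p$ is a local biholomorphism at the identity and admits a local holomorphic section $s$ near $\mathrm{id}$ with $s(\mathrm{id})=\mathrm{Id}$.

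Surjectivity then follows: given $w\in H^0(N_0,\Lambda)$ with associated path $w_t$, for $t$ small we have $w_t$ close to $\mathrm{id}$, so $g(t):=s(w_t)$ is a germ of a holomorphic path in $G_{p,q}$ with $g(0)=\mathrm{Id}$ and $p(g(t))=w_t$; consequently $(\xi,t)\mapsto(g(t)\xi,t)$ is a lift of $w$ to $V\times\C$ of exactly the form demanded in the definition of $\Lambda^{G_{p,q}}$, so $w$ lies in the image. I expect the main obstacle to be the careful justification of the identification of $H^0(N_0,\Lambda)$ with germs of automorphism paths — that is, verifying through compactness that a global section really is a genuine family of biholomorphisms of all of $N_0$ (so that $\Aut^0(N_0)$, and hence $G_{p,q}$, truly enters) rather than merely of shrinking open sets. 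Once that identification is secured, the lifting step is a formal consequence of $p$ being a local biholomorphism at the identity.
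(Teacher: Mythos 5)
The paper states this lemma without proof, so there is no argument of the author's to compare against; judged on its own terms, your proof is correct and uses exactly the ingredients the paper has already put in place. Your two pillars are sound: (a) by compactness and connectedness of $N_0$, a global section of $\Lambda$ is a germ of a holomorphic arc $t\mapsto w_t$ in $\Aut^0(N_0)$ with $w_0=\mathrm{id}$ (your open-plus-closed argument for why each injective $w_t$ is onto is the right one), and (b) the descent homomorphism $G_{p,q}\to\Aut^0(N_0)$, which exists because $A_0,B_0$ are central in $G_{p,q}$ (as invoked in the proof of Lemma \ref{Lemma Zariski tangent space of Tpq and Sp}), is a covering with kernel $\langle A_0,B_0\rangle$; this is the content of the remark following the definition of $G$ in Section \ref{Subsection Group of resonant transformations}. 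Two points are worth making explicit if you write this up. First, surjectivity of $G_{p,q}\to\Aut^0(N_0)$ does not follow from discreteness of the kernel alone: you need the equality of Lie algebras $\mathrm{Lie}(G_{p,q})=VF^{\alpha,\beta}_{\Z\times\N\times\N}\cong H^0(N_0,\Theta)$ from Proposition \ref{Proposition Cohomology in Theta for a LVM manifold of type 264}, which together with injectivity of the differential makes the image an open, hence full, subgroup of the connected group $\Aut^0(N_0)$. Second, for the lift $g(t)=s(w_t)$ to be of the form demanded by the definition of $\Lambda^{G_{p,q}}$ you should note that $t\mapsto w_t$ is holomorphic as a map into the complex Lie group $\Aut(N_0)$ (Bochner--Montgomery), so that $g$ is a holomorphic arc in $G_{p,q}$. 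With these two clarifications the argument is complete.
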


For $k=1$, the map 
$$
H^1(N_0,Q_{1}^{G_{p,q}}) \longrightarrow H^1(N_0,Q_{1})
$$
is one-to-one onto its image, since $Q_1=A_1$ and $Q_1^{G_{p,q}}=A_1^{G_{p,q}}$, so we can apply Proposition \ref{Proposition isomorphism cohomologie AGpq et A}.

Assume that $k\geq 1$ is such that the map 
$$
H^1(N_0,Q_{k}^{G_{p,q}}) \longrightarrow H^1(N_0,Q_{k})
$$
is an isomorphism.
Then, we consider the diagram 
\footnotesize
$$
\begin{tikzcd}[column sep = small, row sep = small]
    H^0(N_0,Q_{k}^{G_{p,q}}) \arrow[r] \arrow[d,"i_1"] & H^1(N_0,A_{k+1}^{G_{p,q}}) \arrow[r] \arrow[d,"i_2"] & H^1(N_0,Q_{k+1}^{G_{p,q}}) \arrow[r] \arrow[d,"i_3"] &  H^1(N_0,Q_{k}^{G_{p,q}}) \arrow[r] \arrow[d,"i_4"] & H^2(N_0,A_{k+1}^{G_{p,q}}) \arrow[d,"i_5"] \\
    H^0(N_0,Q_{k}) \arrow[r] & H^1(N_0,A_{k+1}) \arrow[r] & H^1(N_0,Q_{k+1}) \arrow[r] &  H^1(N_0,Q_{k}) \arrow[r] & H^2(N_0,A_{k+1}).
\end{tikzcd}
$$
\normalsize
By assumption, the map $i_4$ is an isomorphism. According to Lemma \ref{Lemma iso section globales LAMBDA Gpq et LAMBDA}, $i_1$ is an isomorphism. According to Proposition \ref{Proposition isomorphism cohomologie AGpq et A}, the maps $i_2$ and $i_5$ are isomorphisms. Therefore, applying the Five lemma, we deduce that the map $i_3$ is an isomorphism, which is what we wanted. Hence, we prove the following theorem.

\begin{Theorem}\label{Theorem Kuranishi family of a LVM manifold of type 264}
    The family $\varpi_{p,q}$, or the family $\varpi_{p}$, depending on the resonant case, is the Kuranishi family of the compact complex manifold $N_0$.
\end{Theorem}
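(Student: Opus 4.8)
The plan is to deduce the theorem from the two facts already assembled: by Lemma \ref{Lemma Zariski tangent space of Tpq and Sp} the Kodaira--Spencer map of $\varpi_{p,q}$ (resp. $\varpi_p$) is an isomorphism onto $H^1(N_0,\Theta)$, and the inductive five-lemma argument just carried out identifies the cohomology of the geometric sheaf $\Lambda^{G_{p,q}}$ with that of $\Lambda$ at every level $k$ of the filtration. Since the base $T_{p,q}$ (resp. $S_p$) is singular, Corollary \ref{Corollary Smooth family with invertible KS map} does not apply, so the missing ingredient is \emph{completeness}, which I would extract from the sheaf-theoretic comparison rather than from a naive infinitesimal criterion. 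The entire argument for $\varpi_p$ in the case $q=1$ is verbatim the one for $\varpi_{p,q}$, replacing $T_{p,q}$ by $S_p$, so I would treat only the latter.

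First I would promote the level-wise isomorphisms $H^1(N_0,Q_k^{G_{p,q}})\cong H^1(N_0,Q_k)$, valid for all $k\geq 1$, to a bijection $H^1(N_0,\Lambda^{G_{p,q}})\to H^1(N_0,\Lambda)$, by passing to the limit over the tower of surjections $Q_{k+1}\to Q_k$ and invoking Douady's convergence framework \cite{Dou60Obs} (together with the vanishing of all higher obstructions, Corollary \ref{Corollary The higher obstructions vanish}, which guarantees that the formal data actually integrates). By Proposition \ref{Proposition Interet du faisceau Lambda} the target is the set of germs over $(\mathbb{C},0)$ of deformations of $N_0$, while, by the Ehresmann--Thurston principle (Theorem \ref{Theorem Ehresmann Thurston principle}), the source is the set of germs of holomorphic arcs in $(\Hom(\mathbb{Z}^3,G_{p,q}),\rho_0)$. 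Hence every germ of one-parameter deformation of $N_0$ is \emph{geometric}, i.e. induced by such an arc.

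Next I would translate ``geometric'' into ``pullback of $\varpi_{p,q}$''. Given a germ of arc in $\Hom(\mathbb{Z}^3,G_{p,q})$, Lemma \ref{Lemma map between the representations varieties for Gpq} composes it with the analytic fibration $\Psi$ to a germ of arc in $\Hom(\mathbb{Z}^2,G_{p,q})=\widetilde T_{p,q}$ landing in $T_{p,q}$, along which the realized complex structure is exactly $V/\mathcal{A}^{p,q}_{\Psi(\rho)}$, that is, the fiber of $\varpi_{p,q}$. Consequently every germ of one-parameter deformation of $N_0$ is a pullback of $\varpi_{p,q}$ along a germ of arc in the base, so $\varpi_{p,q}$ is complete. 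Combined with the fact that its Kodaira--Spencer map is an isomorphism, the minimality of the Kuranishi family (Theorem \ref{Theorem Kuranihi}) forces the classifying germ $\phi\colon (T_{p,q},\rho_0)\to (K_{N_0},o)$ provided by versality, whose differential is an isomorphism, to be an isomorphism of germs; hence $\varpi_{p,q}$ is the Kuranishi family of $N_0$.

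The main obstacle, I expect, is precisely this last upgrade: passing from ``completeness tested on arcs'' plus an isomorphism of Zariski tangent spaces to a genuine biholomorphism of the \emph{singular} germs $T_{p,q}$ and $K_{N_0}$. Over a smooth base this is immediate, but here one must be sure that the order-by-order comparison of deformation functors, encoded by the isomorphic towers $\{H^1(N_0,Q_k)\}$ and $\{H^1(N_0,Q_k^{G_{p,q}})\}$, converges to an analytic isomorphism and not merely a formal one; this is where Douady's convergence results and the reducedness of the representation variety do the essential work, exactly as in Ghys \cite{Ghys95}. A secondary point requiring care is the bookkeeping that $\phi$ is both dominant and generically injective on arcs, so that the isomorphism of tangent spaces genuinely witnesses $\phi$ as an isomorphism of germs rather than, say, a normalization-type map.
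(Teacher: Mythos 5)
Your overall strategy coincides with the paper's: use the level-wise isomorphisms $H^1(N_0,Q_k^{G_{p,q}})\to H^1(N_0,Q_k)$ to lift a germ of deformation of the complex structure to a formal, then convergent, germ of deformation of the $(G_{p,q},V)$-structure, push it down to $\Hom(\Z^2,G_{p,q})$ via the fibration $\Psi$ of Lemma \ref{Lemma map between the representations varieties for Gpq}, conclude completeness, and combine this with the fact that the Kodaira--Spencer map is an isomorphism (Lemma \ref{Lemma Zariski tangent space of Tpq and Sp}). Two points, however, do not work as you state them. First, the passage from the compatible tower $(\gamma_k^{G_{p,q}})_k$ to an actual analytic germ is obtained in the paper by Artin's approximation theorem applied to the analytic equations cutting out germs of curves in $\Hom(\Z^3,G_{p,q})$; Corollary \ref{Corollary The higher obstructions vanish} is not the right tool here --- it asserts that a first-order class with vanishing first obstruction integrates, which concerns the image of the Kodaira--Spencer map and not the problem of converting a formal solution of the lifting problem into a convergent one.

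Second, and more seriously, your argument as written only establishes that every germ of deformation over $(\C,0)$ is a pullback of $\varpi_{p,q}$, and you then try to upgrade this arc-completeness to genuine completeness by combining it with the isomorphism of Zariski tangent spaces and a dominance/injectivity bookkeeping on the classifying map. That upgrade is not available for singular germs: a morphism of reduced singular germs can be surjective on arcs and an isomorphism on Zariski tangent spaces without being an isomorphism of germs, and in any case completeness requires that families over \emph{arbitrary} base germs $(S,s)$, not just $(\C,0)$, be pullbacks. The paper's resolution is simpler and sidesteps the issue entirely: the sheaves $\Lambda$, $\Lambda^{G_{p,q}}$, $Q_k$ and the five-lemma induction all make sense with $(\C,0)$ replaced by an arbitrary germ of complex space $(S,s)$, so one reruns the identical argument over a general base (as Ghys does) and obtains completeness directly. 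Once completeness over arbitrary bases is in hand, the isomorphism of Kodaira--Spencer maps yields versality and hence the Kuranishi property, with no further analysis of the classifying map being needed.
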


\begin{proof}\label{Preuve Theorem Kuranishi family of a LVM manifold of type 264}
    We will show that the family $\varpi_{p,q}$ (or $\varpi_p$) is complete. Let $\gamma$ be a germ of family of deformation over $(\mathbb C,0)$. We see $\gamma$  as an element of the set $H^1(N_0,\Lambda)$, according to Proposition \ref{Proposition Interet du faisceau Lambda}. For every $k\geq 1$, we consider $\gamma_k$ the projection of $\gamma$ on $H^1(N_0,Q_k)$. The element $\gamma_k$ can be seen as the jet of order $k$ of the germ of family $\gamma$.

    Since the map 
    $$
    H^1(N_0,Q_{k}^{G_{p,q}}) \longrightarrow H^1(N_0,Q_{k})
    $$
    is an isomorphism, there exists a unique element $\gamma_k^{G_{p,q}}$ of $H^1(N_0,Q_{k}^{G_{p,q}})$ whose image is $\gamma_k$.

    Moreover, the family $(\gamma_k^{G_{p,q}})_k$ is compatible in the sense that the projection of $\gamma_{k+1}^{G_{p,q}}$ on $H^1(N_0,Q_{k}^{G_{p,q}})$ is $\gamma_k^{G_{p,q}}$, since $(\gamma_k)_k$ is compatible. 

    Therefore, the family $(\gamma_k^{G_{p,q}})_k$ produces a formal germ of family of deformations of the $(G_{p,q},V)$-structure of the initial one on $N_0$, such that the induced germ of family of deformations of the complex structure of $N_0$ is $\gamma$. By the Artin theorem (see \cite{Art68} Theorem 1.2), there exists a germ of family of deformations of the $(G_{p,q},V)$-structure having the same property. 
    This germ can be seen as a germ of curve in $\Hom(\mathbb Z^3,G_{p,q})$, as we have seen before. But according to Lemma \ref{Lemma map between the representations varieties for Gpq}, this germ of curve can be projected on $\Hom(\mathbb Z^2,G_{p,q})$ via the constructed map. This shows that the germ of family of deformations $\gamma$ is obtained by pulling back the family $\varpi_{p,q}$ (or $\varpi_p$) via this germ of curve. 

    More generally, as in \cite{Ghys95} (p.126), if we take any germ of complex space $(S,s)$ instead of $(\mathbb C,0)$, the same proof shows that any germ of family of deformations of $N_0$ parameterized by $(S,s)$ is obtained by pulling back the family $\varpi_{p,q}$ (or $\varpi_p$). Therefore, the family $\varpi_{p,q}$ (or $\varpi_p$) is complete.

    To conclude the proof, we use Lemma \ref{Lemma Zariski tangent space of Tpq and Sp} \textit{(ii)}. Since the Kodaira-Spencer map of the family $\varpi$ is an isomorphism and since the family is complete, $\varpi$ has the versal property, so it is the Kuranishi family of the compact complex manifold $N_0$.
\end{proof}

\subsection{Geometrization of LVM manifolds of type (2,6,4)}

As mentioned in Remark \ref{Remark On the fact that a G,X manifold has an induced complex structure}, we emphasize that a $(G,X)$-structure on a smooth manifold with $G$ a complex Lie group acting holomorphically on $X$ a complex manifold induces a complex structure of $M$, and that there exists a forgetful map 
$$
\{(G,X)-\text{structures on }M\}\longrightarrow \{\text{complex structures on }M\}.
$$
In this section, we will describe this map for an LVM manifold of type $(2,6,4)$, and for resonant structures, i.e. where $G$ is the Lie group introduced in Section \ref{Subsection Group of resonant transformations} and $X=V=\C^*\times \C^2\setminus\{0\}$.

Let $\Lambda=(\Lambda_1,...,\Lambda_6)$ be an LVM configuration of type $(2,6,4)$. We consider $N$ the associated LVM manifold. 

In the case where $N$ does not carry any non trivial resonances in the sense of Remark \ref{Remark Definition of Non resonant and affinely resonant}, the group $G$ identifies with the group $\mathrm{Diag}(\C^3)^\times$, and according to Theorem \ref{Theorem Developping map for the nearby affine structure in the non resonant case}, there exist a neighborhood $U$ of the natural homomorphism in $\Hom(\Z^3,G)$ and a holomorphic submersion $\Psi$ from $U$ to the representation space $\Hom(\Z^2,G)$ such that the image of the holonomy homomorphism of an $(G,V)$-structure gives the data of the underlying complex structure, i.e. the action of $\Z^2$ on $V$ producing an LVM manifold. This theorem also shows that, locally in the deformation space of $(G,V)$-structures, a structure is uniformizable if and only if $V$ is a holonomy covering, i.e. if the fundamental group of $V$ belongs to (and actually, is) the kernel of the holonomy homomorphism. The latter is also equivalent to the fact that the $(G,V)$-structure is the canonical one of the corresponding LVM manifold. In that case, the structure is moreover complete.

Notice that, according to Theorem \ref{Theorem Kuranishi family for LVM manifolds of type (2,6,4) non resonant}, the image of $\Psi$ is the Kuranishi space of the LVM manifold $N$. Recall that we have $N^{\mathrm{diff}}=\mathbb S^1\times\mathbb S^1 \times \mathbb S^1\times \mathbb S^3$.

\begin{Theorem}\label{Theorem the map between affine structures and complex structure in non resonant case}
    The space $U$ is a smooth open subset of the character variety of $(G,V)$-structures (see Section \ref{Subsection G,X structures}) around the natural one and the map $\Psi:U\to \Psi(U)$ is the restriction to $U$ of the map 
    $$
    \{(G,X)-\text{structures on } N^{\mathrm{diff}}\}\longrightarrow \{\text{complex structures on }N^{\mathrm{diff}}\}.
    $$
    Moreover, the image $\Psi(U)$ is an open set of the Kuranishi space of all its points, and $\Psi$ is a holomorphic submersion.
\end{Theorem}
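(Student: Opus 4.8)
The plan is to assemble the statement from two ingredients already in hand: the explicit developing map of Theorem \ref{Theorem Developping map for the nearby affine structure in the non resonant case} and the versality result of Theorem \ref{Theorem Kuranishi family for LVM manifolds of type (2,6,4) non resonant}. Almost all of the analytic content has been established, so the argument is largely one of identification. First I would dispatch the smoothness of $U$. Since $N$ is non-resonant, $G=\mathrm{Diag}(\C^3)^\times$ is abelian, so its conjugation action on $\Hom(\Z^3,G)$ is trivial and the character variety coincides with the representation variety $\Hom(\Z^3,G)=G^3\cong(\C^*)^9$, a smooth complex manifold. The neighborhood $U$ furnished by Theorem \ref{Theorem Developping map for the nearby affine structure in the non resonant case} is an open subset of it, hence smooth, and the natural structure $\rho_0$ is an interior point.

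Next I would identify $\Psi$ with the forgetful map of Remark \ref{Remark On the fact that a G,X manifold has an induced complex structure}. By the Ehresmann--Thurston principle \ref{Theorem Ehresmann Thurston principle}, each $\rho\in U$ is realized by a $(G,V)$-structure on $N^{\mathrm{diff}}$ with holonomy $\rho$ and developing map $dev_\rho$. The induced complex structure is the one making $dev_\rho$ a local biholomorphism; as $dev_\rho$ is given by an explicit holomorphic formula of maximal rank, this structure is the standard one on $\widetilde M=\C\times(\C^2\setminus\{0\})$. The equivariance relations of Theorem \ref{Theorem Developping map for the nearby affine structure in the non resonant case} then show, through the auxiliary map $\mathrm{Dev}_0$, that the deck transformations of $N^{\mathrm{diff}}$ are carried onto $A_\rho,B_\rho$ while the third generator is sent to the identity, whence $\widetilde M/\pi_1(N^{\mathrm{diff}})\cong V/\langle A_\rho,B_\rho\rangle=N_\rho$. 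Thus the complex structure underlying the holonomy-$\rho$ structure is the LVM manifold of configuration $\Lambda(\rho)=(\Lambda_1^{(0)},\Lambda_2^{(0)},\Lambda_3^{(0)},\Psi(\rho))$, whose class in the Kuranishi space, under the identification $D\cong\widetilde D$ of Lemma \ref{Lemma Deformation space in the case of k plus grand que m+1}, is exactly $\Psi(\rho)$; this is precisely the assertion that $\Psi$ is the restriction of the forgetful map.

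Finally I would settle the openness and submersion claims. That $\Psi$ is a holomorphic submersion is already part of Theorem \ref{Theorem Developping map for the nearby affine structure in the non resonant case}. For the Kuranishi statement, Theorem \ref{Theorem Kuranishi family for LVM manifolds of type (2,6,4) non resonant} gives that the LVM family over $D$ is the Kuranishi family of $N_0$; since non-resonance is an open condition and a small deformation of a non-resonant $(2,6,4)$ LVM manifold is again of this type (Remark \ref{Remark Being non resonant is generic}), after shrinking $U$ every point of $\Psi(U)$ parametrizes a non-resonant manifold, to which the same theorem applies. Hence the family is versal at each such point, so $\Psi(U)$ is an open subset of the Kuranishi space of all its points, in accordance with the completeness clause of Theorem \ref{Theorem Kuranihi}. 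Uniqueness of the uniformizable structure and completeness follow from the description in Theorem \ref{Theorem Developping map for the nearby affine structure in the non resonant case}, where $V$ is a holonomy covering exactly when $\rho$ is the canonical homomorphism.

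The one genuinely non-formal point, and the step I expect to require the most care, is the identification of the induced complex structure with $N_\rho$ rather than with the naive quotient $V/\langle A,B\rangle$: this is what the displayed equivariance relations of Theorem \ref{Theorem Developping map for the nearby affine structure in the non resonant case} encode, and the delicate bookkeeping lies in tracking how the third generator $C$, close to but distinct from the identity, is absorbed into the reparametrization of the developing map so that the deformed configuration $\Lambda(\rho)$ emerges.
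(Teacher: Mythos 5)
Your argument is correct and follows essentially the same route as the paper, which establishes this theorem by combining Theorem \ref{Theorem Developping map for the nearby affine structure in the non resonant case} (explicit developing map, equivariance, submersion property, identification of the induced complex structure with $N_\rho$) with Theorem \ref{Theorem Kuranishi family for LVM manifolds of type (2,6,4) non resonant} and the genericity of non-resonance, exactly as you do. The only imprecision is your phrase that the third generator is ``sent to the identity'': its holonomy is $C=\rho(0,0,1)$, not the identity; what is true is that the corresponding deck transformation $w_1\mapsto w_1+1$ becomes trivial after passing to the intermediate quotient $V$, which is what your computation actually uses.
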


One of the consequences of this Theorem is that, locally around the complex structure of the LVM manifold, every complex structure is geometric, in the sense that it is induced by a $(G,V)$-structure. 

Assume now that $N$ carries resonances given by a couple $(p,q)\in \Z\times\N^*\setminus \{(0,1)\}$ (see Section \ref{Lemma Classification resonances 264}). According to Lemma \ref{Lemma map between the representations varieties for Gpq}, there exists a neighborhood $U$ of the natural homomorphism in $\Hom(\Z^3,G_{p,q})$ and an analytic fibration with smooth fibers $\Psi$ from $U$ to the representation space $\Hom(\Z^2,G_{p,q})$ such that the image of a $(G_{p,q},V)$-structure gives the parameters of its complex underlying structure. The proof of this lemma also shows that, locally in the deformation space of $(G_{p,q},V)$-structures, a structure is complete if and only if it is uniformizable if and only if $V$ is a holonomy covering, i.e. if and only if the $(G_{p,q},V)$-structure is the natural one of the corresponding compact complex manifold $V/\Z^2$ (that is not, \textit{a priori}, an LVM manifold). 

Notice that, according to Theorem \ref{Theorem Kuranishi family of a LVM manifold of type 264}, the image of $\Psi$ is the Kuranishi space of the LVM manifold $N$. 

\begin{Theorem}\label{Theorem the map between resonant structures and complex structure in the resonant case}
    The space $U$ is an open subset of the representations variety associated with the $(G_{p,q},V)$-structures around the natural one and the analytic fibration with smooth fibers $\Psi:U\to \Psi(U)$ is the restriction to $U$ of the map 
    $$
    \{(G_{p,q},V)-\text{structures on }N^{\mathrm{diff}}\}\longrightarrow \{\text{complex structures on }N^{\mathrm{diff}}\}.
    $$
    Moreover, the image $\Psi(U)$ is the Kuranishi space of $N$.
\end{Theorem}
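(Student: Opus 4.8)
The plan is to combine three already established facts: the Ehresmann--Thurston principle (Theorem \ref{Theorem Ehresmann Thurston principle}), which governs the domain of the forgetful map; Lemma \ref{Lemma map between the representations varieties for Gpq}, which produces $\Psi$ and records the complex structure it induces; and Theorem \ref{Theorem Kuranishi family of a LVM manifold of type 264}, which identifies the target with the Kuranishi space. Nothing new needs to be computed; the work is in assembling these pieces and making the identifications precise.

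First I would pin down the domain. Since $\pi_1(N^{\mathrm{diff}})\cong\mathbb Z^3$, the Ehresmann--Thurston principle \ref{Theorem Ehresmann Thurston principle} asserts that the holonomy map from $(G_{p,q},V)$-structures on $N^{\mathrm{diff}}$ to $\Hom(\mathbb Z^3,G_{p,q})$ is open, and that two structures near the natural one sharing the same holonomy are isomorphic as $(G_{p,q},V)$-manifolds. Hence the germ of the space of $(G_{p,q},V)$-structures around the natural one is identified, via holonomy, with the neighborhood $U$ of $\rho_0=(A_0,B_0,\mathrm{Id})$ furnished by Lemma \ref{Lemma map between the representations varieties for Gpq}, sitting inside the representation variety $\Hom(\mathbb Z^3,G_{p,q})$. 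I would work with the representation variety rather than the character variety precisely because, as noted in the introduction, the latter is non-Hausdorff in the resonant case; this is exactly what the first assertion of the theorem records.

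Next I would identify $\Psi$ with the forgetful map of Remark \ref{Remark On the fact that a G,X manifold has an induced complex structure}. Lemma \ref{Lemma map between the representations varieties for Gpq} exhibits, for each $\rho\in U$, an explicit developing map $\mathrm{Dev}_{\gamma,C}$ whose associated complex structure is the quotient $V/\mathcal A^{p,q}_{\Psi(\rho)}$, that is, precisely the fibre $\varpi_{p,q}^{-1}(\Psi(\rho))$ (or $\varpi_p^{-1}(\Psi(\rho))$ when $q=1$). Therefore the composite ``take the holonomy of a $(G_{p,q},V)$-structure, then read off its underlying complex structure'' equals $\rho\mapsto\Psi(\rho)$ read through the family $\varpi_{p,q}$. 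This is the meaning of the claim that $\Psi$ is the restriction of the forgetful map, and the property of being an analytic fibration with smooth fibers is inherited directly from Lemma \ref{Lemma map between the representations varieties for Gpq}. Finally, for the target, Theorem \ref{Theorem Kuranishi family of a LVM manifold of type 264} states that $\varpi_{p,q}$ (or $\varpi_p$) over its base $T_{p,q}$ (or $S_p$) is the Kuranishi family of $N$; since $\Psi(U)$ is exactly the neighborhood of $(A_0,B_0)$ in $\Hom(\mathbb Z^2,G_{p,q})$ identified with $T_{p,q}$ (or $S_p$), its image is the Kuranishi space $K_N$.

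The step I expect to be the main obstacle is making rigorous the identification of the \emph{codomain} of the abstract forgetful map---the germ of the space of complex structures on $N^{\mathrm{diff}}$ near the natural one---with the Kuranishi space $\Psi(U)$. This requires invoking the versality, indeed the completeness, of the Kuranishi family established in Theorem \ref{Theorem Kuranishi family of a LVM manifold of type 264}: every nearby complex structure arises as a fibre of $\varpi_{p,q}$, and its classifying point in the base is exactly $\Psi(\rho)$, so that the two readings of ``the induced complex structure'' agree as germs of finite-dimensional spaces. Care is also needed to ensure that the induced-complex-structure map is well defined at this finite-dimensional level and not merely up to the non-Hausdorff character-variety identifications; the explicit developing maps of Lemma \ref{Lemma map between the representations varieties for Gpq} are what keep this bookkeeping unambiguous.
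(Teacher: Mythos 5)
Your proposal is correct and follows essentially the same route as the paper: the paper presents this theorem as a direct assembly of the Ehresmann--Thurston principle, Lemma \ref{Lemma map between the representations varieties for Gpq} (which supplies $\Psi$, its fibration structure, and the identification of the induced complex structure), and Theorem \ref{Theorem Kuranishi family of a LVM manifold of type 264} (which identifies $\Psi(U)$ with the Kuranishi space). Your extra care in using completeness of the Kuranishi family to identify the codomain of the forgetful map is a reasonable elaboration of a point the paper leaves implicit.
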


Putting together Theorem \ref{Theorem the map between affine structures and complex structure in non resonant case} and Theorem \ref{Theorem the map between resonant structures and complex structure in the resonant case}, we infer that every small deformation of an LVM manifold of type $(2,6,4)$ is geometric, and the group of symmetries is given by the resonances of the LVM manifold acting on $V=\C^*\times \C^2\setminus\{0\}$.

In conclusion, we summarize our results in the following statement.

\begin{Theorem}\label{Theorem Geometrization of LVM manifolds}
    Every small deformation of an LVM manifold $N$ of type $(2,6,4)$ carries a family of compatible, i.e. holomorphic, resonant structures (for $N$) which are parametrized by $3,4$ or $5$ complex numbers. Moreover, among these structures, there exists a unique one that is uniformizable, and it has the property of being complete.
\end{Theorem}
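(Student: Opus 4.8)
The plan is to assemble the two descriptions of the forgetful map already established — Theorem \ref{Theorem the map between affine structures and complex structure in non resonant case} when $N$ carries no non-trivial resonance and Theorem \ref{Theorem the map between resonant structures and complex structure in the resonant case} when it does — and to extract the fiberwise content. First I would fix a small deformation $N'$ of $N$. Because $\Psi(U)$ is in both cases an open neighborhood of the base point in the Kuranishi space of $N$ (Theorems \ref{Theorem Kuranishi family for LVM manifolds of type (2,6,4) non resonant} and \ref{Theorem Kuranishi family of a LVM manifold of type 264}), versality identifies the complex structure of $N'$ with a point $s \in \Psi(U)$. Since $\Psi$ is by construction the restriction of the forgetful map from $(G_N,V)$-structures on $N^{\mathrm{diff}}$ to complex structures, the resonant structures compatible with $N'$ are exactly the points of the fiber $\Psi^{-1}(s)$.

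Next I would describe this fiber. As $\Psi$ is an analytic fibration with smooth fibers (a holomorphic submersion in the non-resonant case), $\Psi^{-1}(s)$ is a smooth complex manifold, and the explicit constructions in the proofs of the two referenced theorems identify it concretely: a point of $\Psi^{-1}(s)$ is specified freely by the holonomy $\rho(0,0,1) \in G_N$ of the generator of $\pi_1(V) \subset \pi_1(N) = \Z^3$, the images of the two remaining generators being recovered from $s$ by inverting the equivariance relations through the implicit function theorem (as in the proofs of Theorem \ref{Theorem Developping map for the nearby affine structure in the non resonant case} and Lemma \ref{Lemma map between the representations varieties for Gpq}). Thus $\Psi^{-1}(s)$ is parametrized by $G_N$, which by Proposition \ref{Proposition Groupe des transfo résonantes} has dimension $3$, $4$ or $5$ according to whether $N$ carries zero, one or two non-trivial resonances. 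This yields the claimed parametrization by $3$, $4$ or $5$ complex numbers.

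Finally I would single out the uniformizable structure inside each fiber. A structure in $\Psi^{-1}(s)$ is uniformizable precisely when $V$ is a holonomy covering, equivalently when the holonomy of $\pi_1(V)$ is trivial, i.e. $\rho(0,0,1) = \mathrm{Id}$; this equivalence is exactly what the developing-map computations of Theorem \ref{Theorem Developping map for the nearby affine structure in the non resonant case} and of the proof of Lemma \ref{Lemma map between the representations varieties for Gpq} record, showing in addition that no other value of $\rho(0,0,1)$ yields a uniformizable structure. Since $\rho(0,0,1) = \mathrm{Id}$ is a single point of $G_N$, the uniformizable structure in each fiber is unique, and it coincides with the canonical $(G_N,V)$-structure of the compact quotient $V/\Z^2$; by Theorem \ref{Theorem The action is fpf and proper} and the remark following it, this canonical structure is complete. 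The main obstacle is therefore not a hard estimate but the careful bookkeeping of matching the fiber of $\Psi$ with the genuine set of compatible resonant structures and of confirming that the single free parameter is $\rho(0,0,1)$; both completeness and the failure of uniformizability away from $\rho(0,0,1) = \mathrm{Id}$ are inherited directly from the developing-map computations carried out earlier.
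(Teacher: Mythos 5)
Your proposal is correct and follows essentially the same route as the paper: the paper states this theorem as a direct summary of Theorems \ref{Theorem the map between affine structures and complex structure in non resonant case} and \ref{Theorem the map between resonant structures and complex structure in the resonant case}, with the fiber of the forgetful map $\Psi$ parametrized by the holonomy of the generator of $\pi_1(V)$ in $G_N$ (hence by $\dim G_N = 3,4$ or $5$ complex numbers via Proposition \ref{Proposition Groupe des transfo résonantes}), and the unique uniformizable structure being the one with trivial holonomy on $\pi_1(V)$, which is complete. Your write-up merely makes explicit the bookkeeping the paper leaves implicit.
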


\section{A complete family of LVM manifolds}\label{Subsection A complete family of LVM manifolds}

Dabrowski proposed in \cite{Dab82} a way of gluing families of Hopf surfaces in order to build a family of deformations that contains all isomorphism classes of Hopf surfaces. The first remark is that this is only possible for surfaces. Indeed, the class of Hopf manifold of dimension $n\geq 3$ has the property that the dimension of the Kuranishi space is unbounded. It is equivalent to the fact that for a Hopf manifold of dimension $n\geq 3$ the number of resonances can be arbitrarily large. The family of Dabrowski contains all the Hopf surfaces, however, it does not have the property of being complete. A family $\varpi$ of deformations of $M$ is complete if every other family of deformations of $M$ is obtained by a pullback of $\varpi$. 

In his thesis \cite{Fro2017}, Fromenteau revisits the work of Dabrowski and manages to construct a family of Hopf surfaces that contains all of them and that is complete at every point. Fromenteau furthermore uses his construction to describe the moduli space of complex structures on $\mathbb S^1\times \mathbb S^3$ from the Teichmüller analytic stacks point of view.

LVM manifolds of type $(2,6,4)$ have, like Hopf surfaces, the property of satisfying a bounded number of resonance relations. Therefore, it is natural to hope that we can glue the Kuranishi families of LVM manifolds in order to get a family that contains all LVM manifolds of type $(2,6,4)$ and that is complete at every point. We draw inspiration from the construction of Fromenteau in order to build such a family.

We use the notation of the proof of Lemma \ref{Lemma common normal form for commuting resonant transfo}. For $p\in\Z$, and $(\alpha,A)\in\C^*\times \mathrm{GL}(2,\C)$, we say that $\lambda$ is a \textbf{$p$-eigenvalue} if the space $\ker(A-\lambda L_{\alpha,p})$ is non-trivial. We recall that $L_{\alpha,p}$ is defined by 
$$
L_{\alpha,p}=\begin{pmatrix}
    1 & 0 \\ 0 & \alpha^p
\end{pmatrix}.
$$
We also say that $\alpha$ is a $p$-eigenvalue for any $p\in\Z$.

\begin{Definition}\label{Definition pour construire l'ensemble Sp}
    Let $p$ be an integer and let 
    $$
    \left(
\begin{pmatrix}
    \alpha_1 & 0 & 0 \\
    0 & \alpha_2 & \varepsilon_2 \\
    0 & \varepsilon_1 & \alpha_3
\end{pmatrix},
\begin{pmatrix}
    \beta_1 & 0 & 0 \\
    0 & \beta_2 & \delta_2 \\
    0 & \delta_1 & \beta_3
\end{pmatrix}
\right)
    $$
    be an element of $\mathrm{GL}(3,\C)^2$. We say that the latter verifies the condition $C_{p}$ if 
    \begin{enumerate}
        \item $\left\{
        \begin{matrix}
            \varepsilon_1\delta_2\beta_1^{p}&=&\delta_1\varepsilon_2\alpha_1^{p}  \\
    \varepsilon_1(\beta_3-\beta_1^p\beta_2)&=&\delta_1(\alpha_3-\alpha_1^p\alpha_2)  \\
    \varepsilon_2(\beta_2-\beta_1^{-p}\beta_3)&=&\delta_2(\alpha_2-\alpha_1^{-p}\alpha_3) 
        \end{matrix}
        \right.$,
        \item the $p$-eigenvalues of the two matrices come from an LVM configuration of type $(2,6,4)$,
        \item for every $(r,s)\in \Z\times \N^*$ distinct of $(p,1)$, the $p$-eigenvalues $\alpha_1,\alpha_2'$ and $\alpha_3'$ of the first matrix verify $\alpha_3'\neq\alpha_1^r\alpha_2'^s$.
    \end{enumerate}

    We say that it verifies the condition $C_p^{S}$ if it verifies the condition $C_p$ and if the $p$-eigenvalues $\alpha_1,\alpha_2'$ and $\alpha_3'$ of the first matrix and the $p$-eigenvalues $\beta_1,\beta_2'$ and $\beta_3'$ verify $$
    \left \{
    \begin{matrix}
        \alpha_3' &= &\alpha_1^p\alpha_2' \\
        \beta_3' &= &\beta_1^p\beta_2'
    \end{matrix}
    \right..
    $$
\end{Definition}

For $p\in\Z$, we consider $S_p$ the algebraic variety defined as 
$$
S_p:=\left\{
\left(
\begin{pmatrix}
    \alpha_1 & 0 & 0 \\
    0 & \alpha_2 & \varepsilon_2 \\
    0 & \varepsilon_1 & \alpha_3
\end{pmatrix},
\begin{pmatrix}
    \beta_1 & 0 & 0 \\
    0 & \beta_2 & \delta_2 \\
    0 & \delta_1 & \beta_3
\end{pmatrix}
\right) \in \mathrm{GL}(3,\mathbb C)^2 \text{ that verifies } C_p
\right\}
$$
The variety $S_p$ has dimension $8$ and its singular locus is 
$$
\left\{
\left(
\begin{pmatrix}
    \alpha_1 & 0 & 0 \\
    0 & \alpha_2 & \varepsilon_2 \\
    0 & \varepsilon_1 & \alpha_3
\end{pmatrix},
\begin{pmatrix}
    \beta_1 & 0 & 0 \\
    0 & \beta_2 & \delta_2 \\
    0 & \delta_1 & \beta_3
\end{pmatrix}
\right) \in \mathrm{GL}(3,\mathbb C)^2 \text{ that verifies } C_p^{S}
\right\}.
$$
According to Theorem \ref{Theorem The action is fpf and proper} and Proposition \ref{Proposition Groupe des transfo résonantes} \textit{(iii)}, for every element $(A,B)$ in $S_p$ with 
$$
A=
\begin{pmatrix}
    \alpha_1 & 0 & 0 \\
    0 & \alpha_2 & \varepsilon_2 \\
    0 & \varepsilon_1 & \alpha_3
\end{pmatrix}~B=
\begin{pmatrix}
    \beta_1 & 0 & 0 \\
    0 & \beta_2 & \delta_2 \\
    0 & \delta_1 & \beta_3
\end{pmatrix},
$$
the action $\mathcal A^{p}_{A,B}:\Z^2\times V \to V$ of $\Z^2$ on $V$ given by 
$$
\begin{matrix}
    (1,0).(\xi_1,\xi_2,\xi_3)&=&(\alpha_1\xi_1,\alpha_2\xi_2+\varepsilon_2\xi_1^{-p}\xi_3,\alpha_3\xi_3+\varepsilon_1\xi_1^p\xi_2), \\
    (0,1).(\xi_1,\xi_2,\xi_3)&=&(\beta_1\xi_1,\beta_2\xi_2+\delta_2\xi_1^{-p}\xi_3,\beta_3\xi_3+\delta_1\xi_1^p\xi_2)
\end{matrix}
$$
is fixed point free, proper and cocompact. We consider the action of $\Z^2$
$$
\begin{matrix}
    \Z^2\times S_p\times V & \longrightarrow & S_p\times V \\
    ((r,s),(A,B),\xi) & \longmapsto & ((A,B),\mathcal A^{p}_{A,B}((r,s),\xi)).
\end{matrix}
$$
Obviously, this action is fixed point free and proper. Therefore, the quotient space $\mathcal M_{p}:=(S_{p}\times V)/\mathbb Z^2$ carries a natural structure of algebraic variety, and the natural projection $S_{p}\times V \to S_{p}$ descends to the quotient to a flat and proper algebraic map $\varpi_{p} :\mathcal M_{p} \to S_{p}$. It is proved in Theorem \ref{Theorem Kuranishi family for LVM manifolds of type (2,6,4) non resonant} and Theorem \ref{Theorem Kuranishi family of a LVM manifold of type 264} that, for every $p$ in $\Z$, the family $\varpi_p$ is complete at each point of $S_p$ that parametrizes an LVM manifold.

\begin{Definition}\label{Definition pour construire l'ensemble Tpq}
    Let $(p,q)$ be an element of $\Z\times \N_{\geq2}$ and let 
    $$
    \left(
\begin{pmatrix}
    \alpha_1 & 0 & 0 \\
    0 & \alpha_2 & 0 \\
    0 & \varepsilon & \alpha_3
\end{pmatrix},
\begin{pmatrix}
    \beta_1 & 0 & 0 \\
    0 & \beta_2 & 0 \\
    0 & \delta & \beta_3
\end{pmatrix}
\right)
    $$
    be an element of $\mathrm{GL}(3,\C)^2$. We say that the latter verifies the condition $K_{p,q}$ if 
    \begin{enumerate}
        \item $\vert\alpha_2\vert>\vert\alpha_3\vert$,
        \item $\varepsilon(\beta_3-\beta_1^p\beta_2^q)=\delta(\alpha_3-\alpha_1^p\alpha_2^q)$,
        \item the eigenvalues of the two matrices come from an LVM configuration of type $(2,6,4)$,
        \item for every $(r,s)\in \Z\times \N^*$ distinct of $(p,q)$, the eigenvalues $\alpha_1,\alpha_2$ and $\alpha_3$ of the first matrix verify $\alpha_3\neq\alpha_1^r\alpha_2^s$.
    \end{enumerate}

    We say that it verifies the condition $K_{p,q}^{S}$ if it verifies the condition $K_{p,q}$ and if the eigenvalues verify $$
    \left \{
    \begin{matrix}
        \alpha_3 &= &\alpha_1^p\alpha_2^q \\
        \beta_3 &= &\beta_1^p\beta_2^q
    \end{matrix}
    \right..
    $$
\end{Definition}

For $(p,q)\in \Z\times \N_{\geq2}$, we consider $T_{p,q}$ the algebraic variety defined as 
$$
T_{p,q}=T_{p,q}'\times \C
$$
where
\small
$$ 
T_{p,q}':=
\left\{
\left(
\begin{pmatrix}
    \alpha_1 & 0 & 0 \\
    0 & \alpha_2 & 0 \\
    0 & \varepsilon & \alpha_3
\end{pmatrix},
\begin{pmatrix}
    \beta_1 & 0 & 0 \\
    0 & \beta_2 & 0 \\
    0 & \delta & \beta_3
\end{pmatrix}
\right)\in \mathrm{GL}(3,\C)^2 \text{ that verifies } K_{p,q}
\right\}.
$$
\normalsize
The variety $T_{p,q}$ has dimension $8$ and its singular locus is 
\small
$$
\left\{
\left(
\begin{pmatrix}
    \alpha_1 & 0 & 0 \\
    0 & \alpha_2 & 0 \\
    0 & \varepsilon & \alpha_3
\end{pmatrix},
\begin{pmatrix}
    \beta_1 & 0 & 0 \\
    0 & \beta_2 & 0 \\
    0 & \delta & \beta_3
\end{pmatrix}
\right)\in \mathrm{GL}(3,\C)^2 \text{ that verifies } K_{p,q}^S
\right\}\times \C.
$$
\normalsize
According to Theorem \ref{Theorem The action is fpf and proper}, for every element $(A,B)$ in $T_{p,q}$ with 
$$
A=
\begin{pmatrix}
    \alpha_1 & 0 & 0 \\
    0 & \alpha_2 & 0 \\
    0 & \varepsilon & \alpha_3
\end{pmatrix}~B=
\begin{pmatrix}
    \beta_1 & 0 & 0 \\
    0 & \beta_2 & 0 \\
    0 & \delta & \beta_3
\end{pmatrix},
$$
the action $\mathcal A^{p,q}_{A,B}:\Z^2\times V \to V$ of $\Z^2$ on $V$ given by 
$$
\begin{matrix}
    (1,0).(\xi_1,\xi_2,\xi_3)=(\alpha_1\xi_1,\alpha_2\xi_2,\alpha_3\xi_3+\varepsilon\xi_1^p\xi_2^q), \\
    (0,1).(\xi_1,\xi_2,\xi_3)=(\beta_1\xi_1,\beta_2\xi_2,\beta_3\xi_3+\delta\xi_1^p\xi_2^q)
\end{matrix}
$$
is fixed point free, proper and cocompact. We consider the action of $\Z^2$
$$
\begin{matrix}
    \Z^2\times T_{p,q}\times V & \longrightarrow & T_{p,q}\times V \\
    ((r,s),(A,B),\lambda,\xi) & \longmapsto & ((A,B),\lambda,\mathcal A^{p,q}_{A,B}((r,s),\xi)).
\end{matrix}
$$
Obviously, this action is fixed point free and proper. Therefore, the quotient space $\mathcal M_{p,q}:=(T_{p,q}\times V)/\mathbb Z^2$ carries a natural structure of algebraic variety, and the natural projection $T_{p,q}\times V \to T_{p,q}$ descends to the quotient to a flat and proper algebraic map $\varpi_{p,q} :\mathcal M_{p,q} \to T_{p,q}$. It is proved in Theorem \ref{Theorem Kuranishi family for LVM manifolds of type (2,6,4) non resonant} and Theorem \ref{Theorem Kuranishi family of a LVM manifold of type 264} that, for every $(p,q)$ in $\Z\times \N_{\geq2}$, the family $\varpi_{p,q}$ is complete at each point of $T_{p,q}$ that parametrizes an LVM manifold.

\begin{Definition}\label{Definition pour construire l'ensemble T}
    Let 
    $$
    \left(
\begin{pmatrix}
    \alpha_1 & 0 & 0 \\
    0 & \alpha_2 & 0 \\
    0 & \varepsilon & \alpha_3
\end{pmatrix},
\begin{pmatrix}
    \beta_1 & 0 & 0 \\
    0 & \beta_2 & 0 \\
    0 & \delta & \beta_3
\end{pmatrix}
\right)
    $$
    be an element of $\mathrm{GL}(3,\C)^2$. We say that the latter verifies the condition $C$ if 
    \begin{enumerate}
        \item $\vert\alpha_2\vert>\vert\alpha_3\vert$,
        \item $\varepsilon(\beta_3-\beta_2)=\delta(\alpha_3-\alpha_2)$,
        \item the eigenvalues of the two matrices come from an LVM configuration of type $(2,6,4)$,
        \item for every $(r,s)\in \Z\times \N^*$, the eigenvalues $\alpha_1,\alpha_2$ and $\alpha_3$ of the first matrix verify $\alpha_3\neq\alpha_1^r\alpha_2^s$.
    \end{enumerate}
\end{Definition}

We consider $T$ the complex manifold defined as 
$$
T=T'\times \C
$$
where
\small
$$ 
T':=
\left\{
\left(
\begin{pmatrix}
    \alpha_1 & 0 & 0 \\
    0 & \alpha_2 & 0 \\
    0 & \varepsilon & \alpha_3
\end{pmatrix},
\begin{pmatrix}
    \beta_1 & 0 & 0 \\
    0 & \beta_2 & 0 \\
    0 & \delta & \beta_3
\end{pmatrix}
\right)\in \mathrm{GL}(3,\C)^2 \text{ that verifies } C
\right\}.
$$
\normalsize
The manifold $T$ has dimension $8$.

According to Theorem \ref{Theorem The action is fpf and proper}, for every element $(A,B)$ in $T$ with 
$$
A=
\begin{pmatrix}
    \alpha_1 & 0 & 0 \\
    0 & \alpha_2 & 0 \\
    0 & \varepsilon & \alpha_3
\end{pmatrix}~B=
\begin{pmatrix}
    \beta_1 & 0 & 0 \\
    0 & \beta_2 & 0 \\
    0 & \delta & \beta_3
\end{pmatrix},
$$
the action $\mathcal A_{A,B}:\Z^2\times V \to V$ of $\Z^2$ on $V$ given by 
$$
\begin{matrix}
    (r,s).\xi=A^rB^s\xi
\end{matrix}
$$
is fixed point free, proper and cocompact. We consider the action of $\Z^2$
$$
\begin{matrix}
    \Z^2\times T\times V & \longrightarrow & T\times V \\
    ((r,s),(A,B),\lambda,\xi) & \longmapsto & ((A,B),\lambda,A^rB^s\xi)).
\end{matrix}
$$
Obviously, this action is fixed point free and proper. Therefore, the quotient space $\mathcal M:=(T\times V)/\mathbb Z^2$ carries a natural structure of complex manifold, and the natural projection $T\times V \to T$ descends to the quotient to a holomorphic submersion $\varpi :\mathcal M \to T$. It is proved in Theorem \ref{Theorem Kuranishi family for LVM manifolds of type (2,6,4) non resonant} that the family $\varpi$ is complete at each point of $T$ that parametrizes an LVM manifold.

We will glue all the family we constructed together. As in \cite{Fro2017} \S 7.2.1 p.59, the family $\varpi$ will only serve to define the gluing maps. 

For $p\in Z$, we consider the map 
$$
\begin{matrix}
    \psi_p & : & T\times V & \longrightarrow & S_p\times V 
\end{matrix}
$$
that sends an element 
$$
\left(\left(
\begin{pmatrix}
    \alpha_1 & 0 & 0 \\
    0 & \alpha_2 & 0 \\
    0 & \varepsilon & \alpha_3
\end{pmatrix},
\begin{pmatrix}
    \beta_1 & 0 & 0 \\
    0 & \beta_2 & 0 \\
    0 & \delta & \beta_3
\end{pmatrix}
\right),\lambda,(\xi_1,\xi_2,\xi_3)\right)
$$
to the element 
$$
\left(
\begin{matrix}
\begin{pmatrix}
    1 & 0 & 0 \\
    0 & 1 & \lambda\alpha_1^{-p}\\
    0 & 0 & 1
\end{pmatrix}
\begin{pmatrix}
    \alpha_1 & 0 & 0 \\
    0 & \alpha_2 & 0 \\
    0 & \varepsilon & \alpha_3
\end{pmatrix}
\begin{pmatrix}
    1 & 0 & 0 \\
    0 & 1 & -\lambda\\
    0 & 0 & 1
\end{pmatrix}, \\ \\
\begin{pmatrix}
    1 & 0 & 0 \\
    0 & 1 & \lambda\beta_1^{-p}\\
    0 & 0 & 1
\end{pmatrix}
\begin{pmatrix}
    \beta_1 & 0 & 0 \\
    0 & \beta_2 & 0 \\
    0 & \delta_1 & \beta_3
\end{pmatrix}
\begin{pmatrix}
    1 & 0 & 0 \\
    0 & 1 & -\lambda\\
    0 & 0 & 1
\end{pmatrix}, \\ \\
\begin{pmatrix}
    \xi_1 \\ \\
    \begin{pmatrix}
        1 & \lambda\xi_1^{-p}\\
        0 & 1
    \end{pmatrix}
    \begin{pmatrix}
        \xi_2 \\
        \xi_3+\frac{\varepsilon}{\alpha_3-\alpha_2}\xi_2-\frac{\varepsilon}{\alpha_3-\alpha_1^p\alpha_2}\xi_1^p\xi_2
    \end{pmatrix}
\end{pmatrix}
\end{matrix}\right),
$$
where $\delta_1=\varepsilon\frac{\beta_3-\beta_1^p\beta_2}{\alpha_3-\alpha_1^p\alpha_2}$.

\begin{Lemma}\label{Lemma Gluing map for Sp}
    For every $p$ in $\Z$, the map $\psi_p$ descends to a morphism of families of deformations between $\varpi$ and $\varpi_p$, i.e. is equivariant for the corresponding actions of $\Z^2$. Moreover, the map $\psi_p$ is one-to-one and its image is the smooth open set of $S_p\times V$ defined by $W_p\times V$, where 
    $$
    W_p:=\left\{
    \begin{matrix}
    \left(
\begin{pmatrix}
    \alpha_1 & 0 & 0 \\
    0 & \alpha_2 & \varepsilon_2 \\
    0 & \varepsilon_1 & \alpha_3
\end{pmatrix},
\begin{pmatrix}
    \beta_1 & 0 & 0 \\
    0 & \beta_2 & \delta_2 \\
    0 & \delta_1 & \beta_3
\end{pmatrix}
\right)\in S_p \\ ~\text{the } p\text{-eigenvalues of the first matrix verify } \left\{\begin{matrix}
    \alpha_3'\neq\alpha_1^p\alpha_2' \\
    \vert\alpha_2'\vert>\vert\alpha_3'\vert\\
    \alpha_2=\alpha_2'\text{ if }\varepsilon_1=0
\end{matrix}\right.
\end{matrix}
\right\}.
    $$
\end{Lemma}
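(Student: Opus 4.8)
The plan is to trace $\psi_p$ back to a single $G_p$-conjugation and then establish in turn the three assertions: $\Z^2$-equivariance, injectivity, and the description of the image.

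First I would recognize the matrix part of $\psi_p$ as the $G_p$-conjugate of $(A,B)$ by the element $\phi_\lambda:=(1,P_\lambda)$ with $P_\lambda=\bigl(\begin{smallmatrix}1&-\lambda\\0&1\end{smallmatrix}\bigr)$, in the sense of the semi-direct product of Proposition~\ref{Proposition Groupe des transfo résonantes}~\textit{(iii)}: the left factors of the displayed products have $2\times2$ blocks $\bigl(\begin{smallmatrix}1&\lambda\alpha_1^{-p}\\0&1\end{smallmatrix}\bigr)=\tau_p(\alpha_1)(P_\lambda^{-1})$ and $\bigl(\begin{smallmatrix}1&\lambda\beta_1^{-p}\\0&1\end{smallmatrix}\bigr)=\tau_p(\beta_1)(P_\lambda^{-1})$, because $\tau_p(z)(P_\lambda^{-1})=L_{z,p}P_\lambda^{-1}L_{z,p}^{-1}$. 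Since the $\Z^2$-action on $T\times V$ fixes the base point $(A,B,\lambda)$ and moves $\xi$ only by the linear map $A^rB^s$, whereas the action on $S_p\times V$ fixes the conjugated pair $(A',B')$ and moves the fiber coordinate by $\mathcal A^p_{A',B'}$, equivariance reduces to the single intertwining identity $\Phi(A^rB^s\xi)=\mathcal A^p_{A',B'}\big((r,s),\Phi(\xi)\big)$, where $\Phi=\Phi_{A,B,\lambda}$ is the $V$-component of $\psi_p$.

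To prove this identity I would factor $\Phi=\widehat{\phi_\lambda^{-1}}\circ\hat D_2\circ\hat D_1$ into three biholomorphisms of $V$, each conjugating one action into the next. Here $\hat D_1\colon\xi_3\mapsto\xi_3+\tfrac{\varepsilon}{\alpha_3-\alpha_2}\xi_2$ is the linear change of coordinates that diagonalizes the linear action of $A$, well defined since the fourth clause of condition $C$ (Definition~\ref{Definition pour construire l'ensemble T}) at $(r,s)=(0,1)$ forces $\alpha_3\neq\alpha_2$; after $\hat D_1$ the matrix is $\mathrm{Diag}(\alpha_1,\alpha_2,\alpha_3)$, whose linear and $G_p$-actions coincide because a diagonal matrix commutes with every $L_{z_1,p}$ so the $\tau_p$-twist is trivial, and from here on all conjugations take place inside $G_p$ and hence automatically intertwine actions. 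The map $\hat D_2\colon\xi_3\mapsto\xi_3-\tfrac{\varepsilon}{\alpha_3-\alpha_1^p\alpha_2}\xi_1^p\xi_2$ is the inverse of the diagonalizing element of Lemma~\ref{Lemma Diagonalisation des elements de Gpq si non resonant} (with $q=1$), well defined since the fourth clause of $C$ at $(r,s)=(p,1)$ forces $\alpha_3\neq\alpha_1^p\alpha_2$; it restores the $(3,2)$ off-diagonal. Finally $\widehat{\phi_\lambda^{-1}}$, acting on $(\xi_2,\xi_3)$ by $\bigl(\begin{smallmatrix}1&\lambda\xi_1^{-p}\\0&1\end{smallmatrix}\bigr)$, supplies the $(2,3)$ off-diagonal and the $\lambda$-dependence. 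Composing the three conjugations gives $\Phi A^rB^s\Phi^{-1}=\mathcal A^p_{A',B'}((r,s),\cdot)$ and at the same time exhibits the conjugated pair in the displayed $S_p$-shape; since conjugation preserves commutation, this pair lies in $\Hom(\Z^2,G_p)$, i.e. satisfies the defining equations of $S_p$. Hence $\psi_p$ is equivariant and descends to a morphism of families over the base map $(A,B,\lambda)\mapsto(A',B')$.

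I expect this intertwining computation to be the main obstacle, precisely because the $S_p$-action carries the factors $\xi_1^{\pm p}$ absent from the linear $T$-action; the idea that resolves it is to route through the diagonal matrix, where the twisted and untwisted actions agree, but keeping the signs and the powers of $\alpha_1^{\pm p}$ consistent across the three conjugations is delicate. For injectivity I would exhibit a left inverse by reversing the three steps: $\alpha_1,\beta_1$ are the $(1,1)$-entries, the $p$-eigenvalues are preserved under $G_p$-conjugation, the $(3,2)$-entry returns $\varepsilon$, the $(2,3)$-entry then pins down $\lambda$, and, each of $\hat D_1,\hat D_2,\widehat{\phi_\lambda^{-1}}$ being a biholomorphism of $V$ for fixed parameters, $\xi$ is recovered too. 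For the image I would prove both inclusions through the dictionary relating the honest eigenvalues $\alpha_1,\alpha_2,\alpha_3$ of the linear $T$-presentation to the $p$-eigenvalues $\alpha_1,\alpha_2',\alpha_3'$ of the twisted $S_p$-presentation of the same manifold: under it the fourth clause of $C$ (forbidding every resonance $\alpha_3=\alpha_1^r\alpha_2^s$) becomes the non-resonance $\alpha_3'\neq\alpha_1^p\alpha_2'$, the first clause becomes the modulus inequality $|\alpha_2'|>|\alpha_3'|$, and the degenerate case $\varepsilon_1=\varepsilon=0$ (where $\hat D_1,\hat D_2$ are trivial) yields the normalization $\alpha_2=\alpha_2'$; conversely, applying the left inverse to a point of $W_p$ and reading the dictionary backwards shows that its eigenvalue data satisfies condition $C$, so the preimage lies in $T$. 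Finally $W_p\times V$ is a smooth open subset because $W_p$ imposes $\alpha_3'\neq\alpha_1^p\alpha_2'$ and hence avoids the singular locus of $S_p$, which by Definition~\ref{Definition pour construire l'ensemble Sp} is exactly the resonance locus $C_p^S$ where $\alpha_3'=\alpha_1^p\alpha_2'$ and $\beta_3'=\beta_1^p\beta_2'$.
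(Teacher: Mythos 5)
Your proposal is correct and reaches the same conclusions as the paper, but it organizes the key computation differently. Where the paper disposes of equivariance with the single sentence that it ``follows from direct computations'' of $\psi_p(A,B,\lambda,A^rB^s\xi)$ versus $(r,s).\psi_p(A,B,\lambda,\xi)$, you factor the fiber component as $\widehat{\phi_\lambda^{-1}}\circ\hat D_2\circ\hat D_1$ and route the linear action through its diagonal form, where the linear and $G_p$-twisted actions coincide, so that the two remaining steps are honest $G_p$-conjugations (by the element $h$ of Lemma \ref{Lemma Diagonalisation des elements de Gpq si non resonant} and by $(1,P_\lambda)$ via the $\tau_p$-twist of Proposition \ref{Proposition Groupe des transfo résonantes} \textit{(iii)}). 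This buys a structural explanation of why the matrix part of $\psi_p$ has the $\alpha_1^{-p}$, $\beta_1^{-p}$ asymmetry between the left and right shears, and why the image automatically lands in $\Hom(\Z^2,G_p)$, i.e.\ satisfies the defining equations of $S_p$; the paper's version buys brevity at the cost of leaving the reader to verify an opaque identity. Your treatment of injectivity and of the image is essentially the paper's: read off $\alpha_1,\beta_1,\varepsilon,\xi_1$ directly, recover the ordered pair $(\alpha_2,\alpha_3)$ from the conjugation-invariant $p$-eigenvalues together with the modulus ordering $\vert\alpha_2\vert>\vert\alpha_3\vert$ and the tie-breaking clause at $\varepsilon_1=0$, then recover $\lambda$ and the remaining data, and run the dictionary backwards for surjectivity onto $W_p\times V$.

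One small imprecision: the claim that ``the $(2,3)$-entry then pins down $\lambda$'' is not quite right as stated, since that entry equals $\lambda(\alpha_1^{-p}\alpha_3-\alpha_2)-\lambda^2\alpha_1^{-p}\varepsilon$ and is quadratic in $\lambda$ when $\varepsilon\neq0$. You should instead recover $\lambda$ from the $(2,2)$-entry $\alpha_2+\lambda\alpha_1^{-p}\varepsilon$ when $\varepsilon\neq0$, and from the $(2,3)$-entry (using $\alpha_3\neq\alpha_1^p\alpha_2$, clause 4 of condition $C$ at $(r,s)=(p,1)$) when $\varepsilon=0$; equivalently, as in the paper, characterize $\lambda$ by the condition that $(0,\lambda,1)$ is the $p$-eigenvector for the $p$-eigenvalue of smaller modulus. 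With that adjustment the left inverse is well defined and your argument closes.
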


\begin{proof}
    The equivariance property follows from direct computations of the quantities 
    $$
    \begin{matrix}
        \psi_p(A,B,\lambda,A^rB^s\xi) & \text{and} & (r,s).\psi_p(A,B,\lambda,\xi),
    \end{matrix}
    $$
    where the last action is given by $\mathcal A^p$. 

    Assume that 
    $$
\left(\left(
\begin{pmatrix}
    \alpha_1 & 0 & 0 \\
    0 & \alpha_2 & 0 \\
    0 & \varepsilon & \alpha_3
\end{pmatrix},
\begin{pmatrix}
    \beta_1 & 0 & 0 \\
    0 & \beta_2 & 0 \\
    0 & \delta & \beta_3
\end{pmatrix}
\right),\lambda,(\xi_1,\xi_2,\xi_3)\right)
$$ and 
$$
\left(\left(
\begin{pmatrix}
    \alpha_1' & 0 & 0 \\
    0 & \alpha_2' & 0 \\
    0 & \varepsilon' & \alpha_3'
\end{pmatrix},
\begin{pmatrix}
    \beta_1' & 0 & 0 \\
    0 & \beta_2' & 0 \\
    0 & \delta' & \beta_3'
\end{pmatrix}
\right),\lambda',(\xi_1',\xi_2',\xi_3')\right)
$$
are two elements of $T\times V$ that have the same image through the map $\psi_p$. It is clear that it implies $\alpha_1=\alpha_1'$, $\beta_1=\beta_1'$, $\varepsilon=\varepsilon'$ and $\xi_1=\xi_1'$. Now, the complex numbers $\alpha_2$ and $\alpha_3$, as well as $\alpha_2'$ and $\alpha_3'$, are the $p$-eigenvalues of the first matrix of the image in the space $\Vect(e_2,e_3)$, where $(e_1,e_2,e_3)$ is the standard basis of $\C^3$. By the condition $C$ of Definition \ref{Definition pour construire l'ensemble T}, we have $\vert\alpha_2\vert>\vert\alpha_3\vert$ and $\vert\alpha_2'\vert>\vert\alpha_3'\vert$, therefore, we have $\alpha_2=\alpha_2'$ and $\alpha_3=\alpha_3'$. This implies $\lambda=\lambda'$, which itself implies $\beta_2=\beta_2'$, $\beta_3=\beta_3'$, $\delta=\delta'$, $\xi_2=\xi_2'$ and finally $\xi_3=\xi_3'$. Hence, the map $\psi_p$ is one-to-one. 

It is clear that the direct image of $\psi_p$ is included in $W_p\times V$. Assume that 
$$
\left(
\begin{pmatrix}
    \alpha_1' & 0 & 0 \\
    0 & \alpha_2' & \varepsilon_2' \\
    0 & \varepsilon_1' & \alpha_3'
\end{pmatrix},
\begin{pmatrix}
    \beta_1' & 0 & 0 \\
    0 & \beta_2' & \delta_2' \\
    0 & \delta_1' & \beta_3'
\end{pmatrix},(\xi_1',\xi_2',\xi_3')
\right)
$$
is an element of $W_p\times V$. Then we consider $\alpha_1=\alpha_1'$, $\beta_1=\beta_1'$, $\varepsilon=\varepsilon_1'$ and $\xi_1=\xi_1'$. By the definition of $W_p$, the first matrix has two $p$-eigenvalues $\alpha_2$ and $\alpha_3$ in $\Vect(e_2,e_3)$ that satisfy $\vert\alpha_2\vert>\vert\alpha_3\vert$. Since $\vert\alpha_2\vert>\vert\alpha_3\vert$ and $\alpha_2'=\alpha_2$ if $\varepsilon_1'=0$, there exists a unique $\lambda\in\C$ such that $(0,\lambda,1)$ is a $p$-eigenvector of the first matrix for the $p$-eigenvalue $\alpha_3$. Notice that, by the commutation assumptions, $(0,\lambda,1)$ is also a $p$-eigenvector of the second matrix. This is enough to define properly $\beta_2$ and $\beta_3$, and therefore $\delta$. Knowing $\lambda$ and $\xi_1$ also permits to define $\xi_2$ and then $\xi_3$ such that 
$$
\left(\left(
\begin{pmatrix}
    \alpha_1 & 0 & 0 \\
    0 & \alpha_2 & 0 \\
    0 & \varepsilon & \alpha_3
\end{pmatrix},
\begin{pmatrix}
    \beta_1 & 0 & 0 \\
    0 & \beta_2 & 0 \\
    0 & \delta & \beta_3
\end{pmatrix}
\right),\lambda,(\xi_1,\xi_2,\xi_3)\right)
$$ is mapped by $\psi_p$ to 
$$
\left(
\begin{pmatrix}
    \alpha_1' & 0 & 0 \\
    0 & \alpha_2' & \varepsilon_2' \\
    0 & \varepsilon_1' & \alpha_3'
\end{pmatrix},
\begin{pmatrix}
    \beta_1' & 0 & 0 \\
    0 & \beta_2' & \delta_2' \\
    0 & \delta_1' & \beta_3'
\end{pmatrix},(\xi_1',\xi_2',\xi_3')
\right)
$$
This shows that $W_p\times V$ is the direct image of $\psi_p$.
\end{proof}

For $(p,q)\in\Z\times \N_{\geq2}$, we define the smooth open subset of $T_{p,q}$ by 
$$
U_{p,q}=\left\{\left(
\begin{pmatrix}
    \alpha_1 & 0 & 0 \\
    0 & \alpha_2 & 0 \\
    0 & \varepsilon & \alpha_3
\end{pmatrix},
\begin{pmatrix}
    \beta_1 & 0 & 0 \\
    0 & \beta_2 & 0 \\
    0 & \delta & \beta_3
\end{pmatrix},\lambda
\right)\in T_{p,q},~\alpha_3\neq\alpha_1^p\alpha_2^q
\right\}
$$
and the map 
$$
\begin{matrix}
    \phi_{p,q} & : & U_{p,q}\times V & \longrightarrow & T\times V 
\end{matrix}
$$
that sends an element 
$$
\left(\left(
\begin{pmatrix}
    \alpha_1 & 0 & 0 \\
    0 & \alpha_2 & 0 \\
    0 & \varepsilon & \alpha_3
\end{pmatrix},
\begin{pmatrix}
    \beta_1 & 0 & 0 \\
    0 & \beta_2 & 0 \\
    0 & \delta & \beta_3
\end{pmatrix}
\right),\lambda,(\xi_1,\xi_2,\xi_3)\right)
$$
to the element 
$$
\left(\left(
\begin{pmatrix}
    \alpha_1 & 0 & 0 \\
    0 & \alpha_2 & 0 \\
    0 & \varepsilon & \alpha_3
\end{pmatrix},
\begin{pmatrix}
    \beta_1 & 0 & 0 \\
    0 & \beta_2 & 0 \\
    0 & \delta' & \beta_3
\end{pmatrix}\right),\lambda
\begin{pmatrix}
    \xi_1 \\
    
        \xi_2 \\
        \xi_3+\frac{\varepsilon}{\alpha_3-\alpha_2}\xi_2-\frac{\varepsilon}{\alpha_3-\alpha_1^p\alpha_2}\xi_1^p\xi_2
    
\end{pmatrix}
\right),
$$
where $\delta_1=\varepsilon\frac{\beta_3-\beta_2}{\alpha_3-\alpha_2}$.

\begin{Lemma}\label{Lemma Gluing map for Tpq}
    For every $(p,q)$ in $\Z\times\N_{\geq2}$, the map $\phi_{p,q}$ descends to a morphism of families of deformations between $\varpi_{p,q}$ and $\varpi$, i.e. is equivariant for the corresponding actions of $\Z^2$. Moreover, the map $\phi_{p,q}$ is a biholomorphism.
\end{Lemma}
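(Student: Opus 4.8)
The plan is to interpret $\phi_{p,q}$, over the non-resonant locus $U_{p,q}$, as the explicit identification between two simultaneously diagonalizable realizations of one and the same commuting $\Z^2$-action on $V$. Fix a point of $U_{p,q}$: there $\alpha_3\neq\alpha_1^p\alpha_2^q$, and since it also lies in $T_{p,q}$ the relations $\alpha_3\neq\alpha_1^r\alpha_2^s$ hold for all $(r,s)\neq(p,q)$ by condition $K_{p,q}$ (Definition \ref{Definition pour construire l'ensemble Tpq}); together these yield $\alpha_3\neq\alpha_1^r\alpha_2^s$ for \emph{every} $(r,s)\in\Z\times\N^*$, which is precisely condition $C$ of Definition \ref{Definition pour construire l'ensemble T}. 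Consequently the source pair $(A,B)$, acting through the resonant automorphisms $\mathcal A^{p}_{A,B}$, is simultaneously conjugate to its diagonal $(\mathrm{Diag}(\alpha_1,\alpha_2,\alpha_3),\mathrm{Diag}(\beta_1,\beta_2,\beta_3))$ by Lemma \ref{Lemma Diagonalisation des elements de Gpq si non resonant}, while the target pair $(A,B')$ with $\delta'=\varepsilon\frac{\beta_3-\beta_2}{\alpha_3-\alpha_2}$—which is the $(0,1)$-affine form and commutes exactly because of this value of $\delta'$—is simultaneously conjugate to the same diagonal by the classical diagonalization of commuting matrices (the $p=0$, $q=1$ instance of the same lemma, legitimate since $\alpha_3\neq\alpha_2$). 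I would then write the fibre component $\Phi$ of $\phi_{p,q}$ as the composite intertwiner $\Phi=h_2\circ h_1^{-1}$, where $h_1$ diagonalizes the source and $h_2$ the target; each $h_i$ fixes $\xi_1,\xi_2$ and shifts $\xi_3$ by the relevant monomial, and their composite reproduces the two correction terms $\frac{\varepsilon}{\alpha_3-\alpha_2}\xi_2$ and $\frac{\varepsilon}{\alpha_3-\alpha_1^p\alpha_2^q}\xi_1^p\xi_2^q$ appearing in the definition of $\phi_{p,q}$.

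For the first assertion (equivariance), once $\Phi=h_2\circ h_1^{-1}$ is set up, the relations $A=h_1\,\mathrm{Diag}(\alpha)\,h_1^{-1}$, $B=h_1\,\mathrm{Diag}(\beta)\,h_1^{-1}$ and $A'=h_2\,\mathrm{Diag}(\alpha)\,h_2^{-1}$, $B'=h_2\,\mathrm{Diag}(\beta)\,h_2^{-1}$ give immediately $\Phi\circ\mathcal A^{p}_{A,B}((r,s),-)=(A')^r(B')^s\circ\Phi$ for all $(r,s)\in\Z^2$, since $h_1^{-1}$ converts the resonant source action into the diagonal one and $h_2$ converts the diagonal one into the linear target action. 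In practice I would only check this on the generators $(1,0)$ and $(0,1)$; the key point making the bookkeeping close is that the single $h_1$ (resp. $h_2$) diagonalizes \emph{both} generators at once, which is guaranteed by the commutation relation in $K_{p,q}$ (resp. by the defining relation of $\delta'$, i.e. condition $C$). Because the matrix part of $\phi_{p,q}$ depends on the base point alone and not on $\xi$, equivariance lets $\phi_{p,q}$ descend to the $\Z^2$-quotients and commute with the projections to the base, hence it defines a morphism of families of deformations from $\varpi_{p,q}$ (restricted to $U_{p,q}$) to $\varpi$.

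For the second assertion I would treat base and fibre separately. On the base, in the coordinates $(\alpha_1,\alpha_2,\alpha_3,\beta_1,\beta_2,\beta_3,\varepsilon,\lambda)$ the entries $\delta$ and $\delta'$ are dependent quantities fixed respectively by the commutation relations of $K_{p,q}$ and of $C$, so the induced base map $U_{p,q}\to T$ is the identity on these coordinates; by the equivalence of conditions noted in the first paragraph these coordinates cut out the same locus on both sides, so the base map is a biholomorphism onto $T$. On each fibre $\Phi$ is a unipotent triangular automorphism of $V$—it fixes $\xi_1,\xi_2$ and adds to $\xi_3$ a holomorphic function of $(\xi_1,\xi_2)$ whose coefficients have nonvanishing denominators on $U_{p,q}$—so it is a biholomorphism of $V$ with an inverse of the same form. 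Putting the two together shows that $\phi_{p,q}$ is a biholomorphism of $U_{p,q}\times V$ onto $T\times V$; equivalently, and perhaps more cleanly, I would simply write the inverse down explicitly and verify that both composites are the identity, exactly as is done for $\psi_p$ in Lemma \ref{Lemma Gluing map for Sp}.

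The main obstacle is the equivariance verification: one must confirm that a \emph{single} conjugating transformation simultaneously intertwines each of the two commuting generators of the source action with its target counterpart. This is exactly where the precise defining equations are used—the commutation relations built into $K_{p,q}$ on the source side and the prescription $\delta'=\varepsilon\frac{\beta_3-\beta_2}{\alpha_3-\alpha_2}$ on the target side—and where one must keep careful track of how the affine correction $\frac{\varepsilon}{\alpha_3-\alpha_2}\xi_2$ interacts with the resonant correction $\frac{\varepsilon}{\alpha_3-\alpha_1^p\alpha_2^q}\xi_1^p\xi_2^q$. Conceptually, though, the whole computation is nothing more than two applications of Lemma \ref{Lemma Diagonalisation des elements de Gpq si non resonant}, one for the $(p,q)$-resonance of the source and one for the $(0,1)$-resonance of the target, so no genuinely new difficulty beyond that lemma and the combinatorics of the two monomials should arise.
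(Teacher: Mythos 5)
Your proof is correct, and it packages the paper's ``direct computation'' in a genuinely more structured way: the paper's own proof consists of the single sentence that equivariance follows from computing $\psi_p(A,B,\lambda,\mathcal A^{p,q}_{A,B}((r,s),\xi))$ and $(r,s).\psi_p(A,B,\lambda,\xi)$ and that bijectivity is straightforward, whereas you factor the fibre component as $\Phi=h_2\circ h_1^{-1}$ and reduce everything to two applications of Lemma \ref{Lemma Diagonalisation des elements de Gpq si non resonant} (once for the $(p,q)$-resonance of the source, once for the $(0,1)$-instance on the target, the latter legitimate because $(0,1)\neq(p,q)$ forces $\alpha_3\neq\alpha_2$ via condition $K_{p,q}$). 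What your organization buys is that the two commutation relations --- the one built into $K_{p,q}$ and the one defining $\delta'$ --- are used exactly once each, to guarantee that a \emph{single} $h_1$ (resp.\ $h_2$) conjugates \emph{both} generators to the common diagonal pair, after which equivariance for all $(r,s)$ is formal. One point worth flagging explicitly: the fibre map your factorization produces is
$$
\xi_3\;-\;\frac{\varepsilon}{\alpha_3-\alpha_2}\,\xi_2\;+\;\frac{\varepsilon}{\alpha_3-\alpha_1^p\alpha_2^q}\,\xi_1^p\xi_2^q,
$$
which differs from the formula printed in the definition of $\phi_{p,q}$ (that formula has the opposite signs and the exponent $1$ in place of $q$, apparently carried over verbatim from $\psi_p$). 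The printed formula cannot be equivariant in the stated direction --- the monomial $\xi_1^p\xi_2^q$ produced by $\mathcal A^{p,q}_{A,B}$ has no counterpart on the other side --- so your reconstruction is the correct one, and your proof implicitly repairs a typo in the paper rather than introducing an error.
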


\begin{proof}
    The equivariance property follows from direct computations of the quantities 
    $$
    \begin{matrix}
        \psi_p(A,B,\lambda,\mathcal A^{p,q}_{A,B}((r,s),\xi)) & \text{and} & (r,s).\psi_p(A,B,\lambda,\xi),
    \end{matrix}
    $$
    where the last action is given by $\mathcal A$. 

    The fact that $\phi_{p,q}$ is a biholomorphism is straightforward.
\end{proof}

Using Lemma \ref{Lemma Gluing map for Sp} and Lemma \ref{Lemma Gluing map for Tpq}, we glue the families $\varpi_{p'}$ and $\varpi_{p,q}$ for every $p'\in\Z$ and $(p,q)\in\Z\times \N_{\geq2}$ all together to construct a family containing all LVM manifolds of type $(2,6,4)$ such that this family is complete at every LVM manifold. 

\begin{Theorem}\label{Theorem big family of LVM manifolds}
    There exists a family of deformations that contains all LVM manifolds of type $(2,6,4)$ and that is complete at every such a manifold.
\end{Theorem}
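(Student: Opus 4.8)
The plan is to build the desired family by gluing together the Kuranishi families constructed above, namely the families $\varpi_p\colon\mathcal M_p\to S_p$ for $p\in\Z$ and $\varpi_{p,q}\colon\mathcal M_{p,q}\to T_{p,q}$ for $(p,q)\in\Z\times\N_{\geq2}$, using the non-resonant family $\varpi\colon\mathcal M\to T$ purely as the device that records how two such pieces overlap, following the strategy of Fromenteau in \cite{Fro2017}. The two gluing lemmas provide the identifications: by Lemma \ref{Lemma Gluing map for Sp}, $\psi_p$ is an equivariant biholomorphism from $T\times V$ onto the open subset $W_p\times V$ of $S_p\times V$, so after passing to the $\Z^2$-quotients it identifies $\varpi$ with the restriction of $\varpi_p$ over $W_p$; by Lemma \ref{Lemma Gluing map for Tpq}, $\phi_{p,q}$ is an equivariant biholomorphism identifying the restriction of $\varpi_{p,q}$ over $U_{p,q}$ with the restriction of $\varpi$ over an open subset of $T$. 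Thus every non-resonant locus of every piece is canonically identified with (a part of) the single model $\varpi$.

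Concretely, I would form the base of the glued family as the quotient of $\bigsqcup_{p}S_p\sqcup\bigsqcup_{p,q}T_{p,q}$ by the relation that identifies two points lying in the non-resonant loci $W_p$, $U_{p,q}$ whenever they are sent to the same point of $T$ by $\psi_p^{-1}$, respectively $\phi_{p,q}$, and I would glue the total spaces $\mathcal M_p$, $\mathcal M_{p,q}$ along the corresponding biholomorphisms coming from the equivariant lifts. Because every non-resonant locus maps to $T$ by a single fixed map, this relation is automatically reflexive, symmetric and transitive, so the cocycle condition on triple overlaps holds for free; here the explicit form of $\psi_p$ and $\phi_{p,q}$ --- conjugation by unipotent matrices that leaves the first coordinate $\xi_1$ untouched --- is what guarantees the identifications are mutually compatible. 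The resulting base is a (generally non-separated) analytic space, and since each gluing map is an isomorphism of flat proper families with compact complex fibres, the glued projection $\varpi_{\mathrm{big}}$ is again flat, proper and has compact complex fibres, i.e. is a family of deformations.

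Next I would check that this family contains every LVM manifold $N$ of type $(2,6,4)$. By the classification of resonances in Lemma \ref{Lemma Classification resonances 264}, $N$ is either non-resonant, or carries a single non-trivial resonance governed by $(p,q)\in\Z\times\N^*$ with $q\geq2$, or carries the two resonances corresponding to $q=1$. In the non-resonant case $N$ is a fibre over a point of $T$, hence of some $W_p$; in the $q=1$ case it is a fibre over a point of the singular locus $C_p^{S}$ of $S_p$; and in the $q\geq2$ case it is a fibre over a point of the singular locus $K_{p,q}^{S}$ of $T_{p,q}$. In every case $N$ appears as a fibre of $\varpi_{\mathrm{big}}$.

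Finally, completeness of a family at a prescribed fibre is a local property of the base, so it suffices to observe that in a neighbourhood of any point parametrizing an LVM manifold the glued family coincides with one of the pieces $\varpi_p$, $\varpi_{p,q}$ or $\varpi$. Each of these is complete at its LVM points, being the Kuranishi family there by Theorems \ref{Theorem Kuranishi family for LVM manifolds of type (2,6,4) non resonant} and \ref{Theorem Kuranishi family of a LVM manifold of type 264} and hence complete on the relevant open set by Theorem \ref{Theorem Kuranihi}; the resonant LVM points lie in the singular loci, which are disjoint from the glued regions $W_p$ and $U_{p,q}$, so only one piece is involved and no ambiguity arises, while the non-resonant points may lie in several charts but all these charts are identified with $\varpi$. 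The main obstacle is precisely the bookkeeping of the gluing: one must verify that the pairwise identifications through $T$ are consistent and that the quotient complex structures on $\mathcal M_p$, $\mathcal M_{p,q}$ fit together into a single (non-separated) complex-analytic total space carrying a genuinely flat and proper map; granting this, containment and completeness follow from the already established properties of the individual pieces.
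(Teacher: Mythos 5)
Your proposal follows essentially the same route as the paper: the pieces are the same families $\varpi_p$ over $S_p$ and $\varpi_{p,q}$ over $T_{p,q}$, glued through the non-resonant model $\varpi$ over $T$ via the maps $\psi_p$ and $\phi_{p,q}$ of Lemmas \ref{Lemma Gluing map for Sp} and \ref{Lemma Gluing map for Tpq} (with $\varpi$ serving only to define the identifications, exactly as in the paper and in Fromenteau's construction), and completeness is deduced locally from Theorems \ref{Theorem Kuranishi family for LVM manifolds of type (2,6,4) non resonant} and \ref{Theorem Kuranishi family of a LVM manifold of type 264}. This matches the paper's argument, and your explicit remarks on the automatic cocycle condition and the possible non-separatedness of the glued base are reasonable elaborations of what the paper leaves implicit.
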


\newpage

\Large
\noindent\textbf{Acknowledgments}
\normalsize

We thank Sorin Dumitrescu and Laurent Meersseman for their helpful support throughout the development of this work. We also wish to express our sincere gratitude to Sönke Rollenske for his interest in the results of this paper and for the valuable discussions we had on this topic.

\nocite{*}
    \footnotesize{\bibliographystyle{alpha}\bibliography{main}}

\vspace{0.5cm}
\texttt{UNIVERSITÉ COTE D'AZUR, LJAD, FRANCE}

Email address: \href{mailto:matthieu.madera@univ-cotedazur.fr}{matthieu.madera@univ-cotedazur.fr}

\end{document}